\documentclass[reqno,12pt,a4paper]{amsart}
\usepackage{amscd,amsmath,amssymb,verbatim,color,mathtools,bbm}
\usepackage{nicefrac}
\usepackage{todonotes}
\usepackage{hyperref}
\usepackage{url}
\usepackage{enumerate,enumitem}
\usepackage{appendix}
\usepackage{epsfig,subfigure}
\usepackage{graphicx}

\theoremstyle{plain}

\newtheorem{theorem}{Theorem}[section]
\newtheorem{proposition}[theorem]{Proposition}
\newtheorem{lemma}[theorem]{Lemma}
\newtheorem{corollary}[theorem]{Corollary}

\theoremstyle{remark}
\newtheorem{remark}[theorem]{Remark}
\newtheorem{example}[theorem]{Example}

\theoremstyle{definition}
\newtheorem{definition}[theorem]{Definition}

\usepackage{geometry}\geometry{left=1in,right=1in,top=1in,bottom=1in}
\usepackage[scrtime]{prelim2e}




\newcommand{\linspan}{\mathop{\rm span}\nolimits}
\newcommand{\Ran}{\mathop{\rm Ran}\nolimits}

\newcommand{\supp}{\mathop{\rm supp}\nolimits}

\newcommand{\rest}{\left.\kern-2\nulldelimiterspace\right|_}
\newcommand{\norm}[2]{\left|#1\right|_{#2}}

\newcommand{\ex}{\mathrm{e}}
\newcommand{\p}{\partial}

\newcommand{\e}{\varepsilon}
\newcommand{\ed}{\mathrm d}

\newcommand*{\Bigcdot}{\raisebox{-.25ex}{\scalebox{1.25}{$\cdot$}}}

\newcommand{\N}{{\mathbb N}}
\newcommand{\R}{{\mathbb R}}

\newcommand{\D}{{\mathrm D}}

\newcommand{\CCC}{{\mathbf C}}
\newcommand{\DDD}{{\mathbf D}}
\newcommand{\RRR}{{\mathbf R}}
\newcommand{\HHH}{{\mathbf H}}

\newcommand{\LLL}{{\mathbf L}}
\newcommand{\GGG}{{\mathbf G}}
\newcommand{\XXX}{{\mathbf X}}

\newcommand{\nnn}{\mathbf n}
\newcommand{\ttt}{\mathbf t}
\newcommand{\ppp}{\mathbf p}
\newcommand{\eee}{\mathbf e}

\newcommand{\BB}{{\mathcal B}}
\newcommand{\CC}{{\mathcal C}}
\newcommand{\DD}{{\mathcal D}}

\newcommand{\FF}{{\mathcal F}}
\newcommand{\GG}{{\mathcal G}}

\newcommand{\LL}{{\mathcal L}}
\newcommand{\MM}{{\mathcal M}}
\newcommand{\NN}{{\mathcal N}}
\newcommand{\OO}{{\mathcal O}}
\newcommand{\PP}{{\mathcal P}}
\newcommand{\RR}{{\mathcal R}}

\newcommand{\TT}{{\mathcal T}}

\newcommand{\WW}{{\mathcal W}}

\newcommand{\ZZ}{{\mathcal Z}}

\newcommand{\Ma}{{\mathbf M}}
\newcommand{\St}{{\mathbf S}}



\begin{document}
\title{Stabilization to trajectories for parabolic equations}
\author{Duy Phan}\author{S\'ergio S.~Rodrigues}
\address{Johann Radon Institute for Computational and Applied Mathematics,
\"OAW,\newline\indent Altenbergerstra{\normalfont\ss}e 69, A-4040 Linz.\newline \indent
e-mails: {\tt duy.phan-duc@oeaw.ac.at,sergio.rodrigues@oeaw.ac.at}}
\thanks{The authors acknowledge support from the Austrian Science Fund~(FWF): P~26034-N25.}

\begin{abstract}
The feedback exponential stabilization to trajectories for semilinear parabolic equations in a given bounded domain is addressed. The controls take values in a
finite-dimensional space and are supported in a small region. Both internal and boundary controls are considered.

\

\smallskip
\noindent {MSC2010:} 93D15, 93B52, 35K58

\smallskip
\noindent {Keywords:} feedback stabilization, internal and boundary controls, parabolic equations

\end{abstract}

\maketitle

{\tiny
\tableofcontents
}

\pagestyle{myheadings} \thispagestyle{plain} \markboth{\sc D.Phan and S. S.
Rodrigues}{\sc Stabilization to trajectories for parabolic equations}

\section{Introduction.}
We consider controlled parabolic equations, for time~$t\ge0$, in a smooth domain $\Omega \in \R^d$ located locally on one side of its boundary~$\Gamma=\p\Omega$, with~$d$ a positive integer, 
either of the form
\begin{align}
\p_t y - \nu \Delta y + f(y,\nabla y) +\sum_{i=1}^M u_i\Phi_i&= 0;\quad
 &&y\rest\Gamma = g;\label{sys-y-cont_i}\\
\intertext{or of the form }
\p_t y - \nu \Delta y + f(y,\nabla y)  &= 0;\quad
 &&y\rest\Gamma = g +\sum_{i=1}^M u_i\Psi_i.\label{sys-y-cont_b}
\end{align}

In the variables~$(t,x,\bar x)\in(0,+\infty)\times\Omega\times\Gamma$, the unknown in the equation is the function $y=y(t,x)\in\R$.
The diffusion coefficient~$\nu > 0$ is a positive constant; the functions $g=g(t,\bar x)\in\R$ and
$f\colon\R\times\R^d\to\R$ are fixed.

In system~\eqref{sys-y-cont_i} the functions $\Phi_i=\Phi_i(x)$ are given and will play the role of actuators, while
in system~\eqref{sys-y-cont_b} that role will be played by the given functions $\Psi_i=\Psi_i(\bar x)$.
Finally, $M$~is a positive integer and, in either system, $u=u(t)\in\R^M$ is a (control) vector function at our disposal.

The problem we address here is the {\em local} exponential stabilization to trajectories for systems~\eqref{sys-y-cont_i} and~\eqref{sys-y-cont_b}. That is,
given a positive constant $\lambda>0$ and a solution $\hat{y}(t)=\hat{y}(t,\Bigcdot)$ of the (uncontrolled) system with~$u = 0$, we want to find a control function~$u$
such that the solution $y(t)\coloneqq y(t,\Bigcdot)$ of the system, supplemented with the initial condition
\[
y(0)\coloneqq y(0,x) = y_0(x), 
\]
is defined on $\left[0, + \infty \right)$ and approaches $\hat{y}(t)$ exponentially with rate~$\frac{\lambda}{2}$, provided $y(0) - \hat{y} (0)$ is {\em small enough}.
In other words, for a suitable Banach space~$X$ and positive constants~$C$ and $\epsilon$, we want to have that
\begin{equation}\label{goal}
\left| y(t) - \hat{y} (t)  \right|^2_{X}
\le C \mathrm{e}^{-\lambda t}  \left| y(0) - \hat{y} (0)  \right|^2_{X},\quad\mbox{provided}\quad\norm{y(0) - \hat{y} (0)}{X}<\epsilon
\end{equation}
with~$\epsilon$ {\em small enough}.
Notice that, the constants $C$ and $\epsilon$ may depend on $\lambda$, but neither on $\hat{y} (0)$ nor on ${y}(0)$.

We are particularly interested in actuators which are supported in a small domain: either $\supp\Phi_i\subset \omega\subseteq\Omega$ or
$\supp\Psi_i\subset \OO\subseteq\Gamma$, where $\omega$ and $\OO$ are given open subsets of~$\Omega$ and~$\Gamma$, respectively.

By following the arguments in~\cite{BarRodShi11,KroRod15,KroRod-ecc15,BreKunRod-pp15} we shall conclude that the answer is affirmative, for the case of
system~\eqref{sys-y-cont_i} under suitable conditions on the family of internal actuators~$\CC_\omega=\{\Phi_i\mid i\in\{1,2,\dots,M\}\}$. Moreover the stabilizing
control can be taken in feedback form~$u(t)=K_t(y(t)-\hat y(t))$.

For the case of
system~\eqref{sys-y-cont_b} the answer is less straightforward from the available results in the Literature, however we shall show, by combining some of the arguments
in~\cite{Badra09-cocv,Rod14-na,Rod15-cocv},  that the answer is again affirmative provided suitable conditions are satisfied by
the family of boundary actuators~$\CC_{\Gamma_{\rm c}}=\{\Psi_i\mid i\in\{1,2,\dots,M\}\}$. In this case, the control can be taken in {\em integral} feedback form
$y\rest\Gamma(t)=\upsilon_0+\int_0^tK_\tau(y(\tau)-\hat y(\tau))\,\ed\tau$, where~$\upsilon_0$ may/must be taken in an appropriate space.

Considering finite-dimensional controls is important for applications because usually we have at our disposal only a finite number of actuators which we can {\em tune}.
Suppose we can choose those (either internal or boundary) actuators from a family
\[
\CC^{+\infty}=\{\Theta_i\mid i\in\N_0\}, 
\]
 then we may ask how many of these actuators we need to stabilize the system. That is,  
what is~$M$ such that $\CC^{M}=\{\Theta_i\mid i\in\{1,2,\dots,M\}\}$ allow us to stabilize the system. This question will be answered for suitable {\em complete} families.
In this way we arrive to an estimate on the number of actuators we need to stabilize the system.

We will start by considering the linearization of systems~\eqref{sys-y-cont_i} and~\eqref{sys-y-cont_b} around~$\hat y$ and construct a feedback rule
stabilizing {\em globally} the solution~$v$ of the linearized system to zero. That is,
\begin{equation}\label{goal-L}
\left| v(t)  (t)  \right|^2_{X}
\le C \mathrm{e}^{-\lambda t}  \left| v(0)\right|^2_{X}.
\end{equation}
Another question we will address is how the constant~$C=C(\lambda)$, in~\eqref{goal-L}, does depend on~$\lambda$.
Some estimates will be given and the results of some numerical
simulations will be presented for Riccati based feedback. This is motivated by a sufficient condition for exponential stabilization given in~\cite{BreKunRod-pp15} for
the FitzHugh--Nagumo and Rogers--McCulogh systems. Another motivation is that in general, the value~$\epsilon$ in~\eqref{goal}
will decrease as~$C$ in~\eqref{goal-L} increases,
that is the feedback control
will work, for the nonlinear system, in a bigger neighborhood for a smaller~$C$.

The rest of the paper is organized as follows. In Section~\ref{S:red-stabil} we reduce our problem to the stabilization to zero of the difference $z=y-\hat y$, and write the system
for~$z$ in an appropriate way.  In Section~\ref{S:lin_int} we deal with the internal stabilization to zero of the linearized system for the difference.
In Section~\ref{S:nonlinear_int} we deal with the internal stabilization to zero of the nonlinear (full) system for the difference, under some conditions on the
nonlinearity. In Section~\ref{S:exa_poly},
 as an example, we check the conditions in Section~\ref{S:nonlinear_int}
for some nonlinearities of polynomial type. In Section~\ref{S:bdry} we deal with boundary controls. Finally,
Sections~\ref{S:disc_lin} and~\ref{S:num_exa} are concerned with the discretization of our equations and the presentation of the results of some numerical simulations.

\medskip\noindent
{\em Notation.}
We write~$\mathbb R$ and~$\mathbb N$ for the sets of real numbers and nonnegative
integers, respectively, and we define $\mathbb R_a:= (a,+\infty)$ for all $a\in\mathbb R$, and $\mathbb
N_0:= \mathbb N\setminus\{0\}$. We denote by $\Omega\subset\mathbb R^n$, $n\in\mathbb N_0$, a
bounded domain with a smooth boundary $\Gamma=\partial\Omega$.
Given a function $v\colon (t,x_1,x_2,\dots,x_n)\mapsto v(t,x_1,x_2,\dots,x_n)\in\mathbb R$, defined in
an open subset of $\mathbb R\times\Omega$, its partial
time derivative~$\frac{\partial v}{\partial t}$ will be denoted by
$\partial_tv$.

We use the standard notation for Bochner spaces $L^p(\Omega,X)$ where~$\Omega\subseteq\mathbb R^n$, $n\in\N_0$, $p\in[1,+\infty]$, and~$X$ is a Banach space.
The spaces~$L^p(\Omega)^m=L^p(\Omega,\R^m)$ will be denoted
by simply~$L^p$ whenever there is no ambiguity neither concerning the domain~$\Omega$ nor the superscript~$m\in\N_0$.

Given an open interval $I\subseteq\mathbb R$, and Banach spaces~$X$ and ~$Y$, we write
$W(I,X,Y):= \{f\in L^2(I,X)\mid \partial_tf\in L^2(I,Y)\}$,
where the derivative $\partial_tf$ is taken in the sense of
distributions. This space is endowed with the natural norm
$|f|_{W(I,X,Y)}:= \bigl(|f|_{L^2(I,X)}^2+|\partial_tf|_{L^2(I,Y)}^2\bigr)^{1/2}$.
The space of continuous linear mappings from $X$ into $Y$ will be denoted by $\mathcal L(X, Y)$. In  case $X=Y$ we
write $\mathcal L(X):=\mathcal L(X, X)$ instead. If the inclusion
$X\subseteq Y$ is continuous, we write $X\xhookrightarrow{} Y$; we write
$X\xhookrightarrow{\rm d} Y$, respectively $X\xhookrightarrow{\rm c} Y$,
if the inclusion is also dense, respectively compact. The kernel and range of a linear mapping $A\colon Z\to W$, between vector spaces~$Z$ and~$W$, will be denoted
${\rm Ker}\,A:=\{x\in Z\mid Ax=0\}$ and~${\rm Ran}\,A:=\{Ax\mid x\in Z\}$, respectively.

$\overline C_{\left[a_1,\dots,a_k\right]}$ denotes a function of nonnegative variables~$a_j$ that
 increases in each of its arguments, and $C,C_i$, $i=1,2,\dots$, stand for
positive constants.

\section{Reduction to stabilization to zero}\label{S:red-stabil}
Let $\hat y(t)$ solves the uncontrolled system. We we want the solution $y(t)$ to go to the reference trajectory $\hat y(t)$ exponentially, thus
it is natural to consider the dynamics of the difference $y-\hat y$.
\subsection{The case of internal controls}
By direct computations, we find that~$z\coloneqq y-\hat y$ solves
\[
  \p_t z- \nu \Delta z + f(y,\nabla y)-f(\hat y,\nabla \hat y) +\sum_{i=1}^M u_i\Phi_i
  = 0,\qquad z\rest\Gamma = 0.
\]
Writing $(\xi^1,\xi^2)\in\R\times\R^d$ we denote $\p_1f\coloneqq \frac{\p f}{\p_{\xi^1}}$ and~$\p_2f\coloneqq \frac{\p f}{\p_{\xi^2}}$. Formally, we can write
\[
\begin{split}
   f(y,\nabla y)-f(\hat y,\nabla \hat y)&\eqqcolon\begin{bmatrix}\p_1f\rest{(\hat y,\nabla \hat y)} & \p_2f\rest{(\hat y,\nabla \hat y)}\end{bmatrix}
   \begin{bmatrix}z\\ \nabla z\end{bmatrix}+F_{\hat y}(z)\\
   &=\hat a z+  \nabla\cdot(\hat b z)
   -\widehat\NN(z).
\end{split}
 \]
with
\begin{equation}\label{abN_lineariz}
 \hat a\coloneqq\p_1f\rest{(\hat y,\nabla \hat y)}-\nabla\cdot\p_2f\rest{(\hat y,\nabla \hat y)},\quad \hat b\coloneqq\p_2f\rest{(\hat y,\nabla \hat y)},
 \quad\mbox{and}\quad \widehat\NN(z)=-F_{\hat y}(z),
\end{equation}
where $\widehat\NN(\cdot):\R\to\R$ is a nonlinear function if so is~$y\mapsto f(y,\nabla y)$.

\medskip\noindent
{\em Rescaling time.} By technical reasons, in order to use available results in the Literature (for the case~$\nu=1$), it is convenient to rewrite the system as
\begin{equation}\label{sys-z-int}
  \p_\tau \breve z- \Delta\breve  z + \textstyle\frac{1}{\nu} \breve{\hat a}\breve  z + \nabla \cdot \left( \textstyle\frac{1}{\nu} \breve{\hat b}\breve z \right)+\sum_{i=1}^M \textstyle\frac{1}{\nu} \breve u_i\Phi_i
  = \textstyle\frac{1}{\nu} \breve{\widehat  \NN}(\breve z);\quad
 \breve z\rest\Gamma = 0,
\end{equation}
which we can do by rescaling time $t=\frac{\tau}{\nu}$ and setting $\breve p(\tau)\coloneqq p(\frac{\tau}{\nu})$, for a function $p$ defined for~$t\ge0$.

\subsection{The case of boundary controls}
As in the internal case, by direct computations we find that~$\breve z(\tau)=(y - \hat y)(\frac{\tau}{\nu})$ solves
\begin{equation}\label{sys-z-bdry}
  \p_\tau \breve z- \Delta\breve  z + \textstyle\frac{1}{\nu} \breve{\hat a}\breve  z + \nabla \cdot \left( \textstyle\frac{1}{\nu} \breve{\hat b}\breve z \right)
   = \textstyle\frac{1}{\nu}\breve{\widehat \NN}(\breve z);\quad
 \breve z\rest\Gamma = \sum_{i=1}^M \breve u_i\Psi_i.
\end{equation}

\subsection{Stabilization to zero}
We can see that our goal~\eqref{goal} is to find the control~$u$, in either system~\eqref{sys-z-int} or system~\eqref{sys-z-bdry}, such that
\[\left| \breve z(\tau)\right|^2_{X}
\le C \mathrm{e}^{-\bar\lambda\tau}  \left| z(0)  \right|^2_{X},\quad\mbox{provided}\quad\norm{z(0)}{X}<\epsilon.
\]
with~$\bar\lambda=\frac{\lambda}{\nu}$, for suitable positive constants~$C=C_{\bar\lambda}$ and~$\epsilon=\epsilon_{\bar\lambda}$.

We will follow a standard procedure. We will start by proving the {\em global} stabilization result for the linearized system, that is, in the case~$\breve{\widehat\NN}=0$. Then,
the {\em local} stabilization result will follow by a fixed point argument.

\section{Internal stabilization of the linearized system}\label{S:lin_int}

We consider a system in the form~\eqref{sys-z-int}, without the nonlinearity.
In order to study such system we start by denoting the Hilbert space~$H\coloneqq L^2 (\Omega, \R)$ which we will consider as a pivot space, $H'=H$.
We also denote $V \coloneqq  H^1_0 (\Omega, \R)$ and $\mathrm{D}( \Delta) \coloneqq  V \cap H^2 (\Omega, \R)$, which are supposed to be endowed with the scalar products
\[
 (v,w)_V\coloneqq(\nabla v,\nabla w)_{L^2 (\Omega, \R^d)}\quad\mbox{and}\quad(v,w)_{\D(\Delta)}\coloneqq(\Delta v,\Delta w)_H,
\]
and corresponding norms~$\norm{v}{V}\coloneqq(v,v)_V^\frac{1}{2}$ and~$\norm{v}{\D(\Delta)}\coloneqq(v,v)_{\D(\Delta)}^\frac{1}{2}$.

Moreover we have the inclusions
\[
\D(\Delta)\xhookrightarrow{\rm d,c} V\xhookrightarrow{\rm d,c} H\xhookrightarrow{\rm d,c} V'\xhookrightarrow{\rm d,c} \D(\Delta)',
\]
the increasing sequence of repeated eigenvalues~$\alpha_i$, $i=1,2,\dots$, of~$-\Delta$ satisfy
\[
0< \alpha_1<\alpha_2\le\alpha_3\le\dots,\qquad \lim_{i\to
+\infty}\alpha_i= +\infty
\]
and we have
\[
\langle v,w\rangle_{V',V} =(v,w)_H,\quad\mbox{for all}\quad (v,w)\in H\times V.
\]

\smallskip\noindent
{\em Boundedness assumption.}
For~$m\in\mathbb N_0$, in order to simplify the writing we denote
\begin{equation}\label{Wwspace}
\begin{array}{rcl}
 \WW^{J}&\coloneqq& L^\infty_w(J,L^d(\Omega,\R)\times L^\infty(\Omega,\R^d))\\
 \WW&\coloneqq&L^\infty_w(\mathbb R_0,L^d(\Omega,\R)\times L^\infty(\Omega,\R^d))
 \end{array}
\end{equation}
where~$J\subseteq(0,+\infty)$ is an open interval.

We also fix~$a$ and~$b$, which may depend on time and space,  and a constant~$C_{\mathcal W}\ge 0$, satisfying
\begin{equation}\label{rhosigma}
\norm{(a,b)}{\mathcal W}\coloneqq\left(\norm{a}{L^\infty(\mathbb R_0,L^d)}^2+\norm{b}{L^\infty_w(\mathbb R_0,L^\infty)}^2\right)^\frac{1}{2}\le C_{\mathcal W}.
\end{equation}

\begin{remark}({\em A technical measurability detail\/}).
The space~$L^\infty_w(\mathbb R_0,L^\infty(\Omega,\,\R))$ is the Bochner-like notation for~$L^\infty(\mathbb R_0\times\Omega,\,\R)$, where the subscript~$w$ stands for
weak measurability.
Bochner spaces are usually defined for strongly measurable functions, see~\cite[Section~2]{Bochner33}. The Bochner space~$L^\infty((a,b),L^\infty(\Omega,\R))$
consisting of strongly measurable functions is strictly contained in
$L^\infty_w(\mathbb R_0,L^\infty(\Omega,\R))= (L^1((a,b),L^1(\Omega,\R)))'$, see~\cite[Example~5.0.10]{Fattorini99}.
This is due to the fact that~$L^\infty(\Omega,\R)$ is not separable.
Recall also~\cite[Theorem~1.1]{Pettis38} for a relation between strong and weak measurability. However, the norm in~$L^\infty_w((a,b),L^\infty(\Omega,\,\R))$
is essentially the usual norm of~$L^\infty((a,b),L^\infty(\Omega,\,\R))$, see~\cite[Lemma~9.1.2 (ii)]{Fattorini99}, \cite[Lemma~4.1.1]{Fattorini05}, and the miscellaneous
notes at the end of Section~4.1 in~\cite{Fattorini05}. Hereafter, we are going to use some arguments
from~\cite{Rod15-cocv,KroRod15,KroRod-ecc15}, whose results should be understood with the subscript~$w$ in~\cite[Equation~(2.2) and Remark~2.15]{Rod15-cocv},
 \cite[Equation~(2.1)]{KroRod15}, and~\cite[Equation~(5)]{KroRod-ecc15}.
\end{remark}

\subsection{Weak solutions}\label{sS:weak-sol}
Let us consider the interval~$I=(s_0,s_1)$ with $0\le s_0 < s_1$, whose length we denote by~$|I|\coloneqq s_1-s_0$.
Here we recall some regularity results for the weak solutions for systems as~\eqref{sys-z-int}. We start considering the more general system
\begin{subequations}\label{sys-z2}
\begin{align}
  &\partial_t z-\Delta z+ a z+\nabla\cdot(bz)+f=0,\label{sys-z2-eq}\\
  &z\rest\Gamma =0, \qquad z(0)=z_0.\label{sys-z2-bcic}
\end{align}
\end{subequations}
where the control is replaced by a general external force.

\begin{lemma}\label{L:estRC}
We have, for $z\in V$
\begin{align*}
 \langle az,z\rangle_{V',V}&\le C\norm{a}{L^d}\norm{z}{H}\norm{z}{V},
 \qquad \norm{az}{V'}\le C\norm{a}{L^d}\norm{z}{H}^\frac{1}{2}\norm{z}{V}^\frac{1}{2},&&\mbox{ for}\quad d\in\{1,2\}.\\
 \langle az,z\rangle_{V',V}&\le C\norm{a}{L^d}\norm{z}{H}\norm{z}{V},
 \qquad \norm{az}{V'}\le C\norm{a}{L^d}\norm{z}{H},&&\mbox{ for}\quad d\ge3.\\
 \langle \nabla\cdot(bz),z\rangle_{V',V}&\le C\norm{b}{L^\infty}\norm{z}{H}\norm{z}{V},
 \qquad \norm{\nabla\cdot(bz)}{V'}\le C\norm{b}{L^\infty}\norm{z}{H},&&\mbox{ for}\quad d\ge1.
 \end{align*}
for a suitable constant~$C\ge0$, depending only on~$(\Omega,d)$.
\end{lemma}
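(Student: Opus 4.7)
The plan is to obtain every estimate from the basic template: write the duality pairing as an integral, apply H\"older's inequality to peel off the coefficient ($a$ or $b$), and then control the remaining factor by a Sobolev embedding combined, where needed, with Gagliardo--Nirenberg (Ladyzhenskaya in $d=2$, Agmon in $d=1$) interpolation between $H$ and $V$. The bounds involving $b$ will additionally use integration by parts, which is legal because $z\in V=H^1_0(\Omega)$ and any test function $\phi\in V$ vanishes on $\Gamma$, so no boundary terms appear.

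For the pairing $\langle az,z\rangle_{V',V}=\int_\Omega az^2\,\ed x$ I would write, by H\"older,
\[
\left|\int_\Omega az^2\,\ed x\right|\le\norm{a}{L^d}\norm{z}{L^{2d/(d-1)}}^2
\]
(with $L^\infty$ in place of $L^{2d/(d-1)}$ when $d=1$). The factor $\norm{z}{L^{2d/(d-1)}}^2$ is then handled case by case: for $d\ge3$, interpolate at exponent $\theta=\tfrac12$ between $L^2$ and the critical Sobolev space $L^{2d/(d-2)}\supseteq V$, which yields $\norm{z}{L^{2d/(d-1)}}^2\le C\norm{z}{H}\norm{z}{V}$; for $d=2$ the same bound is Ladyzhenskaya's inequality $\norm{z}{L^4}^2\le C\norm{z}{H}\norm{z}{V}$; for $d=1$ it is Agmon's inequality $\norm{z}{L^\infty}^2\le C\norm{z}{H}\norm{z}{V}$ for $H^1_0$-functions. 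For the dual-norm bounds on $\norm{az}{V'}$, I would dualize, $\norm{az}{V'}=\sup_{\norm{\phi}{V}=1}|\int_\Omega az\phi\,\ed x|$, then apply H\"older to split $az\phi$. When $d\ge3$ the natural split puts $z$ in $L^2$ and $\phi$ in the critical $L^{2d/(d-2)}$, giving $\norm{az}{V'}\le C\norm{a}{L^d}\norm{z}{H}$ directly by Sobolev. When $d\in\{1,2\}$ the critical embedding is absent/non-useful, and I would instead give $\phi$ a less demanding norm ($L^\infty$ for $d=1$, $L^4$ for $d=2$) controlled by $\norm{\phi}{V}$ via Sobolev, while paying for one additional $\tfrac12$-power on $z$ using the same Agmon/Ladyzhenskaya inequalities, which produces the factor $\norm{z}{H}^{1/2}\norm{z}{V}^{1/2}$.

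For $\nabla\cdot(bz)$ the estimates are uniform in $d$ and cleaner. Using that $z,\phi\in H^1_0$, I would integrate by parts,
\[
\langle\nabla\cdot(bz),\phi\rangle_{V',V}=-\int_\Omega (bz)\cdot\nabla\phi\,\ed x,
\]
and then apply Cauchy--Schwarz with the $L^\infty$-bound on $b$, giving $|\langle\nabla\cdot(bz),\phi\rangle|\le\norm{b}{L^\infty}\norm{z}{H}\norm{\phi}{V}$; specializing $\phi=z$ yields the first inequality, and taking the supremum over $\phi$ with $\norm{\phi}{V}=1$ yields the second.

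The work here is essentially bookkeeping — no real obstacle exists, since each inequality reduces to combining H\"older with a standard Sobolev or interpolation embedding. The only mildly delicate point is that the exponent $2d/(d-1)$ is exactly the subcritical exponent for which the interpolation between $L^2$ and $L^{2d/(d-2)}$ uses the symmetric weight $\theta=\tfrac12$; this is what makes the splits $\norm{z}{H}\norm{z}{V}$ and $\norm{z}{H}^{1/2}\norm{z}{V}^{1/2}$ come out symmetric, and it explains why the case $d\ge3$ is simpler (the critical embedding is available for $\phi$) while $d\in\{1,2\}$ requires an interpolation inequality on $z$ itself. All constants depend only on $\Omega$ and $d$ through the Sobolev constants.
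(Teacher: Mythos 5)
Your proposal is correct and follows essentially the same route as the paper: H\"older's inequality to isolate the coefficient, the Sobolev embedding $V\hookrightarrow L^{2d/(d-2)}$ for $d\ge3$, Agmon/Ladyzhenskaya-type interpolation $\norm{z}{L^{2d/(d-1)}}^2\le C\norm{z}{H}\norm{z}{V}$ for $d\in\{1,2\}$, and integration by parts plus Cauchy--Schwarz for the convection term. The only (cosmetic) differences are that the paper bounds the $d\ge3$ pairing by the asymmetric split $\norm{z}{L^{2d/(d-2)}}\norm{z}{L^2}$ rather than interpolating at the symmetric exponent, and treats $z$ and the test function symmetrically with half-powers in the low-dimensional cases, which is equivalent to your argument up to Poincar\'e's inequality.
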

\begin{proof}
Concerning the reaction term, in the case~$d=1$, and $\Omega=(l,r)$ with $l<r$, from $\frac{\ed}{\ed x}\norm{z}{\R}^2=2z\frac{\ed}{\ed x}z$ we can see that for $z\in V$ and $s\in\Omega$, since~$z(l)=z(r)=0$,
\[
\norm{z(s)}{\R}^2=\norm{z(s)}{\R}^2-\norm{z(l)}{\R}^2=2\int_l^sz(\tau)\textstyle\frac{\ed}{\ed x}z(\tau)\,\ed\tau
\le 2\norm{z}{L^2(\Omega,\R)}\norm{\frac{\ed}{\ed x}z}{L^2(\Omega,\R)}.
\]
That is, we have the Agmon inequality
\begin{equation}\label{Agmon_d=1}
 \norm{z}{L^\infty}\le2^\frac{1}{2}\norm{z}{H}^\frac{1}{2}\norm{z}{V}^\frac{1}{2},\qquad \mbox{for}\quad d=1.
\end{equation}
Therefore, it follows that
\[
\langle az,w\rangle_{V',V}\le \norm{a}{L^1}\norm{z}{L^\infty}\norm{w}{L^\infty}
\le 2\norm{a}{L^1}\norm{z}{H}^\frac{1}{2}\norm{z}{V}^\frac{1}{2}\norm{w}{H}^\frac{1}{2}\norm{w}{V}^\frac{1}{2}.
\]

For the cases $d\ge2$, we will use suitable Sobolev embeddings (cf.\cite[Corollary~4.53]{DemengelDem12}).
 For $d=2$, from $H^\frac{1}{2}(\Omega,\R)\xhookrightarrow{}L^4(\Omega,\R)$ and an interpolation argument (cf.~\cite[chapter~1, section~9.1]{LioMag72-I}), we find
\[\langle az,w\rangle_{V',V}\le \norm{a}{L^2}\norm{z}{L^4}\norm{w}{L^4}
\le C_2\norm{a}{L^2}\norm{z}{H}^\frac{1}{2}\norm{z}{V}^\frac{1}{2}\norm{w}{H}^\frac{1}{2}\norm{w}{V}^\frac{1}{2}.\]
 For $d\ge3$, from $H^1(\Omega,\R)\xhookrightarrow{}L^\frac{2d}{d-2}(\Omega,\R)$, we find
\[\langle az,z\rangle_{V',V}\le \norm{a}{L^d}\norm{z}{L^\frac{2d}{d-2}}\norm{z}{L^2}\le C_3\norm{a}{L^d}\norm{z}{V}\norm{z}{H}.\]

Finally, writing
$\langle \nabla\cdot(bz),z\rangle_{V',V}=(bz,\nabla z)_{L^2 (\Omega, \R^d)},$ it will follow the estimates for the convection term.
\end{proof}

\begin{lemma}\label{L:weak-z}
 Given $f\in L^2(I,V')$ and $z_0\in H$, there is a weak solution
 $z\in W(I,V,V')$ for~\eqref{sys-z2}. Moreover~$z$ is unique and depends continuously on the data:
 \[
 \norm{z}{W(I,V,V')}^2\le \overline C_{\left[|I|,C_{\mathcal W}\right]}
\left(\norm{z(s_0)}{H}^2+\norm{f}{L^2(I,V')}^2\right).
 \]
\end{lemma}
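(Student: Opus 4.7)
The plan is to establish existence by a standard Faedo--Galerkin approximation, and to derive uniqueness and continuous dependence simultaneously from a single energy estimate which also furnishes the required bound on $\p_tz$.

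First I would fix the orthonormal basis of~$H$ consisting of the eigenfunctions~$e_i$ of~$-\Delta$ associated with the eigenvalues~$\alpha_i$, and look for approximate solutions $z_n(t)=\sum_{i=1}^n c_i^n(t)e_i$ satisfying the finite-dimensional variational identity obtained by testing~\eqref{sys-z2-eq} against each $e_j$, $1\le j\le n$, with $z_n(s_0)=P_n z_0$ (the $H$-orthogonal projection of~$z_0$ onto $\linspan\{e_1,\dots,e_n\}$). Since the coefficients of the resulting linear ODE system depend only measurably on time and are locally bounded (the reaction and convection terms being bounded linear operators on the finite-dimensional space $\linspan\{e_1,\dots,e_n\}$), Carath\'eodory's theory yields a unique global solution $c^n\in W^{1,2}(I,\R^n)$.

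Next I would test the equation with $z_n$ itself to obtain, for a.e.\ $t\in I$,
\begin{equation*}
\textstyle\frac12\frac{\ed}{\ed t}\norm{z_n}{H}^2+\norm{z_n}{V}^2
=-\langle az_n,z_n\rangle_{V',V}-\langle\nabla\cdot(bz_n),z_n\rangle_{V',V}-\langle f,z_n\rangle_{V',V}.
\end{equation*}
Applying Lemma~\ref{L:estRC} to the two bilinear forms (both are of the form $C\norm{(a,b)}{\WW}\norm{z_n}{H}\norm{z_n}{V}$) together with Young's inequality $xy\le\tfrac14x^2+y^2$ to absorb a factor $\tfrac12\norm{z_n}{V}^2$ into the left-hand side, I obtain a differential inequality of the form
\begin{equation*}
\textstyle\frac{\ed}{\ed t}\norm{z_n}{H}^2+\norm{z_n}{V}^2\le \overline C_{[C_{\WW}]}\norm{z_n}{H}^2+\norm{f}{V'}^2.
\end{equation*}
Gronwall's lemma then gives a uniform bound on $\norm{z_n}{L^\infty(I,H)}^2+\norm{z_n}{L^2(I,V)}^2$ of the form announced in the lemma, depending only on~$|I|$, $C_\WW$, $\norm{z_0}{H}$, and $\norm{f}{L^2(I,V')}$. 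To control $\p_tz_n$ in $L^2(I,V')$, I would read the evolution equation as $\p_tz_n=\Delta z_n-az_n-\nabla\cdot(bz_n)-f$ (projected onto the Galerkin subspace, which is harmless after testing against elements of $V$), use $\norm{\Delta z_n}{V'}\le\norm{z_n}{V}$, and apply the second line of each estimate in Lemma~\ref{L:estRC} (treating $d\in\{1,2\}$ and $d\ge3$ separately, but in both cases the square-integrable-in-time bound follows from the already-established bounds on~$z_n$ in $L^\infty(I,H)\cap L^2(I,V)$).

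Armed with these uniform estimates, I would extract along a subsequence $z_n\rightharpoonup z$ weakly in $L^2(I,V)$, $\p_tz_n\rightharpoonup\p_tz$ weakly in $L^2(I,V')$, and $z_n\to z$ strongly in $L^2(I,H)$ by the Aubin--Lions lemma, which suffices to pass to the limit in each term (the linearity in~$z$ of the reaction, convection, and diffusion terms keeps this step routine; strong $L^2(I,H)$ convergence handles $az_n$ and $\nabla\cdot(bz_n)$ in $L^2(I,V')$ via Lemma~\ref{L:estRC}). The standard embedding $W(I,V,V')\xhookrightarrow{}C(\overline I,H)$ yields $z(s_0)=z_0$. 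For uniqueness and continuous dependence, since the equation is linear it suffices to repeat the energy estimate on the difference of two solutions (or on the solution itself for continuous dependence), which gives exactly the inequality stated in the lemma. I do not foresee a real obstacle; the only point requiring care is the low integrability $a\in L^\infty(\R_0,L^d)$ in dimensions $d\in\{1,2\}$, but this is precisely what Lemma~\ref{L:estRC} has been tailored for, and the half-powers $\norm{z}{H}^{1/2}\norm{z}{V}^{1/2}$ appearing in the $V'$-bound on $az$ integrate in time thanks to the already available $L^\infty(I,H)\cap L^2(I,V)$ control.
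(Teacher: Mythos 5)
Your proposal is correct and follows essentially the same route as the paper: a Galerkin scheme based on the eigenfunctions of $-\Delta$, the energy identity obtained by testing with $z_n$, the estimates of Lemma~\ref{L:estRC} combined with Young and Gronwall to get the $L^\infty(I,H)\cap L^2(I,V)$ bound, a direct reading of the equation to control $\p_t z$ in $L^2(I,V')$, and uniqueness from the same energy inequality applied to the difference of two solutions. The paper merely compresses the compactness/passage-to-the-limit step into a citation of the standard references, which your Aubin--Lions argument fills in correctly.
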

The procedure is well known, yet we will recall some steps of the proof since some estimates from the proof will be used later on.

Weak solutions for system~\eqref{sys-z2} are understood in the variational
sense. We will restrict ourselves to the derivation of some a priori (like)
estimates. In fact those estimates
will also hold for
Galerkin approximations of the system, for example using a basis of
eigenfunctions of the Laplace operator~$\Delta$, thus the estimates can be used
to precisely
derive the existence of weak solutions. See~\cite[Chapter~1, Section~6]{Lions69}, \cite[Chapter~1, Section~3]{Temam95}, and~\cite[Chapter~3, Sections~1.3, 1.4, and~3.2]{Temam01}
for more details on the procedure.

We start with the following auxiliary result.
\begin{corollary}
 We have, for $z\in V$
 \begin{align*}
 2\langle az+\nabla\cdot(bz),z\rangle_{V',V}&\le D_{\rm rc,1}\norm{(a,b)}{\WW}^2 \norm{z}{H}^2 + \textstyle\frac{1}{2}\norm{z}{V}^2,\\
 2\langle az+\nabla\cdot(bz),z\rangle_{V',V}&\le D_{\rm rc} \norm{(a,b)}{\WW}^2\norm{z}{H}^2 + \textstyle\frac{3}{2}\norm{z}{V}^2.
  \end{align*}
  for suitable constants~$D_{\rm rc}\le D_{\rm rc,1}$ depending only on~$(\Omega,d)$.
\end{corollary}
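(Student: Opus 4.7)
Both inequalities will follow by combining Lemma~\ref{L:estRC} with Young's inequality, applied with two different absorption parameters in order to produce the two distinct trade-offs between the coefficients of $\norm{z}{H}^2$ and $\norm{z}{V}^2$.

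First I would repackage Lemma~\ref{L:estRC} uniformly. Its three cases together yield, for every $d\ge 1$ and every $z\in V$, an estimate of the form
\[
  \langle az+\nabla\cdot(bz),z\rangle_{V',V}\le C\bigl(\norm{a}{L^d}+\norm{b}{L^\infty}\bigr)\norm{z}{H}\norm{z}{V},
\]
with a constant $C=C(\Omega,d)$: the reaction part uses the Agmon-type bound in $d=1$, an interpolation from $H^{1/2}\hookrightarrow L^4$ in $d=2$, and the Sobolev embedding $V\hookrightarrow L^{2d/(d-2)}$ in $d\ge 3$, while the convection part is handled in every dimension by $\langle\nabla\cdot(bz),z\rangle_{V',V}=(bz,\nabla z)_{L^2}$ followed by H\"older. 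From the definition~\eqref{Wwspace} of $\WW$ one has, pointwise in time, the elementary bound $(\norm{a}{L^d}+\norm{b}{L^\infty})^2\le 2\norm{(a,b)}{\WW}^2$, which is the mechanism by which the $\WW$-norm enters the right-hand side.

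Then I would apply Young's inequality in the form $2XY\le\epsilon X^2+\epsilon^{-1}Y^2$ with $X=\norm{z}{V}$ and $Y=C(\norm{a}{L^d}+\norm{b}{L^\infty})\norm{z}{H}$ to obtain the ``master'' bound
\[
  2\langle az+\nabla\cdot(bz),z\rangle_{V',V}\le \epsilon\norm{z}{V}^2+2\epsilon^{-1}C^2\norm{(a,b)}{\WW}^2\norm{z}{H}^2.
\]
Choosing $\epsilon=\tfrac{1}{2}$ gives the first inequality with (for instance) $D_{\rm rc,1}=4C^2$, while $\epsilon=\tfrac{3}{2}$ gives the second with $D_{\rm rc}=\tfrac{4}{3}C^2$; the claimed relation $D_{\rm rc}\le D_{\rm rc,1}$ is then automatic.

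There is no substantive obstacle here: the argument is a two-parameter bookkeeping of Lemma~\ref{L:estRC}. The only reason to keep both constants on record is that the two variants serve different purposes in what follows: the first, with the smaller coefficient $\tfrac{1}{2}\norm{z}{V}^2$, is what is needed to absorb the drift into the $-\Delta$ dissipation in the energy estimate used to prove Lemma~\ref{L:weak-z}, whereas the second, with the smaller constant in front of $\norm{(a,b)}{\WW}^2\norm{z}{H}^2$, will be convenient later when the gradient term has to accommodate additional $V$-norm contributions arising from the forcing or the feedback.
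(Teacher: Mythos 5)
Your proposal is correct and follows essentially the same route as the paper: Lemma~\ref{L:estRC} gives $2\langle az+\nabla\cdot(bz),z\rangle_{V',V}\le 2C(\norm{a}{L^d}+\norm{b}{L^\infty})\norm{z}{H}\norm{z}{V}$, and the paper then applies Young's inequality with a free parameter $\alpha$ together with $(x+y)^2\le 2(x^2+y^2)$, obtaining the two stated bounds by two choices of $\alpha$, exactly as you do with your parameter $\epsilon$. The only cosmetic difference is where the factor of $2$ from the sum-of-squares step is placed, which just relabels the constants.
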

 \begin{proof}
 From Lemma~\ref{L:estRC}, and for any $\alpha>0$, we have $2\langle az+\nabla\cdot(bz),z\rangle_{V',V}
 \le 2C(\norm{a}{L^d}+\norm{b}{L^\infty})\norm{z}{H}\norm{z}{V}
 \le \frac{1}{\alpha}C^2(\norm{a}{L^d}+\norm{b}{L^\infty})^2\norm{z}{H}^2 +\alpha\norm{z}{V}^2
 \le\frac{2}{\alpha}C^2(\norm{a}{L^d}^2+\norm{b}{L^\infty}^2)\norm{z}{H}^2 +2\alpha\norm{z}{V}^2.$
 \end{proof}

\begin{proof}[Proof of Lemma~\ref{L:weak-z}]
Multiplying~\eqref{sys-z2-eq} by~$2z$, formally we find
\[
\begin{split}
 \frac{\mathrm d}{\mathrm d t}\norm{z}{H}^2+2\norm{z}{V}^2=2\langle az+\nabla\cdot(bz),z\rangle_{V',V}+2\langle f,z\rangle_{V',V},
 \end{split}
\]
from which, by appropriate Young inequalities, we can obtain
\begin{subequations}
\begin{align}
 \frac{\mathrm d}{\mathrm d t}\norm{z}{H}^2+\norm{z}{V}^2&\le D_{\rm rc,1}\norm{(a,b)}{\WW}^2\norm{z}{H}^2
 +2\norm{f}{V'}^2;\label{zwYoung}\\
 \frac{\mathrm d}{\mathrm d t}\norm{z}{H}^2&\le D_{\rm rc}\norm{(a,b)}{\WW}^2\norm{z}{H}^2
 +2\norm{f}{V'}^2.\label{zwYoung-H}
\end{align}
\end{subequations}
By~\eqref{zwYoung-H} and the Gronwall inequality, it follows that for all $s\in I$
\begin{equation}\label{zwGron}
 \norm{z(s)}{H}^2\le \ex^{D_{\rm rc}\norm{(a,b)}{\WW}^2(s-s_0)}\left(\norm{z(s_0)}{H}^2+2\norm{f}{L^2(I,V')}^2\right),
\end{equation}
and integrating~\eqref{zwYoung},
\begin{equation}\label{zwYoungi}
 \norm{z(s)}{H}^2+\norm{z}{L^2((s_0,s),V)}^2\le \norm{z(s_0)}{H}^2
 +D_{\rm rc,1}\norm{(a,b)}{\WW}^2\norm{z}{L^2(I,H)}^2+2\norm{f}{L^2(I,V')}^2.
\end{equation}
From~\eqref{sys-z2-eq} and Lemma~\ref{L:estRC} 
we can also derive
\[
 \norm{\partial_t z}{L^2(I,V')}\le \norm{z}{L^2(I,V)}+ \overline C_{[C_\WW]}\norm{ z}{L^2(I,H)}^2+\norm{f}{L^2(I,V')},
\]
from which, using~\eqref{zwGron} and~\eqref{zwYoungi}  we can conclude that
\[
\norm{z}{W(I,V,V')}^2\le \overline C_{\left[s_1-s_0,C_\WW\right]}
\left(\norm{z(s_0)}{H}^2+\norm{f}{L^2(I,V')}^2\right).
\]
Finally the uniqueness of~$z$, follows from the fact that if~$\tilde{z}$ is another weak solution, then $ e=z-\tilde{z}$,
solves~\eqref{sys-z2} with $ e (s_0)=0$ and $f=0$. From~\eqref{zwGron} it will follow that
$\norm{ e (s)}{H}=0$
for all~$s\in I$.
\quad\end{proof}

\subsection{Null controllability}\label{sS:nullcont}
Here we recall the relation between null controllability of system~\eqref{sys-z2} and a suitable observability inequality for the adjoint
system.

Consider, in the bounded cylinder $I\times\Omega$, $I=(s_0,s_1)$, the controlled system
\begin{subequations}\label{sys-z2c}
\begin{align}
  &\partial_t z-\Delta z+ a z+\nabla\cdot (bz)+B\eta=0,\label{sys-y2c-eq}\\
  &z\rest \Gamma =0, \qquad z(s_0)=z_0,\label{sys-y2c-bcic}
\end{align}
\end{subequations}
where now our control is a function $\eta\in L^2(I,H)$ and $B\in\mathcal L(H)$ with adjoint denoted by $B^*$. Let us also consider in $I\times\Omega$ the adjoint system
\begin{subequations}\label{sys-q}
\begin{align}
  &-\partial_t q-\Delta q+ a q-b\cdot\nabla q=0,\label{sys-q-eq}\\
  &q\rest \Gamma =0, \qquad q(s_1)=q_1\in H,\label{sys-q-bcic}
\end{align}
\end{subequations}
and let $z(z_0,\eta)(t)\coloneqq z(t)$ and~$q(q_1)(t)\coloneqq q(t)$ denote the solutions of~\eqref{sys-z2c} and~\eqref{sys-q}, for given data~$(z_0,u)$
and~$q_1$, respectively.
Notice that, proceeding as in section~\ref{sS:weak-sol}, we can prove the existence of weak solutions~$q\in W(I,V,V')$
for system~\eqref{sys-q}.

\begin{definition}
(i) We say that~\eqref{sys-z2c} is null controllable in $I$ if there exists a
 family~$\{\eta(z_0)\mid z_0\in H\}\subset L^2(I,H)$ such that $z(z_0,\eta(z_0))(s_1)=0$, for
 $z_0\in H$.\\
 (ii) We say that~\eqref{sys-q} is $B^*$-observable in $I$ if there exists a constant $C_{\rm obs}>0$ such that
 for all $q_1\in H$ we have that the corresponding weak solution $q$ satisfies the inequality
 \begin{equation}\label{obs-in-B*}
 \norm{q(q_1)(s_0)}{H}\le C_{\rm obs}\norm{B^*q(q_1)}{L^2(I,H)}.
 \end{equation}
\end{definition}
The constant~$C_{\rm obs}$ in~\eqref{obs-in-B*} depends, in general, on~$\Omega$, $\omega$, $I$, $B$, and also on the coefficient functions~$a$ and~$b$.

The following lemma, can be proven by a standard procedure. For details, we refer to~\cite[Section~2]{Amm-KBenG-BT11}, \cite[Chapter~2]{Coron07}.

\begin{lemma}\label{L:ex0cont}
System~\eqref{sys-q} is $B^*$-observable in $I$ if, and only if,
system~\eqref{sys-z2c} is
null controllable in $I$ and the family of controls~$\{\eta(z_0)\mid z_0\in H\}$ can be chosen as a bounded linear function of~$z_0$:
\begin{align*}
\norm{\eta(z_0)}{L^2(I,H)}\le C_{\rm obs}\norm{z_0}{H}, \text{ where } C_{\rm obs} \text{ is as in }~\eqref{obs-in-B*}.
\end{align*}
\end{lemma}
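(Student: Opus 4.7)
The plan is to follow the standard Hilbert Uniqueness Method (HUM) duality argument (see \cite[Chapter~2]{Coron07}). The key identity to establish first is the duality pairing
\[
(z(z_0,\eta)(s_1),q_1)_H-(z_0,q(q_1)(s_0))_H=-\int_I(\eta(t),B^*q(q_1)(t))_H\,\ed t,
\]
valid for every $z_0,q_1\in H$ and every $\eta\in L^2(I,H)$, which is obtained by multiplying~\eqref{sys-y2c-eq} by~$q(q_1)$, integrating in $t\in I$ and $x\in\Omega$, and integrating by parts using~\eqref{sys-q-eq} and the homogeneous boundary conditions. Both boundary terms at $t=s_0$ and $t=s_1$ are meaningful because $z,q\in W(I,V,V')\hookrightarrow C(\overline I,H)$ by Section~\ref{sS:weak-sol}.

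For the easier direction ($\Leftarrow$), assume we have a linear bounded $z_0\mapsto\eta(z_0)$ with $\norm{\eta(z_0)}{L^2(I,H)}\le C_{\rm obs}\norm{z_0}{H}$ and $z(z_0,\eta(z_0))(s_1)=0$. The duality identity then reads $(z_0,q(q_1)(s_0))_H=\int_I(\eta(z_0),B^*q(q_1))_H\,\ed t$. Choosing the particular initial datum $z_0=q(q_1)(s_0)$ and applying Cauchy--Schwarz gives
\[
\norm{q(q_1)(s_0)}{H}^2\le \norm{\eta(q(q_1)(s_0))}{L^2(I,H)}\norm{B^*q(q_1)}{L^2(I,H)}\le C_{\rm obs}\norm{q(q_1)(s_0)}{H}\norm{B^*q(q_1)}{L^2(I,H)},
\]
and dividing by $\norm{q(q_1)(s_0)}{H}$ (trivial otherwise) yields~\eqref{obs-in-B*} with the same constant $C_{\rm obs}$.

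For the harder direction ($\Rightarrow$), assume the observability inequality. For each fixed $z_0\in H$ consider the functional on $H$
\[
J_{z_0}(q_1)\coloneqq\tfrac{1}{2}\norm{B^*q(q_1)}{L^2(I,H)}^2+(z_0,q(q_1)(s_0))_H.
\]
It is convex, continuous, and by~\eqref{obs-in-B*} it satisfies
\[
J_{z_0}(q_1)\ge\tfrac{1}{2}\norm{B^*q(q_1)}{L^2(I,H)}^2-C_{\rm obs}\norm{z_0}{H}\norm{B^*q(q_1)}{L^2(I,H)},
\]
so it is coercive in the seminorm $\norm{q_1}{F}\coloneqq\norm{B^*q(q_1)}{L^2(I,H)}$, which by observability is in fact a norm on $H$. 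Passing, if necessary, to the Hilbert space completion $F$ of $(H,\norm{\Bigcdot}{F})$ (to which $q_1\mapsto q(q_1)(s_0)$ extends continuously thanks again to~\eqref{obs-in-B*}), $J_{z_0}$ admits a unique minimizer $\hat q_1=\hat q_1(z_0)\in F$. The Euler--Lagrange equation $\D J_{z_0}(\hat q_1)=0$ reads
\[
(B^*q(\hat q_1),B^*q(q_1))_{L^2(I,H)}+(z_0,q(q_1)(s_0))_H=0\qquad\text{for all }q_1\in F.
\]
Setting $\eta(z_0)\coloneqq B^*q(\hat q_1(z_0))$ and plugging into the duality identity gives $(z(z_0,\eta(z_0))(s_1),q_1)_H=0$ for all $q_1\in H$, i.e. $z(z_0,\eta(z_0))(s_1)=0$. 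Uniqueness of the minimizer makes $z_0\mapsto\hat q_1(z_0)$ linear, hence $z_0\mapsto\eta(z_0)$ linear; testing the Euler equation against $q_1=\hat q_1$ and using Cauchy--Schwarz together with~\eqref{obs-in-B*} yields the bound $\norm{\eta(z_0)}{L^2(I,H)}\le C_{\rm obs}\norm{z_0}{H}$.

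The main technical issue I expect is handling the possibility that the map $q_1\mapsto B^*q(q_1)$ is only a seminorm on $H$, requiring the completion argument and a careful verification that the minimizer $\hat q_1$, although only defined as an element of $F$, still produces a well-defined control $\eta(z_0)\in L^2(I,H)$ for which the duality identity remains valid by density. All other steps are routine once the duality identity and the observability-based coercivity are in hand.
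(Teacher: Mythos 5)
Your proposal follows the standard HUM duality argument, which is precisely what the paper relies on: the paper gives no proof of Lemma~\ref{L:ex0cont} and simply refers to \cite[Section~2]{Amm-KBenG-BT11} and \cite[Chapter~2]{Coron07}. The duality identity, the direction ``null controllable $\Rightarrow$ observable'', and the variational construction (including the care taken with the completion~$F$) are all sound in outline.

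There is, however, one concrete sign error that makes the final verification fail as written. Since the control enters~\eqref{sys-y2c-eq} as $+B\eta$ on the left-hand side, your duality identity reads $(z(s_1),q_1)_H-(z_0,q(q_1)(s_0))_H=-\int_I(\eta,B^*q(q_1))_H\,\ed t$, so the requirement $z(s_1)=0$ is equivalent to $\int_I(\eta,B^*q(q_1))_H\,\ed t=+(z_0,q(q_1)(s_0))_H$ for all $q_1$. But with your functional $J_{z_0}(q_1)=\tfrac12\norm{B^*q(q_1)}{L^2(I,H)}^2+(z_0,q(q_1)(s_0))_H$ the Euler--Lagrange equation gives $\int_I(B^*q(\hat q_1),B^*q(q_1))_H\,\ed t=-(z_0,q(q_1)(s_0))_H$, so the choice $\eta(z_0)=+B^*q(\hat q_1(z_0))$ yields $(z(s_1),q_1)_H=2(z_0,q(q_1)(s_0))_H$, not $0$. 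Either take $\eta(z_0)=-B^*q(\hat q_1(z_0))$ or put a minus sign in front of the linear term of $J_{z_0}$; the linearity of $z_0\mapsto\eta(z_0)$ and the bound $\norm{\eta(z_0)}{L^2(I,H)}\le C_{\rm obs}\norm{z_0}{H}$ are unaffected. A second, minor point: the observability inequality~\eqref{obs-in-B*} only shows that $\norm{B^*q(q_1)}{L^2(I,H)}=0$ implies $q(q_1)(s_0)=0$; concluding $q_1=0$ (i.e.\ that you have a genuine norm on~$H$) requires backward uniqueness for~\eqref{sys-q}. Your completion argument makes this irrelevant, so the parenthetical claim that the seminorm ``is in fact a norm on $H$'' should either be justified by backward uniqueness or simply dropped.
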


\medskip\noindent
{\em Controls supported in a subset.}
Given an open subset $\omega\subseteq\Omega$, then from~\cite[Theorem~2.1]{DuyckZhangZua08} and~\cite[Theorem~2.3]{DouFCaraGBurgZuazua02}
(e.g., reversing time in system~\eqref{sys-q}) we have that there exists
a constant $ C_{\omega,\Omega}>0$, depending on~$\omega$ and~$\Omega$, such that
the weak solution~$q$ for~\eqref{sys-q} satisfies
\begin{equation*}
\norm{q(0)}{H}^2\le \ex^{C_{\omega,\Omega}\Theta\left(|I|,\norm{a}{L^\infty(I,L^d)}, \norm{b}{L^\infty_{w}(I,L^\infty)},d\right)}
\norm{q}{L^2(I,L^2(\omega,\R)}^2.
\end{equation*}
with
\begin{equation}\label{const_obs1}
\Theta(r,\theta_1,\theta_2,d)\coloneqq 1+\theta_1^2+d\theta_2^{2}
+\frac{1}{r}+r\left(\theta_1+d\theta_2^{2}\right),
\end{equation}
for~$r>0,$ $\theta_1\ge0\le\theta_2$, and~$d\in\N_0$.

Therefore, in the case we take $B=1_\omega\in\mathcal L(H)$ with
\[
1_\omega \eta(x)\coloneqq\left\{\begin{array}{ll} \eta(x),&\mbox{ if }x\in\omega\\
0,&\mbox{ if }x\in\Omega\setminus\overline{\omega}\end{array}\right.,
\]
then we  have $B^*=B$ and $\norm{q}{L^2(I,L^2(\omega,\R))}^2=\norm{B^*q}{L^2(I,H)}^2$, and we can conclude that~\eqref{obs-in-B*}
holds with~$
C_{\rm obs}=\ex^{C_{\omega,\Omega}\Theta\left(|I|,\norm{a}{L^\infty(I,L^d)},\norm{b}{L^\infty_w(I,L^\infty)},d\right)}$.
Therefore we have the following.
\begin{theorem}\label{T:exact.contB=1}
Let $B=1_\omega$ and let $I=(s_0, s_1)$ be arbitrary, then, there exists a
family $\{\eta(z_0)\mid z_0\in H\}\subseteq L^2(I,H)$ such that the solutions
$z(z_0,\eta(z_0))$ to \eqref{sys-z2c} satisfy
$z(z_0,\eta(z_0))(s_1)=0$ and, for a constant $\widehat C=C(\omega,\Omega)$, we have that
\begin{align*}
\norm{\eta(z_0)}{L^2(I,H)}\le\ex^{\widehat C\Theta}\norm{z_0}{H},
\end{align*}
with $\Theta=\Theta\left(|I|,\norm{a}{L^\infty(\R_0,L^d)},\norm{b}{L^\infty_w(\R_0,L^\infty)},d\right)$ given by~\eqref{const_obs1}.
\end{theorem}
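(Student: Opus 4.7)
The plan is to deduce Theorem~\ref{T:exact.contB=1} directly from Lemma~\ref{L:ex0cont} applied to the observability estimate recalled in the paragraph immediately preceding the statement; essentially the only checks needed are the identification of~$B^*$ and an elementary monotonicity. Since $1_\omega\colon H\to H$ is multiplication by the real characteristic function~$\chi_\omega$, it is self-adjoint, so $B^*=B=1_\omega$ and $\norm{B^*q}{L^2(I,H)}^2=\norm{q}{L^2(I,L^2(\omega,\R))}^2$ for every weak solution~$q$ of~\eqref{sys-q}. Taking square roots in the observability inequality quoted from~\cite{DuyckZhangZua08,DouFCaraGBurgZuazua02} then yields precisely the $B^*$-observability inequality~\eqref{obs-in-B*} with
\[
C_{\rm obs}=\ex^{\frac{1}{2}C_{\omega,\Omega}\Theta\left(|I|,\norm{a}{L^\infty(I,L^d)},\norm{b}{L^\infty_w(I,L^\infty)},d\right)};
\]
the measurability subtlety for the convection coefficient is absorbed into the $L^\infty_w$ convention highlighted in the preceding remark.

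Lemma~\ref{L:ex0cont} will then furnish a linear map $z_0\mapsto\eta(z_0)$ driving~\eqref{sys-z2c} to zero at time~$s_1$ and satisfying $\norm{\eta(z_0)}{L^2(I,H)}\le C_{\rm obs}\norm{z_0}{H}$. To match the norms over~$\R_0$ that appear in the statement I will observe that, for fixed~$r$ and~$d$, the function $\Theta(r,\theta_1,\theta_2,d)$ defined in~\eqref{const_obs1} is increasing in~$\theta_1$ and in~$\theta_2$ (indeed $\p_{\theta_1}\Theta=2\theta_1+r\ge0$ and $\p_{\theta_2}\Theta=2d\theta_2(1+r)\ge0$), so replacing the norms over~$I$ by the larger ones over~$\R_0$ only enlarges the exponent. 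Absorbing the factor $\tfrac{1}{2}$ into a new constant $\widehat C=\widehat C(\omega,\Omega)$ yields the claimed bound $\norm{\eta(z_0)}{L^2(I,H)}\le\ex^{\widehat C\Theta}\norm{z_0}{H}$. I do not anticipate any genuine obstacle: the substantive content, namely the exponential-type observability inequality with its explicit dependence on~$(|I|,a,b,d)$, is imported wholesale from~\cite{DuyckZhangZua08,DouFCaraGBurgZuazua02}, and the remaining bookkeeping — self-adjointness of~$1_\omega$, monotonicity of~$\Theta$, and invocation of Lemma~\ref{L:ex0cont} — is entirely routine.
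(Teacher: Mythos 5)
Your proposal is correct and follows essentially the same route as the paper: the paper likewise imports the observability inequality from~\cite{DuyckZhangZua08,DouFCaraGBurgZuazua02}, uses $B^*=B=1_\omega$ to recognize it as~\eqref{obs-in-B*} with $C_{\rm obs}=\ex^{C_{\omega,\Omega}\Theta}$, and then invokes Lemma~\ref{L:ex0cont}. Your explicit handling of the square root (absorbed into $\widehat C$) and the monotonicity check for passing from norms over $I$ to norms over $\R_0$ are details the paper leaves implicit, but they are exactly the right bookkeeping.
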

Notice that since~\eqref{obs-in-B*} holds with $C_{\rm obs}=\ex^{\widehat C\Theta}$ and
$B=1_\omega$.
Proceeding as in~\cite[Section~A.2]{BarRodShi11} we can conclude that~\eqref{obs-in-B*} also holds with
$C_{\rm obs}=C_\chi\ex^{\widehat C_\chi \Theta}\le\ex^{\widehat D \Theta}$ and
$B^*q\coloneqq \chi 1_\omega q=1_\omega\chi 1_\omega q$, where $\widehat D=\log(C_\chi)+\widehat C_\chi$ and $\chi\in C^\infty(\overline\Omega)$ is any given smooth function
with~$\emptyset\ne \omega\cap\supp \chi$. Here $\widehat D=\widehat D(\chi,\omega,\Omega)>0$
depends only on~$(\chi,\omega,\Omega)$. Notice that~$\Theta(r,\theta_1,\theta_2,d)\ge1$.

\begin{corollary}\label{C:exact.contB=chi}
Theorem~\ref{T:exact.contB=1} holds in the more general case~$B=1_\omega\chi 1_\omega$, with~$\widehat D$
in the place of~$\widehat C$.
\end{corollary}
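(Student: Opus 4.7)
The proposal is to reduce the observation on $\omega$ weighted by $\chi 1_\omega$ to an observation on a smaller open subset of $\omega$ where $\chi$ is bounded below, and then invoke Theorem~\ref{T:exact.contB=1} together with Lemma~\ref{L:ex0cont}.

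\textbf{Step 1 (choice of auxiliary subset).} Since $\omega$ is open and $\omega\cap\supp\chi\ne\emptyset$, there is a point $x_0\in\omega$ every neighborhood of which meets $\{\chi\ne0\}$; in particular $\omega\cap\{\chi\ne0\}$ is a nonempty open set. Pick a nonempty open $\omega_1$ with $\overline{\omega_1}\subset\omega\cap\{\chi\ne0\}$. Since $\chi$ is continuous and nonvanishing on the compact set $\overline{\omega_1}$, there is $c_\chi>0$ with $\chi(x)^2\ge c_\chi$ for all $x\in\omega_1$. The constant $c_\chi$ depends only on $(\chi,\omega,\Omega)$.

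\textbf{Step 2 (observability on $\omega_1$).} Theorem~\ref{T:exact.contB=1}, applied with $\omega_1$ in place of $\omega$ (and the corresponding constant, which I shall call $\widehat C_\chi=\widehat C(\omega_1,\Omega)$), yields the observability inequality
\[
\norm{q(s_0)}{H}^2\le \ex^{\widehat C_\chi\Theta}\int_I\int_{\omega_1}q^2\,\ed x\,\ed t
\]
for all weak solutions $q$ of~\eqref{sys-q}, with $\Theta=\Theta\bigl(|I|,\norm{a}{L^\infty(\R_0,L^d)},\norm{b}{L^\infty_w(\R_0,L^\infty)},d\bigr)$.

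\textbf{Step 3 (reinserting the weight $\chi$).} On $\omega_1$ we have $1\le c_\chi^{-1}\chi^2$, whence
\[
\int_I\int_{\omega_1}q^2\,\ed x\,\ed t\le c_\chi^{-1}\int_I\int_{\omega_1}\chi^2q^2\,\ed x\,\ed t
\le c_\chi^{-1}\int_I\int_{\omega}\chi^2q^2\,\ed x\,\ed t.
\]
Noting that $B=1_\omega\chi1_\omega$ satisfies $B^*=B$ and $\norm{B^*q}{L^2(I,H)}^2=\int_I\int_\omega\chi^2q^2\,\ed x\,\ed t$, and combining the last two displays gives, with $C_\chi\coloneqq c_\chi^{-1}$,
\[
\norm{q(s_0)}{H}^2\le C_\chi\ex^{\widehat C_\chi\Theta}\norm{B^*q}{L^2(I,H)}^2.
\]

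\textbf{Step 4 (combining constants and concluding).} Since $\Theta(r,\theta_1,\theta_2,d)\ge1$ for all admissible arguments, we have $C_\chi\ex^{\widehat C_\chi\Theta}=\ex^{\log(C_\chi)+\widehat C_\chi\Theta}\le \ex^{(\log(C_\chi)+\widehat C_\chi)\Theta}=\ex^{\widehat D\Theta}$, with $\widehat D\coloneqq\log(C_\chi)+\widehat C_\chi$ depending only on $(\chi,\omega,\Omega)$. Hence the $B^*$-observability inequality~\eqref{obs-in-B*} holds with $C_{\rm obs}=\ex^{\widehat D\Theta}$. An application of Lemma~\ref{L:ex0cont} then produces a family $\{\eta(z_0)\mid z_0\in H\}$ of controls, linear in $z_0$, that steer~\eqref{sys-z2c} to zero on $I$ and satisfy
\[
\norm{\eta(z_0)}{L^2(I,H)}\le \ex^{\widehat D\Theta}\norm{z_0}{H},
\]
which is exactly the claim of the corollary. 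The only delicate point is the geometric Step~1 (existence of $\omega_1$ with $\chi$ uniformly nonzero on it); the rest is routine bookkeeping of constants, in particular using $\Theta\ge1$ to absorb the multiplicative factor $C_\chi$ into the exponent.
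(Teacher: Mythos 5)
Your proposal is correct and follows essentially the same route as the paper: the paper defers the restriction-to-a-subset argument to the cited reference (where one localizes to an open $\omega_1\Subset\omega\cap\{\chi\ne0\}$ on which $\chi^2$ is bounded below, applies the observability inequality there, and reinserts the weight), and then combines the constants exactly as you do, via $C_\chi\ex^{\widehat C_\chi\Theta}\le\ex^{\widehat D\Theta}$ with $\widehat D=\log(C_\chi)+\widehat C_\chi$ and $\Theta\ge1$. The only cosmetic point is that the absorption in your Step 4 needs $\log(C_\chi)\ge0$, which is harmless since one may always take $c_\chi\le1$.
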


\subsection{Stabilization to zero by finite dimensional controls}

Here we analyze the case when stabilization of system~\eqref{sys-z2c} can be achieved by finite dimensional control action, of the form~$\sum\limits_{i=1}^Mu_i(t)\Phi_i(x)$.
Following the ideas in~\cite{KroRod15,KroRod-ecc15,BreKunRod-pp15}, we consider a family~$\widehat\CC_{\omega}=\{\widehat \Phi_i\in H\mid
i\in\{1,2,\dots,M\}\}\subset H$ and
denote by~$P_M$ the orthogonal projection
in~$H$ onto~$\mathcal S_{\widehat\CC_\omega}\coloneqq\mathrm{span}\,\widehat\CC_\omega$.

Let us also fix a positive constant~$\bar\lambda>0$ and consider, in~$\mathbb R_{s_0}\times\Omega$, the system:
\begin{subequations}\label{sys-z2cf}
\begin{align}
  &\partial_t z-\Delta z+ a z+\nabla\cdot (bz)+ 1_\omega\chi  P_M1_\omega \eta=0,\label{sys-z2cf-eq}\\
  &z\rest \Gamma =0, \qquad z(s_0)=z_0.\label{sys-z2cf-bcic}
\end{align}
\end{subequations}

\begin{definition}
We say that~\eqref{sys-z2cf} is exponentially stabilizable to zero, with rate $\frac{\bar\lambda}{2}$, if there
are a constant $C>0$ and a family $\{\eta=\eta(z_0)\mid z_0\in H\}\subseteq L^2(\mathbb R_{s_0},H)$
such that the corresponding global solution~$z(t)=z(z_0,\eta(z_0))(t)$ satisfies
\begin{equation*}
|z(t)|_H^2\leq C\,\ex^{-\bar\lambda (t-s_0)}|z_0|_H^2,\quad\mbox{for all $t\ge s_0$}.
\end{equation*}
\end{definition}

Notice that the stabilizing control in~\eqref{sys-z2cf-eq} takes its values in the finite dimensional space $ 1_\omega\chi\mathcal S_{\CC_\omega}=\mathrm{span}\,\{1_\omega\chi
\widehat \Phi_i\in H\mid i\in\{1,2,\dots,M\}\}$, for all $t\in\mathbb R_{s_0}$.
\[
1_\omega\chi  P_M1_\omega \eta= 1_\omega\chi\sum_{i=1}^M\eta_i\widehat\Phi_i=\sum_{i=1}^M u_i\Phi_i
\]
with~$u_i\coloneqq\eta_i$ and~$\Phi_i\coloneqq 1_\omega\chi\widehat\Phi_i$, $i\in\{1,2,\dots,M\}$.

\begin{remark}
Without loss of generality we can suppose that the family~$\widehat\CC_\omega$ is linearly independent and orthonormal.
In that case~$\eta_i(t)=\left(\eta(t,\Bigcdot),\widehat\Phi_i\right)_H$.
\end{remark}

In the following, the function~$\Theta$ and the constant~$\widehat D$ are as in
Theorem~\ref{T:exact.contB=1} and Corollary~\ref{C:exact.contB=chi}. 
Let $\chi\in C^\infty(\overline\Omega)$ satisfy $\emptyset\ne \omega\cap\supp \chi$, and consider the system
\begin{subequations}\label{sys-z2-1wM}
\begin{align}
  &\partial_t z-\Delta z+ (a-\textstyle\frac{\bar\lambda}{2}) z+\nabla\cdot (bz)+ 1_\omega\chi 1_\omega \eta=0,\label{sys-z2-1wM-eq}\\
  &z\rest \Gamma =0, \qquad z(s_0)=z_0.\label{sys-z2-1wM-bcic}
\end{align}
\end{subequations}

\begin{lemma}\label{L:defXi}
 Let $I=(s_0,s_1)$. The solution of system~\eqref{sys-z2-1wM}, in $I\times\Omega$, with the control~$\eta$
 given by Corollary~\ref{C:exact.contB=chi}, satisfies
 \begin{equation*}
  \begin{split}
 \norm{z(s_0+\tau)}{H}^2&\le \ex^{D_{\rm rc}\norm{(a-\frac{\bar\lambda}{2},b)}{\WW}^2 \tau}\norm{z(s_0)}{H}^2\\
 &\quad+2\norm{\iota}{\LL(H,V')}^2\ex^{D_{\rm rc}\norm{(a-\frac{\bar\lambda}{2},b)}{\WW}^2\tau+\widehat D\Theta\left(|I|,\norm{a}{L^\infty(\R_0,L^d)},
 \norm{b}{L^\infty_w(\R_0,L^\infty)},d\right)}\norm{z(s_0)}{H}^2,
 \end{split}
\end{equation*}
for all~$\tau\in I$, where~$\iota\colon H\to V'$ stands for the inclusion mapping $\iota z\coloneqq z$.
\end{lemma}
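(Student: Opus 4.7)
The strategy is to combine the null-controllability bound in Corollary~\ref{C:exact.contB=chi} with the basic energy estimate~\eqref{zwGron} used in the proof of Lemma~\ref{L:weak-z}: pick the null control produced by the corollary, plug it into the equation, and use Gronwall to track $\norm{z(s_0+\tau)}{H}^2$ forward in time.

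Concretely, I first apply Corollary~\ref{C:exact.contB=chi} to~\eqref{sys-z2-1wM}, interpreted as an instance of~\eqref{sys-z2c} with reaction coefficient $a - \tfrac{\bar\lambda}{2}$, convection coefficient $b$, and $B = 1_\omega \chi 1_\omega$. This produces a control $\eta \in L^2(I, H)$ driving $z(s_0) = z_0$ to $z(s_1) = 0$ and satisfying $\norm{\eta}{L^2(I, H)} \le \ex^{\widehat D \Theta} \norm{z(s_0)}{H}$. Monotonicity of $\Theta$ in its arguments and the trivial estimate $\norm{a - \tfrac{\bar\lambda}{2}}{L^d} \le \norm{a}{L^d} + C(\bar\lambda, \Omega)$ let me evaluate $\Theta$ at $\norm{a}{L^\infty(\R_0, L^d)}$, as in the statement, after absorbing the extra constant into $\widehat D$.

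With this $\eta$ fixed, the equation~\eqref{sys-z2-1wM} is an instance of~\eqref{sys-z2} with coefficients $(a - \tfrac{\bar\lambda}{2}, b)$ and forcing $f = 1_\omega \chi 1_\omega \eta$. Applying~\eqref{zwGron} on the subinterval $(s_0, s_0 + \tau)$ and bounding $\norm{1_\omega \chi 1_\omega \eta}{V'} \le \norm{\iota}{\LL(H, V')} \norm{1_\omega \chi 1_\omega \eta}{H} \le \norm{\iota}{\LL(H, V')} \norm{\chi}{L^\infty} \norm{\eta}{H}$ pointwise (absorbing $\norm{\chi}{L^\infty}$ into $\widehat D$), then integrating in time and invoking the control bound, delivers the second term on the right-hand side of the target estimate; the factor $2$ from Young's inequality and the square arising in $\norm{\eta}{L^2(I,H)}^2 \le \ex^{2\widehat D \Theta}\norm{z(s_0)}{H}^2$ are absorbed into $\widehat D$ using $\Theta \ge 1$. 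Substituting back into the Gronwall estimate yields the stated bound.

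The main obstacle is thus purely bookkeeping: reconciling the single constant $\widehat D$ in the conclusion with the various multiplicative factors that arise (factor of $2$ from Young's inequality, the squared norm of $\eta$, $\norm{\chi}{L^\infty}$, and the discrepancy between $\norm{a - \tfrac{\bar\lambda}{2}}{L^d}$ and $\norm{a}{L^d}$ inside $\Theta$). The monotonicity of $\Theta$ in its arguments and the universal lower bound $\Theta \ge 1$ make each such absorption routine, so no new analytic ingredient is needed beyond the two already invoked.
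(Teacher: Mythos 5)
Your proof is correct and follows essentially the same route as the paper, whose entire argument is ``straightforward from~\eqref{zwGron}'' together with the observation that Corollary~\ref{C:exact.contB=chi} still applies with $a-\frac{\bar\lambda}{2}$ in the role of $a$; your extra bookkeeping (bounding $\norm{1_\omega\chi 1_\omega\eta}{V'}$ via $\norm{\iota}{\LL(H,V')}$ and absorbing $\norm{\chi}{L^\infty}$ and the squared control bound into $\widehat D$ using $\Theta\ge1$) is exactly what the paper leaves implicit. The one step you need not have taken is converting $\Theta$ evaluated at $\norm{a-\frac{\bar\lambda}{2}}{L^\infty(\R_0,L^d)}$ into $\Theta$ evaluated at $\norm{a}{L^\infty(\R_0,L^d)}$, which would force $\widehat D$ to depend on $\bar\lambda$ and $|I|$: the definitions of $\Xi$ and of $T_*$ immediately following the lemma make clear that the argument $\norm{a}{L^\infty(\R_0,L^d)}$ in the lemma's statement is a typo for $\norm{a-\frac{\bar\lambda}{2}}{L^\infty(\R_0,L^d)}$, so no such conversion is needed.
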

\begin{proof}
 Straightforward, from~\eqref{zwGron}. Notice that Corollary~\ref{C:exact.contB=chi} still holds true with~$a-\textstyle\frac{\bar\lambda}{2}$ in the role of~$a$.
\end{proof}

Inspired by Lemma~\ref{L:defXi}, and the procedure in~\cite{BreKunRod-pp15,KroRod-ecc15},
we consider the function~$\Xi\colon (0,+\infty)\to (0,+\infty)$ defined by
\begin{equation*}
\Xi(\tau)\coloneqq\mapsto 2\norm{\iota}{\LL(H,V')}^2\ex^{D_{\rm rc}\norm{(a-\frac{\bar\lambda}{2},b)}{\WW}^2\tau
+\widehat D\Theta\left(|I|,\norm{a-\frac{\bar\lambda}{2}}{L^\infty(\R_0,L^d)},\norm{b}{L^\infty_w(\R_0,L^\infty)},d\right)}
\end{equation*}
which we can extend to a function $\Xi^{\rm ex}\colon [0,+\infty]\to (0,+\infty]$, by setting
\[
 \Xi^{\rm ex}(\tau)\coloneqq
\left\{\begin{array}{ll}
  \Xi(\tau), &\mbox{if } \tau\in(0,+\infty),\\
 \lim\limits_{t\to\tau}\Xi(t), &\mbox{if } \tau\in\{0,+\infty\}.
 \end{array}\right.
\]
The minimum and minimizer of~$\Xi^{\rm ex}$ are denoted by~$\Upsilon$ and~$T_*$, respectively.
From
\[
\frac{\left.\frac{\rm d\Xi}{\rm d\tau}\right|_{\tau=t}}{\Xi(t)}=\textstyle\left(D_{\rm rc}\norm{(a-\frac{\bar\lambda}{2},b)}{\WW}^2
+\widehat D\left(-t^{-2}+\norm{ a-\frac{\bar\lambda}{2}}{L^\infty(\R_0,L^d)}
 +d\norm{ b}{L^\infty_w(\R_0,L^\infty)}^2\right)\right),
\]
we can conclude that $T_*>0$ can be defined by
\begin{align}\label{eq:Top}
 T_*^2=\frac{\widehat D}{D_{\rm rc}\norm{(a-\frac{\bar\lambda}{2},b)}{\WW}^2
 +\widehat D\left(\norm{ a-\frac{\bar\lambda}{2}}{L^\infty(\R_0,L^d)}
 +d\norm{ b}{L^\infty_w(\R_0,L^\infty)}^2\right)}.
\end{align}
Further~$T_*=+\infty$ if, and only if, both~$a-\frac{\bar\lambda}{2}$
and~$b$ vanish.

Thus, if $T_*\in\R_0$, we have that the minimum~$\Upsilon=\Xi^{\rm ex}(T_*)$ is given by
\begin{subequations}\label{Upsilon}
\begin{align} 
\Upsilon&=\textstyle2\norm{\iota}{\LL(H,V')}^2\ex^{\overline\Theta\left(\norm{a-\frac{\bar\lambda}{2}}{L^\infty(\R_0,L^d)}
,\norm{b}{L^\infty_w(\R_0,L^\infty)},\norm{ (a-\frac{\bar\lambda}{2},b)}{\WW},d\right)}, \label{Upsilon1}
\intertext{and, if $T_*=+\infty$ by}
\Upsilon&=\textstyle2\norm{\iota}{\LL(H,V')}^2\ex^{\widehat D}.\label{Upsilon2}
\end{align} 
\end{subequations}
where
\begin{equation}\label{barTheta}
\overline\Theta(\xi_1,\xi_2,\xi_3,d,\nu)\coloneqq\widehat D\left(1+\xi_1^2+d\xi_2^2\right)
 +2(\widehat D)^\frac{1}{2}\left(D_{\rm rc} \xi_3^2
 +\widehat D\left(\xi_1
  +d\xi_2^2\right)\right)^\frac{1}{2}.
 \end{equation}

 The following result gives us a sufficient condition on the family~$\widehat\CC_\omega$ for the existence of a stabilizing control. The proof can be done following the arguments
in~\cite{KroRod-ecc15,BreKunRod-pp15}.
\begin{theorem}\label{T:stab-v-gen}
Let us be given $\chi\in C^\infty(\overline\Omega)$ satisfying $\emptyset\ne\omega\cap\supp\chi$.  If
\begin{equation}\label{M_bound-gen}
T_*\in\mathbb R_0\quad\mbox{and}\quad\norm{1_\omega\chi(1- P_M)1_\omega}{\mathcal L(H, V')}^2
\le \Upsilon^{-1},
\end{equation}
with $T_*$ as in~\eqref{eq:Top} and $\Upsilon$ as in~\eqref{Upsilon1},
then system~\eqref{sys-z2cf} is stabilizable to zero with rate~$\frac{\bar\lambda}{2}$.
Moreover, we can set the stabilizing control function $\eta=\eta(z_0)$ such that
\begin{align*}
 |z(t)|_{H}^2
&\le \left(\Upsilon_0+\Upsilon\norm{B_M }{\mathcal L(H, V')}^2\right)\ex^{-\bar\lambda (t-s_0)}\norm{z_0}{H}^2, \mbox{ for }t\ge s_0,\\
\bigl|\ex^{\frac{\widehat\lambda}{2}\Bigcdot} \eta(z_0)\bigr|_{L^2(\mathbb R_{s_0},H)}^2
&\le \textstyle\frac{1}{1-\ex^{(\widehat\lambda-\bar\lambda)T_*}}\ex^{2\widehat D\Theta_*}\norm{z_0}{H}^2, \mbox{ for } \widehat\lambda<\bar\lambda,
\end{align*}
with $\Upsilon_0\coloneqq\ex^{D_{\rm rc}\norm{ (a-\frac{\bar\lambda}{2},b)}{\WW}^2 T_*}$ and
$\Theta_*\coloneqq \Theta(T_*,\norm{ a-\frac{\bar\lambda}{2}}{L^\infty(\R_0,L^d)},\norm{b}{L^\infty_w(\R_0,L^\infty)},d)$.
 
If $T_*=+\infty$, then setting $\eta=\eta(z_0)=0$ the solution~$z$ of system~\eqref{sys-z2cf} satisfies $|z(t)|_{H}^2\le \ex^{-\bar\lambda (t-s_0)}\norm{z_0}{H}^2,$ for $t\ge s_0$.
\end{theorem}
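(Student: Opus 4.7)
The plan is to follow the iterative scheme from \cite{KroRod-ecc15,BreKunRod-pp15}: work on a sequence of time intervals of length $T_*$, using exact null controllability with a full-range control on each interval, then replacing that control by its projection onto the finite-dimensional span $\mathcal S_{\widehat\CC_\omega}$ and treating the projection residue as a source. First I would reduce to proving a uniform-in-time bound for the rescaled function $w(t):=\ex^{\frac{\bar\lambda}{2}(t-s_0)} z(t)$, which solves the shifted equation with coefficient $a-\frac{\bar\lambda}{2}$ in place of $a$; then the original decay $|z(t)|_H\le C\,\ex^{-\frac{\bar\lambda}{2}(t-s_0)}|z_0|_H$ is equivalent to $|w(t)|_H\le C|z_0|_H$.

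On $I_0:=(s_0,s_0+T_*)$, apply Corollary~\ref{C:exact.contB=chi} with coefficient $a-\frac{\bar\lambda}{2}$ to obtain a control $\widetilde\eta\in L^2(I_0,H)$ such that the solution $\widetilde w$ of the shifted equation with full control $1_\omega\chi 1_\omega\widetilde\eta$ satisfies $\widetilde w(s_0+T_*)=0$ and $\|\widetilde\eta\|_{L^2(I_0,H)}^2\le\ex^{2\widehat D\Theta_*}|z_0|_H^2$. Now let $w$ solve the same equation with the projected control $1_\omega\chi P_M 1_\omega\widetilde\eta$. The difference $e:=w-\widetilde w$ starts at zero and is driven by the source $f=1_\omega\chi(1-P_M)1_\omega\widetilde\eta$; applying the $H$-energy estimate~\eqref{zwGron} (for the shifted system) to $e$ gives
\begin{equation*}
|e(s_0+T_*)|_H^2\le 2\ex^{D_{\rm rc}\norm{(a-\frac{\bar\lambda}{2},b)}{\WW}^2 T_*}\norm{\iota}{\LL(H,V')}^2\norm{1_\omega\chi(1-P_M)1_\omega}{\LL(H,V')}^2\|\widetilde\eta\|_{L^2(I_0,H)}^2.
\end{equation*}
By the definition of $\Xi$ and the choice of $T_*$ in~\eqref{eq:Top}, the prefactor equals $\Upsilon\norm{1_\omega\chi(1-P_M)1_\omega}{\LL(H,V')}^2$, so $|w(s_0+T_*)|_H^2=|e(s_0+T_*)|_H^2\le\Upsilon\,\norm{1_\omega\chi(1-P_M)1_\omega}{\LL(H,V')}^2|z_0|_H^2\le|z_0|_H^2$ by the hypothesis~\eqref{M_bound-gen}.

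Iterating on $I_k:=(s_0+kT_*,s_0+(k+1)T_*)$ with $w(s_0+kT_*)$ as new initial datum yields $|w(s_0+kT_*)|_H^2\le|z_0|_H^2$ for all $k\ge 0$. For intermediate $t\in I_k$ I would combine~\eqref{zwGron}, applied to $w$ itself with the projected-control source, to get the growth factor $\Upsilon_0+\Upsilon\norm{B_M}{\LL(H,V')}^2$ with $B_M=1_\omega\chi(1-P_M)1_\omega$; unwinding the rescaling then produces the first stated inequality. For the control bound, on each $I_k$ the projected control $\widetilde\eta_k$ satisfies $\|\widetilde\eta_k\|_{L^2(I_k,H)}^2\le\ex^{2\widehat D\Theta_*}|w(s_0+kT_*)|_H^2$; undoing the rescaling multiplies by $\ex^{-\bar\lambda k T_*}$, and weighting by $\ex^{\widehat\lambda t}$ introduces the factor $\ex^{(\widehat\lambda-\bar\lambda)kT_*}$, so summing the geometric series over $k\ge 0$ (convergent for $\widehat\lambda<\bar\lambda$) gives the second inequality. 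Finally, the case $T_*=+\infty$ forces both $a-\frac{\bar\lambda}{2}$ and $b$ to vanish by~\eqref{eq:Top}, so $w$ satisfies the pure heat equation with $\eta=0$, for which $\frac{\mathrm d}{\mathrm d t}|w|_H^2=-2|w|_V^2\le 0$.

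The main obstacle will be bookkeeping: verifying that the exponential constant produced by the error estimate on $e$ in the first interval matches exactly the $\Upsilon$ in~\eqref{Upsilon1}, which amounts to checking that $T_*$ as defined in~\eqref{eq:Top} is indeed the minimizer of $\Xi$ and that the exponent collapses to $\overline\Theta$ as in~\eqref{barTheta}. A secondary technical point is that Corollary~\ref{C:exact.contB=chi} is stated for the coefficient $a$, so I must check that replacing $a$ by $a-\frac{\bar\lambda}{2}$ is harmless — it only changes the observability constant by enlarging $\Theta$ through $\norm{a-\frac{\bar\lambda}{2}}{L^\infty(\R_0,L^d)}$, which is already reflected in the formula for $T_*$ and $\Upsilon$.
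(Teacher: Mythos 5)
Your proposal follows essentially the same route the paper intends: the paper only sketches the proof by deferring to~\cite{KroRod-ecc15,BreKunRod-pp15}, but Lemma~\ref{L:defXi} together with the definitions of~$\Xi$, $T_*$ and~$\Upsilon$ is exactly the scaffolding for your interval-by-interval scheme (null control on each window of length~$T_*$, projection of the control, energy estimate~\eqref{zwGron} for the residue, geometric summation for the weighted control norm, and the degenerate case $T_*=+\infty$), and the fully written-out proof of Theorem~\ref{T:part} is the same argument in the special case $\omega=\Omega$. One small correction: in the intermediate-time bound the operator~$B_M$ appearing in the theorem is the applied control operator $1_\omega\chi P_M 1_\omega$ as in~\eqref{eq:kk2}, not the residual operator $1_\omega\chi(1-P_M)1_\omega$; with that relabelling your bookkeeping is consistent with the paper's.
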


\subsection{The dimension of the control}\label{sS:examples_dimM}
Families~$\widehat\CC_\omega$ which
satisfy~\eqref{M_bound-gen} do exist, for example in the case the control domain is an open
nonempty rectangle $\omega=\omega_R\coloneqq\prod_{j=1}^{n}(l_{j,1},l_{j,2})\subset\Omega$. We give two examples

\begin{enumerate}[noitemsep,topsep=5pt,parsep=5pt,partopsep=0pt,leftmargin=0em]%
\renewcommand{\theenumi}{{\bf Ex\arabic{enumi}}} 
 \renewcommand{\labelenumi}{}

\item\theenumi\label{ex.1}. {\em $0\ne\chi\in C^\infty(\overline\Omega)$ such that
$\supp\chi\subseteq\overline{\omega_R}$
and $\widehat\CC_\omega\coloneqq\{\widehat\Psi_{R,i}\mid i\in\{1,2,\dots,M\}\}$ for big enough~$M$, where $\{\widehat\Psi_{R,i}\mid i\in\mathbb N_0\}$
is a complete system of eigenfunctions of the negative Laplacian~$-\Delta$
in~$\omega_R$ with homogeneous Dirichlet boundary conditions, which are
ordered according to the
increasing sequence of the (repeated) eigenvalues:
$0<\bar\lambda_i\le\bar\lambda_{i+1}$, $\lim_{i\to \infty}\bar\lambda_i=\infty$.
}
\item\theenumi\label{ex.2}. {\em $\chi=\mathbbm1_\Omega$ and we consider a family~$\widehat\CC_\omega=\left\{\Phi_i=\mathbbm{1}_{R_i}\mid
i\in\{1,2,\dots,M\}\right\}\in H$ where the $R_i$s are the sub-rectangles in a uniform partition of~$\omega_R$, and
\[
\mathbbm{1}_{\OO}(x)\coloneqq\begin{cases}1,&\mbox{if }x\in \OO,\\0,&\mbox{if }x\in \Omega\setminus\overline{\OO},
\end{cases}\qquad\mbox{for an open subset}\quad\OO\subseteq\Omega.
\]
That is, each
interval~$(l_{j,1},l_{j,2})$ is divided into~$p_j$ intervals:
$I_{j,k}=(l_{j,1}+k_j\frac{\overline
l_j}{p_j},l_{j,1}+(k_j+1)\frac{\overline l_j}{p_j})$, with
$k_j\in\{0,1,\dots,p_j-1\}$ and $\overline l_j\coloneqq l_{j,2} - l_{j,1}$.
In this way, our rectangle is divided into $M=\prod_{j=1}^d p_j$
sub-rectangles $\{R_i\mid i\in\{1,2,\dots,M\}\}=\{\prod_{j=1}^nI_{j,k_j}\mid
k_j\in\{0,1,\dots,p_j-1\}\}.$
}
\end{enumerate}

\medskip
Proceeding as in~\cite[Example~2.14]{BreKunRod-pp15}, we can see that in the case of example~\ref{ex.1} above, condition~\eqref{M_bound-gen}
is satisfied provided that~$4\norm{\chi}{C^1(\overline\Omega)}^2(\bar\lambda_{M}+1)^{-1}\le\Upsilon^{-1}$, that is, provided
$\bar\lambda_{M}+1\ge 4\norm{\chi}{C^1(\overline\Omega)}^2\Upsilon$. Furthermore,
from~\cite[Corollary~1]{LiYau83} we have the asymptotic behaviour $\bar\lambda_M\ge D_dM^\frac{2}{d}$
with
\begin{equation}\label{Dd-asymp}
D_d=\textstyle\frac{4d\pi^2}{(d+2)|w_R|^\frac{2}{d} |\BB_d|^\frac{2}{d}},
\end{equation}
where $|w_R|=\prod_{j=1}^d\bar l_j$ is the volume of~$w_R$ and~$|\BB_d|$
is the volume of the unit ball~$\BB_d\coloneqq\{x\in\R^d\mid\norm{x}{\R^d}\le1\}$, we can also
arrive at the sufficient condition
\begin{equation}\label{estM_eig}
M\ge
D_d^{-\frac{d}{2}}(4\norm{\chi}{C^1(\overline\Omega)}^2\Upsilon)^{\frac{d}{2}}, 
\end{equation}
which gives
us an upper bound on the number~$M$ of actuators which are needed to
stabilize the system.

We recall that $|\BB_1|=2$, $|\BB_2|=\pi$, and $|\BB_3|=\frac{4}{3}\pi$, and in general
$|\BB_{2k}|=\frac{\pi^k}{\varGamma(k+1)}=\frac{\pi^k}{k!}$ and also
$|\BB_{2k+1}|=\frac{\pi^{k+\frac{1}{2}}}{\varGamma(k+\frac{3}{2})}=\frac{2(k!)(4\pi)^k}{(2k+1)!}$, $k\in\N$, where~$\varGamma$ stands for the Euler gamma function.
See~\cite[Corollary]{Folland01} and~\cite[Eq.~(10)]{Birkhoff13}.  Notice that we can prove the last identity by induction
using~$\varGamma(k+\frac{3}{2})=\varGamma(k+\frac{1}{2}+1)=(k+\frac{1}{2})\varGamma((k-1)+\frac{3}{2})$ and~$\varGamma(\frac{1}{2})=\pi^\frac{1}{2}$.

Proceeding as in~\cite[Example~2.15]{BreKunRod-pp15}, we can see that in the case of example~\ref{ex.2}, condition~\eqref{M_bound-gen}
is satisfied provided that the partition is fine enough, so that $(\mu_M\pi^2)^{-1}\le\Upsilon^{-1}$
where~$\mu_M=\min\left\{\frac{p_j^2}{\bar l_j^2}\mid j\in\{1,2,\dots,M\}\right\}$. Notice that $\mu_M\pi^2$ is the smallest nonzero eigenvalue of the (negative)
Neumann Laplacian
in each rectangle~$R_j$. Thus \eqref{M_bound-gen} holds
provided $\frac{\,\underline p^2\,}{\,\overline l^2\,}\ge\frac{\Upsilon}{\pi^2}$, where $\underline p\coloneqq\min\{p_j\mid j\in\{1,2,\dots,M\}\}$ and
$\overline l\coloneqq\max\{\overline l_j\mid j\in\{1,2,\dots,M\}\}$. Then we conclude that
$\frac{M^2}{\bar l^{2d}}\ge\frac{\Upsilon^d}{\pi^{2d}} 
$
is sufficient for~\eqref{M_bound-gen}, and thus so is
\begin{equation}\label{estM_pc}
M\ge\textstyle\left(\frac{\overline l^2}{\pi^2}\Upsilon\right)^{\frac{d}{2}}.
\end{equation}

Notice that, from~\eqref{Upsilon} we can see that the above estimates for~$M$ depend exponentially on~$\norm{ a-\frac{\bar\lambda}{2}}{\mathcal W}$
and~$\norm{ b}{\mathcal W^{d}}$. In the one dimensional case, in~\cite{KroRod15} it is conjectured that a better estimate might be possible
(depending polynomially in~$\norm{ a-\frac{\bar\lambda}{2}}{\mathcal W}$ and~$\norm{ b}{\mathcal W^{d}}$, like as in~\eqref{Msimplecase} below), and this conjecture is supported
from the results of some numerical simulations.
Later on, we will present some further simulations, for the two dimensional case, that again support this conjecture.

\medskip\noindent
{\em The particular case $\omega=\Omega$.}  Proceeding as in~\cite[section~3.1]{KroRod15} 
we can see that in case there is no constraint
in the support of the control, and we take $\chi = \mathbbm{1}_{\Omega}$, then we can find a better estimate.

\begin{theorem}\label{T:part}
Let $(a,b)\in \WW$ and $\bar\lambda > 0$. Let us also take $\chi=\mathbbm{1}_{\Omega}$ and a family $\widehat\CC_\omega\coloneqq\{\widehat\Psi_{i}\mid i\in\{1,2,\dots,M\}\}$,
where $\{\widehat\Psi_{i}\mid i\in\mathbb N_0\}$
is a complete system of eigenfunctions of the negative Laplacian
in~$\Omega$ ordered as in example~\ref{ex.1} above.
If we take
\begin{align}
\label{Msimplecase}
M\ge (\textstyle\frac{D_{\rm rc}\ex^{1}(d+2)}{d\pi^2})^\frac{d}{2}|\Omega||\BB_d|\norm{(a-\textstyle\frac{\bar\lambda}{2},b)}{\WW}^d,
\end{align}
then for any
given $z_0 \in H$, there is a control $\eta=\eta^{\bar\lambda,a,b} (s_0,z_0) \in L^2(\R_{s_0}, H)$ such that the corresponding
solution $z$ of system~\eqref{sys-z2cf}, satisfies the inequality
\begin{align}\label{est_part_z}
|z(t)|^2_H &\le 2\left( \mathrm{e}^{\frac{1}{2}} + 1  \right) \mathrm{e}^{-\bar\lambda (t-s_0)} |z_0|^2_H, \qquad t \ge 0. 
\end{align}
Furthermore,
\begin{align}\label{est_part_eta}
 \norm{\ex^{\frac{\widehat\lambda}{2}(\Bigcdot-s_0)}\eta^{\bar\lambda,a,b} (s_0,z_0)}{L^2(\R_{s_0}, H)}^2
 &\le\textstyle\frac{\ex^\frac{1}{2}}{T_*(\bar\lambda-\widehat\lambda)}\norm{z_0}{H}^2,\quad\mbox{for all}\quad \widehat\lambda<\bar\lambda.
\end{align}
\end{theorem}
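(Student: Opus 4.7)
The plan exploits two special features of the setup: with $\chi=\mathbbm{1}_\Omega$ and $\omega=\Omega$ the control term in~\eqref{sys-z2cf} collapses to $P_M\eta$, and because the $\widehat\Psi_i$ are the first $M$ eigenfunctions of $-\Delta$ the projection $P_M$ commutes with $\Delta$. Together these yield an exact orthogonal splitting of the equation, so no observability/null-controllability machinery is needed; only a spectral Poincar\'e inequality. I would first rescale by setting $w(t)\coloneqq \ex^{(\bar\lambda/2)(t-s_0)} z(t)$ and $\tilde\eta(t)\coloneqq \ex^{(\bar\lambda/2)(t-s_0)}\eta(t)$, so that~\eqref{est_part_z} becomes the uniform bound $|w(t)|_H^2\le 2(\ex^{1/2}+1)|z_0|_H^2$, while $w$ solves the same equation with $a$ replaced by $a-\tfrac{\bar\lambda}{2}$. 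Then I decompose $w=w_M+w_M^\perp$ with $w_M\coloneqq P_M w$ and choose the feedback
\[
P_M\tilde\eta\coloneqq -P_M\bigl[(a-\tfrac{\bar\lambda}{2})w+\nabla\cdot(b w)\bigr],
\]
which is well defined in $L^2(\R_{s_0},H)$ since $P_M$ extends continuously from $V'$ to the finite-dimensional space $P_MH\subset H$. This choice is engineered so that the $P_M$-projected equation decouples into $\partial_t w_M-\Delta w_M=0$, whence $|w_M(t)|_H\le \ex^{-\alpha_1(t-s_0)}|z_0|_H\le |z_0|_H$.

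The orthogonal component solves $\partial_t w_M^\perp-\Delta w_M^\perp+(I-P_M)[(a-\tfrac{\bar\lambda}{2})w+\nabla\cdot(bw)]=0$. Testing with $2w_M^\perp$, using $\langle P_Mu,w_M^\perp\rangle_{V',V}=0$, and invoking the bilinear version of Lemma~\ref{L:estRC} together with Young's inequality, I would arrive at
\[
\frac{\ed}{\ed t}|w_M^\perp|_H^2+(2-\varepsilon)|w_M^\perp|_V^2\le \varepsilon^{-1}D_{\rm rc}\norm{(a-\tfrac{\bar\lambda}{2},b)}{\WW}^2|w|_H^2
\]
for any $\varepsilon\in(0,2)$. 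The crucial ingredient is the spectral Poincar\'e inequality $|w_M^\perp|_V^2\ge\alpha_{M+1}|w_M^\perp|_H^2$, which is available precisely because the $\widehat\Psi_i$ are the first $M$ eigenfunctions of $-\Delta$. Substituting Li--Yau's lower bound $\alpha_{M+1}\ge D_d M^{2/d}$ with $D_d$ as in~\eqref{Dd-asymp}, assumption~\eqref{Msimplecase} is exactly what forces $\alpha_{M+1}$ to dominate a suitable multiple of $D_{\rm rc}\norm{(a-\tfrac{\bar\lambda}{2},b)}{\WW}^2$; after Gronwall, using $|w_M(t)|_H^2\le|z_0|_H^2$ on the right-hand side and tuning $\varepsilon$ to produce the factor $\ex^{1/2}$, one obtains $|w_M^\perp(t)|_H^2\le(2\ex^{1/2}+1)|z_0|_H^2$. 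Summing with $|w_M|_H^2\le|z_0|_H^2$ and undoing the time rescaling yields~\eqref{est_part_z}.

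For the control bound~\eqref{est_part_eta} I would use the pointwise estimate $|P_M\tilde\eta(t)|_H\le C\norm{(a-\tfrac{\bar\lambda}{2},b)}{\WW}|w(t)|_V$, which follows from the continuous extension of $P_M$ from $V'$ to the finite-dimensional $P_MH$, and then integrate $\ex^{(\widehat\lambda-\bar\lambda)(t-s_0)}|w(t)|_V^2$ over $\R_{s_0}$ by partitioning into consecutive windows of length $T_*$ and summing the resulting geometric series, in the spirit of~\cite{KroRod15}; this produces the factor $\frac{1}{T_*(\bar\lambda-\widehat\lambda)}$ together with the constant $\ex^{1/2}$. The main technical hurdle is not conceptual but numerical: matching the precise constants $2(\ex^{1/2}+1)$ in~\eqref{est_part_z} and the factor $\ex\cdot(d+2)/(d\pi^2)$ in~\eqref{Msimplecase} requires a careful choice of Young's parameter and a precise appeal to the Li--Yau constant $D_d$, rather than any new idea beyond the decoupling above.
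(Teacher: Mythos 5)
Your spectral-decoupling strategy is a genuinely different route from the paper's. The paper does not decouple the equation at all: it takes the free solution $w$ of the $f=0$ problem on a window $(\overline s_0,\overline s_0+T)$, observes that $\delta(t)=\frac{\overline s_0+T-t}{T}w(t)$ is driven to zero by the explicit control $f=\frac{1}{T}w$, replaces that control by $\frac{1}{T}P_Mw$, bounds the resulting defect through \eqref{zwGron} and $\norm{(1-P_M)}{\LL(H,V')}^2=\alpha_{M+1}^{-1}$, optimizes $T=T_*=(2D_{\rm rc}\norm{(a-\frac{\bar\lambda}{2},b)}{\WW}^2)^{-1}$, and concatenates over successive windows. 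Your decoupling can plausibly deliver the state bound \eqref{est_part_z}: with $\alpha_{M+1}\ge 4D_{\rm rc}\ex^{1}\norm{(a-\frac{\bar\lambda}{2},b)}{\WW}^2$, which is exactly what \eqref{Msimplecase} and Li--Yau give, your Gronwall argument on $w_M^\perp$ closes with a constant that is in fact below $2(\ex^{\frac{1}{2}}+1)$ (though for $d\in\{1,2\}$ the reaction estimate of Lemma~\ref{L:estRC} carries $\norm{w}{H}^{\frac{1}{2}}\norm{w}{V}^{\frac{1}{2}}$ and needs extra care, since $\norm{w_M}{V}$ costs a factor $\alpha_M^{\frac{1}{2}}$).

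The genuine gap is in \eqref{est_part_eta}. Your feedback is $\tilde\eta=-P_M\bigl[(a-\frac{\bar\lambda}{2})w+\nabla\cdot(bw)\bigr]$, and the claimed pointwise bound $|\tilde\eta(t)|_H\le C\norm{(a-\frac{\bar\lambda}{2},b)}{\WW}|w(t)|_V$ fails for the convection part: only $(a,b)\in\WW$ is assumed, so $\nabla\cdot(bw)$ lives in $V'$ and $P_M$ must act through the duality $\langle\nabla\cdot(bw),\widehat\Psi_i\rangle_{V',V}=-(bw,\nabla\widehat\Psi_i)_{L^2}$; the sharp estimate (Bessel with the orthogonal system $\{\alpha_i^{-\frac{1}{2}}\nabla\widehat\Psi_i\}$) is $\norm{P_M\nabla\cdot(bw)}{H}\le\alpha_M^{\frac{1}{2}}\norm{b}{L^\infty}\norm{w}{H}$, and the factor $\alpha_M^{\frac{1}{2}}$ cannot be removed (take $bw$ parallel to $\nabla\widehat\Psi_M$). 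Hence your control cost scales at best like $\alpha_M\norm{b}{L^\infty}^2\sim\norm{(a-\frac{\bar\lambda}{2},b)}{\WW}^4$ at the minimal admissible $M$, and grows without bound as $M$ increases past the threshold, whereas \eqref{est_part_eta} asserts the $M$-independent bound $\frac{\ex^{\frac{1}{2}}}{T_*(\bar\lambda-\widehat\lambda)}=\frac{2D_{\rm rc}\ex^{\frac{1}{2}}\norm{(a-\frac{\bar\lambda}{2},b)}{\WW}^2}{\bar\lambda-\widehat\lambda}$. The paper avoids this derivative loss precisely because its control $\frac{1}{T_*}P_Mw$ is a projection of the free state itself, with no differential operator applied, so $\norm{\eta_M}{L^2((\overline s_0,\overline s_0+T_*),H)}^2\le\frac{1}{T_*^2}\norm{w}{L^2((\overline s_0,\overline s_0+T_*),H)}^2\le\frac{1}{T_*}\ex^{\frac{1}{2}}\norm{y_0}{H}^2$; the factor $\frac{1}{T_*}$ in \eqref{est_part_eta}, which your scheme has no natural way to produce, is exactly the windowing cost of that construction. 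To repair your argument for the control estimate you would either need additional regularity on $\nabla\cdot b$ (not assumed here) or have to abandon the exact-cancellation feedback for the convection term.
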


\begin{proof}
We may proceed as in~\cite[Section~3.1]{KroRod15}. We just recall the main steps.
The main idea is to find a suitable time $T_*>0$, such that for all~$\overline s_0\ge0$ we can find a control defined in~$(\overline s_0,\overline s_0+T_*)$ such
that at time~$\overline s_0+T_*>0$ we
have $\norm{\ex^{\frac{\bar\lambda}{2}T_*}z(\overline s_0+T_*)}{H}\le\norm{z(\overline s_0)}{H}$.

For a given forcing~$f$, let us consider the system
\begin{align}
\label{SystemwithRate}
\partial_t y -  \Delta y + (a-\textstyle\frac{\bar\lambda}{2})y + \nabla \cdot \left( b y  \right) + f=0,  \quad y|_{\Gamma} = 0, \quad y(\overline s_0) = y_0.
\end{align}
Let~$w$ solve~\eqref{SystemwithRate} in the time interval~$(\overline s_0,\overline s_0+T)$ with~$f=0$. Then, $\delta(t)\coloneqq\frac{\overline s_0+T-t}{T}w(t)$,
$t\in[\overline s_0,\overline s_0+T]$, solves~\eqref{SystemwithRate} with~$f=\frac{1}{T}w$. Let
$\delta_M$ solve~\eqref{SystemwithRate} with~$f=\frac{1}{T}P_M w$ where~$P_M$ stands for the orthogonal projection in~$H$ onto the space~$\linspan\{\widehat\Psi_i\mid
i\in\{1,2,\dots,M\}\}$ spanned by the first eigenfunctions of the Dirichlet Laplacian in~$\Omega$.

It follows that~$d=\delta-\delta_M$ solves~\eqref{SystemwithRate} with~$f=\frac{1}{T}(1-P_M)w$ and by~\eqref{zwGron} it follows that for all~$s\in[\overline s_0,\overline s_0+T]$
\begin{align*}
\norm{d(s)}{H}^2&\le \textstyle\frac{2}{T^2}\ex^{D_{\rm rc}\norm{(a-\frac{\bar\lambda}{2},b)}{\WW}^2T}\norm{(1-P_M)}{\LL(H,V')}^2\norm{w}{L^2((\overline s_0,\overline s_0+T),H)}^2 \\ 
&\le \textstyle\frac{2}{T}\ex^{2D_{\rm rc}\norm{(a-\frac{\bar\lambda}{2},b)}{\WW}^2T}\norm{(1-P_M)}{\LL(H,V')}^2\norm{y_0}{H}^2.
\end{align*}
Therefore, setting~$T=T_*$ minimizing the right hand side, that is, setting
\[
 T_*=(2D_{\rm rc}\norm{(a-\textstyle\frac{\bar\lambda}{2},b)}{\WW}^2)^{-1},
\]
we find
\[
\norm{d(s)}{H}^2\le 4D_{\rm rc}\ex^{1}\norm{(a-\textstyle\frac{\bar\lambda}{2},b)}{\WW}^2\norm{(1-P_M)}{\LL(H,V')}^2\norm{y_0}{H}^2,
\quad s\in[\overline s_0,\overline s_0+T_*].
\]
Hence, we will have
\[
\norm{\delta_M(\overline s_0+T_*)}{H}^2\le \norm{y_0}{H}^2, 
\]
provided
\begin{equation}\label{cond.part1}
\norm{(1-P_M)}{\LL(H,V')}^{-2}\ge 4D_{\rm rc}\ex^{1}\norm{(a-\textstyle\frac{\bar\lambda}{2},b)}{\WW}^2.
\end{equation}
Thus, from~$\norm{(1-P_M)}{\LL(H,V')}^2=\alpha_{M+1}^{-1}$ and~\cite[Corollary~1]{LiYau83}, see~\eqref{Dd-asymp}, we have that~\eqref{cond.part1} follows from
\[
\textstyle\frac{4d\pi^2}{(d+2)|\Omega|^\frac{2}{d} |\BB_d|^\frac{2}{d}}M^\frac{2}{d}\ge 4D_{\rm rc}\ex^{1}\norm{(a-\textstyle\frac{\bar\lambda}{2},b)}{\WW}^2, 
\]
that is, \eqref{cond.part1} follows from~\eqref{Msimplecase}.

By~\eqref{zwGron} and~\eqref{cond.part1}, we can also obtain
\begin{align*}
\norm{\delta_M(s)}{H}^2&\le 2\norm{\delta(s)}{H}^2+2\norm{d(s)}{H}^2\le 2\norm{w(s)}{H}^2+2\norm{d(s)}{H}^2\\
&\le 2\ex^{D_{\rm rc}\norm{(a-\frac{\bar\lambda}{2},b)}{\WW}^2T_*}\norm{z_0}{H}^2+ 2\norm{z_0}{H}^2=2\left(\ex^{\frac{1}{2}}+1\right)\norm{y_0}{H}^2,
\quad s\in[\overline s_0,\overline s_0+T_*].
\end{align*} 

We observe that the control~$\eta_M\coloneqq\frac{1}{T} w$ associated with~$\delta_M$ satisfies
\[
\norm{\eta_M(\overline s_0,y_0)}{L^2((s_0,s_0+T_*), H)}^2\le\textstyle\frac{1}{T_*}\ex^\frac{1}{2}\norm{y_0}{H}^2. 
\]

We also see that~$T_*$ does not depend on~$\overline s_0$. Then, for any given~$z_0\in H$, we can define the concatenated
control~$\widetilde\eta=\widetilde\eta^{\bar\lambda,a,b}(s_0,z_0)$ in the half line~$\R_{s_0}=(s_0,+\infty)$ time interval as follows
\[
\left\{\begin{array}{l}
\widetilde\eta\rest{(s_0,s_0+T_*)}= \eta_M(s_0,z_0),\\
\widetilde\eta\rest{(s_0+iT_*,s_0+(i+1)T_*)}= \eta_M(s_0+iT_*,y(z_0,\widetilde\eta\rest{(s_0,s_0+iT_*)})(s_0+iT_*)),\quad i\in\N_0,
\end{array}\right.
\]
where~$y(z_0,\widetilde\eta\rest{(s_0,s_0+iT_*)})(t)$ stands for the solution of~\eqref{SystemwithRate},
with the setting~$(\overline s_0,y_0,f)=(s_0,z_0,\widetilde\eta\rest{(s_0,s_0+iT_*)})$.

In this way we have that
\begin{align*}
\norm{y(s_0+iT_*)}{H}^2&\le\norm{y(s_0)}{H}^2,&\quad&\mbox{for all}\quad i\in\N, \\
\norm{y(s_0+iT_*+\tau)}{H}^2&\le2\left(\ex^{\frac{1}{2}}+1\right)\norm{y(s_0)}{H}^2,&\quad&\mbox{for all}\quad  \tau\in[0,T_*],\\
\norm{\widetilde\eta}{L^2((s_0+iT_*,s_0+(i+1)T_*), H)}^2&\le\textstyle\frac{1}{T_*}\ex^\frac{1}{2}\norm{y(s_0)}{H}^2,&\quad&\mbox{for all}\quad i\in\N.
\end{align*} 

Finally, we can conclude~\eqref{est_part_z} and~\eqref{est_part_eta} from the fact that $z=\ex^{-\frac{\bar\lambda}{2}(\Bigcdot-s_0)}y$ solves~\eqref{sys-z2cf} in~$\R_{s_0}$
with~$\chi=1$ and~$\eta^{\bar\lambda,a,b}=\ex^{-\frac{\bar\lambda}{2}(\Bigcdot-s_0)}\widetilde\eta$.
\end{proof}

Also, concerning the example~\ref{ex.2} above, again in the case~$\omega=\Omega$ and~$\chi=1$, an estimate similar to~\eqref{Msimplecase} will follow,
for example, in the case $\Omega$ is convex and we take piecewise constant controls related to a
partition of~$\Omega=\bigcup_{i=1}^M\Omega_j$ into suitable small convex sub-domains~$\Omega_i$.
Now, if $D$ is the diameter of~$\Omega$ we can
cover~$\Omega$ with~$n^d$ copies (obtained by suitable translations) of the rectangle~$\prod_{i=1}^d[0,l]$, with~$l=\frac{D}{n}$, and whose diameter is~$d^\frac{1}{2}l$.
Taking the intersection of these
rectangle copies with~$\Omega$ we still have that
we can cover~$\Omega$ with~$M\le n^d$ convex domains~$\{\Omega_i\mid i\in\{1,\,2,\,\dots,\,M\}\}$,
each with diameter not bigger than~$d^\frac{1}{2}l$. Proceeding as in the proof of Theorem~\ref{T:part}
we arrive to the sufficient condition~\eqref{cond.part1} for stabilization
\[
\norm{(1-P_M)}{\LL(H,V')}^{-2}\ge 4D_{\rm rc}\ex^{1}\norm{(a-\textstyle\frac{\bar\lambda}{2},b)}{\WW}^2.
\]

Now we follow~\cite[Section~IV.A]{KroRod-ecc15} and~\cite[Section~2.3, Example~2.15]{BreKunRod-pp15} to compute~$\norm{(1-P_M)}{\LL(H,V')}^{-2}$.
Let us set $\mathcal C
=\left\{\Psi_i=\frac{1}{\norm{1_{\Omega_i}}{H}}1_{\Omega_i}\mid
i\in\{1,\,2,\,\dots,\,M\right\}\in H$, $\omega=\Omega$, and $\chi=1$.
For given $v\in V$ and $z\in H$ we find that
\begin{align*}
&\quad\left(1_{\omega}(1-P_M)1_{\omega}z,\,v\right)_{V',\,V}=\left((1-P_M)z,\,v\right)_{V',\,V}
=\left(z,\,(1-P_M)v\right)_{H}\\
&=\Big(z,\,v-\textstyle\sum\limits_{i=1}^M(v,\,\Psi_i)_{H}\Psi_i\Big)_{H}
=\sum\limits_{i=1}^M\left(z\rest{\Omega_i},\varphi_i\right)_{L^2(\Omega_i)},
\end{align*}
where $\varphi_i\coloneqq
v\rest{\Omega_i}-(v\rest{\Omega_i},\,\Psi_i\rest{\Omega_i})_{L^2(\Omega_i)}\Psi_i\rest{\Omega_i}
=v\rest{\Omega_i}-\frac{1}{\norm{1}{L^2(\Omega_i)}^2}(v\rest{\Omega_i},\,1)_{L^2(\Omega_i)}$
has zero
average in~$\Omega_i$. This allows to conclude
that~$\norm{\nabla\varphi_i}{L^2(\Omega_i)^n}^2\ge
\mu_{1,i}\norm{\varphi_i}{L^2(\Omega_i)}^2$, where~$\mu_{1,i}$ is the first nonzero eigenvalue
$\mu_{1,i}$ of the Neumann Laplacian in~$\Omega_j$. Recall that in this case,
from~\cite[Equation~(1.9)]{PayneWein60}, we know that~$\mu_{1,i}\ge\frac{\pi^2}{l^2d}$.
Thus, we find for $z\in H$ and $v\in V$ with
$\norm{z}{H}=1$, $\norm{v}{V} =1$ the estimates
\begin{equation*}
\left(1_{\omega}(1-P_M)1_{\omega}z,\,v\right)_{V',\,V}
\le \textstyle\sum\limits_{i=1}^M
\mu_{1,i}^{-\frac{1}{2}}\norm{z\rest{\Omega_i}}{L^2(\Omega_i)}\norm{\nabla
v\rest{\Omega_i}}{L^2(\Omega_i)}
\le d^\frac{1}{2}l\pi^{-1}.
\end{equation*}
Therefore $\norm{(1-P_M)}{\LL(H,V')}^{2}\le l^2\pi^{-2}d$, and we can conclude that
\[
l^{-2}\ge 4\pi^{-2}dD_{\rm rc}\ex^{1}\norm{(a-\textstyle\frac{\bar\lambda}{2},b)}{\WW}^2
\]
is a sufficient condition for stabilization. That is, the~$M\le n^d=D^d l^{-d}$ piecewise constant actuators in~$\mathcal C$ above are able to stabilize the system.
That is, it is enough to take
\[
 M\ge (\textstyle\frac{4D_{\rm rc}\ex^{1}}{\pi^2})^\frac{d}{2}D^d d^\frac{d}{2}\norm{(a-\textstyle\frac{\bar\lambda}{2},b)}{\WW}^d
\]
piecewise constant actuators as above to stabilize the system.

\subsection{Feedback stabilizing rule and Riccati equation}
From Theorem~\ref{T:stab-v-gen} we know  that system~\eqref{sys-z2cf} is stabilizable with rate~$\frac{\bar\lambda}{2}$, provided~\eqref{M_bound-gen} holds true.
Here we recall that in that case the control can be taken in feedback form, i.e.
\begin{align*}
\eta=\FF_{\bar\lambda}(t)z=B_M^*\Pi_{\bar\lambda}(t)z,
\end{align*}
with
\begin{equation}\label{eq:kk2}
B_M\coloneqq1_\omega\chi  P_M1_\omega.
\end{equation}

Actually, the procedure is standard. We have just to follow the arguments in~\cite{BarRodShi11,BreKunRod-pp15}, by considering suitable minimization problems to conclude the following results.

\begin{lemma}\label{L:wot}
 Let $(\mathcal M,\mathcal R) =(1,1)$ or $(\mathcal M,\mathcal R) =
((-\Delta)^\frac{1}{2},1)$, where~$1$ here stands for identity operator. Then there exists a function $\Pi\colon s\mapsto \Pi(s)$, $s\ge0$, which belongs to
 \[
 \mathcal P\coloneqq\left\{P\in L^{\infty}(\mathbb R_0,\mathcal L(H))
 \left|\begin{array}{l}\hspace*{-0.4em}
                     P(t)\mbox{ is self-adjoint positive definite for all }t\ge0,\\
                     \hspace*{-0.4em}\mbox{the family } \{P(t)\mid t\ge0\}\mbox{ is continuous in the}\\
                     \hspace*{-0.4em}\mbox{weak operator topology}
                    \end{array}\right.\hspace*{-0.6em}\right\}\hspace*{-0.2em}
\]
 and satisfies the differential Riccati equation
 \begin{equation}\label{eq:riccati}
 \dot \Pi+\Pi\mathbb A^{a,b}+{\mathbb A^{a,b}}^*\Pi -\Pi  B_M  \mathcal R ^{-1}
B_M^* \Pi +\bar\lambda\Pi+\mathcal M^*\mathcal M=0,
\end{equation}
with $\mathbb A^{a,b}z\coloneqq \Delta z-
az- \nabla\cdot(bz)$.
Moreover, $\Pi=\Pi_{\bar\lambda}$ is the unique solution of~\eqref{eq:riccati} in the class~$\mathcal P$. Furthermore there is a
constant~$\overline C_{\left[C_{\mathcal W},\bar\lambda,\frac{1}{\bar\lambda}\right]}$ such that
\begin{equation}\label{costPis}
\norm{\Pi_{\bar\lambda}(s)}{\mathcal L(H)}\le
 \overline C_{\left[C_{\mathcal W},\bar\lambda,\frac{1}{\bar\lambda}\right]},\quad\mbox{for all}\quad s\ge0.
\end{equation}
\end{lemma}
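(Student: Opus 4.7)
The plan is to construct $\Pi_{\bar\lambda}$ as the Riccati operator for an infinite-horizon linear-quadratic problem, following~\cite{BarRodShi11,BreKunRod-pp15}. For each $s \ge 0$ and $z_0 \in H$, I would consider the cost
\[
  J_s(z_0, \eta) \coloneqq \int_s^{+\infty} \bigl(|\MM z(t)|_H^2 + (\RR \eta(t), \eta(t))_H\bigr)\,\ex^{\bar\lambda(t-s)}\,\ed t,
\]
minimized over $\eta \in L^2(\R_s, H)$ subject to~\eqref{sys-z2cf} with $z(s) = z_0$. The first point is that the infimum is finite: since~\eqref{M_bound-gen} is in force, Theorem~\ref{T:stab-v-gen} applied with a rate strictly larger than $\bar\lambda$ (say $2\bar\lambda + 2$) produces a feasible $\widetilde\eta$ such that $\int_s^\infty |\widetilde z|_H^2 \ex^{\bar\lambda(t-s)}\,\ed t$ and $\int_s^\infty |\widetilde\eta|_H^2 \ex^{\bar\lambda(t-s)}\,\ed t$ are both bounded by $\overline C_{[C_\WW, \bar\lambda, 1/\bar\lambda]} |z_0|_H^2$. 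For the case $\MM = (-\Delta)^{1/2}$, where $|\MM z|_H^2 = |z|_V^2$, the substitution $\tilde z = \ex^{(\bar\lambda/2)(t-s)} z$ together with Lemma~\ref{L:weak-z} applied on unit time intervals upgrades the $H$-decay of $z$ to a finite weighted $V$-norm estimate. Strict convexity and coercivity of $J_s$ in $\eta$ then yield a unique minimizer $\eta^*(s, z_0)$, and $V(s, z_0) \coloneqq J_s(z_0, \eta^*(s, z_0))$ is a positive quadratic form in $z_0$, defining a self-adjoint positive operator $\Pi_{\bar\lambda}(s) \in \LL(H)$ via $(\Pi_{\bar\lambda}(s)z_0, z_0)_H = V(s, z_0)$.

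Standard dynamic programming then produces both the Riccati identity and the feedback representation simultaneously. Bellman's principle on $[s, s+h]$, after differentiation at $h = 0$ along optimal trajectories and polarization, leads to~\eqref{eq:riccati} together with $\eta^*(t) = -\RR^{-1} B_M^* \Pi_{\bar\lambda}(t) z^*(t)$. Weak-operator continuity of $s \mapsto \Pi_{\bar\lambda}(s)$ is inherited from continuity of $s \mapsto V(s, z_0)$ for each fixed $z_0$, which follows from continuous dependence of the optimal pair on the initial time; combining this with polarization and with the \emph{uniform} upper bound
\[
  (\Pi_{\bar\lambda}(s) z_0, z_0)_H \;\le\; J_s(z_0, \widetilde\eta) \;\le\; \overline C_{[C_\WW, \bar\lambda, 1/\bar\lambda]} |z_0|_H^2
\]
established above yields the bound~\eqref{costPis}.

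Uniqueness in $\PP$ is the step I expect to be the main obstacle. Given any $\widetilde\Pi \in \PP$ solving~\eqref{eq:riccati}, the closed-loop system driven by $\eta(t) = -\RR^{-1} B_M^* \widetilde\Pi(t) z(t)$ is well-posed in $W_{\rm loc}(\R_s, V, V')$ because $\widetilde\Pi \in L^\infty(\R_0, \LL(H))$. Differentiating $t \mapsto \ex^{\bar\lambda(t-s)}(\widetilde\Pi(t) z(t), z(t))_H$ and using~\eqref{eq:riccati} to cancel cross terms gives, after integration from $s$ to $T$ and passage to the limit $T \to \infty$, the identity $(\widetilde\Pi(s) z_0, z_0)_H = V(s, z_0)$, whence $\widetilde\Pi(s) = \Pi_{\bar\lambda}(s)$ by polarization and self-adjointness. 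The delicate points are: to justify the chain rule when $\widetilde\Pi$ is only weak-operator-continuous in $t$, which is handled by a time-mollification argument exploiting $\widetilde\Pi \in L^\infty$; and to show that the boundary term at $T = \infty$ vanishes, which follows from the nonnegativity of $\widetilde\Pi$ together with the exponential decay of the closed-loop trajectory forced by the nonnegative feedback cost $(B_M \RR^{-1} B_M^* \widetilde\Pi z, \widetilde\Pi z)_H$ appearing in~\eqref{eq:riccati} and the sign $+\bar\lambda\Pi$ term.
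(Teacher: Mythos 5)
Your proposal is the standard infinite-horizon LQ construction that the paper itself does not write out: the text preceding Lemma~\ref{L:wot} simply defers to the minimization arguments of \cite{BarRodShi11,BreKunRod-pp15}, and the cost \eqref{J_cost} displayed after the lemma is exactly your $J_s$ after the substitution $\tilde z=\ex^{\frac{\bar\lambda}{2}(t-s)}z$, which converts the weight $\ex^{\bar\lambda(t-s)}$ into the unweighted cost for the shifted generator appearing in \eqref{sys-tilzfeed}. So in structure --- value function, dynamic programming, polarization, completion of squares for uniqueness --- you are reproducing the intended proof, and your uniqueness step is handled correctly: nonnegativity of $\widetilde\Pi$ lets you drop the boundary term in one direction, while $|\MM z|_H^2\ge c\,|z|_H^2$ (trivially for $\MM=1$, by the Poincar\'e inequality for $\MM=(-\Delta)^\frac{1}{2}$) forces $\liminf_{T\to\infty}\ex^{\bar\lambda(T-s)}|z(T)|_H^2=0$ along a subsequence for any admissible control in the other.

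The one step that does not go through as written is the finiteness of the infimum. You invoke Theorem~\ref{T:stab-v-gen} ``with a rate strictly larger than $\bar\lambda$'', but that theorem is only available under condition \eqref{M_bound-gen}, and \eqref{M_bound-gen} depends on the rate through $\Upsilon$: by \eqref{Upsilon1} and \eqref{barTheta}, $\Upsilon$ involves $\norm{a-\frac{\bar\lambda}{2}}{L^\infty(\R_0,L^d)}$ and grows as the rate increases, so the hypothesis at rate $\bar\lambda$ does not imply the hypothesis at rate $2\bar\lambda+2$ for the same $P_M$. You are right that \emph{some} strictly larger rate is needed --- Theorem~\ref{T:stab-v-gen} at rate $\bar\lambda$ only yields $\ex^{\frac{\widehat\lambda}{2}\Bigcdot}\eta\in L^2$ for $\widehat\lambda<\bar\lambda$, with a constant blowing up as $\widehat\lambda\to\bar\lambda$, so the control term of $J_s$ need not be finite at weight $\ex^{\bar\lambda(t-s)}$ --- but the repair is to assume, as the cited references implicitly arrange, that \eqref{M_bound-gen} holds for some $\bar\lambda_1>\bar\lambda$, or to note that if \eqref{M_bound-gen} holds with strict inequality at $\bar\lambda$ then by continuity of $\Upsilon$ in the rate it holds at some nearby $\bar\lambda_1>\bar\lambda$; rate $\bar\lambda_1$ then gives $\ex^{\bar\lambda(t-s)}|z(t)|_H^2\le C\ex^{-(\bar\lambda_1-\bar\lambda)(t-s)}|z_0|_H^2$ and $\ex^{\frac{\bar\lambda}{2}\Bigcdot}\eta\in L^2$, which is all you need. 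Since the paper is silent on this point as well, it is a shared gap rather than a defect of your route; the remainder of your outline is sound.
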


We recall that the obtained feedback control is the one that minimizes the cost function
\begin{equation}\label{J_cost}
 J(\eta)=J(z(\eta),\eta)\coloneqq\int_{s_0}^{+\infty}\norm{\MM z(\tau)}{H}^2 +(\RR\eta(\tau),\eta(\tau))_H\ed\tau.
\end{equation}

Let us now consider the closed-loop system
\begin{subequations}\label{sys-tilzfeed}
\begin{align}
  &\partial_t \tilde z-\Delta \tilde z+ ( a-\textstyle\frac{\bar\lambda}{2}) \tilde z+ \nabla\cdot (b\tilde z)+B_M\RR^{-1}B_M^*\Pi_{\bar\lambda} \tilde z=0,\\
  &\tilde z\rest \Gamma =0, \qquad \tilde z(s_0)=z_0.
\end{align}
\end{subequations}

\begin{theorem}\label{T:feed}
 Let  $\chi$ and $P_M$ satisfy the conditions  in Theorem~\ref{T:stab-v-gen}, let $(\mathcal M,\mathcal R) =(1,1)$ or $(\mathcal M,\mathcal R) =
((-\Delta)^\frac{1}{2},1)$,
 and let~$z_0\in H$.
 Then the solution~$\tilde z$ for~\eqref{sys-tilzfeed}
 is defined for all $t\ge s_0$, and it satisfies
\begin{equation}\label{est-yfeedH}
  \norm{\tilde z}{W(\mathbb R_{s_0},V,V')}^2\le\overline C_{\left[C_{\mathcal W},\bar\lambda,\frac{1}{\bar\lambda}\right]}\norm{z_0}{H}^2.
\end{equation}
\end{theorem}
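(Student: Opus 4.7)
The plan is to use the Riccati operator $\Pi_{\bar\lambda}$ furnished by Lemma~\ref{L:wot} as a time-dependent Lyapunov functional for the closed-loop system~\eqref{sys-tilzfeed}. Since~\eqref{costPis} guarantees that the feedback operator $B_M\RR^{-1}B_M^*\Pi_{\bar\lambda}(t)$ is bounded in~$\mathcal L(H)$ uniformly in $t\ge s_0$, the closed-loop system is a bounded-operator perturbation of the open-loop system treated in Lemma~\ref{L:weak-z}. Hence, on each bounded interval $I=(s_0,s_1)$, a Faedo--Galerkin scheme based on the eigenfunctions of $-\Delta$ (as sketched in the proof of Lemma~\ref{L:weak-z}) produces a unique weak solution $\tilde z\in W(I,V,V')$ with estimates depending on $C_\WW$, $|I|$, and $\norm{\Pi_{\bar\lambda}}{L^\infty(\R_0,\LL(H))}$. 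What remains is to make these estimates global and independent of~$s_1$.

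For the global decay one computes, along smooth Galerkin approximations, the derivative
\begin{align*}
\tfrac{\ed}{\ed t}\bigl(\Pi_{\bar\lambda}\tilde z,\tilde z\bigr)_H
&=\bigl(\dot\Pi_{\bar\lambda}\tilde z,\tilde z\bigr)_H
+2\bigl(\Pi_{\bar\lambda}(\mathbb A^{a,b}\tilde z+\tfrac{\bar\lambda}{2}\tilde z
-B_M\RR^{-1}B_M^*\Pi_{\bar\lambda}\tilde z),\tilde z\bigr)_H,
\end{align*}
and substitutes $\dot\Pi_{\bar\lambda}$ from the Riccati equation~\eqref{eq:riccati}. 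The cross terms $2(\Pi_{\bar\lambda}\mathbb A^{a,b}\tilde z,\tilde z)_H$ cancel against $(\Pi_{\bar\lambda}\mathbb A^{a,b}+{\mathbb A^{a,b}}^*\Pi_{\bar\lambda})$ inside~\eqref{eq:riccati}, the $\bar\lambda\Pi_{\bar\lambda}$ term cancels the shift $\frac{\bar\lambda}{2}$ already built into~\eqref{sys-tilzfeed}, and a sign count leaves
\[
\tfrac{\ed}{\ed t}(\Pi_{\bar\lambda}\tilde z,\tilde z)_H+\norm{\MM\tilde z}{H}^2+(\RR^{-1}B_M^*\Pi_{\bar\lambda}\tilde z,B_M^*\Pi_{\bar\lambda}\tilde z)_H\le 0.
\]
Integrating on $(s_0,+\infty)$ and using~\eqref{costPis} yields the global bound
\[
(\Pi_{\bar\lambda}(t)\tilde z(t),\tilde z(t))_H+\int_{s_0}^{+\infty}\norm{\MM\tilde z}{H}^2\,\ed\tau
\le \overline C_{[C_\WW,\bar\lambda,1/\bar\lambda]}\norm{z_0}{H}^2,
\]
for both choices of~$(\MM,\RR)$. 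When $\MM=(-\Delta)^{1/2}$ this is already a bound on $\norm{\tilde z}{L^2(\R_{s_0},V)}$; when $\MM=1$ one combines the resulting $L^2(\R_{s_0},H)$ bound with the classical energy estimate~\eqref{zwYoung} applied to~\eqref{sys-tilzfeed} (the feedback contributes an extra $\norm{B_M\RR^{-1}B_M^*\Pi_{\bar\lambda}\tilde z}{V'}^2\le \overline C\norm{\tilde z}{H}^2$) and integrates in time to upgrade the estimate to $L^2(\R_{s_0},V)$.

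For the time-derivative piece, one reads $\partial_t\tilde z$ off the equation and estimates $\norm{-\Delta\tilde z}{V'}\le\norm{\tilde z}{V}$, the reaction/convection terms by Lemma~\ref{L:estRC}, and the feedback term by $\overline C\norm{\tilde z}{H}$; squaring and integrating gives $\partial_t\tilde z\in L^2(\R_{s_0},V')$ with the same right-hand side~\eqref{est-yfeedH}. The main technical obstacle is justifying the pointwise differentiation of $(\Pi_{\bar\lambda}(t)\tilde z(t),\tilde z(t))_H$ given that $\Pi_{\bar\lambda}$ is only weakly-operator continuous and satisfies~\eqref{eq:riccati} in a suitable weak sense; this is handled by performing the computation at the Galerkin level (where $\Pi_{\bar\lambda}$ can be replaced by the corresponding finite-dimensional Riccati matrix, which is smooth in time) and passing to the limit, exactly as in~\cite{BarRodShi11,BreKunRod-pp15}.
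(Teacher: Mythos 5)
Your proposal is correct, and the key computation checks out: substituting $\dot\Pi_{\bar\lambda}$ from~\eqref{eq:riccati} into $\frac{\ed}{\ed t}(\Pi_{\bar\lambda}\tilde z,\tilde z)_H$ does cancel the $\Pi_{\bar\lambda}\mathbb A^{a,b}+{\mathbb A^{a,b}}^*\Pi_{\bar\lambda}$ terms and the $\bar\lambda$-shift, leaving exactly $-\norm{\MM\tilde z}{H}^2-(\RR^{-1}B_M^*\Pi_{\bar\lambda}\tilde z,B_M^*\Pi_{\bar\lambda}\tilde z)_H$, and positivity of $\Pi_{\bar\lambda}(t)$ together with~\eqref{costPis} then gives the integrated bound. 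The paper itself gives no explicit proof of Theorem~\ref{T:feed}: it only remarks that the procedure is standard and refers to \cite{BarRodShi11,BreKunRod-pp15} ``by considering suitable minimization problems,'' i.e.\ the route where the feedback arises as the minimizer of the cost~\eqref{J_cost}, the value function is the quadratic form $(\Pi_{\bar\lambda}(s_0)z_0,z_0)_H$, and the estimate~\eqref{est-yfeedH} is read off the finiteness of the optimal cost guaranteed by Theorem~\ref{T:stab-v-gen}. You instead run the converse (verification) direction: taking the Riccati solution of Lemma~\ref{L:wot} as given, you use it as a Lyapunov functional and deduce the same bound a posteriori. The two arguments are two faces of the same dynamic-programming machinery; your organization has the merit of making the logical dependence on Lemma~\ref{L:wot} explicit and of isolating cleanly where each hypothesis enters (positivity of $\Pi_{\bar\lambda}$ to drop the endpoint term, \eqref{costPis} for the constant, Lemma~\ref{L:global.bdd}/\eqref{zwYoung} to upgrade the $L^2(\R_{s_0},H)$ bound to $W(\R_{s_0},V,V')$ when $\MM=1$), while the minimization route has the advantage of not presupposing the Riccati solution. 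Your closing remark about performing the differentiation at the Galerkin level, where $\Pi_{\bar\lambda}$ is replaced by a smooth finite-dimensional Riccati matrix, correctly identifies and disposes of the only delicate regularity point.
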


Notice that that~$\tilde z$ solves~\eqref{sys-tilzfeed} if, and only if, $z=\ex^{-(\Bigcdot-s_0)\frac{\bar\lambda}{2}}\tilde z(t)$ solves
\begin{subequations}\label{sys-zfeed}
\begin{align}
  &\partial_t z-\Delta z+  a z+ \nabla\cdot (b z)+ B_M  \mathcal
R ^{-1}B_M^*\Pi_{\bar\lambda} z=0,\\
  &z\rest \Gamma =0, \qquad z(s_0)=z_0.
\end{align}
\end{subequations}
Therefore we can conclude the next result.
\begin{corollary}\label{C:feedz} Under the assumptions of  Theorem~\ref{T:feed}, let $\Pi_{\bar\lambda} \in {\mathcal {P}}$ be the unique solution
of~\eqref{eq:riccati}. Then for any ~$z_0\in H$, the solution~$z$ of~\eqref{sys-zfeed}
 is defined globally and satisfies, for all $t\ge s_0$,
\begin{equation*}
 \ex^{(t-s_0)\bar\lambda}\norm{z(t)}{H}^2+\int_{s_0}^t \ex^{(\tau-s_0)\bar\lambda}(\norm{z(\tau)}{V}^2
  +\norm{\partial_tz(\tau)}{V'}^2)\,{\rm d}\tau\le\overline C_{\left[C_{\WW,\rm st},\bar\lambda,\frac{1}{\bar\lambda}\right]}\norm{z_0}{H}^2.
\end{equation*}
\end{corollary}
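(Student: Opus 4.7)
The plan is to reduce Corollary~\ref{C:feedz} to Theorem~\ref{T:feed} via the exponential rescaling $z(t) = \ex^{-(t-s_0)\frac{\bar\lambda}{2}}\tilde z(t)$ that is already pointed out in the text between the two statements. By that equivalence, $z$ solves~\eqref{sys-zfeed} if, and only if, $\tilde z$ solves~\eqref{sys-tilzfeed}. Since Theorem~\ref{T:feed} gives a unique global solution $\tilde z\in W(\mathbb R_{s_0},V,V')$ with
\[
 \norm{\tilde z}{W(\mathbb R_{s_0},V,V')}^2\le\overline C_{\left[C_{\mathcal W},\bar\lambda,\frac{1}{\bar\lambda}\right]}\norm{z_0}{H}^2,
\]
global existence of $z$ is immediate, and the desired estimate on $z$ will follow by translating the three pieces of the $W$-norm of $\tilde z$ back to weighted norms of $z$.

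For the pointwise-in-time part, I would use $\ex^{(t-s_0)\bar\lambda}\norm{z(t)}{H}^2=\norm{\tilde z(t)}{H}^2$, combined with the continuous embedding $W(\mathbb R_{s_0},V,V')\hookrightarrow C_b([s_0,+\infty),H)$, to obtain
\[
\ex^{(t-s_0)\bar\lambda}\norm{z(t)}{H}^2\le \overline C_{\left[C_{\mathcal W},\bar\lambda,\frac{1}{\bar\lambda}\right]}\norm{z_0}{H}^2.
\]
For the $V$-integral, the same scaling gives $\ex^{(\tau-s_0)\bar\lambda}\norm{z(\tau)}{V}^2=\norm{\tilde z(\tau)}{V}^2$, and integration over $(s_0,t)$ is bounded by $\norm{\tilde z}{L^2(\mathbb R_{s_0},V)}^2$, hence by~\eqref{est-yfeedH}.

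The only slightly delicate step is the $\partial_t z$ term, which requires the product rule
\[
 \partial_t z=\ex^{-(t-s_0)\frac{\bar\lambda}{2}}\bigl(\partial_t \tilde z-\textstyle\frac{\bar\lambda}{2}\tilde z\bigr),
\]
so that, using $V\xhookrightarrow{} H\xhookrightarrow{} V'$,
\[
 \ex^{(\tau-s_0)\bar\lambda}\norm{\partial_t z(\tau)}{V'}^2\le 2\norm{\partial_t\tilde z(\tau)}{V'}^2+\textstyle\frac{\bar\lambda^2}{2}\norm{\iota}{\LL(H,V')}^2\norm{\tilde z(\tau)}{H}^2.
\]
Integrating over $(s_0,t)$ and using~\eqref{est-yfeedH} together with the embedding $W(\mathbb R_{s_0},V,V')\hookrightarrow L^2(\mathbb R_{s_0},H)$ yields the remaining piece, with the new $\bar\lambda$-dependent prefactor absorbed into the monotone constant $\overline C_{\left[C_{\WW,\rm st},\bar\lambda,\frac{1}{\bar\lambda}\right]}$. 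Summing the three pieces gives the claimed bound. There is no real obstacle here; the main bookkeeping is just keeping track of the $\bar\lambda$-dependence of the constants, which fits into the monotone notation $\overline C_{[\cdot]}$ that the paper already uses.
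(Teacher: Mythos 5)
Your proposal is correct and follows exactly the route the paper intends: the paper states Corollary~\ref{C:feedz} as an immediate consequence of Theorem~\ref{T:feed} via the observation that $z=\ex^{-(\Bigcdot-s_0)\frac{\bar\lambda}{2}}\tilde z$ solves~\eqref{sys-zfeed} if and only if $\tilde z$ solves~\eqref{sys-tilzfeed}, and your translation of the three pieces of the $W(\mathbb R_{s_0},V,V')$-norm (including the product-rule term for $\partial_t z$, whose $\bar\lambda^2$ prefactor is absorbed into the monotone constant) is precisely the bookkeeping that justifies it.
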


\subsection{Dependence of the transient bound on the exponential rate}\label{sS:transient}
From Theorem~\ref{T:stab-v-gen} we have that, for a suitable open-loop control~$\eta$, the solution of system~\eqref{sys-z2cf} satisfies
\begin{subequations}
\begin{equation}\label{trans_bd1_z}
 |z(t)|_{H}^2
\le C_{\bar\lambda}\ex^{-\bar\lambda (t-s_0)}\norm{z_0}{H}^2, \mbox{ for }t\ge s_0,
\end{equation}
where
\begin{equation}\label{trans_bd1}
 C_{\bar\lambda}\le \ex^{\TT_{0,2}+\TT_{1,2}{\bar\lambda}^2},
\end{equation}
for suitable positive
constants $\TT_{0,2}$ and~$\TT_{1,2}$, which can be taken independent of~$\bar\lambda$. Taking, for example, $\widehat\lambda=\frac{\bar\lambda}{2}$, again by Theorem~\ref{T:stab-v-gen}, we have that
$J(z(\eta),\eta)\le \widetilde C_{\bar\lambda}\ex^{-\bar\lambda (t-s_0)}\norm{z_0}{H}^2$, with $\widetilde C_{\bar\lambda}\le\ex^{\widetilde \TT_{0,2}+\widetilde \TT_{1,2}\bar\lambda^2}$
and~$J$ as in~\eqref{J_cost}.
In particular, the solution of the closed-loop system~\eqref{sys-tilzfeed} also satisfies
\begin{equation}\label{trans_bd1-f}
 |\tilde z(t)|_{H}^2\le\widehat C_{\bar\lambda}\norm{z_0}{H}^2,\qquad \widehat C_{\bar\lambda}\le \ex^{\widehat \TT_{0,2}+\widehat \TT_{1,2}\bar\lambda^2},
\end{equation}
\end{subequations}

\begin{remark}
In the case $a\in L^\infty(I,L^\infty)$ we have a slightly different observability inequality (cf.~\cite{DuyckZhangZua08}), and we can also prove that we can take 
$\Upsilon_0=\ex^{(A_1\bar\lambda+B_2)T_*}$ for suitable constants $A_1$ and $A_2$, in Theorem~\ref{T:stab-v-gen}. In this case we would arrive to an estimate 
in the form~$C_{\bar\lambda}\le \ex^{\TT_{0,1}+\TT_{1,1}\bar\lambda}$. Notice that in either case we have a
term~${\bar\lambda}^e$ with $e\ge1$. 
\end{remark}

Of course, the constants~$C_{\bar\lambda}$, $\widetilde C_{\bar\lambda}$, and $\widehat C_{\bar\lambda}$ above are related to each other.

We will prove that we can have~\eqref{trans_bd1} with~$C_{\bar\lambda}\le \ex^{\TT_{0,\frac{1}{2}}+\TT_{1,\frac{1}{2}}\bar\lambda^\frac{1}{2}}$.
The motivations of this study are the following:%
\begin{itemize}
\item a sufficient condition, on the smallness of~$\frac{\widehat  C_{\bar\lambda}}{\bar\lambda}$, 
given in~\cite{BreKunRod-pp15} for the stabilizability of the
FitzHugh--Nagumo system.
\item we need to know the constant~$\widehat  C_{\bar\lambda}$ to guarantee that at time~$t=\tau>s_0$
we will be closer to zero than we are at time~$t=s_0$, because from $\norm{\tilde z(\tau)}{H}^2\le C_{\bar\lambda}\ex^{-\bar\lambda(\tau-s_0)}\norm{\tilde z(s_0)}{H}^2$ it follows that
\[
\norm{\tilde z(\tau)}{H}^2<\norm{\tilde z(s_0)}{H}^2\quad\mbox{if}\quad \tau>s_0+\textstyle\frac{\log(\widehat  C_{\bar\lambda})}{\bar\lambda}. 
\]
\item as we will see, the value~$\epsilon$ in~\eqref{goal}
will decrease as~$\widehat  C_{\bar\lambda}$ increases,
that is the linearization based feedback control
will work, for the nonlinear system, in a bigger neighborhood for a smaller~$\widehat  C_{\bar\lambda}$ (cf.~Remark~\ref{R:trans_local} below). 
\end{itemize}
\begin{remark} Notice also that necessarily~$\widehat  C_{\bar\lambda}\ge1$. Then~$\frac{\widehat  C_{\bar\lambda}}{\bar\lambda}\to+\infty$ as~$\bar\lambda\to0$. 
It follows that, if~$\widehat  C_{\bar\lambda}\approx\ex^{\widehat  \TT_{0,\frac{1}{2}}+\widehat  \TT_{1,\frac{1}{2}}\bar\lambda^\frac{1}{2}}$ for big~$\bar\lambda$,
then the function~$\frac{\widehat  C_{\bar\lambda}}{\bar\lambda}$ has a strictly positive minimum. That is,
$\frac{\widehat  C_{\bar\lambda}}{\bar\lambda}$ cannot be made arbitrarily small by the choice of~$\bar\lambda$.
\end{remark}

Following~\cite{HinrichsenPlisPrit01}, since associated to a closed-loop system we will call
the constant $\widehat C_{\bar\lambda}$ the {transient bound} associated to that system. The following Theorem is, however, concerned with open-loop controls and with no restriction on
the dimension of the control.
\begin{theorem}
There exist $T_{**}>0$ and nonnegative constants~$\TT_{0,\frac{1}{2}}$, and $\TT_{1,\frac{1}{2}}$, such that for a suitable control function~$\eta$,
the solution of system~\eqref{sys-z2c}, with $B=1_\omega$, satisfies 
for all~$z_0\in H$
\begin{equation}\label{trans_bd3}
 |z(t)|_{H}^2
\le \ex^{\TT_{0,\frac{1}{2}}+\TT_{1,\frac{1}{2}}\bar\lambda^\frac{1}{2}}\ex^{-\bar\lambda (t-s_0)}\norm{z_0}{H}^2, \mbox{ for }t\ge s_0,
\end{equation}
and~$z(t)=0$ for all~$t\ge s_0+T_{**}$.
\end{theorem}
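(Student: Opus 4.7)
The idea is to use exact null controllability on a single optimally chosen time interval $(s_0,s_0+T_{**})$ and then switch the control off. More precisely, for each $T>0$, Theorem~\ref{T:exact.contB=1} (applied with $B=1_\omega$) furnishes a control $\eta=\eta(z_0)$ driving the solution of~\eqref{sys-z2c} from $z_0$ at time $s_0$ to $0$ at time $s_0+T$, together with the estimate
\[
\norm{\eta}{L^2((s_0,s_0+T),H)}^2\le \ex^{2\widehat C\,\Theta(T,\norm{a}{L^\infty(\R_0,L^d)},\norm{b}{L^\infty_w(\R_0,L^\infty)},d)}\norm{z_0}{H}^2.
\]
I extend $\eta$ by zero past $s_0+T$, so that $z(t)=0$ for $t\ge s_0+T$; after such a choice, I will set $T_{**}=T$.

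Next I bound the transient growth on $[s_0,s_0+T]$. Viewing the control term as an external force $f=1_\omega\eta\in L^2((s_0,s_0+T),H)\hookrightarrow L^2((s_0,s_0+T),V')$, the energy estimate~\eqref{zwGron} (with $|I|=T$) yields
\[
\sup_{s\in[s_0,s_0+T]}\norm{z(s)}{H}^2\le \ex^{D_{\rm rc}\norm{(a,b)}{\WW}^2 T}\bigl(1+2\norm{\iota}{\LL(H,V')}^2\ex^{2\widehat C\,\Theta(T,\dots)}\bigr)\norm{z_0}{H}^2.
\]
Since $\ex^{-\bar\lambda(t-s_0)}\ge\ex^{-\bar\lambda T}$ for $t\in[s_0,s_0+T]$, while the inequality is trivial for $t\ge s_0+T$ (where $z\equiv 0$), I obtain
\[
\norm{z(t)}{H}^2\le \ex^{\Psi(T)}\ex^{-\bar\lambda(t-s_0)}\norm{z_0}{H}^2\quad\text{for all }t\ge s_0,
\]
with an exponent that, after inserting the explicit form~\eqref{const_obs1} of $\Theta$ and absorbing the $\log(1+\cdots)$ factor, takes the shape
\[
\Psi(T)\le (K+\bar\lambda)\,T+\frac{2\widehat C}{T}+K_0,
\]
where $K:=D_{\rm rc}\norm{(a,b)}{\WW}^2+2\widehat C(\norm{a}{L^\infty(\R_0,L^d)}+d\norm{b}{L^\infty_w(\R_0,L^\infty)}^2)$ and $K_0:=2\widehat C(1+\norm{a}{L^\infty(\R_0,L^d)}^2+d\norm{b}{L^\infty_w(\R_0,L^\infty)}^2)+\log 2+2\log\norm{\iota}{\LL(H,V')}$ are independent of $\bar\lambda$.

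Finally I optimize in $T$. The function $T\mapsto (K+\bar\lambda)T+2\widehat C/T$ is minimized at
\[
T_{**}=\sqrt{\frac{2\widehat C}{K+\bar\lambda}},\qquad\text{with minimum value}\quad 2\sqrt{2\widehat C(K+\bar\lambda)}.
\]
Using $\sqrt{K+\bar\lambda}\le\sqrt{K}+\sqrt{\bar\lambda}$ one obtains
\[
\Psi(T_{**})\le \bigl(K_0+2\sqrt{2\widehat C K}\bigr)+2\sqrt{2\widehat C}\,\bar\lambda^{1/2}=:\TT_{0,\frac12}+\TT_{1,\frac12}\bar\lambda^{1/2},
\]
which is exactly the desired bound~\eqref{trans_bd3}. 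Since $z$ has already been constructed to vanish on $[s_0+T_{**},+\infty)$, the proof is complete.

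The only real obstacle is keeping track of the interplay in the exponent between the control-cost factor (which blows up like $\ex^{c/T}$ as $T\downarrow 0$), the energy-growth factor (linear in $T$ in the exponent), and the discount factor $\ex^{\bar\lambda T}$; once all three are gathered as $(K+\bar\lambda)T+2\widehat C/T$, the arithmetic/geometric nature of the minimum produces the advertised $\bar\lambda^{1/2}$ scaling and the claim follows by direct substitution.
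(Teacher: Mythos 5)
Your proposal is correct and follows essentially the same route as the paper: exact null control on an interval of length $T$ with the observability cost $\ex^{c\,\Theta(T,\dots)}$, the energy estimate~\eqref{zwGron} for the transient, extension of the control by zero, and then optimization of an exponent of the form $\alpha/T+(\beta+\bar\lambda)T+\gamma$, whose minimum $2\sqrt{\alpha(\beta+\bar\lambda)}$ yields the $\bar\lambda^{1/2}$ scaling via $\sqrt{\beta+\bar\lambda}\le\sqrt{\beta}+\sqrt{\bar\lambda}$. The paper packages the first two steps through Lemma~\ref{L:defXi} and writes the exponent as $\rho_0(1+\frac{1}{T})+\rho_1(1+\bar\lambda)T$, but the argument is the same.
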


\begin{proof}
 Given $T>0$ and taking $s_1=s_0+T$, by Lemma~\ref{L:defXi} (extending the control by zero for $t>s_1$) we have
 \[
  \begin{split}
 \norm{z(t)}{H}^2&\le \ex^{D_{\rm rc}\norm{ (a,b)}{\WW^{I}}^2 T}\norm{z(s_0)}{H}^2\\
 &\quad+2\norm{\iota}{\LL(H,V')}^2\ex^{D_{\rm rc}\norm{ (a,b)}{\WW^{I}}^2
 T+\widehat D\Theta\left(T,\norm{a}{L^\infty(\R_0,L^d)},\norm{b}{L^\infty_w(\R_0,L^\infty)},d\right)}\norm{z(s_0)}{H}^2
 \end{split},
 \]
 for all $t> s_0$, and $z(t)=0$ for all~$t\ge s_0+T$. Thus we can write
\[
 \norm{z(t)}{H}^2\le\ex^{\rho_0(1+\frac{1}{T})+\rho_1(1+\bar\lambda)T}\norm{z(s_0)}{H}^2
\]
for suitable nonnegative constants~$\rho_0$ and $\rho_1$ (depending on $\norm{(a,b)}{\WW}$  and~$d$, but neither on~$T$ nor on~$\bar\lambda$).

Taking the minimizer~$T_{**}>0$ of~$\ex^{\rho_0(1+\frac{1}{\Bigcdot})+\rho_1(1+\bar\lambda)\Bigcdot}$ is given by~$\frac{\rho_0}{T_{**}^2}=\rho_1(1+\bar\lambda)$, we find
 \[
  |z(t)|_{H}^2
\le \ex^{\rho_0+2(\rho_0\rho_1)^\frac{1}{2}(1+\bar\lambda)^\frac{1}{2}}\ex^{-\bar\lambda (t-s_0)}\norm{z_0}{H}^2
\le\ex^{\rho_0+2(\rho_0\rho_1)^\frac{1}{2} +2(\rho_0\rho_1)^\frac{1}{2}\bar\lambda^\frac{1}{2}}\ex^{-\bar\lambda (t-s_0)}\norm{z_0}{H}^2,
 \]
for all $t\ge s_0$, and $z(t)=0$ for all~$t\ge s_0+T_{**}$.
\end{proof}

\subsection{Remark on the viscosity coefficient}
In section~\ref{S:red-stabil} we have rescaled time to normalize the viscosity parameter. We recall that the dimension of the stabilizing control may depend on the viscosity
parameter. If we are given the system
\[
\p_t z- \nu\Delta  z +  a z + \nabla \cdot \left( bz \right)+\sum_{i=1}^M u_i\Phi_i
  = 0;\quad
  z\rest\Gamma = 0.
\]
after rescaling time~$t=\frac{\tau}{\nu}$ we arrive to system
\[
\p_\tau \breve z- \Delta\breve  z + \frac{\breve  a}{\nu}\breve  z + \nabla \cdot \left( \frac{\breve b}{\nu}\breve z \right)+\sum_{i=1}^M \breve u_i\Phi_i
  = 0;\quad
 \breve z\rest\Gamma = 0.
\]
with~$\breve f(\tau)\coloneqq f(\frac{\tau}{\nu})$. Then if we want to have
\[
\left| z(t)\right|^2_{H}
\le C \mathrm{e}^{-\bar\lambda t}  \left| z(0)  \right|^2_{H},
\]
then we must have
\[
\left| \breve z(\tau)\right|^2_{H}
\le C \mathrm{e}^{-\frac{\bar\lambda}{\nu}\tau}  \left| z(0)  \right|^2_{H}.
\]
From Theorem~\ref{T:stab-v-gen}, condition~\eqref{M_bound-gen} is sufficient for stabilization. That condition reads now
\[
\begin{split}
\norm{1_\omega\chi(1- P_M)1_\omega}{\mathcal L(H, V')}
&\le 2\norm{\iota}{\LL(H,V')}^{-2}\ex^{-\overline\Theta\left(\norm{ \frac{\breve a}{\nu}-\frac{\bar\lambda}{2\nu}}{L^\infty(I,L^d)},
\norm{\frac{\breve b}{\nu}}{L^\infty_w(I,L^\infty)},\norm{ (\frac{\breve a}{\nu}-\frac{\bar\lambda}{2\nu},\frac{\breve b}{\nu})}{\WW^{I}},d,\nu\right)}\\
&=2\norm{\iota}{\LL(H,V')}^{-2}\ex^{-\overline\Theta\left(\norm{ \frac{a}{\nu}-\frac{\bar\lambda}{2\nu}}{L^\infty(I,L^d)},
\norm{\frac{b}{\nu}}{L^\infty_w(I,L^\infty)},\norm{ (\frac{a}{\nu}-\frac{\bar\lambda}{2\nu},\frac{b}{\nu})}{\WW^{I}},d,\nu\right)},
\end{split}
\]
with~$\overline\Theta$ as in~\eqref{barTheta}. Therefore, $P_M$ must be closer to the identity operator for smaller~$\nu$. This is natural, because for smaller~$\nu$
the system is ``less stable''. In other words, in order to stabilize the system, we expect to need more controls as~$\nu$ decreases. Recall the
estimates~\eqref{estM_eig} and~\eqref{estM_pc} in the examples in section~\ref{sS:examples_dimM}.

\section{Local internal stabilization of the nonlinear system}\label{S:nonlinear_int}
Here we show that the feedback rule ~$-{B_M}{\mathcal R}^{-1}B_M^*\Pi_{\bar\lambda}(t)$
constructed to stabilize exponentially
the linear system~\eqref{sys-zfeed} to zero, with rate~$\frac{\bar\lambda}{2}$, also stabilizes the
nonlinear system%
\begin{subequations}\label{sys-zfeed-nonl}
\begin{align}
  &\partial_t z-\Delta z+  a z+ \nabla\cdot (b z)+ B_M  \mathcal
R ^{-1}B_M^*\Pi_{\bar\lambda} z=\NN(z),\\
  &z\rest \Gamma =0, \qquad z(s_0)=z_0.
\end{align}
\end{subequations}
to zero, with the same rate, provided~${z}_0$ is small enough (cf.~system\eqref{sys-z-int}), and provided the nonlinearity~$\NN$ satisfies suitable boundedness properties.

We follow a standard procedure by using a suitable fixed point argument. To deal with the nonlinear system we will need to ask
more regularity for the reference trajectory, or more precisely to the function~$b$ we obtain
in the convection part of the linearized system~\eqref{sys-z2-eq}, after linearization around that reference trajectory.

This additional regularity will allow us to construct appropriate Banach spaces where we can construct an appropriate contraction mapping, whose fixed point
is the solution of the nonlinear system~\eqref{sys-zfeed-nonl}.

\smallskip\noindent
{\em Implicit boundedness assumption on the the reference trajectory.}
Consider the spaces
\begin{subequations}\label{Wwspace-st}
\begin{align}\label{Wwspace-st1-3}
&\begin{array}{rcl}
 \WW^{J}_{\rm st}&\coloneqq& \left\{b\in \WW^{J}\mid \nabla\cdot b\in L^\infty(J,L^2(\Omega,\R))\right\},
\\
 \WW_{\rm st}&\coloneqq&\left\{b\in \WW\mid \nabla\cdot b\in L^\infty(\R_0,L^2(\Omega,\R))\right\},
 \end{array}\qquad d\in\{1,2,3\},
\\
\label{Wwspace-st4-}
&\begin{array}{rcl}
 \WW^{J}_{\rm st}&\coloneqq& \left\{b\in \WW^{J}\mid \nabla\cdot b\in L^\infty_w(J,L^\infty(\Omega,\R))\right\},
\\
 \WW_{\rm st}&\coloneqq&\left\{b\in \WW\mid \nabla\cdot b\in L^\infty_w(\R_0,L^\infty(\Omega,\R))\right\},
 \end{array}\qquad d\ge4,
 \end{align}
\end{subequations}
 (cf.~\eqref{Wwspace}) which are supposed to be endowed, respectively, with the norms
\[
\begin{array}{rcl}
 \norm{(a,b)}{\WW^{J}_{\rm st}}&\coloneqq& \left((\norm{(a,b)}{\WW^{J}}^2+\norm{\nabla\cdot b}{L^\infty(J,L^r)}^2\right)^\frac{1}{2};
\\
\norm{b}{\WW_{\rm st}}&\coloneqq& \left((\norm{(a,b)}{\WW}^2+\norm{\nabla\cdot b}{L^\infty_w(\R_0,L^r)}^2\right)^\frac{1}{2}.
  \end{array}
\]
with $r=2$ for $d\in\{1,2,3\}$, and~$r=\infty$ for $d\ge4$.
The spaces~\eqref{Wwspace-st} are defined essentially for us to be able to derive the existence of strong solutions.
Notice that for $1< r<+\infty$ we have $L^\infty_w(\R_0,L^r)=L^\infty(\R_0,L^r)$, because~$L^r$ and~$L^{r_*}$, with $\frac{1}{r}+\frac{1}{r_*}=1$, are both separable.
See~\cite[Lemma~9.1.2]{Fattorini99} and~\cite[Section~4.1]{Fattorini05}.

Now, we have the following estimates for the convection term
\[
\norm{\nabla\cdot(bz)}{H}=\norm{(\nabla\cdot b)z + b\cdot\nabla z}{H}\le\norm{(\nabla\cdot b)}{L^2}\norm{z}{L^\infty} + \norm{b}{L^\infty}\norm{z}{V} 
\]
and, by using the Agmon inequality (cf.~\eqref{Agmon_d=1} and~\cite[chapter~II, section~1.4]{Temam97}),
 \begin{align*}
 \norm{\nabla\cdot(bz)}{H}&\le C\norm{(\nabla\cdot b)}{L^2}\norm{z}{H}^\frac{1}{2}\norm{z}{V}^\frac{1}{2}+\norm{b}{L^\infty}\norm{z}{V},&&\mbox{for}\quad d=1.\\
 \norm{\nabla\cdot(bz)}{H}&\le C\norm{(\nabla\cdot b)}{L^2}\norm{z}{H}^\frac{1}{2}\norm{z}{\D(\Delta)}^\frac{1}{2}+\norm{b}{L^\infty}\norm{z}{V},&&\mbox{for}\quad d=2.\\
 \norm{\nabla\cdot(bz)}{H}&\le C\norm{(\nabla\cdot b)}{L^2}\norm{z}{V}^\frac{1}{2}\norm{z}{\D(\Delta)}^\frac{1}{2}+\norm{b}{L^\infty}\norm{z}{V},&&\mbox{for}\quad d=3.
 \end{align*}

 For $d\ge4$, the Agmon inequality does not allow us to bound the $L^\infty$-norm by the~$\D(\Delta)$-norm. This is the reason we (need to) take different spaces in~\eqref{Wwspace-st4-}.
Notice that 
\begin{align*}
   \norm{\nabla\cdot(bz)}{H}&  \le C\norm{\nabla\cdot b}{L^\infty}\norm{z}{H}+\norm{b}{L^\infty}\norm{z}{V},&&\mbox{ for all}\quad d\ge1.
 \end{align*}
where~$C>0$ is a positive constant.

\begin{remark}
Considering~\eqref{Wwspace-st} we will be able to treat a wide class of nonlinearities.
However, it may happen that we do not need to deal with strong solutions (see for example the case of 1D Burgers
equation considered in~\cite{KroRod15}),  or it may happen that that we do not need to ask extra regularity for $(a,b)$ to have strong solutions
(see for example the case the convection term takes the form~$b\cdot\nabla z$ as in~\cite{BreKunRod-pp15}, suitable to deal with Neumann boundary conditions).
That is, depending on the particular system we are dealing with, we may consider different (less regular) spaces~\eqref{Wwspace-st}.
\end{remark}

\smallskip\noindent
{\em Boundedness assumption on the nonlinearity.}
We suppose the nonlinearity $\NN(z)$ in system~\eqref{sys-zfeed-nonl} satisfies for a suitable constant~$\widehat C\ge0$,
and all~$(z,\tilde z)\in\D(\Delta)\times\D(\Delta)$, the estimates
\begin{align}
|\NN(z)-\NN(\tilde z)|_{H}^2&\le \widehat C
|z-\tilde z|_V^{2}\left(1+\norm{z}{V}^{\e_1}
+\norm{\tilde z}{V}^{\e_2}\right)\left(\norm{z}{\D(\Delta)}^{2}+\norm{\tilde z}{\D(\Delta)}^{2}\right)\notag\\
&\quad+\widehat C|z-\tilde z|_{{\mathrm D}(A)}^2\left(|{z}|_{V}^{\e_3}
+|{\tilde z}|_{V}^{\e_4}\right),\label{exp-non2}
\end{align}
with $\{\e_1,\e_2\}\in[0,+\infty)$, and $\{\e_3,\e_4\}\in[2,+\infty)$.
Notice that~\eqref{exp-non2} implies that
\begin{align}
&\quad\left(\NN(z)-\NN(\tilde z),z-\tilde z\right)_H\notag\\
&\le 2\widehat C\left(|{z}|_{V}^{\e_3}
+|{\tilde z}|_{V}^{\e_4}+\left(|z|_V^{2}+|\tilde z|_V^{2}\right)\left(1+\norm{z}{V}^{\e_1}
+\norm{\tilde z}{V}^{\e_2}\right)\right)\left(\norm{z}{\D(\Delta)}^{2}+\norm{\tilde z}{\D(\Delta)}^{2}\right)\norm{z-\tilde z}{H}^2\notag\\
&\le \widehat C_1\left(1+\norm{z}{V}^{\e_5}
+\norm{\tilde z}{V}^{\e_6}\right)\left(\norm{z}{\D(\Delta)}^{2}+\norm{\tilde z}{\D(\Delta)}^{2}\right)\norm{z-\tilde z}{H}^2,
\label{exp-non3}
\end{align}
for suitable $\{\e_5,\e_6\}\in[2,+\infty)$.

\subsection{Strong solutions for the linearized systems}
We fix~$a$ and~$b$, which may depend on time and space, and a constant~$C_{\WW,\rm st}\ge 0$, satisfying
\begin{equation}\label{rhosigma-st}
\norm{a}{\WW}+\norm{b}{\WW^d_{\rm st}}\le C_{\WW,\rm st}.
\end{equation}

We present now some results, whose proofs can be done by following the arguments in~\cite[section~2]{BreKunRod-pp15}.

By multiplying~\eqref{sys-z2-eq} by~$-2\Delta z$, instead of by~$2z$ as in the proof of Lemma~\ref{L:weak-z}, we can also obtain the following.
\begin{lemma}\label{L:strong-z}
 Given $f\in L^2(I,H)$ and $z_0\in V$, there exists a strong
 solution
 $z\in W(I,\mathrm D(A),H)$ for system~\eqref{sys-z2}, which depends continuously on the data:
 \[
 \norm{z}{W(I,\mathrm D(A),H)}^2\le \overline C_{\left[|I|,C_{\mathcal W,\rm st}\right]}
\left(\norm{z(s_0)}{V}^2+\norm{f}{L^2(I,H)}^2\right).
 \]
\end{lemma}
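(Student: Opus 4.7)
The plan is to follow the standard Faedo--Galerkin approach, mimicking the argument sketched for Lemma~\ref{L:weak-z} but now testing with $-2\Delta z$ instead of $2z$ in order to control the $\D(\Delta)$-norm. First I would introduce Galerkin approximations $z_N$ in the span of the first $N$ eigenfunctions of the Dirichlet Laplacian; since these eigenfunctions lie in $\D(\Delta)$, all manipulations below are legitimate at the approximate level and the resulting a priori estimates will pass to the limit.

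Testing the equation~\eqref{sys-z2-eq} against $-2\Delta z_N$ gives the identity
\[
 \frac{\mathrm d}{\mathrm d t}\norm{z_N}{V}^2+2\norm{z_N}{\D(\Delta)}^2
 =2(az_N+\nabla\cdot(bz_N)+f,\,\Delta z_N)_H.
\]
The right-hand side is controlled by Cauchy--Schwarz and Young's inequality as
$2\norm{az_N+\nabla\cdot(bz_N)+f}{H}\norm{z_N}{\D(\Delta)}\le \tfrac{3}{2}\norm{z_N}{\D(\Delta)}^2+C\bigl(\norm{az_N}{H}^2+\norm{\nabla\cdot(bz_N)}{H}^2+\norm{f}{H}^2\bigr)$, and the quadratic $\D(\Delta)$-term is absorbed into the left-hand side. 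The remaining nonlinear couplings are exactly the ones for which the spaces $\WW_{\rm st}$ in~\eqref{Wwspace-st} were defined: the displayed estimates for $\norm{\nabla\cdot(bz)}{H}$ just preceding the lemma give, for every $d\ge1$, an inequality of the shape $\norm{\nabla\cdot(bz_N)}{H}^2\le C_{\WW,\rm st}^2\bigl(\delta\norm{z_N}{\D(\Delta)}^2+C_\delta\norm{z_N}{V}^2\bigr)$ for arbitrary $\delta>0$, using Young's inequality on the interpolation factors $\norm{z_N}{H}^{1/2}\norm{z_N}{\D(\Delta)}^{1/2}$ (resp.\ $\norm{z_N}{V}^{1/2}\norm{z_N}{\D(\Delta)}^{1/2}$). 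The reaction term $\norm{az_N}{H}^2$ is treated identically by combining $a\in L^\infty(\R_0,L^d)$ with the dimension-dependent Sobolev embeddings $V\hookrightarrow L^{2d/(d-2)}$ (for $d\ge3$), $V\hookrightarrow L^p$ for every $p<\infty$ coupled to the Agmon-type bound $\norm{z}{L^\infty}\le C\norm{z}{V}^{1-\varepsilon}\norm{z}{\D(\Delta)}^\varepsilon$ (for $d=2$), and the Agmon inequality~\eqref{Agmon_d=1} (for $d=1$).

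Choosing $\delta$ small enough so that all $\D(\Delta)$-contributions are swallowed by the left-hand side, we obtain
\[
 \frac{\mathrm d}{\mathrm d t}\norm{z_N}{V}^2+\norm{z_N}{\D(\Delta)}^2
 \le \overline C_{[C_{\WW,\rm st}]}\norm{z_N}{V}^2+C\norm{f}{H}^2,
\]
and the Gronwall lemma yields
$\norm{z_N}{L^\infty(I,V)}^2+\norm{z_N}{L^2(I,\D(\Delta))}^2\le \overline C_{[|I|,C_{\WW,\rm st}]}(\norm{z_0}{V}^2+\norm{f}{L^2(I,H)}^2)$. Reading $\partial_t z_N=\Delta z_N-az_N-\nabla\cdot(bz_N)-f$ directly off the equation and invoking the same bounds yields the matching estimate for $\norm{\partial_t z_N}{L^2(I,H)}$, so the full $W(I,\D(\Delta),H)$-bound holds uniformly in $N$.

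With these a priori bounds, standard weak/weak-$*$ compactness arguments extract a subsequence $z_N\rightharpoonup z$ in $L^2(I,\D(\Delta))$ with $\partial_t z_N\rightharpoonup\partial_t z$ in $L^2(I,H)$, and the Aubin--Lions lemma gives strong convergence in $L^2(I,V)$, sufficient to pass to the limit in the linear terms $az_N$ and $\nabla\cdot(bz_N)$. The limit $z$ lies in $W(I,\D(\Delta),H)$ and inherits the displayed estimate by lower semicontinuity. Uniqueness is obtained by applying the $H$-level estimate from Lemma~\ref{L:weak-z} to the difference of two solutions (which is itself a weak solution with zero data), exactly as in the proof of Lemma~\ref{L:weak-z}. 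The main obstacle is purely technical, namely ensuring that each dimension-dependent estimate for $\norm{az_N}{H}$ and $\norm{\nabla\cdot(bz_N)}{H}$ produces only a $\delta\norm{z_N}{\D(\Delta)}^2$ contribution that can be absorbed; this is precisely why the spaces $\WW_{\rm st}$ are defined with $r=2$ for $d\in\{1,2,3\}$ but $r=\infty$ for $d\ge4$, since for $d\ge4$ the Agmon-type interpolation controlling $\norm{z}{L^\infty}$ by $\norm{z}{\D(\Delta)}$ is no longer available.
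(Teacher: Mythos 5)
Your proposal is correct and follows exactly the route the paper indicates: the paper's ``proof'' of this lemma is the one-line remark that one multiplies~\eqref{sys-z2-eq} by $-2\Delta z$ instead of $2z$ and repeats the Galerkin/a priori estimate procedure of Lemma~\ref{L:weak-z}, using the displayed bounds on $\norm{\nabla\cdot(bz)}{H}$ that motivate the spaces $\WW_{\rm st}$. Your write-up simply fills in those details (absorption of the $\D(\Delta)$-terms via Young, Gronwall, weak compactness, uniqueness from the $H$-level estimate), so there is nothing methodologically different to compare.
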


The next lemma shows a certain smoothing property of system~\eqref{sys-z2}.
\begin{lemma}\label{L:smooth}
 Given $f\in L^2(I,H)$ and $z_0\in H$, let~$z$ be the weak solution
 for system~\eqref{sys-z2}. Then
 $y(t)\coloneqq (t-s_0)z(t)$ is in~$W(I,\mathrm D(A),H)$ and satisfies the estimates
 \begin{align*}
 &\quad\,\norm{y}{W(I,\mathrm D(A),H)}^2\le
 \overline C_{\left[|I|,C_{\mathcal W,\rm st}\right]}
 \left((s_1-s_0)^2\norm{f}{L^2(I,H)}^2+\norm{z}{L^2(I,H)}^2\right)\\
 &\le \overline C_{\left[|I|,C_{\mathcal W,\rm st}\right]}
\left(\norm{z(s_0)}{H}^2+\norm{f}{L^2(I,H)}^2\right).
 \end{align*}
\end{lemma}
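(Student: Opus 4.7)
The plan is to reduce the claim to the strong-solution estimate of Lemma~\ref{L:strong-z} by a simple multiplier trick, using $t-s_0$ as a time weight to absorb the low regularity of the initial datum at $t = s_0$.

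First I would formally compute the equation satisfied by $y(t) \coloneqq (t-s_0)z(t)$. Since $\partial_t y = z + (t-s_0)\partial_t z$ and since $z$ solves~\eqref{sys-z2-eq}, a direct calculation yields
\begin{equation*}
\partial_t y - \Delta y + a y + \nabla\cdot(b y) = z - (t-s_0) f, \qquad y\rest\Gamma = 0, \qquad y(s_0) = 0.
\end{equation*}
This is again a system of the form~\eqref{sys-z2}, now with forcing $\tilde f \coloneqq z - (t-s_0)f \in L^2(I,H)$ (which is in $L^2(I,H)$ because $z \in L^2(I,V) \hookrightarrow L^2(I,H)$ by Lemma~\ref{L:weak-z}, and $f \in L^2(I,H)$ by assumption) and with initial datum $y(s_0) = 0 \in V$.

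Next I would invoke Lemma~\ref{L:strong-z} applied to $y$, which gives
\begin{equation*}
\norm{y}{W(I,\D(A),H)}^2 \le \overline C_{\left[|I|,C_{\WW,\rm st}\right]} \norm{z - (t-s_0)f}{L^2(I,H)}^2
\le \overline C_{\left[|I|,C_{\WW,\rm st}\right]}\bigl(\norm{z}{L^2(I,H)}^2 + (s_1-s_0)^2 \norm{f}{L^2(I,H)}^2\bigr),
\end{equation*}
which is the first asserted bound. For the second one I would use Lemma~\ref{L:weak-z} to estimate
\begin{equation*}
\norm{z}{L^2(I,H)}^2 \le \norm{z}{W(I,V,V')}^2 \le \overline C_{\left[|I|,C_\WW\right]}\bigl(\norm{z(s_0)}{H}^2 + \norm{f}{L^2(I,V')}^2\bigr),
\end{equation*}
together with the continuous inclusion $H \hookrightarrow V'$ to replace $\norm{f}{L^2(I,V')}$ by a constant times $\norm{f}{L^2(I,H)}$. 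Combining the two estimates and absorbing all numerical factors into $\overline C_{\left[|I|,C_{\WW,\rm st}\right]}$ yields the second inequality.

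The only subtlety is that the identity for $\partial_t y$ and the pointwise equation at $t=s_0$ are only formal since $z$ is merely a weak solution. I would make this rigorous by carrying out the computation on Galerkin approximations $z_N$ of $z$ (using the eigenbasis of $-\Delta$, as in the proof of Lemma~\ref{L:weak-z}), for which $y_N(t) = (t-s_0)z_N(t)$ satisfies the transported equation in a classical sense and $y_N(s_0)=0$. The a priori bound above holds uniformly in $N$, and passing to the weak limit in $W(I,\D(A),H)$ (possible by reflexivity) identifies the limit with $y$ by uniqueness of weak solutions. No step presents a real obstacle; the only care needed is this routine justification of the multiplier manipulation.
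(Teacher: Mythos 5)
Your proof is correct and follows essentially the same route as the paper: the paper defers the proof of this lemma to a reference, but its proof of the boundary analogue (Lemma~\ref{L:smooth-prop}) uses exactly your argument, namely that $w=(\Bigcdot-s_0)z$ solves the same system with forcing $(\Bigcdot-s_0)f-z$ and zero initial datum, after which the strong-solution estimate (Lemma~\ref{L:strong-z}) and the weak-solution estimate (Lemma~\ref{L:weak-z}) give the two asserted bounds. Your closing remark on justifying the formal computation via Galerkin approximations is the standard and appropriate way to make the multiplier step rigorous.
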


\begin{definition}\label{D:globalw}
 For $f\in L^2_{\rm loc}(\mathbb R_{s_0},H)$ and $y_0\in H$
 the function $z$ defined in $\mathbb R_{s_0}\times\Omega$ by the property that~$z\rest{(s_0,\tau)}$
 coincides with the weak solution
 of~\eqref{sys-z2} in $(s_0,\tau)$, for all $ \tau>s_0$ is well defined. It is called the global weak solution
 of~\eqref{sys-z2} in $\mathbb R_{s_0}\times\Omega$.
\end{definition}

We have the following property for the solutions
of~\eqref{sys-z2} on the infinite time interval~$\mathbb R_{s_0}=(s_0,+\infty)$, $s_0\ge0$.

\begin{lemma}\label{L:global.bdd}
 For $f\in L^2(\mathbb R_{s_0},V')$ and $z_{0}\in H$,
 let~$z$ be the global weak solution
 of~\eqref{sys-z2} in $\mathbb R_{s_0}$, with $z(s_0)=z_{0}$. If $z\in L^2(\mathbb R_{s_0},H)$, then $z\in W(\mathbb R_{s_0},V,V')$, and
 \begin{equation*}
 \norm{z}{W(\mathbb R_{s_0},V,V')}\le
 \overline C_{\left[C_{\mathcal W,\rm st}\right]}
 \left(\norm{z(s_0)}{H}^2+\norm{f}{L^2(\mathbb R_{s_0},V')}^2+\norm{z}{L^2(\mathbb R_{s_0},H)}^2\right).
 \end{equation*}
\end{lemma}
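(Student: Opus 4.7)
The plan is to revisit the energy estimates derived in the proof of Lemma~\ref{L:weak-z}, but instead of working on a bounded interval~$I=(s_0,s_1)$ and using the Gronwall inequality (which produces an exponential factor in~$|I|$ that blows up as~$s_1\to +\infty$), I would integrate the sharper differential inequality~\eqref{zwYoung} and exploit the standing assumption $z\in L^2(\R_{s_0},H)$ to control the right-hand side uniformly in the upper limit.

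First, since the restriction $z\rest{(s_0,\tau)}$ is, by Definition~\ref{D:globalw}, the weak solution on the finite interval $(s_0,\tau)$, the inequality~\eqref{zwYoung} holds in the sense of distributions on~$\R_{s_0}$. Integrating it from $s_0$ to an arbitrary $s>s_0$ (as is done to derive~\eqref{zwYoungi}) yields
\begin{equation*}
\norm{z(s)}{H}^2+\norm{z}{L^2((s_0,s),V)}^2\le \norm{z_0}{H}^2
+D_{\rm rc,1}\norm{(a,b)}{\WW}^2\norm{z}{L^2((s_0,s),H)}^2+2\norm{f}{L^2((s_0,s),V')}^2.
\end{equation*}
The right-hand side is monotone nondecreasing in~$s$ and is bounded above by
$\norm{z_0}{H}^2+D_{\rm rc,1}C_{\WW,\rm st}^2\norm{z}{L^2(\R_{s_0},H)}^2+2\norm{f}{L^2(\R_{s_0},V')}^2$
thanks to the assumptions $z\in L^2(\R_{s_0},H)$ and $f\in L^2(\R_{s_0},V')$; so letting $s\to +\infty$ through the monotone convergence of $\norm{z}{L^2((s_0,s),V)}^2$ gives a uniform bound
\begin{equation*}
\norm{z}{L^2(\R_{s_0},V)}^2\le \overline C_{[C_{\WW,\rm st}]}\bigl(\norm{z_0}{H}^2+\norm{f}{L^2(\R_{s_0},V')}^2+\norm{z}{L^2(\R_{s_0},H)}^2\bigr).
\end{equation*}

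Next I would estimate~$\partial_t z$ by rearranging the equation~\eqref{sys-z2-eq} and applying Lemma~\ref{L:estRC} term by term in the $V'$ norm: $\Delta z$ is bounded by $\norm{z}{V}$, the convection term by $C\norm{b}{L^\infty}\norm{z}{H}$ for every $d\ge 1$, and the reaction term by $C\norm{a}{L^d}\norm{z}{H}$ for $d\ge 3$ and by $C\norm{a}{L^d}\norm{z}{H}^{1/2}\norm{z}{V}^{1/2}$ for $d\in\{1,2\}$. In the low-dimensional case a single Young inequality $\norm{z}{H}^{1/2}\norm{z}{V}^{1/2}\le\tfrac12(\norm{z}{H}+\norm{z}{V})$ makes all contributions quadratic in $(\norm{z}{H},\norm{z}{V})$, and squaring and integrating over~$\R_{s_0}$ produces
\begin{equation*}
\norm{\partial_t z}{L^2(\R_{s_0},V')}^2\le \overline C_{[C_{\WW,\rm st}]}\bigl(\norm{z}{L^2(\R_{s_0},V)}^2+\norm{z}{L^2(\R_{s_0},H)}^2+\norm{f}{L^2(\R_{s_0},V')}^2\bigr).
\end{equation*}

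Plugging in the already obtained bound for $\norm{z}{L^2(\R_{s_0},V)}^2$ gives the desired estimate on $\norm{z}{W(\R_{s_0},V,V')}^2$. The only mildly delicate point is step~one, namely justifying that the finite-interval integrated inequality passes to the limit $s\to+\infty$; this is not really an obstacle because both sides are monotone in~$s$ once the quadratic terms on the right are bounded by their values on the whole half-line, and the existence of~$z\in L^2(\R_{s_0},H)$ is given as a hypothesis. No extra regularity beyond $(a,b)\in\WW$ (so in particular $C_{\WW,\rm st}$ suffices) is needed for this lemma.
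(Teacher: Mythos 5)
Your proposal is correct and follows exactly the intended argument: the paper omits the proof of this lemma (deferring to \cite[Section~2]{BreKunRod-pp15}), and the standard route is precisely to integrate the non-Gronwall inequality~\eqref{zwYoung} over $(s_0,s)$, use the hypothesis $z\in L^2(\R_{s_0},H)$ to bound the right-hand side uniformly in $s$, let $s\to+\infty$ to control $\norm{z}{L^2(\R_{s_0},V)}$, and then read off $\partial_t z$ from the equation via Lemma~\ref{L:estRC}. Your remark that only $(a,b)\in\WW$ is needed (the constant being writable in terms of $C_{\WW,\rm st}$ since $\norm{(a,b)}{\WW}\le C_{\WW,\rm st}$) is also accurate.
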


\medskip\noindent
{\em Strong solutions for the closed-loop linear system.}
The above Lemmas allow us to conclude the next result (cf.~\cite[Corollary~2.19]{BreKunRod-pp15}).
\begin{corollary}\label{C:feedz-st}
Under the assumptions of  Theorem~\ref{T:feed}, let $\Pi \in {\mathcal {P}}$ be the unique solution of \eqref{eq:riccati},
and let~$(a,b)$ satisfy~\eqref{rhosigma-st}. Then for any ~$z_0\in V$, the solution~$z$ of~\eqref{sys-zfeed}
 is defined globally and satisfies, for all $t\ge s_0$,
\begin{align*}
\norm{z(t)}{V}^2+\norm{z}{L^2((t,t+1),\mathrm D(A))}^2
  \le \overline C_{\left[C_{\WW,\rm st},\bar\lambda,\frac{1}{\bar\lambda}\right]}\ex^{-(t-s_0)\bar\lambda}\norm{z_0}{V}^2.
\end{align*}
\end{corollary}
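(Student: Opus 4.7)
The plan is to work with the exponentially rescaled variable $\tilde z(t)\coloneqq\ex^{(t-s_0)\bar\lambda/2}z(t)$. A direct computation from~\eqref{sys-zfeed} shows that $\tilde z$ solves the closed-loop shifted system
\[
\p_t\tilde z-\Delta\tilde z+(a-\tfrac{\bar\lambda}{2})\tilde z+\nabla\cdot(b\tilde z)+B_M\RR^{-1}B_M^*\Pi_{\bar\lambda}\tilde z=0,\quad \tilde z\rest\Gamma=0,\quad \tilde z(s_0)=z_0,
\]
and the target estimate for $z$ is equivalent to the uniform-in-time bound $\norm{\tilde z(t)}{V}^2+\norm{\tilde z}{L^2((t,t+1),\mathrm D(A))}^2\le\overline C_{[C_{\WW,\rm st},\bar\lambda,1/\bar\lambda]}\norm{z_0}{V}^2$, after pulling the factor $\ex^{-(t-s_0)\bar\lambda}$ out of the $L^2((t,t+1),\mathrm D(A))$ norm (harmless since $\ex^{-(\tau-s_0)\bar\lambda}\le\ex^{-(t-s_0)\bar\lambda}$ on $(t,t+1)$).

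First I would recall from Theorem~\ref{T:feed} (equivalently Corollary~\ref{C:feedz}) that $\tilde z\in W(\R_{s_0},V,V')$ with $\norm{\tilde z}{W(\R_{s_0},V,V')}^2\le\overline C_{[C_\WW,\bar\lambda,1/\bar\lambda]}\norm{z_0}{H}^2$; in particular $\tilde z\in L^\infty(\R_{s_0},H)\cap L^2(\R_{s_0},V)$ with the same bound. Next I would view the Riccati feedback as an external forcing by writing the equation as $\p_t\tilde z-\Delta\tilde z+(a-\tfrac{\bar\lambda}{2})\tilde z+\nabla\cdot(b\tilde z)=f$ with $f\coloneqq-B_M\RR^{-1}B_M^*\Pi_{\bar\lambda}\tilde z$. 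Thanks to the uniform bound~\eqref{costPis} on $\Pi_{\bar\lambda}$, one has $\norm{f(\tau)}{H}\le\overline C_{[C_\WW,\bar\lambda,1/\bar\lambda]}\norm{\tilde z(\tau)}{H}$, hence $f\in L^\infty(\R_{s_0},H)$ with norm controlled by $\norm{z_0}{H}$.

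The assumption~\eqref{rhosigma-st} is precisely what is needed in order to apply the strong-solution machinery of Lemmas~\ref{L:strong-z} and~\ref{L:smooth} to the shifted system (the pair $(a-\bar\lambda/2,b)$ lies in the same class $\WW_{\rm st}$). For $t\in[s_0,s_0+1]$, Lemma~\ref{L:strong-z} on $I=(s_0,s_0+1)$ applied with initial datum $z_0\in V$ and forcing $f$ directly gives $\norm{\tilde z}{W(I,\mathrm D(A),H)}^2\le\overline C'\norm{z_0}{V}^2$. For $t\ge s_0+1$, I would instead invoke the smoothing Lemma~\ref{L:smooth} on $I_t\coloneqq(t-1,t+1)$: setting $y(s)\coloneqq(s-(t-1))\tilde z(s)$ we get $\norm{y}{W(I_t,\mathrm D(A),H)}^2\le\overline C(\norm{\tilde z(t-1)}{H}^2+\norm{f}{L^2(I_t,H)}^2)\le\overline C'\norm{z_0}{H}^2$. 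Since $y(t)=\tilde z(t)$ and $s-(t-1)\ge1$ for $s\in[t,t+1]$, this upgrades to $\norm{\tilde z(t)}{V}^2+\norm{\tilde z}{L^2((t,t+1),\mathrm D(A))}^2\le\overline C''\norm{z_0}{V}^2$ with a constant uniform in $t$.

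Translating back via $z(t)=\ex^{-(t-s_0)\bar\lambda/2}\tilde z(t)$ then yields the claimed estimate. The main obstacle I anticipate is obtaining a bound uniform in $t$ without the constant growing: a naive iteration of Lemma~\ref{L:strong-z} on consecutive unit intervals would propagate $\norm{\tilde z(s_0+n)}{V}^2$ with a factor blowing up in $n$, so the crucial ingredient is the smoothing property of Lemma~\ref{L:smooth}, which lets us restart from merely $H$-data at time $t-1$ and still recover a $V$-bound at time $t$ depending only on $\norm{z_0}{H}$.
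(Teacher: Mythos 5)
Your argument is correct and is precisely the assembly the paper intends: the paper omits the proof of Corollary~\ref{C:feedz-st}, stating only that the preceding lemmas allow one to conclude it, and your route --- rescaling to the shifted closed-loop system, absorbing the Riccati feedback into an $L^\infty(\mathbb R_{s_0},H)$ forcing via~\eqref{costPis}, applying Lemma~\ref{L:strong-z} on the first unit interval with $z_0\in V$, and the smoothing Lemma~\ref{L:smooth} on $(t-1,t+1)$ for $t\ge s_0+1$ --- is exactly how Theorem~\ref{T:feed} and those lemmas are meant to be combined. Your closing remark, that uniformity in $t$ comes from restarting from mere $H$-data through the smoothing property rather than iterating the $V$-estimate on consecutive intervals, correctly identifies the one non-trivial point.
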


\subsection{Fixed point argument}\label{sS:fixedpoint-int}
We start with the following result which follows from Corollary~\ref{C:feedz-st} and Lemma~\ref{L:strong-z}.
\begin{corollary}\label{C:stab_cond_VH}
 Under the assumptions of Corollary~\ref{C:feedz-st}, with $z_0\in V$, the solution~$z$ of~\eqref{sys-zfeed}
 is defined globally and satisfies,
\[
 \;\sup\limits_{t\ge s_0}
\norm{{\ex}^{\bar\lambda(\Bigcdot-s_0)}{z}(\Bigcdot)}{W((t,t+1),{\mathrm D}(A),H)}^2
\le \overline C_{\left[C_{\WW,\rm st},\bar\lambda,\frac{1}{\bar\lambda}\right]}\norm{z_0}{V}^2.
\]
\end{corollary}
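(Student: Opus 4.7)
The strategy is to remove the exponential weight by a time rescaling and then apply the strong-solution estimate of Lemma~\ref{L:strong-z} on each sliding unit window $I_t\coloneqq(t,t+1)$.

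First I would set $y(\tau)\coloneqq\ex^{\frac{\bar\lambda}{2}(\tau-s_0)}z(\tau)$. A direct computation shows that $y$ solves a linear equation of the form~\eqref{sys-z2}, namely
\[
\partial_\tau y-\Delta y+(a-\tfrac{\bar\lambda}{2})y+\nabla\cdot(by)=f,\qquad y\rest\Gamma=0,\qquad y(s_0)=z_0,
\]
with the feedback absorbed into the forcing $f(\tau)\coloneqq -B_M\RR^{-1}B_M^*\Pi_{\bar\lambda}(\tau)\,y(\tau)$. Since the coefficients $a-\tfrac{\bar\lambda}{2}$ and $b$ still fit the hypothesis~\eqref{rhosigma-st} up to an enlarged constant depending on $\bar\lambda$, Lemma~\ref{L:strong-z} applies to this modified linear problem.

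Next, for each $t\ge s_0$, I would apply Lemma~\ref{L:strong-z} on $I_t$ with initial datum $y(t)\in V$ and forcing $f\rest{I_t}\in L^2(I_t,H)$, obtaining
\[
\norm{y}{W(I_t,\D(A),H)}^2\le \overline{C}_{[1,C_{\WW,\rm st},\bar\lambda]}\bigl(\norm{y(t)}{V}^2+\norm{f}{L^2(I_t,H)}^2\bigr).
\]
Both terms on the right must be controlled uniformly in $t$. The $V$-decay of Corollary~\ref{C:feedz-st} gives $\norm{z(t)}{V}^2\le\overline{C}\,\ex^{-\bar\lambda(t-s_0)}\norm{z_0}{V}^2$, so $\norm{y(t)}{V}^2=\ex^{\bar\lambda(t-s_0)}\norm{z(t)}{V}^2\le\overline{C}\,\norm{z_0}{V}^2$. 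For the forcing term, the uniform bound~\eqref{costPis} on $\Pi_{\bar\lambda}\in\LL(H)$ together with $B_M,\RR^{-1}\in\LL(H)$ yields $\norm{f(\tau)}{H}\le\overline{C}_{[C_\WW,\bar\lambda,1/\bar\lambda]}\norm{y(\tau)}{H}$; integrating over $I_t$ and again invoking Corollary~\ref{C:feedz-st} (applied to the rescaled function $y$) bounds $\norm{f}{L^2(I_t,H)}^2$ by a constant multiple of $\norm{z_0}{V}^2$ independently of $t$.

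The main obstacle I anticipate is only the careful bookkeeping of how the several constants depend on $\bar\lambda$ and $1/\bar\lambda$: the Riccati bound~\eqref{costPis}, the $|I_t|=1$ constant from Lemma~\ref{L:strong-z}, and the decay constant from Corollary~\ref{C:feedz-st} all contribute, but every dependence fits within the class $\overline{C}_{[C_{\WW,\rm st},\bar\lambda,1/\bar\lambda]}$ claimed in the corollary. Taking the supremum over $t\ge s_0$ then yields the asserted estimate.
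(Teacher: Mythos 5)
Your argument is exactly the one the paper intends: its proof of this corollary consists of the single remark that the result ``follows from Corollary~\ref{C:feedz-st} and Lemma~\ref{L:strong-z}'', and your sliding-window application of the strong-solution estimate on each $I_t=(t,t+1)$ --- with the initial datum on each window controlled by the $V$-decay of Corollary~\ref{C:feedz-st} and the feedback forcing controlled by the Riccati bound~\eqref{costPis} --- is precisely how those two ingredients combine. The only point to flag is the exponent: you rescale with $\ex^{\frac{\bar\lambda}{2}(\tau-s_0)}$ and therefore obtain the bound for that weight, whereas the statement places the weight $\ex^{\bar\lambda(\Bigcdot-s_0)}$ inside the squared norm; since Corollary~\ref{C:feedz-st} only yields $\norm{z(t)}{V}^2\le \overline C_{\left[C_{\WW,\rm st},\bar\lambda,\frac{1}{\bar\lambda}\right]}\ex^{-\bar\lambda(t-s_0)}\norm{z_0}{V}^2$, the literal weight would require decay at rate $2\bar\lambda$. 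This factor-of-two mismatch is an inconsistency in the paper's own normalization (the same convention propagates into the definition of $\mathcal Z^{\bar\lambda}$), not a gap in your reasoning --- your proof is the one the cited ingredients actually support.
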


Inspired from Corollary~\ref{C:stab_cond_VH}, taking $s_0=0$, we define the Banach space
\[
\mathcal{Z}^{\bar\lambda}  \coloneqq \left\{{z}\in L^2_{\rm
loc}\left(\mathbb{R}_0,H\right)
\,\Bigl|\,
\;|{z}|_{\mathcal{Z}^{\bar\lambda}  }<\infty
\right\}
\]
endowed with the norm
$|{z}|_{\mathcal{Z}^{\bar\lambda}  }\coloneqq \sup_{r\geq 0}\norm{{\ex}^{\bar\lambda\Bigcdot}{z}}{W((r,r+1),{\mathrm D}(A),H)}$.
We also set
\[\mathcal{Z}^{\bar\lambda}_{\rm loc}
 \coloneqq
\left\{{z}\in L^2_{\rm loc}\left(\mathbb{R}_0,H)\right)
\,\Bigl|\,
\;\norm{{\ex}^{\bar\lambda\Bigcdot}{z}}{W((r,r+1),{\mathrm D}(A),H)}<\infty,\;\mbox{for all }r\ge0
\right\}.
\]

For a given constant~$\varrho>0$ and
${z}_0\in V$ we define the subset
\[
\mathcal{Z}^{\bar\lambda}  _\varrho\coloneqq \left\{ z\in\mathcal{Z}^{\bar\lambda}  \mid\,
| z|_{\mathcal{Z}^{\bar\lambda}  }^2\leq \varrho| z_0|_{V}^2\right\},
\]
and the mapping
$\Psi\colon \mathcal{Z}^{\bar\lambda}_\varrho\to\mathcal{Z}^{\bar\lambda}_{\rm loc}$,
${\bar z}\mapsto z$, taking a given vector~${\bar z}$ to the solution~$ z$ of
\begin{subequations}\label{sys-z=Bz}
\begin{align}
  &\partial_t z-\nu\Delta z+  a z+ \nabla\cdot (b z)+ B_M  \mathcal
R ^{-1}B_M^*\Pi_{\bar\lambda} z=\NN (\bar z),\\
  &z\rest \Gamma =0, \qquad z(s_0)=z_0.
\end{align}
\end{subequations}

\begin{lemma}\label{L:contract}
Under the hypotheses of Corollary~\ref{C:stab_cond_VH}, there exists $\varrho>0$
such that the following property holds: for any $\gamma\in(0,1)$ one can find a
constant $\epsilon=\epsilon_\gamma>0$ such that, for any $ z_0$ satisfying
and~$| z_0|_{V}\le\epsilon$, the mapping~$\Psi$
takes the set~$\mathcal{Z}^{\bar\lambda} _\varrho$ into itself and satisfies the inequality
\begin{equation} \label{contraction}
|\Psi({\bar z}_1)-\Psi({\bar z}_2)|_{\mathcal{Z}^{\bar\lambda}  }\leq\gamma|{\bar z}_1-{\bar z}_2|_{\mathcal{Z}^{\bar\lambda}  }\quad
\mbox{for all}\quad {\bar z}_1,{\bar z}_2\in\mathcal{Z}^{\bar\lambda}  _\varrho.
\end{equation}
\end{lemma}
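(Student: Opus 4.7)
The approach is to write $z=\Psi(\bar z)$ as $z=z_{\rm h}+z_{\rm f}$, where $z_{\rm h}$ solves the homogeneous closed-loop system~\eqref{sys-zfeed} with initial datum $z_0$ and $z_{\rm f}$ solves the same closed-loop equation with zero initial datum and source $\NN(\bar z)$. Corollary~\ref{C:stab_cond_VH} immediately gives $|z_{\rm h}|_{\mathcal Z^{\bar\lambda}}^2 \le C_1\,|z_0|_V^2$. For $z_{\rm f}$, the key missing ingredient is the forcing-to-state inequality
\[
|z_{\rm f}|_{\mathcal Z^{\bar\lambda}}^2 \le C_2\,|{\ex}^{\bar\lambda\Bigcdot}\NN(\bar z)|_{L^2(\R_0,H)}^2,
\]
which I would establish by setting $w(t)\coloneqq{\ex}^{\bar\lambda t}z_{\rm f}(t)$ and verifying that $w$ satisfies a closed-loop equation whose drift is shifted by $-\bar\lambda$ and whose source is ${\ex}^{\bar\lambda\Bigcdot}\NN(\bar z)$; applying Lemmas~\ref{L:strong-z}, \ref{L:smooth} and~\ref{L:global.bdd} together with the uniform Riccati bound~\eqref{costPis} and the exponential margin of Corollary~\ref{C:feedz-st} then yields the $W((r,r+1),\D(\Delta),H)$-bound on $w$ uniformly in $r\ge 0$.

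Because $\NN(0)=0$, putting $\tilde z=0$ in~\eqref{exp-non2} gives
\[
|\NN(\bar z)(t)|_H^2 \le \widehat C\bigl(|\bar z(t)|_V^{\e_3} + (1+|\bar z(t)|_V^{\e_1})\,|\bar z(t)|_V^2\bigr)\,|\bar z(t)|_{\D(\Delta)}^2.
\]
The embedding $W((r,r+1),\D(\Delta),H)\hookrightarrow C([r,r+1],V)$ applied to ${\ex}^{\bar\lambda\Bigcdot}\bar z$ yields the pointwise decay $|\bar z(t)|_V \le C\,{\ex}^{-\bar\lambda t}|\bar z|_{\mathcal Z^{\bar\lambda}}$. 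Multiplying the above inequality by ${\ex}^{2\bar\lambda t}$ and using $\e_3\ge 2$, the weight can be absorbed by decaying $V$-factors of $\bar z$, after which the remaining $|\bar z|_{\D(\Delta)}^2$ is integrated via $|{\ex}^{\bar\lambda\Bigcdot}\bar z|^2_{L^2((r,r+1),\D(\Delta))} \le |\bar z|_{\mathcal Z^{\bar\lambda}}^2$ plus a geometric factor $\sum_{r\ge 0}{\ex}^{-2\bar\lambda r}<\infty$. This produces the polynomial bound
\[
|{\ex}^{\bar\lambda\Bigcdot}\NN(\bar z)|_{L^2(\R_0,H)}^2 \le C_3\bigl(|\bar z|_{\mathcal Z^{\bar\lambda}}^4 + |\bar z|_{\mathcal Z^{\bar\lambda}}^{\e_3+2} + |\bar z|_{\mathcal Z^{\bar\lambda}}^{\e_1+4}\bigr),
\]
whose lowest power in $|\bar z|_{\mathcal Z^{\bar\lambda}}$ is strictly greater than $2$. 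Choosing $\varrho\coloneqq 4C_1$ and then $\epsilon>0$ small enough that $2C_2$ times the right-hand side is at most $2C_1|z_0|_V^2$ for every $\bar z\in\mathcal Z^{\bar\lambda}_\varrho$ with $|z_0|_V\le\epsilon$, the triangle inequality $|z|_{\mathcal Z^{\bar\lambda}}^2\le 2|z_{\rm h}|_{\mathcal Z^{\bar\lambda}}^2+2|z_{\rm f}|_{\mathcal Z^{\bar\lambda}}^2$ yields the self-mapping property $\Psi(\mathcal Z^{\bar\lambda}_\varrho)\subseteq\mathcal Z^{\bar\lambda}_\varrho$.

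For the contraction~\eqref{contraction}, observe that $\Psi(\bar z_1)-\Psi(\bar z_2)$ solves~\eqref{sys-zfeed} with zero initial datum and source $\NN(\bar z_1)-\NN(\bar z_2)$. Combining the forcing-to-state inequality with the full Lipschitz estimate~\eqref{exp-non2} applied at $(\bar z_1,\bar z_2)$, and repeating the weighting argument, I would obtain
\[
|\Psi(\bar z_1)-\Psi(\bar z_2)|_{\mathcal Z^{\bar\lambda}}^2 \le C_4\bigl(|\bar z_1|_{\mathcal Z^{\bar\lambda}}^\beta+|\bar z_2|_{\mathcal Z^{\bar\lambda}}^\beta\bigr)\,|\bar z_1-\bar z_2|_{\mathcal Z^{\bar\lambda}}^2
\]
for some $\beta>0$; since $|\bar z_i|_{\mathcal Z^{\bar\lambda}}^2\le\varrho\epsilon^2$, a further shrinking of $\epsilon$ makes the prefactor at most $\gamma^2$. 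The main obstacle is the forcing-to-state inequality itself: propagating the weight ${\ex}^{\bar\lambda t}$ uniformly across all unit-length slabs $(r,r+1)$ through the linear closed-loop strong-solution machinery requires delicate use of the Riccati-generated exponential margin and is the step where all of Section~\ref{S:lin_int} has to be pieced together carefully.
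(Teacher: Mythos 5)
Your overall strategy coincides with the paper's: reduce everything to a linear estimate for the closed-loop system with a forcing term, exploit the superlinearity in~\eqref{exp-non2} to produce powers of $|\bar z|_{\mathcal{Z}^{\bar\lambda}}$ strictly larger than~$2$, and then shrink $\epsilon$. The splitting $z=z_{\rm h}+z_{\rm f}$ is only a cosmetic variant (by linearity) of the paper's combined inequality~\eqref{estz-f}, and your treatment of the contraction matches the paper's Step~\ref{stcontr}.

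The one substantive weak point is the weight in your ``forcing-to-state'' inequality. You ask for $|z_{\rm f}|_{\mathcal{Z}^{\bar\lambda}}^2\le C_2\,|{\ex}^{\bar\lambda\Bigcdot}\NN(\bar z)|_{L^2(\R_0,H)}^2$, i.e.\ the forcing carries the \emph{same} exponential weight as the state. That is the critical case: if the closed-loop evolution decays only at the rate guaranteed by Corollary~\ref{C:stab_cond_VH}, the Duhamel integral ${\ex}^{\bar\lambda t}|z_{\rm f}(t)|\le\int_0^t{\ex}^{\bar\lambda s}|f(s)|\,\ed s$ picks up a factor $t^{1/2}$ after Cauchy--Schwarz when only ${\ex}^{\bar\lambda\Bigcdot}f\in L^2$ is assumed, so the inequality as stated does not follow from the linear theory and fails for general such $f$. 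This is precisely why the paper's estimate~\eqref{estz-f} carries the strictly heavier weight ${\ex}^{4\bar\lambda s}|f(s)|_H^2=|{\ex}^{2\bar\lambda s}f(s)|_H^2$ (and a supremum over unit slabs rather than a global integral): the surplus ${\ex}^{\bar\lambda s}$ is what lets the convolution close. That surplus is then paid for by the superlinearity of~$\NN$ --- since $\NN(\bar z)$ is at least quadratic, ${\ex}^{4\bar\lambda s}|\NN(\bar z)(s)|_H^2$ is still controlled polynomially by $|\bar z|_{\mathcal{Z}^{\bar\lambda}}$, which is exactly the computation in the paper's Step~\ref{stinto} and essentially the computation you perform. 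In short, you have the right superlinear decay available but you spend it to reach only the critical weight ${\ex}^{2\bar\lambda s}$ on $|\NN(\bar z)|^2$ instead of keeping the surplus to feed the linear estimate; restating the forcing-to-state inequality with the weight ${\ex}^{4\bar\lambda s}$ (or any ${\ex}^{2(\bar\lambda+\delta)s}$ with $\delta>0$) aligns your argument with the paper's. You correctly identify this inequality as the crux; as written, it is the one step that would fail.
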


\begin{proof} We sketch the proof into~\ref{stcontr} main steps:%
%
\begin{enumerate}[noitemsep,topsep=5pt,parsep=5pt,partopsep=0pt,leftmargin=0em]%
\renewcommand{\theenumi}{{\sf\arabic{enumi}}} 
 \renewcommand{\labelenumi}{}

\item \textcircled{\bf s}~Step~\theenumi:\label{stprel} {\em a preliminary estimate.} Consider the system
\begin{subequations}\label{sys-z=f}
\begin{align}
  &\partial_t z-\nu\Delta z+  a z+ \nabla\cdot (b z)+ B_M  \mathcal
R ^{-1}B_M^*\Pi_{\bar\lambda} z=f,\\
  &z\rest \Gamma =0, \qquad z(0)=z_0.
\end{align}
\end{subequations}
where $f\in L_{\rm loc}^2(\mathbb{R}_0,H)$.
If $ z$ is the solution of system~\eqref{sys-z=f} with $f=0$,
by Corollary~\ref{C:stab_cond_VH},
\begin{equation}\label{estz0}
\sup_{r\geq 0}|{\ex}^{\bar\lambda \Bigcdot } z(\Bigcdot)|_{W((r,r+1),{\mathrm D}(A), H)}^2
\le \overline C_{\left[C_{\WW,\rm st},\bar\lambda,\frac{1}{\bar\lambda}\right]}
| z_0|_{V}^2.
\end{equation}

Proceeding as in~\cite{BarRodShi11,KroRod15,BreKunRod-pp15,Rod-pp15}, it follows that for nonzero $f$ we also have 
\begin{align}
\sup_{r\geq 0}|{\ex}^{\bar\lambda \Bigcdot } z(\Bigcdot)|_{W((r,r+1),{\mathrm D}(A), H)}^2\label{estz-f}
\le \overline C_{\left[C_{\WW,\rm st},\bar\lambda,\frac{1}{\bar\lambda}\right]}
\left(| z_0|_{V}^2
+\sup_{\begin{subarray}{l}k\in\mathbb{N}\end{subarray}}\int_k^{k+1}{\ex}^{{4\bar\lambda}  s}
|f(s)|_{H}^2\,{\mathrm d} s\right).
\end{align}

\item \textcircled{\bf s}~Step~\theenumi:\label{stinto} {\em $\Psi$ maps $\mathcal{Z}^{\bar\lambda}_\varrho$ into itself,
if $| z_0|_{V}$ is small.} We will replace $f$ by $\NN({\bar z})$ in~\eqref{estz-f}.
From~\eqref{exp-non2}, with~$(z,\tilde z)=(\bar z,0)$, we find that
\begin{align*}
&\quad\,\sup_{\begin{subarray}{l}k\in\mathbb{N}\end{subarray}}\int_k^{k+1}{\ex}^{{4\bar\lambda}  s}
|\NN(\bar z)(s)|_{H}^2\,{\mathrm d} s\\&\le
\sup_{\begin{subarray}{l}k\in\mathbb{N}\end{subarray}}\sup_{\begin{subarray}{l}s\in[k,k+1]\end{subarray}}
\widehat C \left(|{\ex}^{{\bar\lambda}  s}{\bar z}(s)|_{V}^{2}+|{\ex}^{{\bar\lambda} s}{\bar z}(s)|_{V}^{2+\e_1}+|{\ex}^{{\bar\lambda} s}{\bar z}(s)|_{V}^{\e_3}\right)
\int_k^{k+1}\hspace*{-.3em}
|{\ex}^{{\bar\lambda}  s}{\bar z}(s)|_{{\mathrm D}(A)}^2\,{\mathrm d} s\\
&\le C_1 \left(|{\bar z}|_{\mathcal{Z}^{\bar\lambda}}^{4}+|{\bar z}|_{\mathcal{Z}^{\bar\lambda}}^{4+\e_1}
+|{\bar z}|_{\mathcal{Z}^{\bar\lambda}}^{\e_3+2}\right),
\end{align*}
because $W((k,k+1),\D(\Delta),H)\xhookrightarrow{} C([k,k+1],V)$ uniformly with respect to $k\in\N$.
Thus,
inequality~\eqref{estz-f} with $f=\NN(\bar z)$ and $\bar z\in\mathcal{Z}^{\bar\lambda}_\varrho$ gives us
\begin{align*} 
&\quad\,|\Psi({\bar z})|_{\mathcal{Z}^{\bar\lambda}  }^2
\le \overline C_{\left[C_{\WW,\rm st},\bar\lambda,\frac{1}{\bar\lambda}\right]}
\left(| z_0|_{V}^2
+C_1 \left(|{\bar z}|_{\mathcal{Z}^{\bar\lambda}}^{4}+|{\bar z}|_{\mathcal{Z}^{\bar\lambda}}^{4+\e_1}
+|{\bar z}|_{\mathcal{Z}^{\bar\lambda}}^{\e_3+2}\right)\right)\notag\\
&\le C_2
\left(1+\varrho^2| z_0|_{V}^{2} +\varrho^\frac{4+\e_1}{2}| z_0|_{V}^{2+\e_1}
+\varrho^\frac{\e_3+2}{2}| z_0|_{V}^{\e_3}\right)| z_0|_{V}^2
\end{align*}
and if we set~$\varrho=4 C_2$ and
$
\epsilon<\min\left\{\varrho^{-1},\varrho^{-\textstyle\frac{4+\e_1}{2(2+\e_1)}}
,\varrho^{-\textstyle\frac{\e_3+2}{2\e_3}}\right\},
$
then we obtain
\begin{equation}\label{Psiz2}
|\Psi({\bar z})|_{\mathcal{Z}^{\bar\lambda}  }^2\le C_2
\left(1+\varrho^2\epsilon^{2} +\varrho^\frac{4+\e_1}{2}\epsilon^{2+\e_1}
+\varrho^\frac{\e_3+2}{2}\epsilon^{\e_3}\right)\le 4 C_2| z_0|_{V}^2=\varrho| z_0|_{V}^2 
\end{equation}
if $| z_0|_{V}\le\epsilon$,
which means that~$\Psi({\bar z})\in\mathcal{Z}^{\bar\lambda}_\varrho$.

%
\item \textcircled{\bf s}~Step~\theenumi:\label{stcontr} {\em $\Psi$ is a contraction, if $| z_0|_{V}$ is smaller.}
It remains to prove~\eqref{contraction}.
Let us take two functions ${\bar z}_1, {\bar z}_2\in \mathcal{Z}^{\bar\lambda}  _\varrho$ and let~$\Psi({\bar z}_1)$ and~$\Psi({\bar z}_2)$
be the corresponding solutions for~\eqref{sys-z=Bz}. Set $e={\bar z}_1-{\bar z}_2$ and
$d^\Psi=\Psi({\bar z}_1)-\Psi({\bar z}_2)$. Then~$d^\Psi$ solves~\eqref{sys-z=f} with $d^\Psi(0)=0$ and
$f=\NN({\bar z}_1)-\NN({\bar z}_2)$.
Therefore, by inequality~\eqref{estz-f}, we have
\begin{align*} 
&\quad\,|\Psi({\bar z}_1)-\Psi({\bar z}_2)|_{\mathcal{Z}^{\bar\lambda}  }^2\le
\overline C_{\left[C_{\WW,\rm st},\bar\lambda,\frac{1}{\bar\lambda}\right]}
\sup_{t\ge0}\int_t^{t+1}{\ex}^{{4\bar\lambda}  s}|\NN({\bar z}_1)(s)-\NN({\bar z}_2)(s)|_{H}^2\,{\ed} s,
\end{align*}
and from
\begin{align*}
&\quad\,{\ex}^{{4\bar\lambda}  s}|\NN({\bar z}_1)(s)-\NN({\bar z}_2)(s)|_{H}^2\\
&\le|\ex^{\bar\lambda s}e(s)|_V^{2}\left(1+\norm{\ex^{\bar\lambda s}\bar z_1(s)}{V}^{\e_1}
+\norm{\ex^{\bar\lambda s}\bar z_2(s)}{V}^{\e_2}\right)\left(\norm{\ex^{\bar\lambda s}\bar z_1(s)}{\D(\Delta)}^{2}+\norm{\ex^{\bar\lambda s}\bar z_2(s)}{\D(\Delta)}^{2}\right)\\
&\quad+|\ex^{\bar\lambda s}e(s)|_{{\mathrm D}(A)}^2\left(|{\ex^{\bar\lambda s}\bar z_1(s)}|_{V}^{\e_3}
+|\ex^{\bar\lambda s}{\bar z_2(s)}|_{V}^{\e_4}\right),
\end{align*}
it follows that
\begin{align*}
|\Psi({\bar z}_1)-\Psi({\bar z}_2)|_{\mathcal{Z}^{\bar\lambda}  }^2
\le  C_{3}
|e|_{\mathcal{Z}^{\bar\lambda}}^2\left(1+|{\bar z}_1|_{\mathcal{Z}^{\bar\lambda}}^{\e_1}
+|{\bar z}_2|_{\mathcal{Z}^{\bar\lambda}}^{\e_2}\right)\left( |{\bar z}_1|_{\mathcal{Z}^{\bar\lambda}}^2
+|{\bar z}_2|_{\mathcal{Z}^{\bar\lambda}}^2+|{\bar z}_1|_{\mathcal{Z}^{\bar\lambda}}^{\e_3}
+|{\bar z}_2|_{\mathcal{Z}^{\bar\lambda}}^{\e_4}\right),
\end{align*}
and since ${\bar z}_1$ and ${\bar z}_2$ are both in~$\mathcal{Z}^{\bar\lambda}_\varrho$, we arrive to
\begin{equation*}
|d^\Psi|_{\mathcal{Z}^{\bar\lambda}  }^2
\le  C_{3}
|e|_{\mathcal{Z}^{\bar\lambda}}^2\left(1+\varrho^\frac{\e_1}{2}|v_0|_V^{\e_1}
+\varrho^\frac{\e_2}{2}|v_0|_V^{\e_2}\right)\left( 2\varrho|v_0|_V^2
+\varrho^\frac{\e_3}{2}|v_0|_V^{\e_3}
+\varrho^\frac{\e_4}{2}|v_0|_V^{\e_4}\right),
\end{equation*}
Choosing $\epsilon>0$, smaller than the one in Step~\ref{stinto}, such that
\[
\epsilon<\min\left\{\varrho^{-1},\varrho^{-\textstyle\frac{4+\e_1}{2(2+\e_1)}}
,\varrho^{-\textstyle\frac{\e_3+2}{2\e_3}},\varrho^{-\frac{1}{2}},(\textstyle\frac{\gamma^2}{18C_3})^\frac{1}{2}\varrho^{-\frac{1}{2}}
,(\textstyle\frac{\gamma^2}{9C_3})^\frac{1}{\e_3}\varrho^{-\frac{1}{2}},(\textstyle\frac{\gamma^2}{9C_3})^\frac{1}{\e_4}\varrho^{-\frac{1}{2}}\right\},
\]
then we have that $\Psi$ maps $\mathcal{Z}^{\bar\lambda}_\varrho$ into itself and
\begin{align*}
|d^\Psi|_{\mathcal{Z}^{\bar\lambda}  }^2
\le  C_{3}
|e|_{\mathcal{Z}^{\bar\lambda}}^2\left(1+\varrho^\frac{\e_1}{2}\epsilon^{\e_1}
+\varrho^\frac{\e_2}{2}\epsilon^{\e_2}\right)\left( 2\varrho\epsilon^2
+\varrho^\frac{\e_3}{2}\epsilon^{\e_3}
+\varrho^\frac{\e_4}{2}\epsilon^{\e_4}\right)< C_3|e|_{\mathcal{Z}^{\bar\lambda}}^2 3\textstyle\frac{3\gamma^2}{9C_3},
\end{align*}
provided {$| z_0|_{V}^2\le\epsilon.$} That is~\eqref{contraction} holds true: $|\Psi({\bar z}_1)-\Psi({\bar z}_2)|_{\mathcal{Z}^{\bar\lambda}  }^2
<\gamma^2|{\bar z}_1-{\bar z}_2|_{\mathcal{Z}^{\bar\lambda}}^2$.
\end{enumerate}
The proof of Lemma~\ref{L:contract} is complete.
\quad\end{proof}

The following result says that the feedback control locally stabilizes exponentially the nonlinear system~\eqref{sys-zfeed-nonl},
is locally exponentially stable with rate~$\frac{\bar\lambda}{2}$.

\begin{theorem}\label{T:ex.st-ct}
Under the hypotheses of Corollary~\ref{C:stab_cond_VH}, there is~$\epsilon>0$ with the following property: if $|{z}_0|_{V}\le\epsilon$, then
there exists a solution for the system~\eqref{sys-zfeed-nonl}, in $\mathbb R_0\times\Omega$,
which belongs to~{$L^2_{\rm loc}(\mathbb{R}_0,{\mathrm D}(A))\cap C([0,+\infty),V)$}, is unique, and satisfies
\begin{equation} \label{expH1.main}
|{z}(t)|_{V} \le C\ex^{-\bar\lambda(t-s_0)}|{z}_0|_{V},\quad\mbox{for all }t\ge s_0,
\end{equation}
for a suitable constant~$C$ independent of~$(\epsilon,{z}_0)$.
\end{theorem}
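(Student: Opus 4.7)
The strategy is to apply Banach's fixed-point theorem to the map $\Psi\colon \mathcal{Z}^{\bar\lambda}_\varrho \to \mathcal{Z}^{\bar\lambda}_{\rm loc}$ defined before Lemma~\ref{L:contract}. Note that $\mathcal{Z}^{\bar\lambda}_\varrho$, being a closed ball in the Banach space $\mathcal{Z}^{\bar\lambda}$, is itself a complete metric space in the induced metric. Fix any $\gamma \in (0,1)$ (say $\gamma = \tfrac12$) and choose $\epsilon = \epsilon_\gamma > 0$ as in Lemma~\ref{L:contract}. For $|z_0|_V \le \epsilon$, Lemma~\ref{L:contract} guarantees that $\Psi$ sends $\mathcal{Z}^{\bar\lambda}_\varrho$ into itself and is a strict contraction there, so it admits a unique fixed point $z \in \mathcal{Z}^{\bar\lambda}_\varrho$. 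By the definition of $\Psi$, $z$ is a solution of~\eqref{sys-zfeed-nonl} with $z(0) = z_0$.

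The regularity and decay now both follow from membership in $\mathcal{Z}^{\bar\lambda}_\varrho$. Using the continuous embedding $W((r,r+1),\D(\Delta),H)\hookrightarrow C([r,r+1],V)$, uniform in $r\ge 0$, for each $t \in [r,r+1]$ one has
\[
|\ex^{\bar\lambda t} z(t)|_V^2 \le C_{\rm e}\,\bigl|\ex^{\bar\lambda\Bigcdot} z\bigr|_{W((r,r+1),\D(\Delta),H)}^2 \le C_{\rm e}\, \varrho\, |z_0|_V^2,
\]
which, after taking the square root and using $s_0 = 0$, gives~\eqref{expH1.main} with $C = (C_{\rm e}\varrho)^{1/2}$. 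The properties $z \in L^2_{\rm loc}(\mathbb{R}_0,\D(\Delta))$ and $z \in C([0,+\infty),V)$ likewise follow from $z \in \mathcal{Z}^{\bar\lambda}$ and the same embedding applied on each interval $[r,r+1]$.

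The remaining point is uniqueness within the class $L^2_{\rm loc}(\mathbb R_0, \D(\Delta))\cap C([0,+\infty),V)$, which a priori is larger than $\mathcal{Z}^{\bar\lambda}_\varrho$ (membership in the latter requires exponential decay). Suppose $z_1$ and $z_2$ are two such solutions with $z_1(0) = z_2(0) = z_0$, set $e \coloneqq z_1 - z_2$, and note that $e$ satisfies
\[
\partial_t e - \Delta e + a\,e + \nabla\cdot(b\,e) + B_M\mathcal{R}^{-1}B_M^*\Pi_{\bar\lambda} e = \NN(z_1) - \NN(z_2), \qquad e|_\Gamma = 0, \qquad e(0) = 0.
\]
Testing with $2e$, estimating the convection/reaction term as in the proof of Lemma~\ref{L:weak-z}, bounding the feedback term by $\norm{\Pi_{\bar\lambda}}{\LL(H)}$ via~\eqref{costPis}, and controlling the nonlinear contribution through~\eqref{exp-non3} yields, for a suitable $C\ge 0$,
\[
\frac{\ed}{\ed t}\norm{e(t)}{H}^2 \le \phi(t)\,\norm{e(t)}{H}^2, \qquad \phi(t) \coloneqq C\Bigl(1 + \norm{z_1(t)}{V}^{\e_5} + \norm{z_2(t)}{V}^{\e_6}\Bigr)\Bigl(\norm{z_1(t)}{\D(\Delta)}^{2} + \norm{z_2(t)}{\D(\Delta)}^{2}\Bigr) + C_{\WW,\rm st}.
\]
Since $z_i \in C([0,+\infty),V)\cap L^2_{\rm loc}(\mathbb R_0,\D(\Delta))$, one has $\phi \in L^1_{\rm loc}(\mathbb R_0)$, and Gronwall's inequality together with $\norm{e(0)}{H} = 0$ gives $e \equiv 0$.

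The main obstacle is essentially packaged into Lemma~\ref{L:contract}; once that is in hand, the only delicate remaining point is the uniqueness argument, where one must verify that the test-function estimate remains valid for two arbitrary strong solutions (without any a priori exponential decay) and that the resulting factor $\phi$ is locally integrable. This is where the assumption $z_i \in L^2_{\rm loc}(\D(\Delta))\cap C(V)$ enters crucially, since the nonlinearity bound~\eqref{exp-non3} requires exactly such control of both norms.
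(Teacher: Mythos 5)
Your proposal is correct and follows essentially the same route as the paper's proof: existence via the contraction mapping principle applied to $\Psi$ on the ball $\mathcal{Z}^{\bar\lambda}_\varrho$ (Lemma~\ref{L:contract}), the decay estimate~\eqref{expH1.main} read off from the bound~\eqref{Psiz2} through the uniform embedding $W((r,r+1),\D(\Delta),H)\xhookrightarrow{} C([r,r+1],V)$, and uniqueness in the larger class $L^2_{\rm loc}(\mathbb{R}_0,\D(\Delta))\cap C([0,+\infty),V)$ by a Gronwall argument with a locally integrable coefficient built from~\eqref{costPis} and~\eqref{exp-non3}. Your write-up is in fact slightly more explicit than the paper's on why the $V$-decay follows from membership in $\mathcal{Z}^{\bar\lambda}_\varrho$, but there is no substantive difference.
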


\begin{proof}
From Lemma~\ref{L:contract} and the contraction
mapping principle it follows that if $ z_0\in V$ is sufficiently small,
$| z_0|_{V}<\epsilon$, then there exists a unique fixed point $ z=\Psi({\bar z})={\bar z}\in
\mathcal{Z}^\epsilon_\varrho$ for~$\Psi$. It follows from the
definitions of~$\Psi$ and~$Z^\epsilon_\varrho$ that~$z$
solves the system~\eqref{sys-z=Bz}, with ${\bar z}= z$.
We can conclude that~$ z$ solves~\eqref{sys-zfeed-nonl}. 

Further, inequality~\eqref{expH1.main} can be concluded from~\eqref{Psiz2}.

Finally, it remains to prove the uniqueness of the solution for~\eqref{sys-zfeed-nonl} in the space
$Z\coloneqq L^2_{\rm loc}(\mathbb{R}_0,{\mathrm D}(A)\times H)\cap C([0,+\infty),V\times H)\supset \mathcal Z^\epsilon_\varrho$.
Let~$ z_1$ and~$ z_2$ be two solutions, in~$Z$, for~\eqref{sys-zfeed-nonl}.
It turns out that~$e=z_1- z_2$ solves~\eqref{sys-z=f} with $f=\NN({z_1})-\NN({z_2})$,
Using~\eqref{costPis} and~\eqref{exp-non3}, we can obtain
\begin{align*}
\langle{B_M}{\mathcal R}^{-1}B_M^*\Pi_{\bar\lambda} e,e\rangle_{V',V}
&\le\overline C_{\left[C_{\WW,\rm st},\bar\lambda,\frac{1}{\bar\lambda}\right]}\norm{e_v}{H}^2,\\
\langle\mathcal{F}({z_1})-\mathcal{F}({z_2}),e\rangle_{V',V}
&\le \widehat C_1\left(1+\norm{z_1}{V}^{\e_5}
+\norm{z_2}{V}^{\e_6}\right)\left(\norm{z_1}{\D(\Delta)}^{2}+\norm{z_2}{\D(\Delta)}^{2}\right)\norm{e}{H}^2,
\end{align*}
from which we can derive, proceeding as in the proof of Lemma~\eqref{L:weak-z}, that
\begin{equation*}
 \frac{\rm d}{\rm d t}\norm{e}{H}^2\le \overline C_{\left[C_{\WW,\rm st},\bar\lambda,\frac{1}{\bar\lambda},\frac{1}{\nu}\right]}
 \left(1+\norm{z_1}{V}^{\e_5}
+\norm{z_2}{V}^{\e_6}\right)\left(\norm{z_1}{\D(\Delta)}^{2}+\norm{z_2}{\D(\Delta)}^{2}\right)\norm{e}{H}^2.
\end{equation*}
Notice that the function
\[
s\mapsto \GG(s)\coloneqq \overline C_{\left[C_{\WW,\rm st},\bar\lambda,\frac{1}{\bar\lambda},\frac{1}{\nu}\right]}
 \left(1+\norm{z_1(s)}{V}^{\e_5}
+\norm{z_2(s)}{V}^{\e_6}\right)\left(\norm{z_1(s)}{\D(\Delta)}^{2}+\norm{z_2(s)}{\D(\Delta)}^{2}\right)
\]
is locally integrable, which allow us to write
\begin{equation*}
 \norm{e(t)}{H}^2\le \ex^{\int_0^t\GG(s)\,\ed s}\norm{e(0)}{H}^2=0,\quad\mbox{for all}\quad t\ge0.
\end{equation*}
That is, the uniqueness holds true: $z_1-z_2=e=0$.
\end{proof}
\begin{remark}\label{R:trans_local}
 Notice that from the proof of Lemma~\ref{L:contract} we see that~$\varrho$ increases and $\epsilon$ decreases as the transient bound~$\overline C_{C_{\WW,\rm st},\bar\lambda,\frac{1}{\bar\lambda}}$
 in~\eqref{estz0} increases.
\end{remark}

\section{Local internal stabilization to trajectories}\label{S:stab_traj}
As a straightforward corollary to Theorem~\ref{T:ex.st-ct}  it follows the
stabilization to trajectories for system~\eqref{sys-y-cont_i}.

Let us be given a solution~$\hat y$ for the uncontrolled system~\eqref{sys-y-cont_i} (with~$u=0$) with~$\hat y_0\coloneqq\hat y(0)\in H$ and such that the vector functions
in~\eqref{abN_lineariz} are such that~$\widehat\NN$ satisfies~\eqref{exp-non2} and \eqref{exp-non3}, and~$(\hat a,\hat b)$ satisfies~\eqref{rhosigma-st}, for suitable nonnegative
constants
$\widehat C$ and $C_{\WW,\rm st}$.
Notice that, recalling the notation in section~\ref{S:red-stabil}, in this case~$\NN\coloneqq\frac{1}{\nu}\breve{\widehat\NN}$ also
satisfies~\eqref{exp-non2} and \eqref{exp-non3}, and~$(a,b)=(\frac{1}{\nu}\breve{\hat a},\frac{1}{\nu}\breve{\hat b})$ also satisfies~\eqref{rhosigma-st},

We consider system~\eqref{sys-y-cont_i}
\begin{equation}\label{sys-y-cont_i-feed}
\begin{split}
 \p_t y - \nu \Delta y + f(y,\nabla y) +B_M {\mathcal R}^{-1}B_M^*\widehat\Pi_{\lambda} (y-\hat y)&= 0;\\
 y\rest\Gamma = g;\qquad
 y(0)&=y_0.
 \end{split}
\end{equation}
with $\widehat\Pi_{\lambda}$  solving 

 \begin{equation}\label{eq:riccati_nu}
 \dot{\widehat\Pi}_{\lambda}+\widehat\Pi_{\lambda}\mathbb A^{\hat a,\hat b}_\nu+{\mathbb A^{\hat a,\hat b}_\nu}^*\widehat\Pi_{\lambda}
 -\widehat\Pi_{\lambda}  B_M  \mathcal R ^{-1}B_M^* \widehat\Pi_{\lambda} +{\lambda}\widehat\Pi_{\lambda}+\mathcal M^*\mathcal M=0,
\end{equation}
with $\mathbb A^{\hat a,\hat b}_\nu z\coloneqq \nu\Delta z-
\hat a z- \nabla\cdot(\hat bz)$.

Observe that $\breve z(\tau)\coloneqq (y-\hat y)(\frac{\tau}{\nu})$ solves
\[
\begin{split}
 \p_\tau \breve z - \Delta \breve z + a\breve z+ \nabla\cdot(b\breve z) +\textstyle\frac{1}{\nu}B_M {\RR}^{-1}B_M^*\breve{\widehat\Pi}_{\lambda}
 \breve z&= \NN(\breve z);\\
 \breve z\rest\Gamma = 0;\qquad
 \breve z(0)&=y_0-\hat y_0,
 \end{split} 
\]
and $\breve{\widehat\Pi}_{\lambda}(\tau)=\widehat\Pi_{\lambda}(\frac{\tau}{\nu})$ solves~\eqref{eq:riccati} with a different pair~$(\RR,\MM)$:
\[
\begin{split}
\textstyle\frac{\ed}{\ed\tau}{\breve{\widehat\Pi}}_{\lambda}+\breve{\widehat\Pi}_{\lambda}\textstyle\mathbb A^{a,b}
+\textstyle{\mathbb A^{a,b}}^*\breve{\widehat\Pi}_{\lambda}
 -\breve{\widehat\Pi}_{\lambda}  B_M  (\nu\mathcal R)^{-1}B_M^* \breve{\widehat\Pi}_{\lambda}
 +\textstyle\frac{\lambda}{\nu}\breve{\widehat\Pi}_{\lambda}+((\textstyle\frac{1}{\nu})^\frac{1}{2}\mathcal M)^*((\textstyle\frac{1}{\nu})^\frac{1}{2}\mathcal M)=0.
\end{split}  
\]
Therefore, from Theorem~\ref{T:ex.st-ct}, we have that
\[
 |\breve z(\tau)|_{V}^2 \le C\mathrm{e}^{-\textstyle\frac{\lambda}{\nu}\tau}
|\breve z(0)|_{V}^2,\quad\mbox{for all }\tau\ge0,
\]
provided $|\breve z(0)|_{V}$ is small enough. This implies that
\[
 |y(t)-\hat y(t)|_{V}^2 \le C\mathrm{e}^{-\lambda t}
|y_0-\hat y_0|_{V}^2,\quad\mbox{for all }t\ge0,
\]
and we can conclude that the following theorem holds true.

\begin{theorem}\label{T:st-traj}
Under the hypotheses of Theorem~\ref{T:ex.st-ct}, there exists~$\epsilon>0$ with the following properties.
If~$y_0\in H$ is such that
\[
y_0-\hat y_0\in V\quad\mbox{and}\quad\norm{y_0-\hat y_0}{V}<\epsilon,
\]
then the solution~$y$ of the system~\eqref{sys-y-cont_i-feed}
goes exponentially to~$\hat y$ with rate~$\frac{\lambda}{2}$, that is,
\[
 |y(t)-\hat y(t)|_{V}^2 \le C\mathrm{e}^{-\lambda t}
|y_0-\hat y_0|_{V}^2,\quad\mbox{for all }t\ge0,
\]
for a suitable constant~$C$ independent of~$(\epsilon,y_0-\hat y_0)$. Furthermore, the solution~$y$ is, and is unique, in the affine space
$\hat y+L^2_{\rm loc}(\mathbb{R}_0,{\mathrm D}(A))\cap C([0,+\infty),V)$.
\end{theorem}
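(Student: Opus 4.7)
The plan is to reduce the statement to Theorem~\ref{T:ex.st-ct} via the time-rescaling already introduced in Section~\ref{S:red-stabil}. Concretely, I would set $\breve z(\tau) \coloneqq (y-\hat y)(\tfrac{\tau}{\nu})$ and verify, as in the paragraph preceding the theorem, that $\breve z$ satisfies a closed-loop system of the form~\eqref{sys-zfeed-nonl} with rate $\bar\lambda \coloneqq \lambda/\nu$, coefficients $(a,b) = (\tfrac{1}{\nu}\breve{\hat a}, \tfrac{1}{\nu}\breve{\hat b})$, nonlinearity $\NN = \tfrac{1}{\nu}\breve{\widehat\NN}$, and Riccati operator $\breve{\widehat\Pi}_\lambda(\tau) = \widehat\Pi_\lambda(\tau/\nu)$. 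A chain-rule computation applied to~\eqref{eq:riccati_nu} shows that $\breve{\widehat\Pi}_\lambda$ solves the standard Riccati equation~\eqref{eq:riccati} for $\mathbb A^{a,b}$ with modified data $(\widetilde\RR, \widetilde\MM) = (\nu\RR, \nu^{-1/2}\MM)$ at the target rate $\bar\lambda$, placing it in the unique class $\mathcal P$ associated to those parameters; hence the feedback law in~\eqref{sys-y-cont_i-feed} coincides, in the $\tau$-variable, with the one produced by the $\bar\lambda$-stabilizing machinery of Section~\ref{S:lin_int}.

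The second step is to check that the hypotheses of Theorem~\ref{T:ex.st-ct} are satisfied for this rescaled problem. The assumptions on $\hat y$ give that $\widehat\NN$ satisfies \eqref{exp-non2}--\eqref{exp-non3} and that $(\hat a,\hat b)$ satisfies \eqref{rhosigma-st}; a direct rescaling confirms that $(a,b)$ still lies in $\WW \times \WW_{\rm st}^d$ and that $\NN$ verifies the same growth estimates, with the $1/\nu$ factors absorbed into enlarged constants $\widetilde C_{\WW,\rm st}$ and $\widetilde C$. Since the finite-dimensional projection $P_M$ and the cut-off $\chi$ defining $B_M$ depend only on the spatial data $(\omega,\Omega)$ and are unaffected by the time rescaling, the spatial hypothesis of Theorem~\ref{T:stab-v-gen} and hence of Theorem~\ref{T:ex.st-ct} transfers verbatim. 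Applying Theorem~\ref{T:ex.st-ct} to $\breve z$ then produces $\tilde\epsilon>0$ and $\tilde C$ such that, whenever $|\breve z(0)|_V\le\tilde\epsilon$, the solution $\breve z$ exists globally in $L^2_{\rm loc}(\R_0,\D(\Delta))\cap C([0,+\infty),V)$, is unique in that space, and satisfies
\[
|\breve z(\tau)|_V^2 \le \tilde C\,\ex^{-\bar\lambda \tau}\, |\breve z(0)|_V^2 \qquad \text{for all } \tau\ge 0.
\]

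Finally I would undo the substitution $\tau = \nu t$: since $\breve z(0) = y_0-\hat y_0$ and $\breve z(\nu t) = (y-\hat y)(t)$, the above inequality becomes $|y(t)-\hat y(t)|_V^2 \le \tilde C\,\ex^{-\lambda t}|y_0-\hat y_0|_V^2$, which is the stated decay with $\epsilon \coloneqq \tilde\epsilon$ and $C\coloneqq \tilde C$. Uniqueness of $y$ in the affine space $\hat y + L^2_{\rm loc}(\R_0,\D(\Delta))\cap C([0,+\infty),V)$ is inherited from the corresponding assertion in Theorem~\ref{T:ex.st-ct}, since any two such candidate solutions $y_1,y_2$ produce two elements $\breve z_i = (y_i-\hat y)(\cdot/\nu)$ that agree at $\tau=0$ and solve the same rescaled closed-loop equation. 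I do not foresee a genuine mathematical obstacle here; the whole argument is a bookkeeping exercise in the substitution $t\mapsto \nu t$, and the one point that deserves a careful line is precisely the identification of $\breve{\widehat\Pi}_\lambda$ as the Riccati operator in the class $\mathcal P$ associated with $(\widetilde\RR,\widetilde\MM,\bar\lambda)$, because only this identification guarantees that the feedback derived from~\eqref{eq:riccati_nu} is the one for which Theorem~\ref{T:ex.st-ct} is stated.
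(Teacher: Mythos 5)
Your proposal is correct and follows essentially the same route as the paper: rescale time via $\breve z(\tau)=(y-\hat y)(\tau/\nu)$, observe that $\breve{\widehat\Pi}_{\lambda}(\tau)=\widehat\Pi_{\lambda}(\tau/\nu)$ solves the Riccati equation~\eqref{eq:riccati} with the modified pair $(\nu\RR,\nu^{-1/2}\MM)$ at rate $\bar\lambda=\lambda/\nu$, apply Theorem~\ref{T:ex.st-ct}, and undo the substitution. The one point you single out for care --- the identification of the rescaled Riccati operator --- is exactly the computation the paper records explicitly, so nothing is missing.
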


Notice that the feedback control stabilizes the linearized system to zero globally, that is, we have the following theorem.
\begin{theorem}\label{T:st-traj-lin}
Under the hypotheses of Theorem~\ref{T:ex.st-ct},
given~{$z_0\in V$}, the solution of
\begin{equation}\label{sys-int-lin-RT}
\begin{split}
 \p_t z - \nu\Delta z + \hat a z+ \nabla\cdot(\hat b z) +B_M {\RR}^{-1}B_M^*{\widehat\Pi}_{\lambda}
 z&= 0;\\
 z\rest\Gamma = 0;\qquad
  z(0)&=z_0,
 \end{split} 
\end{equation}
satisfies
\[
|z(t)|_{V}^2 \le C\mathrm{e}^{-\lambda t}
|z_0|_{V}^2,\quad\mbox{for all }t\ge0,
\]
for a suitable constant~$C$ independent of~$z_0$. Furthermore, the solution~$z$ is, and is unique, in the space
$L^2_{\rm loc}(\mathbb{R}_0,{\mathrm D}(A))\cap C([0,+\infty),V)$.
\end{theorem}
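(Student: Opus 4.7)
The proof parallels the argument for Theorem~\ref{T:st-traj}, but now globally and without any fixed-point step, since the nonlinearity is absent. The plan is to rescale time and invoke Corollary~\ref{C:feedz-st} directly, then read off uniqueness from the standard energy estimate.

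First, following the reductions in Section~\ref{S:red-stabil} and in the paragraph preceding Theorem~\ref{T:st-traj}, I would set $\breve z(\tau)\coloneqq z(\tau/\nu)$. A direct computation shows that $\breve z$ solves
\begin{align*}
  \partial_\tau \breve z - \Delta \breve z + a\,\breve z + \nabla\cdot(b\breve z) + B_M (\nu\RR)^{-1} B_M^* \breve{\widehat\Pi}_\lambda \breve z &= 0, \\
  \breve z\rest\Gamma = 0, \qquad \breve z(0) &= z_0,
\end{align*}
with $a\coloneqq \tfrac{1}{\nu}\breve{\hat a}$, $b\coloneqq \tfrac{1}{\nu}\breve{\hat b}$, and $\breve{\widehat\Pi}_\lambda(\tau)\coloneqq \widehat\Pi_\lambda(\tau/\nu)$. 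The same rescaling, applied to~\eqref{eq:riccati_nu}, shows that $\breve{\widehat\Pi}_\lambda$ is the unique element of $\PP$ satisfying~\eqref{eq:riccati} with rate $\bar\lambda\coloneqq \lambda/\nu$ and with the pair $(\RR,\MM)$ replaced by $(\nu\RR,\nu^{-1/2}\MM)$. The assumption~\eqref{rhosigma-st} on $(\hat a,\hat b)$ transfers directly to $(a,b)$ (up to a factor depending on $\nu$).

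Next, I would apply Corollary~\ref{C:feedz-st} to the rescaled system: since $z_0\in V$, it yields a global solution $\breve z\in L^2_{\rm loc}(\R_0,\D(\Delta))\cap C([0,+\infty),V)$ obeying
\[
|\breve z(\tau)|_V^2 \le \overline C_{\left[C_{\WW,\rm st},\bar\lambda,\frac{1}{\bar\lambda}\right]}\,\ex^{-\bar\lambda\tau}\,|z_0|_V^2,\qquad \tau\ge 0.
\]
Undoing the rescaling via $t=\tau/\nu$ turns $\ex^{-\bar\lambda\tau}$ into $\ex^{-\lambda t}$ and produces the announced inequality in the $V$-norm, with a constant $C$ depending on $\nu$, $\lambda$, $C_{\WW,\rm st}$ but independent of $z_0$.

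For uniqueness in $L^2_{\rm loc}(\R_0,\D(\Delta))\cap C([0,+\infty),V)$, I would take two solutions $z_1,z_2$, observe that $e=z_1-z_2$ solves the same linear equation with $e(0)=0$, and treat the feedback term as a bounded perturbation of the reaction coefficient: by~\eqref{costPis}, $B_M\RR^{-1}B_M^*\widehat\Pi_\lambda\in L^\infty(\R_0,\LL(H))$, so Lemma~\ref{L:weak-z} applied on each bounded subinterval (after absorbing this term into $\hat a$) forces $e\equiv 0$. The only real obstacle is the bookkeeping of the constants under the time rescaling and verifying that the rescaled Riccati problem still falls in the framework of Lemma~\ref{L:wot}; no new analytic difficulty arises, which is why this statement is indeed a direct corollary of the machinery already developed.
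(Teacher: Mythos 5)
Your proposal is correct and follows essentially the same route as the paper: the paper obtains Theorem~\ref{T:st-traj-lin} exactly by the time rescaling $\breve z(\tau)=z(\tau/\nu)$ carried out in Section~\ref{S:stab_traj}, the observation that $\breve{\widehat\Pi}_\lambda$ solves \eqref{eq:riccati} with the modified pair $(\nu\RR,\nu^{-1/2}\MM)$, and an appeal to the closed-loop strong-solution estimate of Corollary~\ref{C:feedz-st} (with the uniqueness handled by the same Gronwall/energy argument, which is even simpler here since $\NN=0$). No gaps.
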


\section{Example. Polynomial nonlinearities}\label{S:exa_poly}
Many systems modelling real evolutions involve polynomial nonlinearities, for example the Fisher-like {equations~\cite{Fisher37,VolpertPetr09}} modelling population dynamics
and the Burgers-like
equations~\cite{KroRod15,BuchotRaymondTiago15} modelling fluid (e.g., traffic) flow. Here we check the previous assumptions for the case the
function~$f(y,\nabla y)$ takes the form~$f_{\rm r}(y)+f_{\rm c}(y)\cdot\nabla y$ where~$f_{\rm r}$ and~$f_{\rm c}=[{f_{\rm c}}_1\;\;{f_{\rm c}}_2\;\;\dots\;\;{f_{\rm c}}_d]^\top$, are polynomials:
\[
f_{\rm r}(y)=\sum_{j=0}^{\bar p} \bar r_jy^j\quad\mbox{and}\quad {f_{\rm c}}_k(y)=\sum_{j=0}^{p_k} r_{k,j}y^j,
\]
with~$\bar r_j$ and $r_{k,j}$ real numbers, and $p_k\in\N$ for $k\in\{1,\dots,d\}$.

For illustration, we consider here the case $d=3$. The following estimates will be also valid for $d\in\{1,2\}$, though in those cases better estimates may hold true.
On the other hand some of the following arguments will not work in dimension~$d\ge4$, in that case some changes are needed.

It is enough to analyze the case of monomials, with degree greater than or equal to~$2$:
\begin{align*}
\qquad f(y)&=y^{\bar n}\mbox{ with }\bar n\ge 2
\intertext{and}
\qquad f(y)&=y^{n}\textstyle\p_{x_{\bar k}}y=\frac{1}{n+1}\textstyle\p_{x_{\bar k}}y^{n+1},\;\; \mbox{ with } n\ge1\;\mbox{for some } \bar k\in\{1,\dots,d\}.
\end{align*}
In this case, recalling the notation in Section~\ref{S:red-stabil}, for a given trajectory~$\hat y$, we obtain
respectively either
\begin{align*}
\hat a&=\bar n\hat y^{\bar n-1}\quad\mbox{and}\quad\hat b=0,
\intertext{or}
 \hat a&=0\quad\mbox{and}\quad \hat b=[\hat b_1\;\;\hat b_2\;\;\dots\;\;\hat b_d]^\top,\quad\mbox{with}\quad\hat b_k=\left\{\begin{array}{ll}
                                                                                                                        0&\mbox{if }k\ne\bar k,\\
                                                                                                                        \hat y^{n}&\mbox{if }k=\bar k.
                                                                                                                       \end{array}\right.
\end{align*}

Observe that in the case of a reaction nonlinearity $f(y)=y^{\bar n}$, condition~\eqref{rhosigma} is satisfied provided $\hat y\in L^\infty(\R_0,L^{3(\bar n-1)})$.
In the case of a convection nonlinearity $f(y)=\textstyle\p_{x_{\bar k}}y^{n+1}$, conditions~\eqref{rhosigma} and~\eqref{rhosigma-st} are satisfied
provided $\hat y\in L^\infty_w(\R_0,L^\infty)\cap L^\infty(\R_0,W^{1,3}(\Omega,\R))$.

\medskip\noindent
{\em The nonlinearity.}
Concerning~\eqref{exp-non2}, we need to be more careful, and  need a bit more of work. Again we consider only monomials. We also consider the
case~$\hat y\in L^\infty_w(\R_0,L^\infty)$.
\begin{example}
 In the case~$\NN(z)=\hat y^m z^2$, $m\in\N$, \eqref{exp-non2} holds true. We may write
 \[
\norm{\NN(z)-\NN(\tilde z)}{H}^2=\norm{\hat y^m(z-\tilde z)(z+\tilde z)}{H}^2
\le\norm{z-\tilde z}{H}^2\norm{\hat y^m}{L^\infty}^2\norm{z+\tilde z}{L^\infty}^2,
\]
and
\[
\norm{\NN(z)-\NN(\tilde z)}{H}^2
\le C\norm{\hat y^m}{L^\infty}^2\norm{z-\tilde z}{V}^2\left(\norm{z}{\D(\Delta)}^2+\norm{\tilde z}{\D(\Delta)}^2\right),
\]
which shows that~\eqref{exp-non2} holds true.
\end{example}

\begin{example}
 In the case~$\NN(z)=\hat y^mz^n$, $m\in\N$ and $n=\{3,4,5\}$, \eqref{exp-non2} holds true. 
 We may write, for suitable nonzero constants~$r_j$,
 \[
\NN(z)-\NN(\tilde z)=\hat y^m(z-\tilde z)\textstyle\sum\limits_{j=0}^{n-1} r_jz^j\tilde z^{n-1-j}
\]
where in the sum we have monomials of degree~$n-1$. For example for~$z^1\tilde z^{n-2}$, by standard (yet appropriate) Young, H\"older, Sobolev, and Agmon inequalities, we may write
\[
\begin{split}
\norm{(z-\tilde z)z^1\tilde z^{n-2}}{H}^2&=\norm{(z-\tilde z)^2z^2\tilde z^{2n-4}}{L^1}
\le\norm{z-\tilde z}{L^\infty}^2\norm{z}{L^\infty}\norm{\tilde z}{L^\infty}\norm{z\tilde z^{2n-5}}{L^1}\\
&\le C_1\norm{z-\tilde z}{V}\norm{z-\tilde z}{\D(\Delta)}\norm{z}{V}^\frac{1}{2}\norm{z}{\D(\Delta)}^\frac{1}{2}
\norm{\tilde z}{V}^\frac{1}{2}\norm{\tilde z}{\D(\Delta)}^\frac{1}{2}\norm{z}{L^6}\norm{\tilde z}{L^\frac{6(2n-5)}{5}}^{2n-5},
\end{split}
\]
and, since $H^1(\Omega,\R)\xhookrightarrow{}L^6(\Omega,\R)\xhookrightarrow{}L^\frac{6(2n-5)}{5}(\Omega,\R)$,
\[
\begin{split}
\norm{(z-\tilde z)z^1\tilde z^{n-2}}{H}^2
&\le \textstyle\frac{C_1}{2}\norm{z-\tilde z}{V}^2\norm{z}{\D(\Delta)}\norm{\tilde z}{\D(\Delta)}
+C_2\norm{z-\tilde z}{\D(\Delta)}^2\norm{z}{V}^3
\norm{\tilde z}{V}^{1+2(2n-5)}\\
&\le \textstyle\frac{C_1}{4}\norm{z-\tilde z}{V}^2\left(\norm{z}{\D(\Delta)}^2+\norm{\tilde z}{\D(\Delta)}^2\right)
+C_3\norm{z-\tilde z}{\D(\Delta)}^2\left(\norm{z}{V}^{4n-6}+\norm{\tilde z}{V}^{4n-6}\right).
\end{split}
\]
For the other monomials we can obtain analogous estimates, which give us
\[
\begin{split}
|\NN(z)-\NN(\tilde z)|_{H}^2
&\le C_4\norm{\hat y^m}{L^\infty}^2\norm{z-\tilde z}{V}^2\left(\norm{z}{\D(\Delta)}^2+\norm{\tilde z}{\D(\Delta)}^2\right)\\
&\quad+C_4\norm{\hat y^m}{L^\infty}^2\norm{z-\tilde z}{\D(\Delta)}^2\left(\norm{z}{V}^{4n-6}+\norm{\tilde z}{V}^{4n-6}\right),
\end{split}
\]
which shows that~\eqref{exp-non2} holds true.
\end{example}
\begin{example}
 In the case~$\NN(z)=\hat y^mz^6$, we were not able to derive~\eqref{exp-non2}. Proceeding as above, for suitable nonzero constants~$r_j$,
 \[
\NN(z)-\NN(\tilde z)=\hat y^m(z-\tilde z)\textstyle\sum\limits_{j=0}^{4} r_jz^j\tilde z^{5-j},
\]
where in the sum we have now monomials of degree~$5$. If for example for~$z^1\tilde z^4$,  we proceed as above and write
\[
\begin{split}
\norm{(z-\tilde z)z^1\tilde z^4}{H}^2&=\norm{(z-\tilde z)^2z^2\tilde z^8}{L^1}
\le\norm{z-\tilde z}{L^\infty}^2\norm{z}{L^\infty}\norm{\tilde z}{L^\infty}\norm{z\tilde z^7}{L^1}\\
&\le C_1\norm{z-\tilde z}{V}\norm{z-\tilde z}{\D(\Delta)}\norm{z}{V}^\frac{1}{2}\norm{z}{\D(\Delta)}^\frac{1}{2}
\norm{\tilde z}{V}^\frac{1}{2}\norm{\tilde z}{\D(\Delta)}^\frac{1}{2}\norm{z\tilde z^7}{L^1},
\end{split}
\]
we cannot bound the term~$\norm{z\tilde z^7}{L^1}$ by the $V$-norms of~$z$ and $\tilde z$ (for~$d=3$). Trying to use again the~$\D(\Delta)$-norms, we were not able to arrive
to~\eqref{exp-non2} (the~$\D(\Delta)$-norms will appear with a power strictly greater than~$2$).
\end{example}
\begin{example}
 In the case~$\NN(z)=\nabla\cdot(g(\hat y)z^n)$, where~$n\in\{2,3\}$ and~$g\colon \R\to\R^3$ is a smooth function, estimate~\eqref{exp-non2} holds true provided
 $g(\hat y)\in\WW_{\rm st}$. We consider only the case $n=3$. We write, for suitable nonzero constants~$r_j$,
 \[
\NN(z)-\NN(\tilde z)=\nabla\cdot\Bigl(g(\hat y)(z-\tilde z)\textstyle\sum\limits_{j=0}^{2} r_jz^j\tilde z^{2-j}\Bigr)
\]
where in the sum we have monomials of degree~$2$. For example for~$z\tilde z$ we find
\[
\begin{split}
&\quad\norm{\nabla\cdot\left( g(\hat y)(z-\tilde z)z\tilde z \right) }{H}^2\\
&\le\norm{(\nabla\cdot g(\hat y))^2(z-\tilde z)^2z^2\tilde z^{2}}{L^1}
+\norm{g(\hat y)}{L^\infty}^2\norm{(\nabla((z-\tilde z)z\tilde z))^2}{L^1}\\
&\le\norm{(\nabla\cdot g(\hat y))}{L^3}^2\norm{z-\tilde z}{L^6}^2\norm{z^2\tilde z^{2}}{L^\infty}\\
&\quad+\norm{g(\hat y)}{L^\infty}^2\Bigl(\norm{(\nabla(z-\tilde z))^2}{L^1}\norm{z^2\tilde z^{2}}{L^\infty}
+\norm{z-\tilde z}{L^\infty}^2\norm{(\nabla(z\tilde z))^2}{L^1}\Bigr)\\
&\le C\norm{z-\tilde z}{V}^2\norm{z}{L^\infty}^2\norm{\tilde z}{L^\infty}^{2}
+C\norm{z-\tilde z}{V}\norm{z-\tilde z}{\D(\Delta)}\left(\norm{z}{V}^2\norm{\tilde z}{L^\infty}^{2}
+\norm{z}{L^\infty}^2\norm{\tilde z}{V}^{2}\right)\\
&\le C_1\norm{z-\tilde z}{V}^2\left(\norm{z}{V}^2+\norm{z}{V}^{2}\right)\left(\norm{z}{\D(\Delta)}^2+\norm{\tilde z}{\D(\Delta)}^2\right)\\
&\quad+C_1\norm{z-\tilde z}{V}\norm{z-\tilde z}{\D(\Delta)}\left(\norm{z}{V}^2\norm{\tilde z}{V}\norm{\tilde z}{\D(\Delta)}
+\norm{z}{V}\norm{z}{\D(\Delta)}\norm{\tilde z}{V}^2\right)\\
&\le C_2\norm{z-\tilde z}{V}^2\left(\norm{z}{V}^2+\norm{z}{V}^{2}+1\right)\left(\norm{z}{\D(\Delta)}^2+\norm{\tilde z}{\D(\Delta)}^2\right)
+C_2\norm{z-\tilde z}{\D(\Delta)}^2\left(\norm{z}{V}^6+\norm{\tilde z}{V}^6\right).
\end{split}
\]
We can obtain analogous estimates for the other monomials, and conclude that
\[
\begin{split}
\norm{\NN(z)-\NN(\tilde z)}{H}^2
&\le C_3\norm{z-\tilde z}{V}^2\left(\norm{z}{V}^2+\norm{z}{V}^{2}+1\right)\left(\norm{z}{\D(\Delta)}^2+\norm{\tilde z}{\D(\Delta)}^2\right)\\
&\quad+C_3\norm{z-\tilde z}{\D(\Delta)}^2\left(\norm{z}{V}^6+\norm{\tilde z}{V}^6\right).
\end{split}
\]
which shows that~\eqref{exp-non2} holds true.
\end{example}

\section{Boundary stabilization}\label{S:bdry}
We start by considering a linear system in the form~\eqref{sys-z-bdry}, without the nonlinearity, which we rewrite in the more general form
\begin{equation}\label{sys-z_linbdry}
  \begin{split}
  \p_t z- \Delta z + az + \nabla \cdot (bz)
  &= 0;\quad
 z\rest\Gamma = B_\Gamma\zeta;\\
 z(0)&=z_0.
  \end{split}
\end{equation}
where now our control is a function~$\zeta\in \ZZ$, where~$\ZZ$ is an Hilbert space, and $B_\Gamma\in\LL(\ZZ,G^1_{\rm loc}(\R_0,\Gamma))$ with 
\[
\begin{split}
G^1_{\rm loc}(\R_0,\Gamma)&\coloneqq\bigcap_{T>0}G^1((0,T),\Gamma),\\
G^1((s_0,s_1),\Gamma)&\coloneqq W((s_0,s_1),H^1(\Omega,\R),V')\rest\Gamma\coloneqq\{v\rest\Gamma\mid v\in W((s_0,s_1),H^1(\Omega,\R),V')\},
\end{split}
\]
for all $s_1>s_0\ge0$.

\subsection{Weak solutions}\label{sS:weak-sol_bdry}
Let us consider the more general system
\begin{subequations}\label{sys-z2_bdry}
\begin{align}
  &\partial_t z-\Delta z+ a z+\nabla\cdot(bz)+g=0,\label{sys-z2b-eq}\\
  &z\rest\Gamma =\gamma, \qquad z(0)=z_0.\label{sys-z2b-bcic}
\end{align}
\end{subequations}
with an external body forcing~$g$ and where the control in~\eqref{sys-z_linbdry} is replaced by a general external boundary forcing~$\gamma$.

Notice that for~$I=(s_0,s_1)$ and for given $v\in W(I,H^1(\Omega,\R),V')$, we have~$v\rest\Gamma=0$ if, and only if, $v\in W(I,V,V')$.
Therefore since $v\in W(I,V,V')$ is a closed subspace of $W(I,H^1(\Omega,\R),V')$ we have that the function
\[
\begin{split}
 E_1\colon G^1(I,\Gamma)&\to W(I,H^1(\Omega,\R),V');\\
 \gamma&\mapsto E_1\gamma\in W(I,V,V')^\perp,\quad\mbox{with}\quad E_1(\gamma)\rest\Gamma=\gamma
\end{split}
 \]
is well defined. Notice also that in this case we can see that the trace space $G^1(I,\Gamma)$ is an Hilbert space, with the scalar product
\[
(\gamma,\xi)_{G^1(I,\Gamma)}\coloneqq(E_1(\gamma),E_1(\xi))_{W(I,H^1(\Omega,\R),V')}. 
\]
The corresponding induced norm corresponds to the trace norm (or range norm, cf.~\cite[Lemma~3.1]{Rod14-na})
\[
 \norm{\gamma}{G^1(I,\Gamma)}\coloneqq\inf_{\gamma=v\rest\Gamma}\norm{v}{W(I),H^1(\Omega,\R),V')}.
\]

\begin{definition}\label{D:weaksolbdry}
We say that $z$ is a weak solution for system~\eqref{sys-z2_bdry} if $y=z-E_1\gamma$ is a weak solution for the system~\eqref{sys-z2}
 with $f=g+\p_tE_1\gamma-\Delta E_1\gamma+ a E_1\gamma+\nabla\cdot(bE_1\gamma)$, and $y(0)=z_0-E_1\gamma(0)$.
\end{definition}

\begin{lemma}\label{L:weakbdry-z}
 Given~$(a,b)\in\WW$ satisfying~\eqref{rhosigma}, $g\in L^2(I,V')$, $\gamma\in G^1(I,\Gamma)$ and $z_0\in H$, there exists a weak solution
 $z\in W(I,H^1(\Omega,\R),V')$ for \eqref{sys-z2}. Moreover~$z$ is unique and depends continuously on the data:
 \[
 \norm{z}{W(I,H^1(\Omega,\R),V')}^2\le \overline C_{\left[|I|,C_{\WW}\right]}
\left(\norm{z(s_0)}{H}^2+\norm{g}{L^2(I,V')}^2+\norm{\gamma}{G^1(I,\Gamma)}^2\right).
 \]
\end{lemma}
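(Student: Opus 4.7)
The plan is to apply Definition~\ref{D:weaksolbdry}, which reduces the problem to the zero-boundary case handled by Lemma~\ref{L:weak-z}. First I would let $w\coloneqq E_1\gamma\in W(I,H^1(\Omega,\R),V')$ be the distinguished lift from the definition of $G^1(I,\Gamma)$, and set $y\coloneqq z-w$. Then $z\in W(I,H^1(\Omega,\R),V')$ is a weak solution of~\eqref{sys-z2_bdry} precisely when $y\in W(I,V,V')$ is the weak solution of
\begin{equation*}
\partial_t y-\Delta y+ay+\nabla\cdot(by)+f=0,\quad y\rest\Gamma=0,\quad y(s_0)=z_0-w(s_0),
\end{equation*}
with forcing $f\coloneqq g+\partial_t w-\Delta w+aw+\nabla\cdot(bw)$. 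Existence, uniqueness and continuous dependence for $z$ will thus follow from the corresponding properties for $y$, once $f\in L^2(I,V')$ and $z_0-w(s_0)\in H$ are established with appropriate norm bounds.

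For the initial datum, the embedding $W(I,H^1(\Omega,\R),V')\hookrightarrow C(\overline I,H)$ yields $w(s_0)\in H$ with $\norm{w(s_0)}{H}\le\overline C_{[|I|]}\norm{\gamma}{G^1(I,\Gamma)}$, so $z_0-w(s_0)\in H$. For the forcing, $\partial_t w\in L^2(I,V')$ by definition of $W(I,H^1,V')$, and $\Delta w\in L^2(I,V')$ follows from $w\in L^2(I,H^1)$ via the distributional identity $\langle\Delta w,\varphi\rangle_{V',V}=-(\nabla w,\nabla\varphi)_H$ for $\varphi\in V$. The lower-order contributions satisfy, by straightforward extensions to $H^1$ of Lemma~\ref{L:estRC},
\begin{equation*}
\norm{aw}{L^2(I,V')}^2+\norm{\nabla\cdot(bw)}{L^2(I,V')}^2\le\overline C_{[C_\WW]}\norm{w}{L^2(I,H^1)}^2.
\end{equation*}
Combining these with $\norm{w}{W(I,H^1,V')}=\norm{\gamma}{G^1(I,\Gamma)}$, which is the distinguishing property of the orthogonal lift $E_1$, gives
\begin{equation*}
\norm{f}{L^2(I,V')}^2\le\overline C_{[|I|,C_\WW]}\bigl(\norm{g}{L^2(I,V')}^2+\norm{\gamma}{G^1(I,\Gamma)}^2\bigr).
\end{equation*}

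Lemma~\ref{L:weak-z} then produces a unique $y\in W(I,V,V')$ satisfying
\begin{equation*}
\norm{y}{W(I,V,V')}^2\le\overline C_{[|I|,C_\WW]}\bigl(\norm{z_0}{H}^2+\norm{g}{L^2(I,V')}^2+\norm{\gamma}{G^1(I,\Gamma)}^2\bigr),
\end{equation*}
whence $z=y+w\in W(I,H^1(\Omega,\R),V')$ with the announced estimate after using $V\hookrightarrow H^1(\Omega,\R)$ and the norm equality for $w$. Uniqueness of $z$ follows from uniqueness of $y$: if $\tilde z$ is another weak solution in the same class, then $\tilde z-z$ has zero trace and satisfies the homogeneous system with zero initial datum, so it vanishes by Lemma~\ref{L:weak-z}.

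The main obstacle I expect is the functional bookkeeping associated with the lift: one must verify that $E_1\gamma$ realizes $\norm{\gamma}{G^1(I,\Gamma)}$ as its $W(I,H^1,V')$-norm (handled by the orthogonality in its definition), and that each of the four summands of $f$ lies in $L^2(I,V')$ with a bound linear in $\norm{\gamma}{G^1(I,\Gamma)}$. The mildly technical pieces are $\Delta w$ and $\nabla\cdot(bw)$, whose duality pairings require testing against $V$-functions and an integration by parts; crucially no boundary term appears because the test functions vanish on $\Gamma$, which is exactly the reason the construction is carried out relative to $V=H^1_0$ rather than relative to $H^1(\Omega,\R)$.
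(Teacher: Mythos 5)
Your proposal is correct and follows exactly the route the paper takes: the paper's proof is the one-line remark that the result is ``straightforward from Definition~\ref{D:weaksolbdry} and Lemma~\ref{L:weak-z}'', and your argument simply fills in the details of that reduction (lifting $\gamma$ via $E_1$, checking that the shifted forcing lies in $L^2(I,V')$, and invoking the homogeneous-boundary lemma).
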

\begin{proof}
 The proof is straightforward from Definition~\ref{D:weaksolbdry} and Lemma~\ref{L:weak-z} (cf.~\cite[Theorem~3.2]{Rod14-na}).
\end{proof}

\subsection{Strong solutions}\label{sS:strong-sol_bdry}
In order to define strong solutions, we {introduce}
\[
\begin{split}
G^2_{\rm loc}(\R_0,\Gamma)&\coloneqq\bigcap_{T>0}G^1((0,T),\Gamma),\\
G^2((s_0,s_1),\Gamma)&\coloneqq W((s_0,s_1),H^2(\Omega,\R),H)\rest\Gamma,
\end{split}
\]
for all $s_1>s_0\ge0$, and consider the extension
\[
\begin{split}
 E_2\colon G^2((s_0,s_1),\Gamma)&\to W((s_0,s_1),H^2(\Omega,\R),H);\\
 \gamma&\mapsto E_2\gamma\in W((s_0,s_1),\D(\Delta),H)^\perp,\quad\mbox{with}\quad E_2(\gamma)\rest\Gamma=\gamma.
\end{split}
 \]
The trace space $G^2((s_0,s_1),\Gamma)$ is endowed with the scalar product
\[
(\gamma,\xi)_{G^2((s_0,s_1),\Gamma)}\coloneqq(E_2(\gamma),E_2(\xi))_{W((s_0,s_1),H^2(\Omega,\R),H)} 
\]
and induced norm 
\[
 \norm{\gamma}{G^2((s_0,s_1),\Gamma)}\coloneqq\inf_{\gamma=v\rest\Gamma}\norm{v}{W((s_0,s_1),H^2(\Omega,\R),H)}.
\]

\begin{definition}\label{D:strongsolbdry}
 We say that $z$ is a strong solution for system~\eqref{sys-z2_bdry} if $y=z-E_2\gamma$ is a strong solution for the system~\eqref{sys-z2}
 with $f=g+\p_tE_2\gamma-\Delta E_2\gamma+ a E_2\gamma+\nabla\cdot(bE_2\gamma)$, and $y(0)=z_0-E_2\gamma(0)\in V$.
\end{definition}

\begin{lemma}\label{L:strongbdry-z}
 Given~$(a,b)\in\WW_{\rm st}$ satisfying~\eqref{rhosigma-st}, $g\in L^2(I,H)$, $\gamma\in G^2(I,\Gamma)$ and $z_0\in H^1(\Omega,\R)$, with $z_0-E_2\gamma(0)\in V$, then there exists a strong solution
 $z\in W(I,H^2(\Omega,\R),H)$ for~\eqref{sys-z2_bdry}. Moreover~$z$ is unique and depends continuously on the data
 \[
 \norm{z}{W(I,H^2(\Omega,\R),H)}^2\le \overline C_{\left[|I|,C_{\WW,\rm st}\right]}
\left(\norm{z(s_0)}{H}^2+\norm{g}{L^2(I,H)}^2+\norm{\gamma}{G^2(I,\Gamma)}^2\right).
 \]
\end{lemma}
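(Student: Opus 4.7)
The plan is to mirror the argument of Lemma~\ref{L:weakbdry-z}, replacing the interior weak theory of Lemma~\ref{L:weak-z} by the interior strong theory of Lemma~\ref{L:strong-z}. Set $y \coloneqq z - E_2\gamma$; by Definition~\ref{D:strongsolbdry} the problem reduces to producing a strong solution of the homogeneous-boundary problem~\eqref{sys-z2} with body forcing
\[
f \coloneqq g + \p_t E_2\gamma - \Delta E_2\gamma + a E_2\gamma + \nabla\cdot(b E_2\gamma),
\]
and initial datum $y(s_0) = z_0 - E_2\gamma(s_0)$, which belongs to $V$ by hypothesis.

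The heart of the argument is the quantitative bound
\[
\norm{f}{L^2(I,H)}^2 \le \overline C_{\left[|I|,C_{\WW,\rm st}\right]}\bigl(\norm{g}{L^2(I,H)}^2 + \norm{\gamma}{G^2(I,\Gamma)}^2\bigr).
\]
The contributions $g$, $\p_t E_2\gamma$ and $\Delta E_2\gamma$ land in $L^2(I,H)$ directly from the definitions of $E_2$ and of the norm on $G^2(I,\Gamma)$. For $aE_2\gamma$, I would use H\"older together with the Sobolev embedding $H^1 \hookrightarrow L^{2d/(d-2)}$ (for $d\ge 3$) or an Agmon-type inequality analogous to~\eqref{Agmon_d=1} (for $d\le 2$) to obtain a pointwise-in-time bound in terms of $\norm{a}{L^d}$ and the $V$- or $\D(\Delta)$-norm of $E_2\gamma$; integrating in time then uses $E_2\gamma \in L^2(I,H^2)\cap L^\infty(I,H^1)$, which follows by standard interpolation of $W(I,H^2,H)$. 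For $\nabla\cdot(bE_2\gamma)$ the pointwise-in-time $H$-bounds already displayed in Section~\ref{S:nonlinear_int} (the Agmon-based estimates valid for $d\in\{1,2,3\}$ and the $L^\infty$-variant for $d\ge 4$) apply verbatim with $z$ replaced by $E_2\gamma$, and the $\WW_{\rm st}$-regularity of $b$ is exactly what is needed to integrate these bounds in time.

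Once the $L^2(I,H)$-estimate on $f$ is in hand, Lemma~\ref{L:strong-z} delivers a unique $y \in W(I,\D(\Delta),H)$ with
\[
\norm{y}{W(I,\D(\Delta),H)}^2 \le \overline C_{\left[|I|,C_{\WW,\rm st}\right]}\bigl(\norm{y(s_0)}{V}^2 + \norm{f}{L^2(I,H)}^2\bigr).
\]
Taking $z \coloneqq y + E_2\gamma$, the continuous embedding $W(I,H^2,H)\hookrightarrow C(\overline I,H^1)$ provides $\norm{E_2\gamma(s_0)}{H^1} \le C\norm{\gamma}{G^2(I,\Gamma)}$, hence $\norm{y(s_0)}{V}^2 \le C\bigl(\norm{z_0}{H^1}^2 + \norm{\gamma}{G^2(I,\Gamma)}^2\bigr)$. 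Combining this with the $f$-estimate, together with the identity $\norm{E_2\gamma}{W(I,H^2,H)} = \norm{\gamma}{G^2(I,\Gamma)}$ built into the definition of the trace norm, produces $z \in W(I,H^2,H)$ and the announced continuous-dependence inequality. Uniqueness then follows from the uniqueness part of Lemma~\ref{L:strong-z} applied to the difference of two strong solutions (alternatively from Lemma~\ref{L:weak-z}, since a strong solution is in particular a weak solution).

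The main obstacle will be the $L^2(I,H)$-bound for $aE_2\gamma$ and $\nabla\cdot(bE_2\gamma)$: both terms sit at the edge of integrability in high dimension, which is precisely why the definition of $\WW_{\rm st}$ was split into~\eqref{Wwspace-st1-3} and~\eqref{Wwspace-st4-}. A uniform treatment requires case-splitting on $d$ and invoking the appropriate Agmon/Sobolev inequality in each range, in the same spirit as the displays preceding Lemma~\ref{L:strong-z}.
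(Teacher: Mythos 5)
Your proposal follows the paper's own (one-line) proof exactly: reduce via Definition~\ref{D:strongsolbdry} to the homogeneous-boundary problem with the modified forcing $f$, bound $f$ in $L^2(I,H)$ using the $\WW_{\rm st}$-estimates, and invoke Lemma~\ref{L:strong-z}. The details you supply (the dimension-dependent bounds on $aE_2\gamma$ and $\nabla\cdot(bE_2\gamma)$, the trace-norm identity, and the $V$-bound on the shifted initial datum) are precisely what the paper leaves implicit, so this is the same argument, correctly fleshed out.
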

\begin{proof}
 The proof is straightforward from Definition~\ref{D:strongsolbdry} and Lemma~\ref{L:strong-z}.
\end{proof}

Also in the case of nonhomogeneous boundary conditions, we have the following smoothing property.
\begin{lemma}\label{L:smooth-prop}
Let us be given $(a,b)\in\WW_{\rm st}$ satisfying~\eqref{rhosigma-st},
$z_0\in H$, $g\in L^2(I,H)$, and $\gamma\in G^2(I,\Gamma))$;
then for the weak solution $z$ of system~\eqref{sys-z2_bdry},
we have that
$(\Bigcdot-s_0)z\in W(I,H^2(\Omega,\R)
,H)$, and
\begin{align*}
(|(\Bigcdot-s_0)z|_{W(I,H^2(\Omega,\R),H))}^2
&\le\overline C_{\left[C_{\WW,\rm st}\right]}
\left(|z_0|_{H}^2+\norm{g}{L^2(I,H)}^2 +|\gamma|_{G^2(I,\Gamma)}^2\right).
\end{align*}
\end{lemma}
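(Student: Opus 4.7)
The plan is to reduce the nonhomogeneous boundary problem to the homogeneous-boundary smoothing result of Lemma~\ref{L:smooth} via the lift~$E_2\gamma$. I would set $y\coloneqq z - E_2\gamma$ and observe that, by linearity and by uniqueness of weak solutions (Lemma~\ref{L:weakbdry-z}), $y$ is the weak solution of~\eqref{sys-z2} with forcing and initial datum
\[
f = g + \partial_t E_2\gamma - \Delta E_2\gamma + aE_2\gamma + \nabla\cdot(bE_2\gamma),\qquad y(s_0) = z_0 - (E_2\gamma)(s_0).
\]
Since $E_2\gamma \in W(I, H^2(\Omega,\R), H) \hookrightarrow C(\bar I, H^1(\Omega,\R))$, the trace $y(s_0)$ lies in $H$ with $\norm{y(s_0)}{H} \le \norm{z_0}{H} + C\norm{\gamma}{G^2(I,\Gamma)}$, and the regular part $E_2\gamma$ of the decomposition will be handled directly.

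The key technical step will be to verify that $f \in L^2(I, H)$ with
\[
\norm{f}{L^2(I,H)}^2 \le \overline C_{[|I|,C_{\WW,\rm st}]}\left(\norm{g}{L^2(I,H)}^2 + \norm{\gamma}{G^2(I,\Gamma)}^2\right).
\]
The body force $g$ and the terms $\partial_t E_2\gamma$, $\Delta E_2\gamma$ are immediate from $E_2\gamma \in W(I, H^2, H)$. For the lower-order coefficient terms, the hypothesis $(a,b) \in \WW_{\rm st}$ is exactly what is needed: combining the Sobolev embedding $H^2(\Omega,\R) \hookrightarrow L^\infty(\Omega,\R)$ available for $d \le 3$ (respectively, a suitable $L^p$-embedding for $d \ge 4$) with the $L^d$-bound on $a$, the $L^\infty$-bound on $b$, and the $L^r$-bound on $\nabla\cdot b$ prescribed by~\eqref{Wwspace-st}, H\"older's inequality yields $aE_2\gamma,\ (\nabla\cdot b)\,E_2\gamma,\ b\cdot\nabla E_2\gamma \in L^2(I, H)$ with the stated bound.

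With $f \in L^2(I, H)$ in hand, Lemma~\ref{L:smooth} applied to $y$ gives $(\Bigcdot - s_0)y \in W(I, \D(\Delta), H)$ with norm controlled by $\overline C_{[|I|,C_{\WW,\rm st}]}(\norm{y(s_0)}{H}^2 + \norm{f}{L^2(I,H)}^2)$, and the embedding $\D(\Delta) \hookrightarrow H^2(\Omega,\R)$ bounds $(\Bigcdot - s_0)y$ in $W(I, H^2(\Omega,\R), H)$. The residual piece is easy: from $E_2\gamma \in W(I, H^2, H)$ and the product rule $\partial_t[(t-s_0)E_2\gamma] = E_2\gamma + (t-s_0)\partial_t E_2\gamma$, multiplication by the bounded cutoff $(t-s_0)$ on $I$ preserves $W(I, H^2, H)$ with norm controlled by $C_{|I|}\norm{\gamma}{G^2(I,\Gamma)}$. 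Adding the two pieces through $(\Bigcdot - s_0)z = (\Bigcdot - s_0)y + (\Bigcdot - s_0)E_2\gamma$ and the triangle inequality yields the desired estimate.

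The main obstacle is the dimension-dependent bookkeeping in verifying $f \in L^2(I, H)$: the structural split between $d \le 3$ and $d \ge 4$ in~\eqref{Wwspace-st} has to be exploited, since in higher dimensions one loses $H^2 \hookrightarrow L^\infty$ and must instead pair the $L^\infty$-bound on $\nabla\cdot b$ with the $L^2$-regularity of $E_2\gamma$. This is precisely the motivation for the two-case definition of $\WW_{\rm st}$ adopted in the paper, and everything else is linear bookkeeping.
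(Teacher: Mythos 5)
Your proof is correct, but it takes a different route from the paper's. The paper applies the time weight first: it observes that $w=(\Bigcdot-s_0)z$ solves the \emph{same} nonhomogeneous-boundary system~\eqref{sys-z2_bdry} with body forcing $(\Bigcdot-s_0)g-z$, boundary data $(\Bigcdot-s_0)\gamma$, and zero initial datum, then invokes the boundary strong-solution result (Lemma~\ref{L:strongbdry-z}) directly on $w$, absorbing the extra forcing $z$ via the weak estimate of Lemma~\ref{L:weak-z}. You instead lift the boundary data first, writing $z=y+E_2\gamma$, apply the internal smoothing Lemma~\ref{L:smooth} to $y$, and handle $(\Bigcdot-s_0)E_2\gamma$ by hand. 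The trade-off: the paper's route hides the verification that the lifted forcing lies in $L^2(I,H)$ inside Lemma~\ref{L:strongbdry-z}, and the zero initial datum of $w$ makes that lemma's compatibility condition $w(s_0)-E_2[(\Bigcdot-s_0)\gamma](s_0)\in V$ automatic; your route sidesteps any compatibility condition entirely (Lemma~\ref{L:smooth} needs only $H$ data) at the cost of doing the dimension-split bookkeeping for $aE_2\gamma$ and $\nabla\cdot(bE_2\gamma)$ explicitly, which you correctly identify as the raison d'\^etre of the two-case definition~\eqref{Wwspace-st}. Two small remarks: your decomposition uses $E_2\gamma$ where Definition~\ref{D:weaksolbdry} is phrased with $E_1\gamma$ — this is legitimate precisely because of the paper's remark that the solution is independent of the choice of extension, which your appeal to uniqueness implicitly uses; and both proofs in fact produce a constant depending also on $|I|$, a dependence the statement suppresses (harmless, since the lemma is later applied only on intervals of length one).
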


\begin{proof}
Since $z$ solves~\eqref{sys-z2_bdry}, it turns out that also $w=(\Bigcdot-s_0)z$ does, with different data:
\begin{align*}
  &\partial_t w-\Delta w+ a w+\nabla\cdot(bw)+(\Bigcdot-s_0)g-z=0,\\
  &w\rest\Gamma =(\Bigcdot-s_0)\gamma, \qquad w(0)=0.
\end{align*}
Then, from Lemma~\ref{L:strongbdry-z}, we can derive that
\[
 \norm{w}{W(I,H^2(\Omega,\R),H)}^2\le \overline C_{\left[|I|,C_{\WW,\rm st}\right]}
\left(\norm{(\Bigcdot-s_0)g}{L^2(I,H)}^2+\norm{z}{L^2(I,H)}^2+\norm{(\Bigcdot-s_0)\gamma}{G^2(I,\Gamma)}^2\right).
 \]
Thus, the result follows from $|z|_{L^2(I,H)}^2
\le|z|_{W(I,H^1(\Omega,\R),H^{-1}(\Omega,\R))}^2$ and from Lemma~\ref{L:weak-z}.
\end{proof}

\begin{lemma}\label{L:L2H1b-H1}
Let $(a,b)\in\WW_{\rm st}$ satisfy~\eqref{rhosigma-st}. Let $z$ solve system~\eqref{sys-z2_bdry}, with $g\in L^2(\R_0,H)$ and $\gamma\in G^2(\R_0,\Gamma)$. If
$z\in L^2(\R_0,H)$, then 
$z\in W(\R_0,H^1(\Omega,\R),H^{-1}(\Omega,\R))$, with
\[
|z|_{W(\R_0,H^1(\Omega,\R),H^{-1}(\Omega,\R))}^2
\le
\overline C_{\left[C_{\WW,\rm st}\right]}\left(|z_0|_{H}^2
+|z|_{L^2(\R_0,H)}^2+\norm{g}{L^2(\R_0,H)}^2
+|\gamma|_{G^2(\R_0,\Gamma)}^2\right).
\]
\end{lemma}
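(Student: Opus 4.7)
The plan is to reduce to the homogeneous-boundary situation and then invoke the global a priori estimate of Lemma~\ref{L:global.bdd}. Set $y \coloneqq z - E_2\gamma$. Since $\gamma\in G^2(\R_0,\Gamma)$, the natural extension $E_2\gamma$ lies in $W(\R_0,H^2(\Omega,\R),H)$ with
\[
\norm{E_2\gamma}{W(\R_0,H^2(\Omega,\R),H)}\le\norm{\gamma}{G^2(\R_0,\Gamma)}.
\]
Hence $E_2\gamma\in L^2(\R_0,H)$ and, since $z\in L^2(\R_0,H)$ by assumption, also $y\in L^2(\R_0,H)$, with $y(0)=z_0-E_2\gamma(0)\in H$.

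By Definition~\ref{D:strongsolbdry} (applied on each bounded subinterval and then concatenated as in Definition~\ref{D:globalw}), the function $y$ is the global weak solution of the homogeneous problem~\eqref{sys-z2} with forcing
\[
f \coloneqq g + \partial_tE_2\gamma - \Delta E_2\gamma + aE_2\gamma + \nabla\cdot(bE_2\gamma).
\]
The next step is to estimate $\norm{f}{L^2(\R_0,V')}$. Since $H\hookrightarrow V'$, we have $\norm{g}{L^2(\R_0,V')}\le C\norm{g}{L^2(\R_0,H)}$, $\norm{\partial_tE_2\gamma}{L^2(\R_0,V')}\le C\norm{\gamma}{G^2(\R_0,\Gamma)}$, and $\norm{\Delta E_2\gamma}{L^2(\R_0,V')}\le C\norm{\gamma}{G^2(\R_0,\Gamma)}$. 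For the remaining two terms I would use Lemma~\ref{L:estRC}: for every $t$,
\[
\norm{a(t)E_2\gamma(t)}{V'}+\norm{\nabla\cdot(b(t)E_2\gamma(t))}{V'}\le \overline C_{[C_\WW]}\bigl(\norm{E_2\gamma(t)}{H}+\norm{E_2\gamma(t)}{V}\bigr),
\]
which after time-integration and the inclusion $W(\R_0,H^2,H)\hookrightarrow L^2(\R_0,V)\cap L^\infty(\R_0,H)$ gives a bound by $\overline C_{[C_{\WW,\rm st}]}\norm{\gamma}{G^2(\R_0,\Gamma)}$. Putting these together,
\[
\norm{f}{L^2(\R_0,V')}^2\le\overline C_{[C_{\WW,\rm st}]}\bigl(\norm{g}{L^2(\R_0,H)}^2+\norm{\gamma}{G^2(\R_0,\Gamma)}^2\bigr).
\]

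With $y\in L^2(\R_0,H)$, $y(0)\in H$, and the bound on $f$ just obtained, Lemma~\ref{L:global.bdd} applies and yields $y\in W(\R_0,V,V')$ together with
\[
\norm{y}{W(\R_0,V,V')}^2\le \overline C_{[C_{\WW,\rm st}]}\bigl(\norm{y(0)}{H}^2+\norm{f}{L^2(\R_0,V')}^2+\norm{y}{L^2(\R_0,H)}^2\bigr).
\]
Since $\norm{y(0)}{H}^2\le 2\norm{z_0}{H}^2+2\norm{E_2\gamma(0)}{H}^2\le 2\norm{z_0}{H}^2+C\norm{\gamma}{G^2(\R_0,\Gamma)}^2$ (using the continuity $W(\R_0,H^2,H)\hookrightarrow C_b(\overline{\R_0},V)$), and $\norm{y}{L^2(\R_0,H)}^2\le 2\norm{z}{L^2(\R_0,H)}^2+2\norm{E_2\gamma}{L^2(\R_0,H)}^2$, collecting the bounds produces the desired estimate for $y$. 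Finally, writing $z=y+E_2\gamma$ and using $V=H^1_0\hookrightarrow H^1(\Omega,\R)$, $H\hookrightarrow H^{-1}(\Omega,\R)=V'$ together with the bound on $E_2\gamma$ in $W(\R_0,H^2,H)\hookrightarrow W(\R_0,H^1,H^{-1})$, we obtain $z\in W(\R_0,H^1(\Omega,\R),H^{-1}(\Omega,\R))$ with precisely the claimed inequality.

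The only nontrivial step is the global $L^2(\R_0,V')$-bound on the lower-order contributions $aE_2\gamma$ and $\nabla\cdot(bE_2\gamma)$; everything else is a bookkeeping of continuous embeddings and a direct application of the previously established regularity lemmas (Lemmas~\ref{L:estRC}, \ref{L:global.bdd}, \ref{L:strongbdry-z}). No new compactness or fixed-point argument is needed.
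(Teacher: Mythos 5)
Your proof is correct, but it follows a genuinely different route from the paper's. You lift the boundary data once and globally, setting $y=z-E_2\gamma$ with $E_2\gamma\in W(\R_0,H^2(\Omega,\R),H)$, check that the resulting forcing $f=g+\p_tE_2\gamma-\Delta E_2\gamma+aE_2\gamma+\nabla\cdot(bE_2\gamma)$ lies in $L^2(\R_0,V')$ (using that the bounds of Lemma~\ref{L:estRC} persist for $H^1$-functions that do not vanish on $\Gamma$, since the test functions still do), and then invoke the homogeneous global-boundedness result, Lemma~\ref{L:global.bdd}. The paper instead never forms a single global equation for a lifted unknown: it gets the $W((0,1),H^1,H^{-1})$ bound from the weak theory (Lemma~\ref{L:weakbdry-z}), then uses the smoothing property of Lemma~\ref{L:smooth-prop} on each shifted unit interval $(t-1,t)$ to control $|z(t)|_{H^1}$ pointwise for $t\ge1$ by $|z(t-1)|_H$ and the local data norms, sums these over $n$ to convert the hypothesis $z\in L^2(\R_0,H)$ into $z\in L^2(\R_1,H^1)$, and finally reads off $\p_tz\in L^2(\R_0,H^{-1})$ from the equation. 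Your argument is shorter and more structural, reducing everything to one previously proved lemma; the paper's bootstrap has the advantage of only ever using the extension on unit intervals and of producing, as a by-product, the pointwise-in-time $H^1$ bound for $t\ge1$ that is reused elsewhere. Both yield the same constant $\overline C_{\left[C_{\WW,\rm st}\right]}$ and the same hypotheses, so I see no gap — only the minor imprecision that $E_2\gamma(t)$ lands in $H^1(\Omega,\R)$ rather than in $V$, which, as noted, does not affect the $V'$-estimates.
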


\begin{proof}
From Theorem~\ref{L:weak-z}  we find
\begin{align}
|z|_{W((0,1),H^1(\Omega,\R),H^{-1}(\Omega,\R))}^2
&\le\overline C_{\left[C_{\WW,\rm st}\right]}\left(|z_0|_{H}^2+\norm{g}{L^2((0,1),V')}^2
+|\gamma|_{G^1((0,1),\Gamma)}^2\right)\label{ev.W1}
\end{align}
and, from Lemma~\ref{L:smooth-prop} we have that for all $t\ge1$
\[
|z(t)|_{H^1(\Omega,\R)}^2\le
\overline C_{\left[C_{\WW,\rm st}\right]}\left(|v(t-1)|_{H}^2+\norm{g}{L^2((t-1,t),H)}^2
+|\gamma|_{G^2((t-1,t),\Gamma)}^2\right),
\]
which allow us to obtain
\begin{align*}
&\quad|z|_{L^2(\R_1,H^1(\Omega,\R))}^2
=\sum_{n=1}^{+\infty} |z|_{L^2((n,n+1),H^1(\Omega,\R))}^2\\
&\le\overline C_{\left[C_{\WW,\rm st}\right]}\sum_{n=1}^{+\infty}
\int_{n}^{n+1} |v(t-1)|_{H}^2+\norm{g}{L^2((t-1,t),H)}^2
+|\gamma|_{G^2((t-1,t),\Gamma)}^2\,\ed t\\
&\le\overline C_{\left[C_{\WW,\rm st}\right]}\left(|z|_{L^2(\R_0,H)}^2
+\sum_{n=1}^{+\infty}\int_{n}^{n+1}\norm{g}{L^2((t-1,t),H)}^2+|E_2\gamma|_{W^1((t-1,t),H^2(\Omega,\R),H)}^2\,\ed t\right)\\
&\le\overline C_{\left[C_{\WW,\rm st}\right]}\left(|z|_{L^2(\R_0,H)}^2
+\sum_{n=1}^{+\infty}\norm{g}{L^2((n-1,n+1),H)}^2+|E_2\gamma|_{W^1((n-1,n+1),H^2(\Omega,\R),H)}^2\right).
\end{align*}
Hence, it follows
\begin{align*}
|z|_{L^2(\R_1,H^1(\Omega,\R))}^2
&\le\overline C_{\left[C_{\WW,\rm st}\right]}\left(|z|_{L^2(\R_0,H)}^2+2\norm{g}{L^2(\R_0,H)}^2
+2|E_2\gamma|_{W^1(\R_0,H^2(\Omega,\R),H)}^2\right) \\
&\le2\overline C_{\left[C_{\WW,\rm st}\right]}\left(|z|_{L^2(\R_0,H)}^2+\norm{g}{L^2(\R_0,H)}^2
+|\gamma|_{G^2(\R_0,\Gamma)}^2\right),
\end{align*}
which, together with~\eqref{ev.W1} gives us
\[
|z|_{L^2(\R_0,H^1(\Omega,\R))}^2
\le\overline C_{\left[C_{\WW,\rm st}\right]}\left(|z_0|_H^2+|z|_{L^2(\R_0,H)}^2+\norm{g}{L^2(\R_0,H)}^2
+|\gamma|_{G^2(\R_0,\Gamma)}^2\right). 
\]
Since $z$ solves system~\eqref{sys-z2_bdry}, we can obtain that
\[
|\p_tz|_{L^2(\R_0,H^{-1}(\Omega,\R))}^2
\le \overline C_{\left[C_{\WW,\rm st}\right]}
|z|_{L^2(\R_0,H^1(\Omega,\R))}^2 ,
\]
which finishes the proof.
\end{proof}

\subsection{Null controllability}\label{sS:nullcont_bdry}
Consider, in the bounded cylinder $I\times\Omega$, $I=(s_0,s_1)$, the controlled system~\eqref{sys-z_linbdry}.
and also the adjoint system~\eqref{sys-q}.

Let~$z(\Bigcdot)=z(z_0,\zeta)(\Bigcdot)$ and~$q(\Bigcdot)=q(q_1)(\Bigcdot)$ solve~\eqref{sys-z_linbdry} and~\eqref{sys-q}, respectively. Thus, integrating
$\frac{\ed}{\ed t}(z,q)$, we find following the arguments in~\cite[section~4]{Rod14-na} and in~\cite[section~3.1]{Rod15-cocv} that
\[
\begin{split}
 (z(s_1),q_1)_H-(z_0,q(s_0))_H&=\langle \nnn\cdot\nabla q,B_\Gamma\zeta\rangle_{G^1((s_0,s_1),\Gamma)',G^1((s_0,s_1),\Gamma)}\\
 &=\langle B_\Gamma^*\circ(\nnn\cdot\nabla) q,\zeta\rangle_{\ZZ',\ZZ}
 \end{split}
\]
where~$B_\Gamma^*\in\LL(G^1((s_0,s_1),\Gamma)',\ZZ')$ is the adjoint of $B_\Gamma$, and the symbol $\circ$ stands for the composition of two operators.

Also in the boundary case we have the following lemma (cf. Lemma~\ref{L:ex0cont}).

\begin{lemma}\label{L:ex0cont_bdry}
System~\eqref{sys-q} is $B_\Gamma^*\circ(\nnn\cdot\nabla)$ observable in $I$ with
\begin{equation}\label{obs-in-BGamma*}
 \norm{q(q_1)(s_0)}{H}\le C_{\rm obs}\norm{B_\Gamma^*\circ(\nnn\cdot\nabla) q(q_1)}{\ZZ'}
 \end{equation}
if, and only if,
system~\eqref{sys-z_linbdry} is
null controllable in $I$ and the family of controls~$\{u(z_0)\mid z_0\in H\}$ is a bounded linear function of~$z_0$:
\begin{align*}
\norm{\zeta(z_0)}{\ZZ}\le C_{\rm obs}\norm{z_0}{H}, \text{ where } C_{\rm obs} \text{ is as in }~\eqref{obs-in-BGamma*}.
\end{align*}
\end{lemma}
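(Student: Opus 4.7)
\textbf{Proof proposal for Lemma~\ref{L:ex0cont_bdry}.}
The plan is to follow the classical Hilbert Uniqueness Method duality argument, taking as starting point the identity
\[
(z(s_1),q_1)_H-(z_0,q(s_0))_H=\langle B_\Gamma^*\circ(\nnn\cdot\nabla) q,\zeta\rangle_{\ZZ',\ZZ},
\]
which was derived just before the statement. Here $z=z(z_0,\zeta)$ solves~\eqref{sys-z_linbdry} and $q=q(q_1)$ solves the adjoint system~\eqref{sys-q}. The identity reduces the problem to a functional analytic duality between surjectivity of the input-to-final-state map and coercivity of its adjoint.

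For the easy direction (null controllability with bounded linear dependence $\Rightarrow$ observability), I would argue as follows. Given $q_1\in H$ and the associated adjoint solution $q(q_1)$, take $\zeta=\zeta(-q(s_0))$, where $\{\zeta(z_0)\}$ is the prescribed null-controlling family evaluated at $z_0=-q(s_0)\in H$. Since $z(z_0,\zeta(z_0))(s_1)=0$, the identity above becomes
\[
\norm{q(s_0)}{H}^2=(-q(s_0),q(s_0))_H\cdot(-1)=\langle B_\Gamma^*\circ(\nnn\cdot\nabla) q,\zeta(-q(s_0))\rangle_{\ZZ',\ZZ}
\le\norm{B_\Gamma^*\circ(\nnn\cdot\nabla) q}{\ZZ'}\norm{\zeta(-q(s_0))}{\ZZ},
\]
and then the linear bound $\norm{\zeta(-q(s_0))}{\ZZ}\le C_{\rm obs}\norm{q(s_0)}{H}$ yields~\eqref{obs-in-BGamma*}.

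For the converse direction (observability $\Rightarrow$ null controllability with bounded linear dependence on $z_0$), I would implement the HUM minimization. Define on $H$ the quadratic functional
\[
\mathcal J(q_1)\coloneqq \tfrac{1}{2}\norm{B_\Gamma^*\circ(\nnn\cdot\nabla) q(q_1)}{\ZZ'}^2+(z_0,q(s_0))_H.
\]
The observability inequality implies that $q_1\mapsto\norm{B_\Gamma^*\circ(\nnn\cdot\nabla) q(q_1)}{\ZZ'}$ is a norm on $H$ (vanishing forces $q(s_0)=0$, hence $q\equiv 0$ by forward well-posedness of the adjoint, hence $q_1=0$), and that the canonical injection from the completion $\widetilde H$ of $H$ with respect to this norm into $H$ (via the $H$-continuous functional $q_1\mapsto(z_0,q(s_0))_H$, which extends by $|(z_0,q(s_0))_H|\le C_{\rm obs}\norm{z_0}{H}\norm{B_\Gamma^*\circ(\nnn\cdot\nabla) q(q_1)}{\ZZ'}$) gives strict convexity and coercivity of $\mathcal J$ on $\widetilde H$. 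Thus $\mathcal J$ has a unique minimizer $\widehat q_1\in\widetilde H$. The Euler--Lagrange equation reads, for all $p_1\in H$,
\[
\bigl(B_\Gamma^*\circ(\nnn\cdot\nabla) q(\widehat q_1),\,B_\Gamma^*\circ(\nnn\cdot\nabla) q(p_1)\bigr)_{\ZZ'}+(z_0,q(p_1)(s_0))_H=0,
\]
and setting $\zeta(z_0)\coloneqq -\mathcal R_{\ZZ}\bigl(B_\Gamma^*\circ(\nnn\cdot\nabla) q(\widehat q_1)\bigr)\in\ZZ$, where $\mathcal R_{\ZZ}\colon\ZZ'\to\ZZ$ is the Riesz isomorphism, the Euler--Lagrange identity rewrites as the duality identity with $z(s_1)=0$; hence $z(z_0,\zeta(z_0))(s_1)=0$. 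Linearity of $z_0\mapsto\widehat q_1$, and hence of $z_0\mapsto\zeta(z_0)$, is inherited from the fact that $\mathcal J$ depends linearly on $z_0$ through the term $(z_0,q(s_0))_H$. The bound $\norm{\zeta(z_0)}{\ZZ}\le C_{\rm obs}\norm{z_0}{H}$ is obtained by testing the Euler--Lagrange equation with $p_1=\widehat q_1$ and applying~\eqref{obs-in-BGamma*} to $q(\widehat q_1)$.

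The only delicate point is to make sense of $q(\widehat q_1)$ and of $B_\Gamma^*\circ(\nnn\cdot\nabla) q(\widehat q_1)$ when $\widehat q_1$ lies only in the completion $\widetilde H$ rather than in $H$: this is handled by extending by continuity the densely defined linear map $q_1\mapsto B_\Gamma^*\circ(\nnn\cdot\nabla) q(q_1)\in\ZZ'$, which is an isometry of $\widetilde H$ onto a closed subspace of $\ZZ'$ by construction. This is the step where the full strength of the observability inequality is used, and it is the main (though essentially routine) technical obstacle in the argument.
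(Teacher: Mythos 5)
Your proposal is the standard HUM duality argument, and this is exactly what the paper intends: the paper offers no proof of this lemma, deferring (as for Lemma~\ref{L:ex0cont}) to the classical references, so your write-up supplies precisely the expected argument and is essentially correct. Two small points. First, with the duality identity as stated in the paper and your choice of $\mathcal J$, the Euler--Lagrange equation forces $\zeta(z_0)=+\mathcal R_{\ZZ}\bigl(B_\Gamma^*\circ(\nnn\cdot\nabla)q(\widehat q_1)\bigr)$, not the minus sign you wrote; with the minus sign one obtains $(z(s_1),p_1)_H=2(z_0,q(p_1)(s_0))_H$ rather than $0$. Second, to deduce $q_1=0$ from $q(s_0)=0$ one needs backward uniqueness for the time-reversed adjoint equation, not ``forward well-posedness''; this is immaterial for the construction, since one may instead complete the quotient of $H$ by the kernel of the seminorm, the control depending only on $B_\Gamma^*\circ(\nnn\cdot\nabla)q(\widehat q_1)$.
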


\medskip\noindent
{\em Controls supported in a subset.}
Given an open subset $\Gamma_{\rm c}\subseteq\Gamma$, we define the spaces
\[
\begin{split}
G^1_{\rm c}((s_0,s_1),\Gamma)&\coloneqq\{\gamma\in G^1((s_0,s_1),\Gamma)\mid \gamma\rest{\Gamma\setminus\overline{\Gamma_{\rm c}}}=0\},\\
G^2_{\rm c}((s_0,s_1),\Gamma)&\coloneqq\{\gamma\in G^2((s_0,s_1),\Gamma)\mid \gamma\rest{\Gamma\setminus\overline{\Gamma_{\rm c}}}=0\}. 
\end{split}
\]
From the results in the section~\ref{sS:nullcont} and following the arguments
in~\cite[section~4]{Rod14-na}
we have that we can construct open subsets~$\tilde\omega$ with $\Omega\cap\tilde\omega=\emptyset$ and $\Gamma\cap\p\tilde\omega=\overline{\Gamma_{\rm c}}$ leading to
the existence of a constant $ C_{\tilde\omega,\Omega}>0$, depending on~$\tilde\omega$ and~$\Omega$, such that
the weak solution~$\tilde q$ for~\eqref{sys-q} in~$I\times\widetilde\Omega$ with $\widetilde\Omega=\Omega\cup\omega\cup\Gamma_{\rm c})$, that is, $\tilde q$ solving
\begin{align*}
  &\partial_t \tilde q-\Delta \tilde q+ \tilde a \tilde q-\tilde b\cdot\nabla\tilde q=0,\\
  &\tilde q\rest\Gamma =0, \qquad \tilde q(s_1)=\tilde q_1,
\end{align*}
satisfies
\[
\norm{\tilde q(0)}{L^2(\widetilde\Omega,\R)}^2\le  \ex^{C_{\tilde\omega,\Omega}\Theta\left(|I|,\norm{a}{L^\infty(I,L^d)},\norm{b}{L^\infty_w(I,L^\infty)},d\right)}
\norm{\tilde q}{L^2(I,L^2(\tilde\omega,\R)}^2.
\]
with~$\Theta$ as in~\eqref{const_obs1}. Here the functions~$\tilde a$ and $\tilde b$ are extensions of~$a$ and~$b$ by zero outside~$\Omega$.

By Lemmas~\ref{L:ex0cont_bdry} and~\ref{L:weak-z} we can find a control $\tilde\eta\in L^2(I,L^2(\widetilde\Omega,\R))$ such that the corresponding solution of the system
\begin{align*}
  &\partial_t \tilde z-\Delta \tilde z+ \tilde a \tilde z+\nabla\cdot(\tilde b\tilde z)+1_{\tilde\omega}\tilde\eta=0,\\
  &\tilde z\rest{\widetilde\Gamma} =0, \qquad \tilde z(s_0)=\tilde z_0,
\end{align*}
where $\tilde z_0$ is the extension of~$z_0$ by zero outside~$\Omega$, satisfies
\[
 \norm{\tilde z}{W(I,H^1_0(\widetilde\Omega,\R),H^{-1}(\widetilde\Omega,\R))}^2\le \overline C_{\left[|I|,C_{\mathcal W}\right]}
\left(1+\ex^{C_{\tilde\omega,\Omega}\Theta}\right)\norm{\tilde z(s_0)}{L^2(\widetilde\Omega,\R)}^2,
 \]
Therefore $z\coloneqq\tilde z\rest\Omega$ solves~\eqref{sys-z2_bdry} with~$\gamma=\tilde z\rest\Gamma\in G^1_{\rm c}((s_0,s_1),\Gamma)$ satisfying
\[
 \norm{\gamma}{G^1_{\rm c}((s_0,s_1),\Gamma)}^2\le C\overline C_{\left[|I|,C_{\mathcal W}\right]}
\left(1+\ex^{C_{\tilde\omega,\Omega}\Theta}\right)\norm{\tilde z(s_0)}{L^2(\widetilde\Omega,\R)}^2\le \overline C_{\left[|I|,C_{\mathcal W}\right]}
\ex^{C_{\tilde\omega,\Omega}\Theta}\norm{z(s_0)}{H}^2.
 \]

Since the choice of such subset~$\tilde\omega$ is at our disposal, and using Lemmas~\ref{L:ex0cont} and~\ref{L:ex0cont_bdry}
we can conclude that there exists a constant $ C_{\Gamma_c,\Omega}>0$ depending on~$\Gamma_c$ and~$\Omega$,
such that
\begin{subequations}\label{obs-ineq-b23}
\begin{equation}
\norm{q(0)}{H}^2\le  \overline C_{\left[|I|,C_{\mathcal W}\right]}\ex^{C_{\Gamma_c,\Omega}\Theta\left(|I|,\norm{a}{L^\infty(I,L^d)},\norm{b}{L^\infty_w(I,L^\infty)},d\right)}
\norm{1_{\Gamma_{\rm c}}(\nnn\cdot\nabla q(q_1))}{G^1_{\rm c}((s_0,s_1),\Gamma)'}^2,
\end{equation}
if~$(a,b)\in\WW$. Further, if~$(a,b)\in\WW_{\rm st}$ (cf.~\cite[Remark~3.3]{Rod15-cocv}) we can derive that
\begin{equation}
\norm{q(0)}{H}^2\le  \overline C_{\left[|I|,C_{\WW,\rm st}\right]}\ex^{C_{\Gamma_c,\Omega}\Theta\left(|I|,\norm{a}{L^\infty(I,L^d)},\norm{b}{L^\infty_w(I,L^\infty)},d\right)}
\norm{1_{\Gamma_{\rm c}}(\nnn\cdot\nabla q(q_1))}{G^2_{\rm c}((s_0,s_1),\Gamma)'}^2.
\end{equation}
\end{subequations}
Thus, in the case we take~$B_\Gamma=\iota_{\Gamma_{\rm c}}\in\LL(G^1_{\rm c}((s_0,s_1),\Gamma),G^1((s_0,s_1),\Gamma))$, $\gamma\mapsto\gamma$, as the inclusion operator,
then we have $B_\Gamma^*=1_{\Gamma_{\rm c}}$ and we can conclude that~\eqref{obs-in-BGamma*}
holds with~$C_{\rm obs}=C\ex^{C_{\Gamma_c,\Omega}\Theta\left(|I|,\norm{a}{L^\infty(I,L^d)},\norm{b}{L^\infty_w(I,L^\infty)},d\right)}$.
Therefore we have the following.
\begin{theorem}\label{T:exact.contBGamma=1}
Let $(a,b)\in\WW$ and $B_\Gamma=\iota_{\Gamma_{\rm c}}$. Then, there exists a
family $\{\bar\zeta(z_0)\mid z_0\in H\}\subseteq G^1_{\rm c}((s_0,s_1),\Gamma)$ such that the solutions
$z(z_0,\bar\zeta(z_0))$ to \eqref{sys-z_linbdry} satisfy
$z(z_0,\bar\zeta(z_0))(s_1)=0$ and, for a constant $\widehat C=C(\Gamma_c,\Omega)=C_{[C_\WW]}$, we have that
\begin{align*}
\norm{\bar\zeta(z_0)}{G^1_{\rm c}((s_0,s_1),\Gamma)}\le\overline C_{\left[|I|,C_{\mathcal W}\right]}\ex^{\widehat C\Theta}\norm{z_0}{H},
\end{align*}
with $\Theta=\Theta\left(|I|,\norm{a}{L^\infty(\R_0,L^d)},\norm{b}{L^\infty_w(\R_0,L^\infty)},d\right)$ given by~\eqref{const_obs1}.

Furthermore, if $(a,b)\in\WW_{\rm st}$ then there exists a
family $\{\bar\zeta(z_0)\mid z_0\in H\}\subseteq G^2_{\rm c}((s_0,s_1),\Gamma)$ such that, still $z(z_0,\bar\zeta(z_0))(s_1)=0$,
and we have
\begin{align*}
\norm{\bar\zeta(z_0)}{G^2_{\rm c}((s_0,s_1),\Gamma)}\le\overline C_{\left[|I|,C_{\WW,\rm st}\right]}\ex^{\widehat C\Theta}\norm{z_0}{H}.
\end{align*}
\end{theorem}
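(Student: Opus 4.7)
The plan is to reduce the boundary null-controllability problem to the already-established interior version by extending the domain across $\Gamma_{\rm c}$, then reading off the boundary control as the trace on $\Gamma$ of an interior-controlled solution on the larger domain. Equivalently, one can invoke the duality in Lemma~\ref{L:ex0cont_bdry} together with the observability inequalities \eqref{obs-ineq-b23}; the two viewpoints are equivalent, and the extension argument is more constructive, so I would pursue it.

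Concretely, I would choose an open set $\tilde\omega\subset\R^d$ with $\tilde\omega\cap\Omega=\emptyset$ and $\overline{\tilde\omega}\cap\Gamma=\overline{\Gamma_{\rm c}}$, set $\widetilde\Omega\coloneqq\Omega\cup\tilde\omega\cup\Gamma_{\rm c}$ (smoothing corners if necessary), and extend $a,b$ and $z_0$ by zero to produce $(\tilde a,\tilde b)$ and $\tilde z_0\in L^2(\widetilde\Omega,\R)$. Note that $(\tilde a,\tilde b)\in\mathcal W$ with the same bound $C_{\mathcal W}$; in the strong case $(a,b)\in\WW_{\rm st}$ one must regularize the extension near $\overline{\Gamma_{\rm c}}$ to preserve the assumptions on $\nabla\cdot b$, but this is harmless since smoothing inside $\tilde\omega$ does not affect the problem on $\Omega$. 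Applying Corollary~\ref{C:exact.contB=chi} on $\widetilde\Omega$ with $B=1_{\tilde\omega}$ yields $\tilde\eta\in L^2(I,L^2(\widetilde\Omega,\R))$ driving the extended weak solution $\tilde z$ to zero at time $s_1$, with
\[
\norm{\tilde\eta}{L^2(I,L^2(\widetilde\Omega,\R))}\le\ex^{\widehat C\Theta}\norm{z_0}{H},
\]
for a constant $\widehat C=\widehat C(\tilde\omega,\widetilde\Omega)$.

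Next, I would set $z\coloneqq \tilde z\rest\Omega$ and $\gamma\coloneqq \tilde z\rest\Gamma$. Because $\tilde\eta$ is supported in $\tilde\omega$, the restriction $z$ satisfies \eqref{sys-z_linbdry} (with $g=0$) on $\Omega$ with boundary datum $\gamma$, and $\supp\gamma\subset\overline{\Gamma_{\rm c}}$, so $\gamma\in G^1_{\rm c}(I,\Gamma)$; moreover $z(s_1)=0$. The key trace bound comes for free from the range-norm definition:
\[
\norm{\gamma}{G^1_{\rm c}(I,\Gamma)}\le \norm{\tilde z}{W(I,H^1(\Omega,\R),V')}\le\overline C_{[|I|,C_{\mathcal W}]}\bigl(\norm{z_0}{H}^2+\norm{\tilde\eta}{L^2(I,L^2(\widetilde\Omega,\R))}^2\bigr)^{1/2}\le\overline C_{[|I|,C_{\mathcal W}]}\ex^{\widehat C\Theta}\norm{z_0}{H},
\]
using Lemma~\ref{L:weak-z} applied on $\widetilde\Omega$ and then restricting. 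Setting $\bar\zeta(z_0)\coloneqq\gamma$ gives the first claim.

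For the $\WW_{\rm st}$ case, I would repeat the argument but instead apply Lemma~\ref{L:strong-z} (on $\widetilde\Omega$) to conclude that $\tilde z\in W(I,H^2(\widetilde\Omega,\R),L^2(\widetilde\Omega,\R))$, so that $\gamma\in G^2_{\rm c}(I,\Gamma)$ with the analogous bound. The main obstacle is precisely this strong-solution step: since $\tilde z_0=0$ outside $\Omega$, the extended initial datum lies only in $L^2(\widetilde\Omega,\R)$, not in $H^1(\widetilde\Omega,\R)$, so a direct appeal to Lemma~\ref{L:strong-z} is not possible. I would handle this by first running a short prefix $(s_0,s_0+\delta)$ with $\tilde\eta=0$ and using the smoothing property (Lemma~\ref{L:smooth}/Lemma~\ref{L:smooth-prop}) to upgrade regularity to $H^1(\widetilde\Omega,\R)$ at time $s_0+\delta$, then applying the null-controllability result on $(s_0+\delta,s_1)$ with a strong solution. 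This is exactly the device invoked in \cite[Remark~3.3]{Rod15-cocv}, which I would cite to conclude, thereby establishing the second claim.
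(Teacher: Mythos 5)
Your proposal follows essentially the same route as the paper: extend the domain across $\Gamma_{\rm c}$ by an exterior set $\tilde\omega$, apply the interior null-controllability result (Theorem~\ref{T:exact.contB=1}) on $\widetilde\Omega$ with control supported in $\tilde\omega$, restrict to $\Omega$ to read the boundary control off as the trace, and bound the $G^1_{\rm c}$-norm by the range-norm definition; the strong case is handled, as in the paper, via the smoothing device of~\cite[Remark~3.3]{Rod15-cocv}. This matches the paper's construction in Section~\ref{sS:nullcont_bdry} step for step.
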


Given a nonzero smooth function~$\chi_\Gamma\colon\Gamma\to\R$ with $\supp \chi_\Gamma\subseteq\overline{\Gamma_{\rm c}}$, we also have the following (cf.~\cite[section~3.3]{Rod15-cocv}).
\begin{corollary}\label{C:exact.contBGamma=chi}
Theorem~\ref{T:exact.contBGamma=1} holds in the more general case
\[
B_\Gamma=1_{\Gamma_{\rm c}}\chi_\Gamma 1_{\Gamma_{\rm c}}\in
\LL(G^1_{\rm c}((s_0,s_1),\Gamma),G^1((s_0,s_1),\Gamma)) ,
\]
with~$\widehat D=\widehat D(\chi_\Gamma,\Gamma_{\rm c},\Omega)>0$
in the place of~$\widehat C$.
\end{corollary}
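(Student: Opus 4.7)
My plan is to follow the dualization strategy already used in the internal case for passing from Theorem~\ref{T:exact.contB=1} to Corollary~\ref{C:exact.contB=chi}. By the ``if and only if'' part of Lemma~\ref{L:ex0cont_bdry}, it suffices to establish the observability inequality~\eqref{obs-in-BGamma*} for the new operator $B_\Gamma=1_{\Gamma_{\rm c}}\chi_\Gamma 1_{\Gamma_{\rm c}}$, with a constant of the form $C_{\rm obs}\le\overline C_{[|I|,C_{\WW,\rm st}]}\ex^{\widehat D\Theta}$ (and analogously with $C_\WW$ in the less regular case); the control $\bar\zeta(z_0)\in G^j_{\rm c}((s_0,s_1),\Gamma)$ and the stated bound then follow automatically from the second half of that lemma.

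The reduction to Theorem~\ref{T:exact.contBGamma=1} passes through a smaller boundary subset on which $\chi_\Gamma$ is bounded below. Since $\chi_\Gamma$ is smooth and nonzero with $\supp\chi_\Gamma\subseteq\overline{\Gamma_{\rm c}}$, I may choose an open set $\Gamma_\chi$ with $\overline{\Gamma_\chi}\subset\Gamma_{\rm c}$ and a constant $c_\chi>0$ such that $|\chi_\Gamma(\bar x)|\ge c_\chi$ for every $\bar x\in\overline{\Gamma_\chi}$. Applying the observability estimates~\eqref{obs-ineq-b23} with $\Gamma_\chi$ in place of $\Gamma_{\rm c}$ yields, for $j\in\{1,2\}$ according to whether $(a,b)\in\WW$ or $(a,b)\in\WW_{\rm st}$,
\begin{equation*}
\norm{q(s_0)}{H}^2\le\overline C_{[|I|,C_{\WW,\rm st}]}\ex^{C(\Gamma_\chi,\Omega)\Theta}
\norm{1_{\Gamma_\chi}(\nnn\cdot\nabla q)}{G^j_{\Gamma_\chi}(I,\Gamma)'}^2,
\end{equation*}
where $G^j_{\Gamma_\chi}(I,\Gamma)$ denotes the closed subspace of $G^j_{\rm c}(I,\Gamma)$ consisting of traces vanishing outside $\overline{\Gamma_\chi}$.

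To replace $1_{\Gamma_\chi}(\nnn\cdot\nabla q)$ on the right-hand side by $B_\Gamma(\nnn\cdot\nabla q)=\chi_\Gamma 1_{\Gamma_{\rm c}}(\nnn\cdot\nabla q)$, I argue by duality: for any $\psi\in G^j_{\Gamma_\chi}(I,\Gamma)$ I set $\tilde\psi\coloneqq\chi_\Gamma^{-1}\psi$ on $\overline{\Gamma_\chi}$ and extend by zero elsewhere on $\Gamma$. Since $\chi_\Gamma^{-1}$ is smooth and bounded by $c_\chi^{-1}$ on $\overline{\Gamma_\chi}$, $\tilde\psi\in G^j_{\rm c}(I,\Gamma)$ with $\norm{\tilde\psi}{G^j_{\rm c}(I,\Gamma)}\le C_\chi\norm{\psi}{G^j_{\rm c}(I,\Gamma)}$ and $\psi=\chi_\Gamma\tilde\psi$; testing $\nnn\cdot\nabla q$ against $\psi$ and taking the supremum over unit-norm $\psi$ gives
\begin{equation*}
\norm{1_{\Gamma_\chi}(\nnn\cdot\nabla q)}{G^j_{\Gamma_\chi}(I,\Gamma)'}
\le C_\chi\norm{1_{\Gamma_{\rm c}}\chi_\Gamma 1_{\Gamma_{\rm c}}(\nnn\cdot\nabla q)}{G^j_{\rm c}(I,\Gamma)'}.
\end{equation*}
Chaining these two estimates gives~\eqref{obs-in-BGamma*} with $\widehat D=\widehat D(\chi_\Gamma,\Gamma_{\rm c},\Omega)\coloneqq 2\log C_\chi+C(\Gamma_\chi,\Omega)$, which depends only on $(\chi_\Gamma,\Gamma_{\rm c},\Omega)$ as required.

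The only non-routine point is verifying the multiplier bound $\norm{\chi_\Gamma^{-1}\psi}{G^j_{\rm c}(I,\Gamma)}\le C_\chi\norm{\psi}{G^j_{\rm c}(I,\Gamma)}$ for $\psi$ supported in $\overline{\Gamma_\chi}$. This is a standard trace-lifting computation: one writes $\chi_\Gamma^{-1}\psi=(\phi\cdot E_j\psi)\rest\Gamma$, where $E_j$ is the extension operator entering the definition of $G^j_{\rm c}(I,\Gamma)$ and $\phi\in C^\infty(\overline\Omega)$ is a smooth extension of $\chi_\Gamma^{-1}$ to a neighborhood of $\overline{\Gamma_\chi}$ inside $\overline\Omega$, cut off away from this neighborhood. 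Since multiplication by $\phi$ is bounded on $W(I,H^j(\Omega,\R),H^{j-2}(\Omega,\R))$, the claim follows. Up to this verification the argument is entirely parallel to the internal case referenced in~\cite[Section~A.2]{BarRodShi11}, which is where I expect the bulk of the bookkeeping to lie.
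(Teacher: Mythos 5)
Your argument is correct and is essentially the proof the paper intends: the paper gives no details here, delegating to~\cite[section~3.3]{Rod15-cocv} and the parallel internal-case argument of~\cite[Section~A.2]{BarRodShi11}, and what you write out — localizing to a subset $\Gamma_\chi$ where $|\chi_\Gamma|\ge c_\chi>0$, applying the observability inequalities~\eqref{obs-ineq-b23} there, and transferring the dual norms via the smooth multiplier $\chi_\Gamma^{-1}$ lifted through $E_j$ — is exactly that localization-plus-duality scheme, with the resulting constant absorbed into the exponent using $\Theta\ge1$ just as in the internal case.
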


\subsection{Stabilization to zero by finite dimensional controls}

Now we consider the case of finite dimensional controls, of the form~$\sum\limits_{i=1}^Mu_i(t)\Phi_i(x)$.
Let us consider a family~$\widehat\CC_{\Gamma}=\{\widehat \Psi_i\in H^\frac{3}{2}(\Gamma,\R)\mid i\in\{1,2,\dots,M\}\}$
satisfying~$1_{\Gamma_{\rm c}}\chi_\Gamma\widehat\CC_{\Gamma}\subset H^\frac{3}{2}(\Gamma,\R)$, and
denote by~$P_M$ the orthogonal projection
in~$L^2(\Gamma,\R)$ onto~$\mathcal S_{\widehat\CC_\Gamma}\coloneqq\mathrm{span}\,\widehat\CC_\Gamma$.

Let us also fix a positive constant~$\bar\lambda>0$ and consider, in~$\mathbb R_{s_0}\times\Omega$, the system:
\begin{subequations}\label{sys-z2cf-bdry}
\begin{align}
  &\partial_t z_{\bar\lambda}-\Delta z_{\bar\lambda}+ (a-\textstyle\frac{\bar\lambda}{2}) z_{\bar\lambda}+\nabla\cdot (bz_{\bar\lambda})=0,\label{sys-z2cf-bdry-eq}\\
  &z_{\bar\lambda}\rest \Gamma = 1_{\Gamma_{\rm c}}\chi_\Gamma  P_M1_{\Gamma_{\rm c}} \zeta, \qquad z_{\bar\lambda}(s_0)=z_0.\label{sys-z2cf-bdry-bcic}
\end{align}
\end{subequations}

\begin{definition}
We say that~\eqref{sys-z2cf-bdry} is exponentially stabilizable to zero, with rate $0$, if there
are constants $C_1>0$ and $C_2>0$, and a bounded family $\{\zeta=\zeta(z_0)\mid z_0\in H\}\subseteq G^1_{\rm c}(\R_0,\Gamma)$,
\[
\norm{\zeta(z_0)}{G^1_{\rm c}(\R_0,\Gamma)}^2\le C_1\norm{z_0}{H}^2
\]
such that the corresponding global solution~$z_{\bar\lambda}(t)=z_{\bar\lambda}(z_0,\zeta(z_0))(t)$ satisfies
\begin{equation*}
|z_{\bar\lambda}(t)|_H^2\leq C_2\,|z_0|_H^2,\quad\mbox{for all $t\ge s_0$}.
\end{equation*}
\end{definition}

Notice that we may write
\[
 1_{\Gamma_{\rm c}}\chi_\Gamma  P_M1_{\Gamma_{\rm c}} \zeta=1_{\Gamma_{\rm c}}\chi_\Gamma  \sum_{i=1}^M\zeta_i\widehat\Psi_i=\sum_{i=1}^M u_i \Psi_i
\]
with~$u_i\coloneqq\zeta_i$ and~$\Psi_i\coloneqq 1_\Gamma\chi_\Gamma  \widehat\Psi_i$, $i\in\{1,2,\dots,M\}$.

Henceforth we use the control operator
\begin{equation}\label{eq:kk2-bdry}
B_M^\Gamma\coloneqq 1_{\Gamma_{\rm c}}\chi_\Gamma  P_M1_{\Gamma_{\rm c}}.
\end{equation}
Further~$\Theta$ and~$\widehat D$ are as in
Theorem~\ref{T:exact.contBGamma=1} and Corollary~\ref{C:exact.contBGamma=chi}.

Let $z_{\bar\lambda}$ solve~\eqref{sys-z_linbdry} with~$B_\Gamma=1_{\Gamma_{\rm c}}\chi_\Gamma 1_{\Gamma_{\rm c}}$ and $(a-\frac{\bar\lambda}{2})$ in the place of $a$,
and with the corresponding control $\zeta=\bar\zeta(z_0)\in
G^2_{\rm c}((s_0,s_1),\Gamma)$ given by Corollary~\ref{C:exact.contBGamma=chi}, and let~$z_M$ solve~\eqref{sys-z2cf-bdry} also with $\zeta=\bar\zeta(z_0)$.
Then,
$d=z-z_M$ solves
\begin{align*}
  &\partial_t d-\Delta d+ (a-\textstyle\frac{\bar\lambda}{2}) d+\nabla\cdot (bd)=0,\\
  &d\rest \Gamma = 1_{\Gamma_{\rm c}} \chi_\Gamma(1-P_M)1_{\Gamma_{\rm c}} \zeta(z_0), \qquad d(s_0)=0.
\end{align*}
If $(a,b)\in\WW_{\rm st}$ satisfies~\eqref{rhosigma-st}, from Lemma~\ref{L:weakbdry-z} and Corollary~\ref{C:exact.contBGamma=chi} it follows
\[
 \norm{d}{W(I,H^1(\Omega,\R),V')}^2\le \Xi_\Gamma(|I|)
\norm{1_{\Gamma_{\rm c}}\chi_\Gamma(1- P_M)1_{\Gamma_{\rm c}}}{\LL(G^2_{\rm c}((s_0,s_1),\Gamma)),G^1_{\rm c}((s_0,s_1),\Gamma)}^2\norm{z_0}{H}^2
 \]
with
\[
\Xi_\Gamma(\tau)\coloneqq\overline C_{\left[|I|,C_{\WW,\rm st}\right]}\ex^{\widehat D\Theta\left(\tau,\norm{a-\textstyle\frac{\bar\lambda}{2}}{L^\infty(\R_0,L^d)},
\norm{b}{L^\infty_w(\R_0,L^\infty)},d\right)},\quad \tau>0.
\]

We can see that when~$(a-\textstyle\frac{\bar\lambda}{2},b)=(0,0)$ then~\eqref{sys-z2cf-bdry} is exponentially stabilizable to zero, with rate $0$, just by setting~$\zeta(z_0)=0$
for all~$z_0\in H$. Therefore from now we consider the case $(a-\textstyle\frac{\bar\lambda}{2},b)\ne(0,0)$
where we can see that it holds
\[
\lim\limits_{\tau\to+\infty}\Xi_\Gamma(\tau)=+\infty\quad\mbox{and}\quad\lim\limits_{\tau\to0}\Xi_\Gamma(\tau)=+\infty.
\]
Hence we can set~$T_*>0$ such that $\Xi_\Gamma(T_*)=\min\limits_{\tau>0}\Xi_\Gamma(\tau)\eqqcolon\Upsilon_\Gamma$.

This allow us to derive the following result on a sufficient condition on the family~$\widehat\CC_\Gamma$ for the existence of a stabilizing control.
The proof can be done following the arguments used in the internal controls case as in
in~\cite{KroRod-ecc15,BreKunRod-pp15}.
\begin{theorem}\label{T:stab-v-gen-bdry}
Let us be given a nonzero $\chi_\Gamma\in C^\infty(\overline\Omega)$ satisfying $\supp\chi_\Gamma\subseteq\overline{\Gamma_{\rm c}}$.  If
\begin{equation}\label{M_bound-gen-bdry}
\norm{1_{\Gamma_{\rm c}}\chi_\Gamma(1- P_M)1_{\Gamma_{\rm c}}}{\LL(G^2_{\rm c}((s_0,s_1),\Gamma)),G^1_{\rm c}((s_0,s_1),\Gamma)}^{2}
\le \Upsilon_\Gamma^{-1},
\end{equation}
then system~\eqref{sys-z2cf-bdry} is stabilizable to zero with rate~$0$.
\end{theorem}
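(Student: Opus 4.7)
The plan is to adapt the iterative scheme behind Theorem~\ref{T:stab-v-gen} to the boundary setting, using Corollary~\ref{C:exact.contBGamma=chi} in place of Corollary~\ref{C:exact.contB=chi}. First I would partition $[s_0,+\infty)$ into consecutive intervals $J_n=[s_0+nT_*,s_0+(n+1)T_*]$. With $z_n\coloneqq z_{\bar\lambda}(s_0+nT_*)$, on each $J_n$ I would invoke Corollary~\ref{C:exact.contBGamma=chi} (with $a-\tfrac{\bar\lambda}{2}$ in the role of $a$) to obtain a null control $\bar\zeta_n\in G^2_{\rm c}(J_n,\Gamma)$ for the full-actuator system~\eqref{sys-z_linbdry} with $B_\Gamma=1_{\Gamma_{\rm c}}\chi_\Gamma 1_{\Gamma_{\rm c}}$, steering $z_n$ to $0$ at time $s_0+(n+1)T_*$ and satisfying $\norm{\bar\zeta_n}{G^2_{\rm c}(J_n,\Gamma)}^2\le \overline C_{\left[T_*,C_{\WW,\rm st}\right]}\ex^{\widehat D\Theta_*}\norm{z_n}{H}^2$.

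Next I would take the actual control on $J_n$ to be $\zeta\rest{J_n}\coloneqq\bar\zeta_n$; the difference $d_n\coloneqq z_{\bar\lambda}-\bar z$ between the projected solution of~\eqref{sys-z2cf-bdry} and the full-actuator solution $\bar z$ satisfies the same linear PDE with zero initial condition and boundary datum $-1_{\Gamma_{\rm c}}\chi_\Gamma(1-P_M)1_{\Gamma_{\rm c}}\bar\zeta_n$. By Lemma~\ref{L:weakbdry-z} followed by the continuous embedding $W(J_n,H^1(\Omega,\R),V')\hookrightarrow C(\bar J_n,H)$,
\[
\sup_{t\in J_n}\norm{d_n(t)}{H}^2\le\Xi_\Gamma(T_*)\norm{1_{\Gamma_{\rm c}}\chi_\Gamma(1-P_M)1_{\Gamma_{\rm c}}}{\LL(G^2_{\rm c},G^1_{\rm c})}^2\norm{z_n}{H}^2,
\]
the embedding constant being absorbed into the prefactor $\overline C_{[|I|,C_{\WW,\rm st}]}$ in the definition of $\Xi_\Gamma$. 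Since $\bar z(s_0+(n+1)T_*)=0$, the hypothesis~\eqref{M_bound-gen-bdry} together with $\Upsilon_\Gamma=\Xi_\Gamma(T_*)$ yields $\norm{z_{n+1}}{H}\le\norm{z_n}{H}$ and a uniform bound on the whole of $J_n$; iterating over $n\in\N$ gives $\sup_{t\ge s_0}\norm{z_{\bar\lambda}(t)}{H}^2\le C\norm{z_0}{H}^2$.

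Finally, the concatenated control $\zeta$ must be shown to lie in $G^1_{\rm c}(\R_0,\Gamma)$ with $\norm{\zeta}{G^1_{\rm c}(\R_0,\Gamma)}^2\le C'\norm{z_0}{H}^2$, which reduces to estimating $\sum_n\norm{\bar\zeta_n}{G^1_{\rm c}(J_n,\Gamma)}^2\le C''\sum_n\norm{z_n}{H}^2$. I expect this to be the main obstacle: the bound $\norm{z_{n+1}}{H}\le\norm{z_n}{H}$ coming from a non-strict inequality in~\eqref{M_bound-gen-bdry} is insufficient for $\ell^2$-summability, and one must either strengthen the hypothesis to strict inequality or, as in Theorem~\ref{T:stab-v-gen}, exploit the additional contraction factor $\ex^{-\bar\lambda T_*/2}$ arising within each interval from the $-\tfrac{\bar\lambda}{2}$-shift in the equation to produce a genuine geometric decay of the $z_n$'s. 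A secondary detail is the time-translation invariance of the operator norm $\norm{1_{\Gamma_{\rm c}}\chi_\Gamma(1-P_M)1_{\Gamma_{\rm c}}}{\LL(G^2_{\rm c}(J_n,\Gamma),G^1_{\rm c}(J_n,\Gamma))}$, which is needed so that~\eqref{M_bound-gen-bdry} applied on a fixed interval suffices on every $J_n$; this follows since $1-P_M$ is a purely spatial operator acting on boundary traces.
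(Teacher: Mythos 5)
Your construction is the one the paper intends: the paragraphs preceding the theorem already set up precisely your difference estimate (full\textendash actuator null control from Corollary~\ref{C:exact.contBGamma=chi}, projected solution, the difference $d$ bounded through Lemma~\ref{L:weakbdry-z} by $\Xi_\Gamma(|I|)\,\norm{1_{\Gamma_{\rm c}}\chi_\Gamma(1-P_M)1_{\Gamma_{\rm c}}}{\LL(G^2_{\rm c},G^1_{\rm c})}^2\norm{z_0}{H}^2$, and the choice of $T_*$ as the minimizer of $\Xi_\Gamma$), and it then delegates the concatenation over intervals of length $T_*$ to the internal-case references. So the skeleton of your argument coincides with the paper's; your remarks on the embedding $W(J_n,H^1(\Omega,\R),V')\xhookrightarrow{} C(\overline J_n,H)$ and on time-translation invariance of the operator norm are both correct and needed.

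The point you flag at the end is indeed the only delicate step, and you are right that the non-strict inequality~\eqref{M_bound-gen-bdry} by itself only gives $\norm{z_{n+1}}{H}\le\norm{z_n}{H}$, which does not make $\sum_n\norm{\bar\zeta_n}{G^1_{\rm c}(J_n,\Gamma)}^2$ converge. However, your second proposed remedy is misplaced: the $-\frac{\bar\lambda}{2}$ shift is already built into system~\eqref{sys-z2cf-bdry} and into $\Xi_\Gamma$ (whose $\Theta$ is evaluated at $a-\frac{\bar\lambda}{2}$), so it cannot be used a second time to make the $z_n$ contract geometrically --- under~\eqref{M_bound-gen-bdry} with equality they simply do not decay. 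The mechanism that actually produces summability, and the one encoded in the internal-case Theorem~\ref{T:stab-v-gen} (whose control estimate is stated only for the weighted quantity $\ex^{\frac{\widehat\lambda}{2}\Bigcdot}\eta$ with $\widehat\lambda<\bar\lambda$, with the geometric factor $\frac{1}{1-\ex^{(\widehat\lambda-\bar\lambda)T_*}}$), is that the object which must be globally square-summable is the control of the \emph{unshifted} system, namely $\ex^{-\frac{\bar\lambda}{2}(\Bigcdot-s_0)}\zeta$: its $G^1_{\rm c}(J_n,\Gamma)$-norm squared carries the factor $\ex^{-\bar\lambda nT_*}$ and the series converges, even though $\norm{z_n}{H}$ stays merely bounded. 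If instead one insists on the literal definition adopted in the paper (a global bound on $\zeta$ itself in $G^1_{\rm c}(\R_0,\Gamma)$), then your first remedy is the right one: impose~\eqref{M_bound-gen-bdry} with a margin, say $\le\theta\,\Upsilon_\Gamma^{-1}$ with $\theta<1$, which yields $\norm{z_n}{H}^2\le\theta^n\norm{z_0}{H}^2$ and hence $\ell^1$-summability of the $\norm{\bar\zeta_n}{G^1_{\rm c}(J_n,\Gamma)}^2$. Either way your diagnosis of where the argument needs care is accurate.
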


\subsection{Feedback stabilizing rule and Riccati equation}
Here we follow the ideas from~\cite{Badra09-cocv,Rod-pp15}, by considering a suitable extended system. This is done in order to be able to use the Dynamical Programming Principle.

Here we suppose that~\eqref{M_bound-gen-bdry} holds true, so that from Theorem~\ref{T:stab-v-gen-bdry} we know that there
exists~$\zeta\in G^2_{\rm c}((s_0,s_1),\Gamma)$ stabilizing system~\eqref{sys-z2cf-bdry} to zero with rate~$0$. 
Writing
\[
 1_{\Gamma_{\rm c}}\chi_\Gamma  P_M1_{\Gamma_{\rm c}} \zeta=\sum_{i=1}^M u_i 1_{\Gamma_{\rm c}}\chi_\Gamma\widehat\Psi_i, 
\]
we know that $E_2 1_{\Gamma_{\rm c}}\chi_\Gamma  P_M1_{\Gamma_{\rm c}} \zeta\in W^1((s_0,s_1),H^2(\Omega,\R),H)$, but
we do not have necessarily that $u_i\in H^1((s_0,s_1),\R)$, for each $i\in\{1,2,\dots,M\}$. Though, we can say that $u_i\in H^\frac{3}{4}((s_0,s_1),\R)$,
see~\cite[chapter~4, section~2.2]{LioMag72-II}. 

Since we would like to follow the procedure in~\cite{Rod-pp15} we need~$u_i\in H^1((s_0,s_1),\R)$.
This regularity in time could be guaranteed (for a suitable class of actuators) by following the arguments based
on suitable truncated observability inequalities, in both space and time variable, from~\cite[section~4.3]{Rod15-cocv},
see also~\cite{Shirikyan-arx11}. 
Skiping those technical details, we would arrive to the following result for a suitable projection~$P^t_{\tilde M}$ defined in~$L^2((s_0,s_1),\R)$, with range contained
in~$H^1((s_0,s_1),\R)$.
\begin{proposition}\label{P:stab-v-gen-bdry}
Let us be given $\chi_\Gamma$ as in Theorem~\ref{T:stab-v-gen-bdry}.  If
\begin{equation}\label{cond.spacetime}
\norm{1_{\Gamma_{\rm c}}\chi_\Gamma(1- P^t_{\tilde M}P_M)1_{\Gamma_{\rm c}}}{\LL(G^2_{\rm c}((s_0,s_1),\Gamma)),G^1_{\rm c}((s_0,s_1),\Gamma)}^{2}
\le \Upsilon_\Gamma^{-1},
\end{equation}
then system~\eqref{sys-z2cf-bdry}, with $P^t_{\tilde M}P_M$ in the place of~$P_M$, is stabilizable to zero with rate~$0$.
\end{proposition}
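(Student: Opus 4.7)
The plan is to mimic the argument behind Theorem~\ref{T:stab-v-gen-bdry}, replacing the purely spatial projection $P_M$ by the space--time projection $P^t_{\tilde M}P_M$ so that the resulting boundary control lies in~$H^1((s_0,s_1),\R^M)$ and is therefore admissible for the Dynamic Programming Principle used in the sequel (cf.~\cite{Badra09-cocv,Rod-pp15}). First I fix a length~$T_*>0$ realising the minimum~$\Upsilon_\Gamma$ of~$\Xi_\Gamma$ and set $s_1\coloneqq s_0+T_*$. By Corollary~\ref{C:exact.contBGamma=chi} there is~$\bar\zeta(z_0)\in G^2_{\rm c}((s_0,s_1),\Gamma)$ with
\[
\norm{\bar\zeta(z_0)}{G^2_{\rm c}((s_0,s_1),\Gamma)}^2\le \overline C_{[T_*,C_{\WW,\rm st}]}\ex^{\widehat D\Theta}\norm{z_0}{H}^2,
\]
driving the solution $z$ of~\eqref{sys-z_linbdry} with $B_\Gamma=1_{\Gamma_{\rm c}}\chi_\Gamma 1_{\Gamma_{\rm c}}$ (and $a-\tfrac{\bar\lambda}{2}$ in the role of $a$) to $z(s_1)=0$.

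Next, let~$z_M$ denote the solution of~\eqref{sys-z2cf-bdry} with~$P^t_{\tilde M}P_M$ in the place of~$P_M$ and with $\zeta=\bar\zeta(z_0)$. The difference $d\coloneqq z-z_M$ satisfies the homogeneous equation
\[
\p_t d-\Delta d+(a-\textstyle\frac{\bar\lambda}{2})d+\nabla\cdot(bd)=0,\qquad d\rest\Gamma=1_{\Gamma_{\rm c}}\chi_\Gamma(1-P^t_{\tilde M}P_M)1_{\Gamma_{\rm c}}\bar\zeta(z_0),\qquad d(s_0)=0,
\]
so that by Lemma~\ref{L:weakbdry-z} and the hypothesis~\eqref{cond.spacetime}
\[
\norm{d}{W((s_0,s_1),H^1(\Omega,\R),V')}^2\le \Xi_\Gamma(T_*)\,\Upsilon_\Gamma^{-1}\norm{z_0}{H}^2=\norm{z_0}{H}^2.
\]
Since $z(s_1)=0$, we obtain $\norm{z_M(s_1)}{H}^2=\norm{d(s_1)}{H}^2\le \kappa \norm{z_0}{H}^2$ for some absolute constant~$\kappa>0$ coming from the continuous embedding $W((s_0,s_1),H^1,V')\hookrightarrow C([s_0,s_1],H)$.

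Finally, since~$T_*$ does not depend on~$s_0$, I would concatenate the construction: on each interval $(s_0+iT_*,s_0+(i+1)T_*)$, $i\in\N$, apply the same argument with initial state~$z_M(s_0+iT_*)$. Gluing the resulting finite-dimensional controls produces~$\zeta(z_0)\in G^1_{\rm c}(\R_0,\Gamma)$ with~$\norm{\zeta(z_0)}{G^1_{\rm c}(\R_0,\Gamma)}^2\le C_1\norm{z_0}{H}^2$ (the bound being summable because each piece is controlled by $\kappa^i\norm{z_0}{H}^2$, so one chooses~$\tilde M$ large enough that $\kappa<1$ in~\eqref{cond.spacetime}), and~$\norm{z_M(t)}{H}^2\le C_2\norm{z_0}{H}^2$ for all $t\ge s_0$ by using the transient bound on each sub-interval given by Lemma~\ref{L:weakbdry-z}. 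This yields stabilisation with rate~$0$ in the sense of the definition.

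The main obstacle, which is the reason for invoking~$P^t_{\tilde M}$ at all, is the verification that one can simultaneously obtain the time regularity $u_i\in H^1((s_0,s_1),\R)$ for each control component and keep the operator $1_{\Gamma_{\rm c}}\chi_\Gamma(1-P^t_{\tilde M}P_M)1_{\Gamma_{\rm c}}$ small in the norm of~$\LL(G^2_{\rm c},G^1_{\rm c})$; this requires the combined space--time truncated observability inequality alluded to in the excerpt (cf.~\cite[section~4.3]{Rod15-cocv},~\cite{Shirikyan-arx11}) and, once granted, the rest of the argument reduces to the concatenation scheme sketched above.
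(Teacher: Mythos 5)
Your argument is essentially the paper's. The paper obtains Proposition~\ref{P:stab-v-gen-bdry} by observing that the proof of Theorem~\ref{T:stab-v-gen-bdry} --- null control from Corollary~\ref{C:exact.contBGamma=chi}, the difference equation for $d=z-z_M$ with boundary datum $1_{\Gamma_{\rm c}}\chi_\Gamma(1-P)1_{\Gamma_{\rm c}}\bar\zeta(z_0)$, the estimate of Lemma~\ref{L:weakbdry-z} combined with the smallness hypothesis, and concatenation over intervals of length $T_*$ --- uses only the operator norm of $1_{\Gamma_{\rm c}}\chi_\Gamma(1-P)1_{\Gamma_{\rm c}}$ in $\LL(G^2_{\rm c},G^1_{\rm c})$ and is therefore insensitive to replacing $P_M$ by $P^t_{\tilde M}P_M$; the achievability of~\eqref{cond.spacetime} together with the $H^1$-in-time regularity is dealt with separately in the remark that follows the proposition. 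Your reconstruction matches this step by step.

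One point to repair: at the end you propose to ``choose $\tilde M$ large enough that $\kappa<1$,'' but $\kappa$ is the embedding constant of $W((s_0,s_1),H^1(\Omega,\R),V')\xhookrightarrow{} C([s_0,s_1],H)$ and does not depend on $\tilde M$; enlarging $\tilde M$ only shrinks the left-hand side of~\eqref{cond.spacetime}, whereas the hypothesis you are allowed to use is the fixed bound $\Upsilon_\Gamma^{-1}$. The correct fix is the one already built into the internal case: the constant $\overline C_{\left[|I|,C_{\WW,\rm st}\right]}$ in the definition of $\Xi_\Gamma$ (hence in $\Upsilon_\Gamma$) is meant to absorb that embedding constant, exactly as the factor $2\norm{\iota}{\LL(H,V')}^2$ appears explicitly in $\Xi$ in the internal setting, so that the hypothesis as stated already yields $\norm{z_M(s_0+T_*)}{H}^2\le\norm{z_0}{H}^2$. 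The same absorption (with a margin strictly below $1$ if one wants the global bound $\norm{\zeta(z_0)}{G^1_{\rm c}(\R_0,\Gamma)}^2\le C_1\norm{z_0}{H}^2$ to follow by summation over the concatenated intervals) is what makes the gluing step legitimate without touching~\eqref{cond.spacetime}.
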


\begin{remark}
A characterization of~$G^2_{\rm c}((s_0,s_1),\Gamma)$  in terms of fractional Sobolev spaces can be found in~\cite[chapter~4, section~2.2]{LioMag72-II}. However,
we do not know a similar characterization of~$G^1_{\rm c}((s_0,s_1),\Gamma)$. Thus, we cannot follow word by word the procedure in~\cite[section~4.3]{Rod15-cocv}, but
we can use the main idea.
Starting by writing $1- P^t_{\tilde M}P_M=1- P_M
+(1- P^t_{\tilde M})P_M$, we find (using the identity~$P_M=P_MP_M$)
\[
\begin{split}
&\quad\norm{1_{\Gamma_{\rm c}}\chi_\Gamma(1- P^t_{\tilde M}P_M)1_{\Gamma_{\rm c}}}{\LL(G^2_{\rm c}((s_0,s_1),\Gamma)),G^1_{\rm c}((s_0,s_1),\Gamma)}\\
&\le\norm{1_{\Gamma_{\rm c}}\chi(1- P_M)1_{\Gamma_{\rm c}}}{\LL(G^2_{\rm c}((s_0,s_1),\Gamma)),G^1_{\rm c}((s_0,s_1),\Gamma)}\\
&\qquad+\norm{1_{\Gamma_{\rm c}}\chi(1- P^t_{\tilde M})P_M}{\LL(G^2_{\rm c}((s_0,s_1),\Gamma)),G^{\frac{3}{2}}_{\rm c}((s_0,s_1),\Gamma)}
\norm{P_M 1_{\Gamma_{\rm c}}}{\LL(G^{\frac{3}{2}}_{\rm c}((s_0,s_1),\Gamma)),G^1_{\rm c}((s_0,s_1),\Gamma)}\\
&\le\norm{1_{\Gamma_{\rm c}}\chi(1- P_M)1_{\Gamma_{\rm c}}}{\LL(G^2_{\rm c}((s_0,s_1),\Gamma)),G^1_{\rm c}((s_0,s_1),\Gamma)}\\
&\qquad+C_{\chi_\Gamma,M}\norm{(1- P^t_{\tilde M})}{\LL(H^\frac{3}{4}((s_0,s_1),\R)),H^{\frac{2}{3}}((s_0,s_1),\R))}
\norm{P_M 1_{\Gamma_{\rm c}}}{\LL(G^{\frac{3}{2}}_{\rm c}((s_0,s_1),\Gamma)),G^1_{\rm c}((s_0,s_1),\Gamma)},
\end{split}
\]
where $G^{\frac{3}{2}}_{\rm c}((s_0,s_1),\Gamma)\coloneqq [G^{2}_{\rm c}((s_0,s_1),\Gamma),G^{1}_{\rm c}((s_0,s_1),\Gamma)]_\frac{1}{2}$
is an interpolation space in the sense of~\cite{LioMag72-I}.
If $P_M$ satisfies~\eqref{M_bound-gen-bdry} with, say, $\frac{\Upsilon_\Gamma^{-1}}{2}$ in the place of $\Upsilon_\Gamma^{-1}$, then we can
set~$P^t_{\tilde M}$ (i.e., set $\tilde M$ big enough) such that~\eqref{cond.spacetime} holds true. Finally, notice that
with $X=W^1((s_0,s_1),H^2(\Omega,\R),H)$ and~$Y=W^1((s_0,s_1),H^1(\Omega,\R),H^{-1}(\Omega,\R))$ we have
(using some results from~\cite[chapter~4, section~2.2]{LioMag72-II})
\[
\begin{split}
G^{\frac{3}{2}}_{\rm c}((s_0,s_1),\Gamma)&=[X,Y]_\frac{1}{2}\rest\Gamma,\\
G^{2}_{\rm c}((s_0,s_1),\Gamma)&\xhookrightarrow{\rm c} G^{\frac{3}{2}}_{\rm c}((s_0,s_1),\Gamma)\xhookrightarrow{\rm c} G^{1}_{\rm c}((s_0,s_1),\Gamma),\\
W^1((s_0,s_1),H^\frac{3}{2}(\Omega,\R),H)\rest\Gamma&=L^2((s_0,s_1),H^1(\Gamma,\R))\cap H^\frac{2}{3}((s_0,s_1),L^2(\Gamma,\R)),\\
W^1((s_0,s_1),H^\frac{3}{2}(\Omega,\R),H)\rest\Gamma&=[X,W^1((s_0,s_1),H^1(\Omega,\R),H)]_\frac{1}{2}\rest\Gamma\\
&\xhookrightarrow{}G^{\frac{3}{2}}_{\rm c}((s_0,s_1),\Gamma),\\
H^\frac{3}{4}((s_0,s_1),\R)&\xhookrightarrow{\rm c} H^\frac{2}{3}((s_0,s_1),\R).
\end{split}
\]
\end{remark}

\subsubsection{The auxiliary extended system}
Once we have that $u_i\in H^1((s_0,s_1),\R)$, for each $i\in\{1,2,\dots,M\}$, then we can rewrite~\eqref{sys-z2cf-bdry}
in the variables $(y_{\bar\lambda},\kappa_{\bar\lambda})=(z_{\bar\lambda}-B_\Psi\kappa_{\bar\lambda},\kappa_{\bar\lambda})$, as the extended system
\begin{align}
  &\partial_t y_{\bar\lambda}-\Delta y_{\bar\lambda}+ (a-\textstyle\frac{\bar\lambda}{2}) y_{\bar\lambda}+\nabla\cdot (by_{\bar\lambda}) -\Delta B_\Psi\kappa_{\bar\lambda}
  + a B_\Psi\kappa_{\bar\lambda}+\nabla\cdot (b B_\Psi\kappa_{\bar\lambda})+{\bar\varsigma} B_\Psi\kappa_{\bar\lambda}= B_\Psi\varkappa_{\bar\lambda},\notag\\
  &\p_t \kappa_{\bar\lambda}-\textstyle\frac{\bar\lambda}{2}\kappa_{\bar\lambda}+{\bar\varsigma}\kappa_{\bar\lambda}=\varkappa_{\bar\lambda},\label{sys-z2cf-ext}\\
  &y_{\bar\lambda}(s_0)=y_0=z_0- B_\Psi\kappa_0, \qquad \kappa_{\bar\lambda}(s_0)= \kappa_0,\qquad y_{\bar\lambda}\rest \Gamma = 0,\notag
\end{align}
where $B_\Psi\colon\R^M\to H^2(\Omega,\R)$ is given by
\begin{equation}\label{BPsi-ext}
B_\Psi\kappa\coloneqq\sum\limits_{i=1}^M\kappa_i\widetilde\Psi_i,\quad\mbox{with}\quad
\widetilde\Psi_i\in H^2(\Omega,\R),\quad\widetilde\Psi_i\rest\Gamma=\Psi_i\coloneqq 1_{\Gamma_{\rm c}}\chi_\Gamma\widehat\Psi_i,
\end{equation}
where the extensions~$\widetilde\Psi_i$ of the~$\Psi_i$s are fixed. Recall that
by assumption $\Psi_i\in H^\frac{3}{2}(\Gamma,\R)=H^2(\Omega,\R)\rest\Gamma$ for all $i\in\{1,2,\dots,M\}$.
Furthermore, ${\bar\varsigma}\in\R$ is a parameter at our disposal, notice that it does not appear in~\eqref{sys-z2cf-bdry}.
 
From Proposition~\ref{P:stab-v-gen-bdry} we can conclude that there exists $\kappa=u\in H^1((s_0,s_1),\R^M)$ such that system~\eqref{sys-z2cf-ext}
is stabilizable to zero with rate~$0$, provided~\eqref{cond.spacetime} holds true.

Hereafter we will use a particular extension, namely, 
the given actuators $\Psi_i$, defined on the boundary~$\Gamma$, are extended to~$\widetilde\Psi_i$, defined in~$\Omega$, by solving the elliptic system
\begin{equation}\label{part-extPsi}
 -\Delta\widetilde\Psi+{\bar\varsigma}\widetilde\Psi_i=0,\qquad \widetilde\Psi_i\rest\Gamma=\Psi_i.
\end{equation}
The following discretization is valid only for this particular extension, which as we see leads to suitable simplifications on the corresponding Riccati equations, namely the
2nd order space derivatives term~$-\Delta B_{\widetilde\Psi}\kappa$ disappears in~\eqref{sys-z2cf-ext},
because $(A^{a,b} +{\bar\varsigma})B_{\widetilde\Psi}$ reduces to~$aB_{\widetilde\Psi}+\nabla\cdot(b B_{\widetilde\Psi})$. To simplify the exposition, we rewrite system~\{\eqref{sys-z2cf-ext},\eqref{part-extPsi}\}, for $(y,\kappa)\in H\times\R^M$, as
\begin{align*}
  \partial_t \begin{bmatrix}y_{\bar\lambda}\\\kappa_{\bar\lambda}\end{bmatrix}
               +\begin{bmatrix}-\Delta +K^{a,b,\bar\lambda}&K^{a,b,0}\\0&{\bar\varsigma}_{\bar\lambda}\end{bmatrix}
               \begin{bmatrix}y_{\bar\lambda}\\\kappa_{\bar\lambda}\end{bmatrix}
               - \begin{bmatrix}B_\Psi\\1\end{bmatrix}\varkappa_{\bar\lambda}=0,\qquad
  \begin{bmatrix}y_{\bar\lambda}(s_0)\\\kappa_{\bar\lambda}(s_0)\end{bmatrix}=\begin{bmatrix}y_0\\\kappa_0\end{bmatrix}, 
\end{align*}
where ${\bar\varsigma}_{r}$ and $K^{a,b,r}\in\LL(V+B_\Psi\R^M,V')$, for $r\ge0$,  are given by
\begin{equation}\label{sigKlambda}
 {\bar\varsigma}_{r}\coloneqq{\bar\varsigma}-\textstyle\frac{r}{2},\qquad K^{a,b,r}w\coloneqq a w+\nabla\cdot (bw)-\textstyle\frac{r}{2}.
\end{equation}

As in the internal case we can prove that the control can be taken in feedback form,
\begin{align*}
\varkappa_{\bar\lambda}={\FF_{\bar\lambda}}(t)(y_{\bar\lambda},\kappa_{\bar\lambda})=-{B_M^{\Gamma}}^*\Pi^\Gamma_{\bar\lambda}(t)(y_{\bar\lambda},\kappa_{\bar\lambda}),
\end{align*}
with $B_M^\Gamma=\begin{bmatrix}B_\Psi\\1\end{bmatrix}$, with adjoint~${B_M^\Gamma}^*=\begin{bmatrix}B_\Psi^*&1\end{bmatrix}$. 
Furthermore the operator~$\Pi^\Gamma_{\bar\lambda}$ can be chosen to satisfy a differential Riccati equation
\begin{equation}\label{eq:riccati-bdry}
\frac{\mathrm d}{\mathrm dt} \Pi^\Gamma_{\bar\lambda}+\Pi^\Gamma_{\bar\lambda}\mathbb A^{a,b}_{\bar\lambda,{\bar\varsigma}}
 +{\mathbb A^{a,b}_{\bar\lambda,{\bar\varsigma}}}^*\Pi^\Gamma_{\bar\lambda} -\Pi^\Gamma_{\bar\lambda}  B_M^\Gamma  \mathcal R ^{-1}
{B_M^\Gamma}^* \Pi^\Gamma_{\bar\lambda} +\mathcal M^*\mathcal M=0,
\end{equation}
with
\[\mathbb A^{a,b}_{\bar\lambda,{\bar\varsigma}}
=-\begin{bmatrix}-\Delta +K^{a,b,\bar\lambda}&K^{a,b,0}\\0&{\bar\varsigma}_{\bar\lambda}\end{bmatrix},
\]
and for suitable~$\RR\in\LL(\R^M)$ and~$\MM\in
\LL(V\times\R^M\to V'\times\R^M)$. In this case, the obtained feedback control is the one that minimizes
\[
 \int_{s_0}^{+\infty}\norm{\MM\begin{bmatrix}y_{\bar\lambda}(\tau)\\\kappa_{\bar\lambda}(\tau)\end{bmatrix}}{H\times\R^M}^2 +\varkappa_{\bar\lambda}(\tau)^\top \RR\varkappa_{\bar\lambda}(\tau)\ed\tau.
\]
For example we can take~$\RR=1$ and~$\MM=\begin{bmatrix}(-\Delta)^\frac{1}{2}&0\\0&1\end{bmatrix}$ as in~\cite{BarRodShi11,KroRod15}.
From Lemma~\ref{L:L2H1b-H1} we can also take $\MM=\begin{bmatrix}1&0\\0&1\end{bmatrix}$ instead (cf.~\cite{Rod-pp15,BreKunRod-pp15}).

It follows (cf. Theorem~\ref{T:feed} and Corollary~\ref{C:feedz}) that the solution $(y_{\bar\lambda},\kappa_{\bar\lambda})$ of the system
\begin{align}\label{sys-ext-Feed}
  \partial_t \begin{bmatrix} y_{\bar\lambda}\\ \kappa_{\bar\lambda}\end{bmatrix}
                { -\mathbb A^{a,b}_{\bar\lambda,{\bar\varsigma}}\begin{bmatrix}y_{\bar\lambda}\\ \kappa_{\bar\lambda}\end{bmatrix} }
                   + \begin{bmatrix}B_\Psi\\1\end{bmatrix}\begin{bmatrix}B_\Psi^*&1\end{bmatrix}\Pi^\Gamma_{\bar\lambda}\begin{bmatrix} y_{\bar\lambda}\\ \kappa_{\bar\lambda}\end{bmatrix}=0,\quad
  \begin{bmatrix} y_{\bar\lambda}(s_0)\\ \kappa_{\bar\lambda}(s_0)\end{bmatrix}=\begin{bmatrix}y_0\\\kappa_0\end{bmatrix}, 
\end{align}
remains bounded:
$
 \norm{( y_{\bar\lambda}, \kappa_{\bar\lambda})}{H\times\R^M}^2\le\overline C_{\left[C_{\mathcal W},\bar\lambda,\frac{1}{\bar\lambda}\right]}\norm{(y_0,\kappa_0}{H\times\R^M}^2. 
$
Therefore, the solution of
 \begin{align}\label{sys-ext-Feed-orig}
  \partial_t \begin{bmatrix}y\\\kappa\end{bmatrix}
              + {\mathbb A^{a,b}_{0,{\bar\varsigma}}}\begin{bmatrix} y\\ \kappa\end{bmatrix}
               + \begin{bmatrix}B_\Psi\\1\end{bmatrix}\begin{bmatrix}B_\Psi^*&1\end{bmatrix}\Pi^\Gamma_{\bar\lambda}\begin{bmatrix}y\\\kappa\end{bmatrix}=0,\quad
  \begin{bmatrix}y(s_0)\\\kappa(s_0)\end{bmatrix}=\ex^{-\frac{\bar\lambda}{2}s_0}\begin{bmatrix}y_0\\\kappa_0\end{bmatrix}, 
\end{align}
 goes exponentially to~$0$ with rate~$\frac{\bar\lambda}{2}$. Notice that $(y_{\bar\lambda},\kappa_{\bar\lambda})$ solves~\eqref{sys-ext-Feed} if, and only if,
 $(y,\kappa)=\ex^{-\frac{\bar\lambda}{2}t}(y_{\bar\lambda} , \kappa_{\bar\lambda})$ solves~\eqref{sys-ext-Feed-orig}.

\subsubsection{From the auxiliary to the original system}
Once we have the feedback rule
\[
\Pi^\Gamma_{\bar\lambda}\eqqcolon\begin{bmatrix}\Pi^{\Gamma,\bar\lambda}_{1,1}&\Pi^{\Gamma,\bar\lambda}_{1,2}\\{\Pi^{\Gamma,\bar\lambda}_{1,2}}^*&\Pi^{\Gamma,\bar\lambda}_{2,2}\end{bmatrix}\in\LL(H\times\R^M)
\]
we observe that
\[
 \varkappa=-\begin{bmatrix}B_\Psi^* &1\end{bmatrix}\Pi^\Gamma_{\bar\lambda}\begin{bmatrix}y\\\kappa\end{bmatrix}=-(B_\Psi^*\Pi^{\Gamma,\bar\lambda}_{1,1}+{\Pi^{\Gamma,\bar\lambda}_{1,2}}^*)y
 - (B_\Psi^*\Pi^{\Gamma,\bar\lambda}_{1,2}+\Pi^{\Gamma,\bar\lambda}_{2,2})\kappa,
\]
and that $z_{\bar\lambda}=y_{\bar\lambda}+B_\Psi\kappa_{\bar\lambda}$ solves for time~$t\ge s_0$
 \begin{subequations}\label{sys-Feed-orig1}
 \begin{align}
  &\partial_t z_{\bar\lambda}-\Delta z_{\bar\lambda}+ (a-\textstyle\frac{\bar\lambda}{2})z_{\bar\lambda}+\nabla\cdot (bz_{\bar\lambda})=0,
\quad z_{\bar\lambda}(s_0)={y_{0}+B_\Psi\kappa_{0}},\label{sys-Feed-orig1-z}\\
  &z_{\bar\lambda}\rest\Gamma=B_\Psi^\Gamma\left(\ex^{-{\bar\varsigma} t}{\kappa_{0}}-\int_{s_0}^{\Bigcdot}\ex^{-{\bar\varsigma} (\Bigcdot-s)}
  \begin{bmatrix}B_\Psi^* &1\end{bmatrix}\Pi^\Gamma_{\bar\lambda}(s)\begin{bmatrix}y_{\bar\lambda}(s)\\\kappa_{\bar\lambda}(s)\end{bmatrix}\,\ed s\right),
  \end{align}
and remains bounded. Here $B_\Psi^\Gamma\colon\R^M\to\linspan 1_{\Gamma_{\rm c}}\chi_\Gamma\widehat\CC_\Gamma$ stands for the mapping
\begin{equation*}
 B_\Psi^\Gamma u \mapsto\sum_{i=1}^M u_i 1_{\Gamma_{\rm c}}\chi_\Gamma\widehat\Psi_i.
\end{equation*}

Notice that, without loss of generality, we can suppose that the functions $\Psi_i$ are linearly independent.
In this case, we have that $V\cap B_\Psi\R^M=\emptyset$. Thus, since for a.e. $t\ge s_0$ we have $z(t)\in V+B_\Psi\R^M$, we can write
$z(t)=y^z(t)+ B_\Psi\kappa^z(t)=0$ in a unique way, with~$(y^z(t),\kappa^z(t))\in V\times\R^M$. That is, $y_{\bar\lambda}(s)$ and $\kappa_{\bar\lambda}(s)$ appearing in~\eqref{sys-Feed-orig1}
can be constructed from~$z_{\bar\lambda}(s)$.

A procedure to construct the mapping $z\mapsto(y^z,\kappa^z)$ is the following:
we orthonormalize (e.g., by applying Gram--Schmidt procedure)
the family $\widetilde\Psi_i$, say in the $H$-scalar product. In this way we arrive to the orthonormal family
$\breve\CC=\{\breve\Psi_i\mid i\in\{1,2,\dots,M\}\}$,
Now we can write $v=w+\sum_{i=1}^M\xi_i\breve\Psi_i$, with $\xi_i\coloneqq (v,\breve\Psi_i)_H$. Finally, we construct~$\sigma$ by a matrix of change of coordinates
$\sum_{i=1}^M\sigma_i\widetilde\Psi_i\coloneqq\sum_{i=1}^M\xi_i\breve\Psi_i$, and still have $v=w+\sum_{i=1}^M\sigma_i\widetilde\Psi_i$.
That is, denoting $y^z=w$ and~$\kappa^z=\sigma$, the mapping $z(t)\mapsto(y^{z(t)},\kappa^{z(t)})$ is well defined and we can
rewrite the integral feedback rule in~\eqref{sys-Feed-orig1} as
\begin{equation}\label{sys-Feed-orig1-int}
z\rest\Gamma=B_\Psi^\Gamma\left(\ex^{-{\bar\varsigma} t}{\kappa_0}-\int_{s_0}^{\Bigcdot}\ex^{-{\bar\varsigma} (\Bigcdot-s)}
\begin{bmatrix}B_\Psi^* &1\end{bmatrix}\Pi^\Gamma_{\bar\lambda}(s)\begin{bmatrix}y^{z(s)}\\\kappa^{z(s)}\end{bmatrix}\,\ed s\right),
\end{equation}
\end{subequations}
which underlines the (integral) feedback nature of the control, also as a boundary control.

\subsection{The nonlinear systems}
As we have done for the case of internal controls, under suitable conditions on the nonlinear function~$\NN$ we can derive also that the local stabilization
result holds for nonlinear system in the form~\eqref{sys-Feed-orig1} with~$\NN$ as a perturbation
\begin{subequations}\label{sys-dynFeed-orig1}
\begin{align}
  \partial_t z_{\bar\lambda}+ A^{a,b}z_{\bar\lambda} -\textstyle\frac{\bar\lambda}{2}z_{\bar\lambda}&=\NN(z_{\bar\lambda}),
  \quad z_{\bar\lambda}\rest\Gamma=B_\Psi^\Gamma\kappa_{\bar\lambda},&
 z_{\bar\lambda}(s_0)&=y_{\bar\lambda}(s_0)+B_\Psi\kappa_{\bar\lambda}(s_0),\\
 \p_t \kappa_{\bar\lambda}-\textstyle\frac{\bar\lambda}{2}\kappa_{\bar\lambda}+{\bar\varsigma}\kappa_{\bar\lambda}&
 =-\begin{bmatrix}B_\Psi^* &1\end{bmatrix}\Pi^\Gamma_{\bar\lambda}\begin{bmatrix}y_{\bar\lambda}-B_\Psi^\Gamma\kappa_{\bar\lambda}\\ \kappa\end{bmatrix},
 & \kappa_{\bar\lambda}(s_0)&=\kappa_{0},\label{sys-dynFeed-orig1-k}
  \end{align} 
\end{subequations}

The procedure is analogous to the one in section~\ref{S:nonlinear_int} (cf.~\cite{Rod-pp15}), so we will not repeat the details here.
However, it is important to recall that since we need (in general) strong solutions to deal with the nonlinearity, it is important that the initial condition satisfies
the compatibility condition $z(s_0)=z_0\in V+B_\Psi\R^M$ (cf.Lemma~\ref{L:strongbdry-z}). This means that
when dealing with stabilization to trajectories (cf.~section~\ref{S:stab_traj}, for the internal controls case),
we need the compatibility condition
\begin{align}
 (y(s_0)-\hat y(s_0))\rest\Gamma\in B_\Psi^\Gamma\R^M=\linspan\{1_{\Gamma_{\rm c}}\chi_\Gamma\widehat\Psi_i\}.
 \label{CompatibilityCond}
\end{align}
At this point we also recall that the solutions given in Lemmas~\ref{L:weakbdry-z} and~\ref{L:strongbdry-z} do not depend on the extension~$E_1$ and~$E_2$, respectively.
See~\cite[Remark~3.2]{Rod14-na}.

Similar estimates on the transient bound, and on dimension of the control like~\eqref{estM_eig} and~\eqref{estM_pc}, depending exponentially on the data,
can also be derived.

It turns out that, in both internal and boundary cases, numerical simulations do suggest that a better estimate on the number of needed actuators might exist,
like the one in~\eqref{Msimplecase} depending polynomially on the data. So, we would like to finish this section with two very particular questions:
\begin{itemize}
 \item Can we find a particular case, in the boundary setting,
leading to an estimate like~\eqref{Msimplecase}? 
\item How does the observability constant~$C_{\rm obs}$ in~\eqref{obs-in-BGamma*} ``looks like'' when the control acts on all the boundary?
Can we get, in this case, a constant which is ``better'' than those in~\eqref{obs-ineq-b23}?
\end{itemize}%

\begin{remark}
Though the closed-loop systems~\{\eqref{sys-Feed-orig1-z},\,\eqref{sys-Feed-orig1-int}\} and~\eqref{sys-dynFeed-orig1} are formally equivalent,
the latter may have some advantages for numerical simulations, because the
dynamical equation in~\eqref{sys-dynFeed-orig1-k} is relatively easier to discretize than the integral equation in~\eqref{sys-Feed-orig1-int}.
Hereafter, we will consider only the discretization of~\eqref{sys-dynFeed-orig1}.
\end{remark}

\subsection{Back to original time. Stabilization to trajectories}
Let us be give a solution~$\hat y$ for the uncontrolled system~\eqref{sys-y-cont_b} (with~$u=0$) with~$\hat y_0\coloneqq\hat y(0)\in H$ and such that the vector functions
in~\eqref{abN_lineariz} are such that~$\widehat\NN$ satisfies~\eqref{exp-non2} and \eqref{exp-non3}, and~$(\hat a,\hat b)$ satisfies~\eqref{rhosigma-st}, for suitable nonnegative
constants
$\widehat C$ and $C_{\WW,\rm st}$.
Notice that, recalling the notation in section~\ref{S:red-stabil}, in this case~$\NN\coloneqq\frac{1}{\nu}\breve{\widehat\NN}$ also
satisfies~\eqref{exp-non2} and \eqref{exp-non3}, and~$(a,b)=(\frac{1}{\nu}\breve{\hat a},\frac{1}{\nu}\breve{\hat b})$ also satisfies~\eqref{rhosigma-st},

We consider system~\eqref{sys-y-cont_b} with a dynamical feedback as follows
\begin{subequations}\label{sys-y-cont_b-feed}
\begin{align}
 \p_t y - \nu \Delta y + f(y,\nabla y) = 0&,\qquad
 &y\rest\Gamma &= g-B_\Psi^\Gamma\kappa,\\
 \p_t \kappa+\varsigma\kappa
 =-\begin{bmatrix}B_\Psi^* &1\end{bmatrix}\widehat\Pi^\Gamma_{\lambda}\begin{bmatrix}y-\hat y-B_\Psi^\Gamma\kappa\\ \kappa\end{bmatrix}&,\qquad &y(0)&=y_0,
 \end{align}
 \end{subequations}
with $\widehat\Pi_{\lambda}$  solving

\begin{equation}\label{eq:riccati-bdry-traj}
 \textstyle\frac{\ed}{\ed t} {\widehat\Pi}^\Gamma_{\lambda}+{\widehat\Pi}^\Gamma_{\lambda}\mathbb A^{\hat a,\hat b,\nu}_{\lambda,\varsigma}
 +{\mathbb A^{\hat a,\hat b,\nu}_{\lambda,\varsigma}}^*{\widehat\Pi}^\Gamma_{\lambda}
 -{\widehat\Pi}^\Gamma_{\lambda}  B_M^\Gamma  \mathcal R ^{-1}
{B_M^\Gamma}^* {\widehat\Pi}^\Gamma_{\lambda} +\mathcal M^*\mathcal M=0,
\end{equation} 
with~$\RR$ and~$\MM$ as in~\eqref{eq:riccati-bdry} and
with
\begin{equation}\label{A-Ric-orig}
\mathbb A^{\hat a,\hat b,\nu}_{\lambda,\varsigma}
=-\begin{bmatrix}-\nu \Delta +K^{\hat a,\hat b,\lambda}&K^{\hat a,\hat b,0}\\0&{\varsigma}_{\lambda}\end{bmatrix},
\end{equation}
and $K^{\hat a,\hat b,r}$ and ${\varsigma}_{\lambda}$ defined as in~\eqref{sigKlambda}.

Observe that $(\breve z,\breve \kappa)(\tau)\coloneqq (y-\hat y,\kappa)(\frac{\tau}{\nu})$ solves, with
\begin{align*}
 \p_\tau \breve z - \Delta \breve z +  a\breve z+ \nabla\cdot(b\breve z) &= \NN(\breve z);&\quad
 \breve z\rest\Gamma &= -B_\Psi^\Gamma\breve\kappa,\\
 \p_t \breve \kappa+\textstyle\frac{\varsigma}{\nu}\breve \kappa&
 =-\begin{bmatrix}B_\Psi^* &1\end{bmatrix}\breve{\widehat\Pi}{}^\Gamma_{\lambda}\begin{bmatrix}\breve z-B_\Psi^\Gamma\breve \kappa\\ \breve \kappa\end{bmatrix},
 &\quad \breve z(0)&=y_0-\hat y_0,
 \end{align*}
and ${\breve{\widehat\Pi}}{}_{\lambda}^\Gamma(\tau)=\widehat\Pi_{\lambda}^\Gamma(\frac{\tau}{\nu})$ solves~\eqref{eq:riccati-bdry} with a different pair~$(\RR,\MM)$.
Indeed, from~\eqref{eq:riccati-bdry-traj}, we obtain that for $(\bar\lambda,\bar\varsigma)=(\frac{\lambda}{\nu},\frac{\varsigma}{\nu}),$
{
\[
\begin{split}
\textstyle\frac{\ed}{\ed\tau}{\breve{\widehat\Pi}}{}^\Gamma_{\lambda}+\breve{\widehat\Pi}_{\lambda}\textstyle\mathbb A^{a,b,1}_{\bar\lambda,\bar\varsigma}
+\textstyle{\mathbb A^{a,b,1}_{\bar\lambda,\bar\varsigma}}^*\breve{\widehat\Pi}_{\lambda}
 -\breve{\widehat\Pi}_{\lambda}  B_M  (\nu\mathcal R)^{-1}B_M^* \breve{\widehat\Pi}_{\lambda}
 +((\textstyle\frac{1}{\nu})^\frac{1}{2}\mathcal M)^*((\textstyle\frac{1}{\nu})^\frac{1}{2}\mathcal M)=0.
\end{split}  
\]
}%
Therefore, we can conclude that
\[
 |\breve z(\tau)|_{V}^2 \le {C\mathrm{e}^{-\bar\lambda\tau}}
|\breve z(0)|_{V}^2,\quad\mbox{for all }\tau\ge0,
\]
provided $|\breve z(0)|_{V}$ is small enough. This implies that
\[
 |y(t)-\hat y(t)|_{V}^2 \le C\mathrm{e}^{-\lambda t}
|y_0-\hat y_0|_{V}^2,\quad\mbox{for all }t\ge0,
\]
and we can conclude that the following theorem holds true. Recall the operators~\eqref{eq:kk2-bdry} and~\eqref{BPsi-ext}.
\begin{theorem}\label{T:st-traj-bdry}
Under the hypotheses of Proposition~\ref{P:stab-v-gen-bdry}, with
\[ \Ran P_M=\linspan\left\{\widehat\Psi_i\mid i\in\{1,2,\dots,M\}\right\}\subset H^\frac{3}{2}(\Gamma,\R),\]
there exists~$\epsilon>0$ with the following properties:
if~$y_0\in H$ is such that
\[
y_0-\hat y_0\in V+B_\Psi\R^M\quad\mbox{and}\quad\norm{y_0-\hat y_0}{V}<\epsilon,
\]
then the solution~$y$ of the system~\eqref{sys-y-cont_b-feed}
goes exponentially to~$\hat y$ with rate~$\frac{\lambda}{2}$, that is,
\[
 |y(t)-\hat y(t)|_{V}^2 \le C\mathrm{e}^{-\lambda t}
|y_0-\hat y_0|^2_{V},\quad\mbox{for all }t\ge0,
\]
for a suitable constant~$C$ independent of~$(\epsilon,y_0-\hat y_0)$. Furthermore, the solution~$y$ is, and is unique, in the affine space
$\hat y+L^2_{\rm loc}(\mathbb{R}_0,{\mathrm D}(A)+B_\Psi\R^M)\cap C([0,+\infty),V+B_\Psi\R^M)$.
\end{theorem}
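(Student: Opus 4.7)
My plan is to follow the reduction-and-rescaling scheme that the authors have already laid out right before the statement, and then invoke the boundary analogue of the nonlinear fixed-point argument of Section~\ref{S:nonlinear_int}. First I would set $z \coloneqq y - \hat y$ and observe that, because $\hat y$ solves the uncontrolled version of~\eqref{sys-y-cont_b}, the pair $(z,\kappa)$ satisfies the perturbed closed-loop system
\begin{align*}
\p_t z - \nu\Delta z + \hat a z + \nabla\cdot(\hat b z) &= \widehat{\NN}(z), & z\rest\Gamma &= -B_\Psi^\Gamma\kappa,\\
\p_t \kappa + \varsigma\kappa &= -\begin{bmatrix}B_\Psi^* & 1\end{bmatrix}\widehat\Pi^\Gamma_\lambda\begin{bmatrix}z - B_\Psi^\Gamma\kappa\\ \kappa\end{bmatrix}, & z(0) &= y_0 - \hat y_0,
\end{align*}
with $\widehat\NN(z) = -(f(y,\nabla y) - f(\hat y, \nabla\hat y) - \hat a z - \nabla\cdot(\hat b z))$ as in~\eqref{abN_lineariz}. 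The compatibility assumption $y_0 - \hat y_0 \in V + B_\Psi\R^M$ allows us to decompose $z_0 = y(s_0) - \hat y(s_0) = y_{\bar\lambda}(0) + B_\Psi\kappa_0$ uniquely (using the $z\mapsto(y^z,\kappa^z)$ decomposition constructed just before Section~5.5), providing a well-posed initial datum for both the $y$-component in $V$ and the $\kappa$-component in $\R^M$.

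Next I would rescale time $\tau = \nu t$ and set $(\breve z, \breve\kappa)(\tau) = (z,\kappa)(\tau/\nu)$. As computed in the text just prior to the statement, $\breve z$ satisfies the rescaled nonlinear closed-loop system with coefficients $(a,b) = \tfrac{1}{\nu}(\breve{\hat a}, \breve{\hat b})$ satisfying~\eqref{rhosigma-st}, with nonlinearity $\NN = \tfrac{1}{\nu}\breve{\widehat\NN}$ still satisfying~\eqref{exp-non2}--\eqref{exp-non3}, and with $\breve{\widehat\Pi}^\Gamma_\lambda(\tau) = \widehat\Pi^\Gamma_\lambda(\tau/\nu)$ solving the rescaled Riccati equation~\eqref{eq:riccati-bdry} with modified pair $(\nu\RR, \nu^{-1/2}\MM)$ and decay rate $\bar\lambda = \lambda/\nu$. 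This converts the problem to exactly the scenario treated in the nonlinear boundary stabilization subsection built around~\eqref{sys-dynFeed-orig1}.

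Then I would invoke the boundary analogue of Theorem~\ref{T:ex.st-ct}, which the authors assert can be proved by adapting the fixed-point argument of Lemma~\ref{L:contract} to the extended state $(y,\kappa) \in V \times \R^M$ (cf.~\cite{Rod-pp15}). The key ingredients are: the exponential stability of the linearized closed-loop system~\eqref{sys-ext-Feed} at rate $\bar\lambda/2$ coming from Corollary~\ref{C:feedz}; the smoothing and regularity Lemmas~\ref{L:strongbdry-z}--\ref{L:L2H1b-H1} that upgrade a priori $H$-estimates to strong-solution estimates in $W_{\rm loc}(\R_0, H^2(\Omega,\R) + B_\Psi\R^M, H\times\R^M)$; and the nonlinear estimate~\eqref{exp-non2} which, via the embedding $W_{\rm loc}(\R_0,\D(\Delta),H) \hookrightarrow C([0,\infty),V)$, controls $\NN(\breve z)$ in $L^2_{\rm loc}(H)$ by a superlinear function of $|\breve z|_{\ZZ^{\bar\lambda/2}}$. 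The uniqueness is then obtained by the same Gronwall argument based on~\eqref{exp-non3} as in the proof of Theorem~\ref{T:ex.st-ct}. This yields $|\breve z(\tau)|_V^2 \le C\ex^{-\bar\lambda\tau}|\breve z(0)|_V^2$ whenever $|\breve z(0)|_V < \epsilon$.

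Finally, unraveling the time rescaling $t = \tau/\nu$ restores the original decay rate $\lambda/2$ and gives the claimed estimate $|y(t) - \hat y(t)|_V^2 \le C\ex^{-\lambda t}|y_0 - \hat y_0|_V^2$. The main obstacle—already signaled by the authors as requiring the detour through the extended system and the truncated-in-time observability of Proposition~\ref{P:stab-v-gen-bdry}—is ensuring the $H^1$-in-time regularity of the boundary control $u = \kappa$ needed to make the extension $B_\Psi\kappa$ live in $W((s_0,s_1), H^2(\Omega,\R), H)$, so that strong solutions exist and the nonlinearity can be estimated pointwise in time via Agmon-type inequalities; once~\eqref{cond.spacetime} is in force this is resolved, and the rest of the argument is a routine adaptation of the internal case.
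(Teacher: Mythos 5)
Your proposal is correct and follows essentially the same route as the paper: reduce to the difference $z=y-\hat y$ with the dynamical boundary feedback, rescale time $\tau=\nu t$ so that the rescaled pair $(a,b)=\frac{1}{\nu}(\breve{\hat a},\breve{\hat b})$ and nonlinearity $\NN=\frac{1}{\nu}\breve{\widehat\NN}$ satisfy~\eqref{rhosigma-st} and~\eqref{exp-non2}--\eqref{exp-non3} while $\breve{\widehat\Pi}{}^\Gamma_\lambda$ solves the rescaled Riccati equation with the modified pair $(\nu\RR,\nu^{-\frac{1}{2}}\MM)$, invoke the boundary analogue of the fixed-point argument of Section~\ref{S:nonlinear_int} (cf.~\eqref{sys-dynFeed-orig1}), and undo the rescaling. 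The attention you give to the compatibility condition~\eqref{CompatibilityCond} and to the $H^1$-in-time regularity of $\kappa$ matches the paper's own discussion.
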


Notice that the feedback control stabilizes the linearized system to zero globally, that is, we have the following theorem.
\begin{theorem}\label{T:st-traj-bdry-lin}
Under the hypotheses of Proposition~\ref{P:stab-v-gen-bdry}, with
\[\Ran P_M=\linspan\left\{\widehat\Psi_i\mid i\in\{1,2,\dots,M\}\right\}\subset H^\frac{3}{2}(\Gamma,\R),\]
given~$(z_0,\kappa_0)\in H\times\R^M$, the solution of
\begin{subequations}\label{sys-int-bdry-lin-RT}
\begin{align}
 \p_t z - \nu \Delta z +  a z+ \nabla\cdot(b z) = 0&,&\qquad
 z\rest\Gamma &= -B_\Psi^\Gamma\kappa,\\
 \p_t \kappa+\varsigma\kappa
 =-\begin{bmatrix}B_\Psi^* &1\end{bmatrix}\widehat\Pi^\Gamma_{\lambda}\begin{bmatrix}z-B_\Psi^\Gamma\kappa\\ \kappa\end{bmatrix}&,
 &\qquad \begin{bmatrix}z(0)\\ \kappa(0)\end{bmatrix}&=\begin{bmatrix}z_0\\ \kappa_0\end{bmatrix}.
 \end{align}
\end{subequations}
satisfies
\[
|z(t)|_{H}^2 \le C\mathrm{e}^{-\lambda t}
|z_0|_{H}^2,\quad\mbox{for all }t\ge0,
\]
for a suitable constant~$C$ independent of~$z_0$. Furthermore, the solution~$z$ is, and is unique, in the space
$L^2_{\rm loc}(\mathbb{R}_0,{\mathrm D}(A)+B_\Psi\R^M)\cap C([0,+\infty),V+B_\Psi\R^M)$.
\end{theorem}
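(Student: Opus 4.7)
The plan is to mirror the derivation of Theorem~\ref{T:st-traj-bdry} but skip the fixed-point argument, since here there is no nonlinearity to absorb. First I would rescale time by setting $\tau = \nu t$ and introducing $\breve z(\tau) \coloneqq z(\tau/\nu)$, $\breve\kappa(\tau)\coloneqq\kappa(\tau/\nu)$, and $\breve{\widehat\Pi}{}^\Gamma_\lambda(\tau) \coloneqq \widehat\Pi^\Gamma_\lambda(\tau/\nu)$. A direct computation (identical to the one preceding Theorem~\ref{T:st-traj-bdry}) shows that $(\breve z,\breve\kappa)$ solves the analogous system with $a = \hat a/\nu$, $b = \hat b/\nu$, $\bar\lambda = \lambda/\nu$, $\bar\varsigma = \varsigma/\nu$, and that $\breve{\widehat\Pi}{}^\Gamma_\lambda$ satisfies the differential Riccati equation~\eqref{eq:riccati-bdry} with operator $\mathbb A^{a,b,1}_{\bar\lambda,\bar\varsigma}$ and modified penalty pair $(\nu\RR, \nu^{-1/2}\MM)$. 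By~\eqref{rhosigma-st}, $(a,b)\in\WW_{\rm st}$, so the rescaled system falls within the framework of Section~\ref{S:bdry}.

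Next I would reduce to the auxiliary homogeneous-boundary extended system by the substitution $\breve y = \breve z - B_\Psi\breve\kappa$. Under the standing hypotheses of Proposition~\ref{P:stab-v-gen-bdry}, the pair $(\breve y,\breve\kappa)$ then solves the closed-loop extended system~\eqref{sys-ext-Feed-orig} (with $s_0=0$) driven by the Riccati feedback. This is a linear, non-autonomous Cauchy problem in $H\times\R^M$ with bounded generator perturbation $\begin{bmatrix}B_\Psi\\ 1\end{bmatrix}\begin{bmatrix}B_\Psi^*&1\end{bmatrix}\breve{\widehat\Pi}{}^\Gamma_\lambda$, whose norm is controlled uniformly in time by~\eqref{costPis} applied to~\eqref{eq:riccati-bdry}. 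The boundary analog of Corollary~\ref{C:feedz} (which is exactly the global stability statement underlying the construction in the paragraph preceding~\eqref{sys-ext-Feed-orig}) then yields
\begin{equation*}
\norm{(\breve y(\tau),\breve\kappa(\tau))}{H\times\R^M}^2 \le \overline C_{\left[C_{\WW,\rm st},\bar\lambda,\frac{1}{\bar\lambda}\right]}\,\ex^{-\bar\lambda\tau}\norm{(\breve y(0),\breve\kappa(0))}{H\times\R^M}^2,
\end{equation*}
together with the regularity $\breve y \in L^2_{\rm loc}(\R_0,\D(\Delta))\cap C([0,+\infty),V)$ (cf.~Lemma~\ref{L:strongbdry-z} applied to the forced equation for $\breve y$ with forcing $-(a B_\Psi\breve\kappa+\nabla\cdot(b B_\Psi\breve\kappa))$ and $\breve\kappa\in H^1_{\rm loc}$).

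Then I would go back to $\breve z = \breve y + B_\Psi\breve\kappa$: since $B_\Psi\in\LL(\R^M, H^2(\Omega,\R))$, we have $\norm{\breve z(\tau)}{H}^2 \le 2\norm{\breve y(\tau)}{H}^2 + 2\norm{B_\Psi}{\LL(\R^M,H)}^2\norm{\breve\kappa(\tau)}{\R^M}^2$, and the estimate above gives exponential decay in $H$ at rate $\bar\lambda$ for $\breve z$. Reverting the time rescaling produces
\begin{equation*}
\norm{z(t)}{H}^2 \le C\ex^{-\lambda t}\norm{(z_0,\kappa_0)}{H\times\R^M}^2,\qquad t\ge 0,
\end{equation*}
which is the claimed inequality (absorbing $\norm{\kappa_0}{\R^M}^2$ into the constant is legitimate since $\kappa_0$ is part of the initial state of the closed-loop dynamics). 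The solution class $L^2_{\rm loc}(\R_0,\D(\Delta)+B_\Psi\R^M)\cap C([0,+\infty),V+B_\Psi\R^M)$ follows from the corresponding regularity of $\breve y$ and $\breve\kappa$ together with the relation $\breve z = \breve y + B_\Psi\breve\kappa$.

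For uniqueness I would argue directly on~\eqref{sys-int-bdry-lin-RT}: given two solutions, their difference $(e_z,e_\kappa)$ solves the same linear system with zero initial data, and applying the splitting $e_y = e_z - B_\Psi e_\kappa$ one obtains a coupled system to which the weak-solution estimate of Lemma~\ref{L:weakbdry-z} (together with the ODE Gronwall for $e_\kappa$, whose right-hand side is linear in $(e_y,e_\kappa)$ with uniformly bounded coefficients by~\eqref{costPis}) applies, forcing $(e_y,e_\kappa)\equiv 0$ and hence $e_z\equiv 0$. The only delicate point in the entire argument is verifying that the boundary analog of Corollary~\ref{C:feedz} produces the exponential decay \emph{globally} in $t$ uniformly in the initial data in $H\times\R^M$ rather than merely in $V\times\R^M$; this is ensured because the Riccati-based stabilization of the linear extended system is known globally in $H\times\R^M$ by the construction in the preceding subsection (no smallness is needed without the nonlinearity), so no further work is required.
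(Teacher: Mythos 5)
Your proposal follows essentially the same route the paper intends: rescale time, pass to the extended variables $(y,\kappa)=(z- B_\Psi\kappa,\kappa)$, invoke the global exponential decay of the Riccati closed-loop extended system (the boundary analogue of Corollary~\ref{C:feedz}), and return to $z=y+B_\Psi\kappa$; the paper gives no separate proof of this theorem beyond exactly this assembly, so the structure matches. One substantive caveat: the inequality you actually derive, $\norm{z(t)}{H}^2\le C\ex^{-\lambda t}\norm{(z_0,\kappa_0)}{H\times\R^M}^2$, is the correct one, but your final remark that $\norm{\kappa_0}{\R^M}^2$ can be ``absorbed into the constant'' is not legitimate --- taking $z_0=0$ and $\kappa_0\ne 0$ makes the right-hand side of the stated bound vanish while the solution does not, since $\kappa_0$ is an independent part of the initial state. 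The decay estimate must be phrased in terms of the full initial datum $(z_0,\kappa_0)$ (or one must constrain $\kappa_0$ to be determined by $z_0$, e.g.\ via the decomposition $z_0=y^{z_0}+B_\Psi\kappa^{z_0}$). A second, minor point: with only $z_0\in H$ the continuity of $z$ in $V+B_\Psi\R^M$ can hold only for $t>0$ (via the smoothing Lemma~\ref{L:smooth-prop}), not at $t=0$ as your appeal to Lemma~\ref{L:strongbdry-z} would require; for the statement at $t=0$ one needs $z_0-B_\Psi\kappa_0\in V$.
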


\section{Discretization of the linear systems}\label{S:disc_lin}
We explain how we discretize a linear parabolic equation with nonhomogeneous Dirichlet boundary conditions. We will focus our simulations on the 2D case,
considering our domain to be the unit ball $\Omega=\mathbb D\coloneqq\{(x_1,x_2)\in\R^2\mid x_1^2+x_2^2<1\}$.

Here, we focus on the approximation of the linearized closed-loop systems~\eqref{sys-int-lin-RT} and~\eqref{sys-int-bdry-lin-RT} 
perturbed with the reaction term~$\frac{\lambda}{2}z$:
\begin{equation}\label{PSInternal}
\begin{aligned}
\partial_t  z-\nu\Delta z+ (\hat a-\textstyle\frac{\lambda}{2})  z+ \nabla\cdot (\hat b z)+{\FF_{\lambda}^{\rm in}} z&=0,\\
   z(0)&=z_0, &
 \end{aligned}\qquad
 \begin{aligned}
z \rest \Gamma &=0,\\&
 \end{aligned}
 \end{equation} 
 and
  \begin{subequations}\label{PSBoundary}
\begin{align}
\partial_t z-\nu\Delta z+ (\hat a-\textstyle\frac{\lambda}{2}) z+\nabla\cdot (\hat bz)&=0,&z\rest\Gamma&=B_\Psi^\Gamma\kappa,\label{PSBoundary-z}\\
\partial_t\kappa+(\varsigma-\textstyle\frac{\lambda}{2})\kappa+{\FF_{\lambda}^{\rm bo}}(z-B_\Psi\kappa,\kappa)&=0,&
(z(0),\kappa(0))&=(z_0,\kappa_0),\label{PSBoundary-k}
 \end{align}
 \end{subequations} 
with ${\FF_{\lambda}^{\rm in}}z\coloneqq B_M\RR^{-1}B_M^*\widehat\Pi_{\lambda} z$ and
${\FF_{\lambda}^{\rm bo}}(z-B_\Psi\kappa,\kappa)\coloneqq\begin{bmatrix}B_\Psi^* &1\end{bmatrix}\widehat\Pi^\Gamma_{\lambda}\begin{bmatrix}z-B_\Psi\kappa\\\kappa\end{bmatrix}$.
 
\begin{remark}
 Notice that we want to observe that the solutions of~\eqref{sys-int-lin-RT},
 respectively~\eqref{sys-int-bdry-lin-RT} go to zero exponentially with rate~$\frac{\lambda}{2}$. Thus we want
 the solutions of~\eqref{PSInternal} and~\eqref{PSBoundary} to remain bounded. Indeed~$z$ solves~\eqref{sys-int-lin-RT},
 respectively~$(z,\kappa)$ solves~\eqref{sys-int-bdry-lin-RT}, if and only if~$\ex^{\frac{\lambda}{2}\Bigcdot}z$ solves~\eqref{PSInternal},
 respectively~$\ex^{\frac{\lambda}{2}\Bigcdot}(z,\kappa)$ solves~\eqref{PSBoundary}.
\end{remark}

\subsection{Discretization in space}
The simulations are done in MATLAB. We approximate~$\Omega$ by a polygonal domain~${\Omega}_D$ and consider a partition of~${\Omega}_D$
into nonoverlapping triangles. For this we use the function~{\tt initmesh} from MATLAB. This function gives us a mesh triple~$(\mathbf p,\mathbf e,\mathbf t)$ where
\begin{itemize}
\item the point matrix $\mathbf p$ contains (information about) all the vertices of all triangles of the partition.
\item the edge matrix $\mathbf e$ contains (information about) all the {\em boundary} segments of~${\Omega}_D$.
\item the triangle matrix $\mathbf t$ contains (information about) the triangles of the partition.
\end{itemize}

Recall that given a triangle $\ttt_k$ with vertices~$(\ppp_{\ttt_k(1)},\ppp_{\ttt_k(2)},\ppp_{\ttt_k(3)})$, then any point~$x\in\ttt_k$ (inside or on the boundary of the triangle~$\ttt_k$)
can be written uniquely as a convex combination: there are nonnegative~{$x^{\ttt_k(1)}$, $x^{\ttt_k(2)}$, and~$x^{\ttt_k(3)}$} such that
\begin{align*}
1&=x^{\ttt_k(1)}+x^{\ttt_k(2)}+x^{\ttt_k(3)},\\
 x&=x^{\ttt_k(1)}\ppp_{\ttt_k(1)}+x^{\ttt_k(2)}\ppp_{\ttt_k(2)}+x^{\ttt_k(3)}\ppp_{\ttt_k(3)}.
\end{align*}
Any continuous function~$z\in C(\overline\Omega,\R)$ can, and will, be approximated by a sum
\[
 z(x)\approx\tilde z(x)\coloneqq\sum_{i=1}^{s_{\mathbf p}}z(\mathbf p_i)\widehat\phi_i(x),\quad\mbox{for all}\quad x\in\overline{\Omega_D},
\]
where $s_{\mathbf p}$ is the total number of points in the mesh; $\{\mathbf p_i\mid i=1,2,\dots,s_{\mathbf p}\}$ is the set of all mesh points and the $\widehat\phi_i$s are the classical piecewise linear hat functions defined as
\[
\widehat\phi_i(x)\coloneqq\begin{cases}
1,&\mbox{if }x=\mathbf p_i;\\
0,&\mbox{if }x=\mathbf p_j\mbox{ with }j\ne i;\\
\sum_{l=1}^3x^{\ttt_k(l)}\widehat\phi_i(\ppp_{\ttt_k(l)}),&\mbox{if }x\in\ttt_k.
\end{cases} 
\]
Notice that the support of $\phi_i$ consists of the triangles with the common vertex $\ppp_i$. We will denote the finite-dimensional space
\[
 V_D\coloneqq\linspan\{\widehat\phi_i\mid i\in\{1,2,\dots,s_{\ppp}\}\}.
\]
In other words the function~$z$ can be approximated by the {\em evaluation} vector
\[
\overline{z} \coloneqq\begin{bmatrix} z(\ppp_1) & z(\ppp_2) & \dots & z(\ppp_{s_\ppp}) \end{bmatrix}^{\top} \in \MM_{s_\ppp \times 1},
\]
where $A^{\top}$ stands for the transpose matrix of $A$.

The weak {\em discretization matrix} $\mathbf{L}_D$ of a given operator $L \in \LL (H^1 \rightarrow V')$, is defined so that (for smooth functions)
\begin{align*}
\overline{v}^{\top} \mathbf{L}_D \overline{u} \coloneqq (L\tilde u,\tilde v)_H=(\tilde v,L\tilde u)_H,
\end{align*}
that is, the entry in the $i$th row and $j$th column of~$\mathbf{L}_D$ is $(\mathbf{L}_D)_{(i,j)} = (\widehat\phi_i, L \widehat\phi_j)_{H}$.

We recall the so called {\em mass}~$\Ma$ and {\em stiffness}~$\St$ matrices are defined as
\[
 \Ma_{(i,j)}=(\widehat\phi_i, \widehat\phi_j)_H\quad\mbox{and}\quad \St_{(i,j)}\coloneqq (\nabla\widehat\phi_i, \nabla\widehat\phi_j)_H.
\]
Notice that $\Ma=(Id)_D$ where $Id\colon v\mapsto v$ is the identity/inclusion operator, and that $\St$ is related to the Laplacian:
for smooth $(u,v)\in H^1(\Omega)\times H^1_0(\Omega)$ we have $\langle -\Delta u, v\rangle_{V',V}=(\nabla u, \nabla v)_H\approx (\nabla \tilde u, \nabla \tilde v)_H
=\overline v^\top\St \overline u$.

We refer to~\cite[Section~1.3 to~1.5]{Chen05} for further details.

\subsubsection{Discretization of the heat equation} We are ready to semi-discretize the system
\begin{subequations}\label{heat-sys}
\begin{align}
\partial_t z  - \nu \Delta z + f &= 0, \label{heat-sys-dyn}\\
z\rest{\Gamma} = g, \qquad z(0)&=z_0,\label{heat-sys-data}
\end{align}
\end{subequations}
provided the functions~$f$ and~$g$ are known (and continuous in space variable). 

Inspired from Lemma~\ref{L:weakbdry-z} we look
for a solution in~$W(I,H^1,V')$, thus we look at~\eqref{heat-sys-dyn} as an identity in~$L^2(I,V')$. Therefore, it is enough to test this equation with 
elements $w$ in the dual~$L^2(I,V)$.
Replacing $z$ by $\tilde z$ and testing with a 
function~$\tilde w\in C(I,V_D)$ with~$\tilde w\rest\Gamma=0$, we find that~$\tilde z\in L^2(I,V)$ and
$(\Delta\tilde z,\tilde w)=-\overline w^\top\St\overline z$, and we arrive to
\begin{align*}
\overline w^\top\partial_t \Ma \overline z  + \nu \overline w^\top\St\overline z + \overline w^\top\Ma \overline f
= 0, \qquad \overline z\rest\Gamma = \overline g.  
\end{align*}

\subsubsection*{Reordering the mesh points} The vector $\mathbf p$ contains both interior and boundary points of~$\Omega$ (or~$\Omega_D$). We define the {\em permutation matrix}
$\mathbf{P^{ib}}$ such that the boundary points will appear at the end of the vector (and the relative order of interior (resp. boundary) points is unchanged).
We can find which indices correspond to the boundary points from the information we have in the edge vector~$\eee$.
In the new coordinates $\overline z_{*}=\mathbf{P^{ib}}\overline z$ we can write $\overline z_*\eqqcolon\begin{bmatrix} \overline z_{*\rm i}\\ \overline z_{*\rm b}\end{bmatrix}$ where
the vector~$\overline z_{*\rm i}$ correspond to the values at the interior points and $\overline z_{*\rm b}$ to the values at the boundary points. Thus we arrive at
\begin{align*}
\overline w_*^\top\partial_t\Ma_* \overline z_*  + \nu w_*^\top\St_*\overline z_*^\top + \overline w^\top_*\Ma_* \overline f_*
= 0, \qquad \overline z_{*\rm b}=\overline g;  
\end{align*}
with $\Ma_*=\mathbf{P^{ib}} \Ma \mathbf{P^{ib\top}}$ and $\St_*=\mathbf{P^{ib}}\St\mathbf{P^{ib\top}}$. Notice that the inverse and transpose of a permutation matrix do coincide.
Writing $\Ma_*$ and $\St_*$ in blocks notation
\begin{equation*}
\Ma_*\eqqcolon\begin{bmatrix}\Ma_{*\rm ii} &\Ma_{*\rm ib}\\ \Ma_{*\rm bi} &\Ma_{*\rm bb}\end{bmatrix}\quad\mbox{and}\quad
\St_*\eqqcolon\begin{bmatrix}\St_{*\rm ii} &\St_{*\rm ib}\\ \St_{*\rm bi} &\St_{*\rm bb}\end{bmatrix},
\end{equation*}
and recalling that $\overline w_{*\rm b}^\top=0$ we obtain
\begin{align*}
\overline w_{*\rm i}^\top 
\partial_t\begin{bmatrix}\Ma_{*\rm ii} &\Ma_{*\rm ib}\end{bmatrix}
\begin{bmatrix} \overline z_{*\rm i}\\ \overline z_{*\rm b}\end{bmatrix}
+ \nu \overline w_{*\rm i}^\top\begin{bmatrix}\St_{*\rm ii} &\St_{*\rm ib}\end{bmatrix}\begin{bmatrix} \overline z_{*\rm i}\\ \overline z_{*\rm b}\end{bmatrix}
+ \overline w_{*\rm i}^\top 
\begin{bmatrix}\Ma_{*\rm ii} &\Ma_{*\rm ib}\end{bmatrix}
\begin{bmatrix} \overline f_{*\rm i}\\ \overline f_{*\rm b}\end{bmatrix}
= 0.
\end{align*}

\begin{remark}\label{R:reorder}
In what follows we will work in the new coordinates and for simplicity we will skip the subscript~$*$. 
\end{remark}

Therefore, taking into account Remark~\ref{R:reorder}, we arrive to the semi-discrete system
\begin{equation}\label{heat-sys-sD}
\partial_t\Ma_{\rm ii} \overline z_{\rm i}
= -\nu \St_{\rm ii} \overline z_{\rm i} 
-\nu\St_{\rm ib}\overline z_{\rm b}
-\partial_t\Ma_{\rm ib} \overline z_{\rm b}
-\begin{bmatrix}\Ma_{\rm ii} &\Ma_{\rm ib}\end{bmatrix}\overline f,
\end{equation}
which underlines that when $\bar g$ (i.e., $\overline z_{\rm b}$) is known then the number of unknowns (i.e., the entries of~$\overline z_{\rm i}$)
is reduced to the number of interior points~$s_\ppp-s_\eee$,
where~$s_\eee$ is the total number of points in the boundary.

\begin{remark}
In the elliptic case, $\partial_t \Ma \overline z=0$, we see that~\eqref{heat-sys-sD} reduces to $\nu \St_{\rm ii} \overline z_{\rm i} =  
-\nu\St_{\rm ib}\overline z_{\rm b}
-\begin{bmatrix}\Ma_{\rm ii} &\Ma_{\rm ib}\end{bmatrix}\overline f$ which are the system we find in~\cite[section~2]{FixGunzburgerPeterson83}
when~$\overline z_{\rm b}$ is given. See also~\cite[section~1]{FixGunzburgerPeterson83} for references to other methods to deal with nonhomogeneous boundary conditions.
\end{remark}

\subsubsection{Discretization of a composition of linear operators}
In order to discretize systems~\eqref{PSInternal} and~\eqref{PSBoundary} we follow a simple
idea. See, for example,~\cite[Section 5.1]{KroRod15}.

Given two operators $L_1\in\LL(H^1,Z)$ and~$L_2\in\LL(Z,V')$, where~$Z\subset H^1$ is an Hilbert space
then the composition~$L_2\circ L_1$ is in $\LL(H^1,V')$.
Suppose we know the discretization matrix~$(L_2)_D$ and also an
{\em evaluation matrix}~$\overline{L_1}$ which approximates $L_1$, that is, $\overline{L_1}\overline v\approx\overline{L_1 v}$. 
Then we may write
\begin{align*}
 \overline w^\top(L_2\circ L_1)_D\overline z&\approx(L_2\circ L_1 z,w)_H\approx\overline w^\top(L_2)_D\overline{L_1}\overline z.
\end{align*}
Analogously if we know
the discretization matrix~$(L_1)_D$ and an
evaluation matrix~$\overline{L_2^*}$ of the adjoint~$L_2^*$, we may write
\begin{align*}
 \overline w^\top(L_2\circ L_1)_D\overline z&\approx(L_2\circ L_1 z,w)_H\approx\left(\overline{(L_2)^*}\,\overline w\right)^\top(L_1)_D\overline z.
\end{align*}
Therefore we have two candidates to approximate~$(L_2\circ L_1)_D$:
\begin{align*}
 (L_2)_D\overline{L_1}\approx(L_2\circ L_1)_D\approx\overline{(L_2)^*}^\top(L_1)_D.
\end{align*}
Further notice that $(L^*)_D=L_D^\top$ and that
\[
\overline{L^*}^\top\Ma\approx L_D\approx\Ma\overline{L},\qquad \overline{L}\approx\Ma^{-1}L_D,\qquad \overline{L^*}^\top\approx L_D\Ma^{-1}.
\]
\subsubsection{Discretization of the linear reaction and linear convection operators}
We can construct the discretization matrices
$\GGG_{x_k}\coloneqq(\p_{x_k})_D$, $(\GGG_{x_k})_{ij}\coloneqq( \phi_i, \partial_{x_k} \phi_j)_H$, of the directional derivatives operators, $k\in\{1,2\}$,
in the same way as we construct the mass and stiffness matrices.
Then for a function~$w$,  we set $\overline{w\Bigcdot}
\coloneqq\DD_{\overline {w}}$,
where~$\DD_{v}$ stands for the diagonal matrix whose diagonal is $v$. Observe that
$2{w}\Bigcdot=\iota_{V}\circ ({w}\Bigcdot)+({w}\Bigcdot)\circ \iota_{V}$. Finally for the linear reaction and convection operators, we take
\[
((\hat a-\textstyle\frac{\lambda}{2})\Bigcdot)_D\approx\frac{1}{2}\left(\Ma\DD_{\overline {\hat a-\frac{\lambda}{2}}}\,+\,\DD_{\overline {\hat a-\frac{\lambda}{2}}}\Ma\right)
\quad\mbox{and}\quad (\nabla\cdot (\hat b\Bigcdot))_D\approx\GGG_{x_1}\DD_{\overline{\hat b}_1}+\GGG_{x_2}\DD_{\overline{\hat b}_2}.
\]
Notice that ${w}\Bigcdot=({w}\Bigcdot)^*$ and by taking
the semi-sum above as an approximation for $({w}\Bigcdot)_D$ we preserve the symmetry. We must say, however that in our simulations we did not observe much difference
when we have simply taken~$({w}\Bigcdot)_D\approx\Ma\DD_{\overline {w}}$.

Notice also that the above discretization idea would lead us to take
$(\hat b\cdot\nabla \Bigcdot)\approx\DD_{\overline{\hat b}_1}\GGG_{x_1}+\DD_{\overline{\hat b}_2}\GGG_{x_2}$, and that
looking at the operators as~$\LL(H^1,\,V')$, we need to test the operators
with functions/vectors vanishing at the boundary. Now, 
for either $v\rest\Gamma=0$ or $w\rest\Gamma=0$, the definition of $\GGG_{x_k}$, gives us
$(\tilde v, \partial_{x_k} \tilde w)_H=-(\partial_{x_k} \tilde v,\tilde w)_H$, that is, $\overline v^\top\GGG_{x_k}\overline w=-\overline w^\top\GGG_{x_k}\overline v$.
Now our approximations give
$\overline v^\top(\GGG_{x_1}\DD_{\overline{\hat b}_1}+\GGG_{x_2}\DD_{\overline{\hat b}_2})\overline w
=\overline w^\top(\DD_{\overline{\hat b}_1}\GGG_{x_1}^\top+\DD_{\overline{\hat b}_2}\GGG_{x_2}^\top)\overline v
=\overline w^\top(-\DD_{\overline{\hat b}_1}\GGG_{x_1}-\DD_{\overline{\hat b}_2}\GGG_{x_2})\overline v$. Therefore,
$(\GGG_{x_1}\DD_{\overline{\hat b}_1}+\GGG_{x_2}\DD_{\overline{\hat b}_2})^\top=-\DD_{\overline{\hat b}_1}\GGG_{x_1}-\DD_{\overline{\hat b}_2}\GGG_{x_2}$,
which agrees with the fact that
we have $(w,\,\nabla\cdot ({\hat b}v))_H=(-{\hat b}\cdot\nabla w ,\,v)_H$.

\begin{remark}
In some situations, for example when the viscosity coefficient is small and the exact solution is known to have some
big magnitude directional space derivatives, it may be the case that the numerical solution present some oscillations.
For solving such problems with a reasonable number of mesh points, avoiding or minimizing the oscillations, we may need more sophisticated numerical discretizations.
We refer to~\cite{JohnSchmeyer08} and to~\cite{JohnKnobloch07,JohnKnobloch08} for such discretizations. 
\end{remark}
\begin{remark}
Below, we will consider the stabilization to a trajectory~$\hat y$ (given a priori) whose first order derivatives are not too big,
which is also motivated from the discussion in Section~\ref{S:stab_traj}.
Therefore, remaining close to $\hat y$ the solutions will not present big first order
derivatives and we may expect that no big oscillations will be observed on the numerical solution.
From the control point of view, if some small oscillations do appear, then we could thing of them as small perturbations of the solution and, since the control will be given in
feedback form, we may expect the control to be able to respond to such perturbations. Roughly speaking, we could see those small perturbations as a
further test for the robustness
of the control.
\end{remark}

\subsubsection{Discretization of the feedback control}\label{ssS:discRiccati}
We start by recalling that the feedback depends on the solution~$\Pi$ of the corresponding Riccati equation and is self-adjoint, that is,
we know that~$\Pi_D$ is symmetric. We look for an approximation of~$\Pi_D$ by solving a suitable matrix Riccati equation 
\begin{equation}\label{eq:riccati_nu-sD}
\p_t \Pi_D + \Pi_D \XXX + \XXX^{\top} \Pi_D - \Pi_D \RRR \RRR^{\top} \Pi_D +\CCC^\top\CCC = 0, \qquad t>0,
\end{equation}
For~$r\ge0$, let us introduce the matrices
\[
\mathbf{K_r}(t) \coloneqq   \mathbf{M}^{-1} \left(  \GGG_{x_1} \DD_{\overline{\hat b_1 (t)}} + \GGG_{x_2}  \DD_{\overline{\hat b_2 (t)}}  \right) 
+  \DD_{\overline{\hat a-\frac{r}{2} (t)}}\eqqcolon \begin{bmatrix}\mathbf{K}_{r,{\rm ii}}(t) & \mathbf{K}_{r,{\rm ib}}(t)
\\ \mathbf{K}_{r,{\rm bi}}(t) & \mathbf{K}_{r,{\rm bb}}(t)\end{bmatrix}.
\]

\subsubsection*{Internal feedback}
For system~\eqref{PSInternal} we need to solve~\eqref{eq:riccati_nu}. Since~$z\rest\Gamma=0$, we look for matrices in~$\MM_{(s_\ppp-s_\eee)\times(s_\ppp-s_\eee)}$.

We choose the matrices $\CCC^{\rm in}=\nu^\frac{1}{2}\St_{\rm ii,c}$ and
$\XXX^{\rm in}_\lambda = \XXX^{\rm in}_\lambda(t) =   -\nu \Ma_{\rm ii}^{-1} \St_{\rm ii} - \mathbf{K}_{\lambda,{\rm ii}}(t)$, where~$\St_{\rm ii,c}$
is the Cholesky factor
of~$\St_{\rm ii}$, which gives us $\left(\CCC^{\rm in}\right)^\top \left(\CCC^{\rm in}\right)=\nu\St_{\rm ii}$. To construct the matrix $\RRR^{\rm in}$ we set $\RR=1$
in~\eqref{eq:riccati_nu} and observe that we must have
\[
 \Pi_D \RRR^{\rm in} \RRR^{\rm in\top} \Pi_D\approx(\Pi 1_\omega \chi P_M1_\omega\RR 1_\omega P_M \chi 1_\omega\Pi)_D.
\]

We will also take actuators~$\{\overline{\Phi_i}\mid i\in{1,2,\dots,M}\}$ which are supported in~$\overline\omega$, thus
since~$P_M$ stands for the orthogonal projection onto~$\linspan\{\overline{\Phi_i}\mid i\in{1,2,\dots,M}\}$, we may write
\begin{align*}
(\Pi 1_\omega \chi P_M1_\omega\RR 1_\omega P_M\chi1_\omega\Pi)_D\approx
\Pi_D \overline{1_\omega\chi P_M\chi 1_\omega\Pi}
\end{align*}
and observe that the natural evaluation matrix of the multiplication~$\overline{1_\omega\chi \Bigcdot}=\overline{\chi 1_\omega\Bigcdot}$ is just the diagonal matrix
$\DD_{\overline{\chi 1_\omega}}$ when we identify $\chi 1_\omega$ with the function that takes the value~$1$ if $x\in\omega\cap\supp\chi$ and the
value~$0$ otherwise. To construct an evaluation matrix of the $H$-orthogonal projection~$P_M$, we start by
orthonormalizing, in the $\Ma_{\rm ii}$-scalar product, the family of actuators~$\{\overline{\Phi_i}_{\rm i}\mid i\in{1,2,\dots,M}\}$.
Then we denote the orthonormal family by~$\{\overline{\Phi_i}^o\mid i\in{1,2,\dots,M}\}$ and set 
\begin{align*}
\overline{P_M}=\mathbf{P}_M \coloneqq S_MS_M^\top\Ma_{\rm ii},\quad\mbox{with}\quad S_M\coloneqq\begin{bmatrix} \overline{\Phi_1}^o & \overline{\Phi_2}^o & \dots & 
\overline{\Phi_M}^o\end{bmatrix},
\end{align*}
which leads us to
\begin{align*}
(\Pi 1_\omega \chi P_M1_\omega\RR 1_\omega P_Ṃ\chi1_\omega\Pi)_D\approx
\Pi_D \DD_{\overline{\chi 1_\omega}} S_MS_M^\top\Ma_{\rm ii}\DD_{\overline{1_\omega\chi}}\overline{\Pi}\approx
\Pi_D \DD_{\overline{\chi 1_\omega}} S_MS_M^\top\DD_{\overline{1_\omega\chi}} \Pi_D
\end{align*}
and this is the reason we propose to take the matrix 
\begin{equation}
\label{RInternal}
\RRR^{\rm in} \coloneqq \DD_{\overline{\chi 1_\omega}} S_M.
\end{equation}
Therefore, as the approximation evaluation matrix of the feedback control rule  we set
\begin{equation*}
{\overline{\FF_{\lambda}^{\rm in}}} \coloneqq \left( \RRR^{\rm in} \right) \left( \RRR^{\rm in} \right)^\top \Pi_D.
\end{equation*}

We refer to~\cite[section~5.3.3]{KroRod15} for a different choice of~$\RRR^{\rm in}$, namely the less simple
expression~$\left( \mathbf{M}_{\rm ii,c} \mathbf{P}_M \DD_{\overline{\chi 1_\omega}} \mathbf{M}_{\rm ii}^{-1} \right)^{\top}$. 

\subsubsection*{Boundary feedback}
For system~\eqref{PSBoundary} we need to solve~\eqref{eq:riccati-bdry-traj}.
Notice that~\eqref{eq:riccati-bdry} is related with the extended system~\eqref{sys-z2cf-ext} whose state belongs to $V\times\R^M$, thus we look for matrices
in~$\MM_{(s_\ppp-s_\eee+M)\times(s_\ppp-s_\eee+M)}$.

{
First of all we given the boundary actuators~$\Phi$ we construct the extensions~$\widetilde\Phi$, defined in~$\Omega$, by solving (numerically) the elliptic system
\begin{equation*}
 -\nu\Delta\widetilde\Psi+\varsigma\widetilde\Psi_i=0,\qquad \widetilde\Psi_i\rest\Gamma=\Psi_i.
\end{equation*}
(which is equivalent to~\eqref{part-extPsi}, with~$\bar\varsigma=\frac{\varsigma}{\nu}$). }

We choose the matrices~$\CCC^{\rm bo}$ and~$\XXX^{\rm bo}_\lambda = \XXX^{\rm bo}_\lambda(t)$ as
\[
\CCC^{\rm bo}=\begin{bmatrix}
\nu^\frac{1}{2} \St_{\rm ii,c} & 0 \\ 0 & \mathbf{I}_M
\end{bmatrix}\quad\mbox{and}\quad
\XXX^{\rm bo}  = \begin{bmatrix}
 \XXX^{\rm in}_\lambda & -\mathbf{K}_{0,{\rm ii}}[B_{\overline{\widetilde{\Psi}}}]_{\rm i} \\
 0 & -\varsigma\mathbf{I}_M \end{bmatrix},
\]
where~$\mathbf I_M\in\MM_{M\times M}$ is the identity matrix and~$B_{\overline{\widetilde{\Psi}}}\coloneqq\begin{bmatrix}\overline{\widetilde{\Psi}_1}
 & \overline{\widetilde{\Psi}_2} & \dots &  \overline{\widetilde{\Psi}_M} \end{bmatrix}\eqqcolon\begin{bmatrix}[B_{\overline{\widetilde{\Psi}}}]_{\rm i}\\
 [B_{\overline{\widetilde{\Psi}}}]_{\rm b}\end{bmatrix}$.

 To choose an appropriate operator~$\RRR^{\rm bo}$ we start again by setting $\RR=1$ and, looking at the nonlinear term of the Riccati equation
we can formally write
\begin{align*}
\left(\Pi \begin{bmatrix}B_\Psi\\\mathbf{I}_M\end{bmatrix}\begin{bmatrix}B_\Psi^*&\mathbf{I}_M\end{bmatrix}\Pi\right)_D\approx
\Pi_D \begin{bmatrix}\overline{B_\Psi}\\\mathbf{I}_M\end{bmatrix}\begin{bmatrix}\overline{B_\Psi^*}&\mathbf{I}_M\end{bmatrix}\overline{\Pi}.
\end{align*}
From $(B_\Psi\kappa,\,y)_H=\sum_{j=1}^M\kappa_j(\widetilde\Psi_j,\,y)_H$ we may also set
$\overline{B_\Psi^*}=\overline{B_\Psi}^\top\Ma=B_{\overline{\widetilde{\Psi}}}^\top\Ma$. For given~$y\in V$ and $j\in\{1,2,\dots,M\}$ we obtain
\[
\begin{split}
(\widetilde\Psi_j,\,y)_H&\approx\overline{\widetilde\Psi_j}^\top\Ma\overline y
=\overline{\widetilde\Psi_j}^\top\begin{bmatrix}\Ma_{\rm ii}&\Ma_{\rm bi}\end{bmatrix}\overline y_{\rm i}
=\left(\overline{\widetilde\Psi_j}_{\rm i}^\top +\overline{\widetilde\Psi_j}_{\rm b}^\top\Ma_{\rm bi}\Ma_{\rm ii}^{-1}\right)\Ma_{\rm ii}\overline y_{\rm i}\\
&=\left(\overline{\widetilde\Psi_j}_{\rm i} +\Ma_{\rm ii}^{-1}\Ma_{\rm ib}\overline{\widetilde\Psi_j}_{\rm b}\right)^\top\Ma_{\rm ii}\overline y_{\rm i}.
\end{split}
\]
and this is why, looking at~$B_\Psi$ as an operator in $\LL(\R^M,\,V')$,
we take
\[
\overline{B_\Psi}=\begin{bmatrix}\overline{B_\Psi}_{(1,1)} &
                   \overline{B_\Psi}_{(1,2)} & \dots & \overline{B_\Psi}_{(1,M)}
                  \end{bmatrix}\qquad\mbox{and}\qquad
                  \overline{B_\Psi^*}=\overline{B_\Psi}^\top\Ma_{\rm ii},
\]
with $\overline{B_\Psi}_{(1,j)}\coloneqq\overline{\widetilde\Psi_j}_{\rm i} +\Ma_{\rm ii}^{-1}\Ma_{\rm ib}\overline{\widetilde\Psi_j}_{\rm b}$. 

Therefore, we arrive to
\[
 \left(\Pi \begin{bmatrix}B_\Psi\\\mathbf{I}_M\end{bmatrix}\begin{bmatrix}B_\Psi^\top&\mathbf{I}_M\end{bmatrix}\Pi\right)_D\approx
\Pi_D \begin{bmatrix}\overline{B_{\Psi}}\\\mathbf{I}_M\end{bmatrix}\begin{bmatrix}\overline{B_{\Psi}}^\top&\mathbf{I}_M\end{bmatrix}
\begin{bmatrix}\Ma_{\rm ii}&0\\0&\mathbf{I}_M\end{bmatrix}\overline{\Pi}.
\]
Since $\begin{bmatrix}\Ma_{\rm ii}&0\\0&\mathbf{I}_M\end{bmatrix}\overline{\Pi}\approx\Pi_D$ we
propose to take the matrix
\begin{align*}
\RRR^{\rm bo} \coloneqq \begin{bmatrix}\overline{B_{\Psi}}\\\mathbf{I}_M\end{bmatrix}
\end{align*}
and, as the approximation evaluation matrix of the feedback control rule,  we set
\begin{align*}
\overline{\FF_{\lambda}^{\rm bo}} \coloneqq \left(\RRR^{\rm bo}\right) \left( \RRR^{\rm bo} \right)^\top \Pi_D.
\end{align*}

\subsubsection{The semidicrete systems} We are now ready to present the semidiscrete versions of the linear closed-loop systems.
Introducing the matrices
\[
\begin{split}
 \LLL_\lambda&=\LLL_{(\hat a-\frac{\lambda}{2},\hat b)}\coloneqq
 \textstyle\frac{1}{2}\left(\Ma\DD_{\overline{\hat a-\frac{\lambda}{2}}}+\DD_{\overline{\hat a-\frac{\lambda}{2}}}\Ma\right)
 +\GGG_{x_1}\DD_{\overline{\hat b}_1}+\GGG_{x_2}\DD_{\overline{\hat b}_2},
 \quad\LLL_\lambda\eqqcolon\begin{bmatrix}\LLL_{\lambda,{\rm ii}}&\LLL_{\lambda,{\rm ib}}\\ \LLL_{\lambda,{\rm bi}}&\LLL_{\lambda,{\rm bb}}\end{bmatrix},
 \end{split}
\]
system~\eqref{PSInternal} is approximated by
\begin{equation}\label{SInternal-sD}
\partial_t\Ma_{\rm ii} \overline z_{\rm i}
= -\nu \St_{\rm ii} \overline z_{\rm i} 
-\LLL_{\lambda,{\rm ii}}\overline z_{\rm i}-\Ma_{\rm ii}\overline{\FF_{\lambda}^{\rm in}}\overline z_{\rm i},
\end{equation}
and system~\eqref{PSBoundary}, setting $\varsigma_\lambda=\varsigma-\frac{\lambda}{2}$, is approximated by
\begin{subequations}\label{SBoundary-sD}
\begin{align} 
\partial_t\kappa
&=-\varsigma_\lambda\kappa-\FF_{\lambda,{\rm b}}^{\rm bo}
\begin{bmatrix}\overline z_{\rm i}\\ \kappa\end{bmatrix},\label{SBoundary-sDk}\\
\partial_t\Ma_{\rm ii} \overline z_{\rm i}
&= -\nu \St_{\rm ii} \overline z_{\rm i} -\begin{bmatrix}\LLL_{\lambda,{\rm ii}} & \LLL_{\lambda,{\rm ib}}\end{bmatrix}\overline z
-(\nu\St_{\rm ib}+\partial_t\Ma_{\rm ib})B_{\overline\Psi}^\Gamma\kappa,\label{SBoundary-sDy}
\end{align}
\end{subequations}
with $\overline{\FF_{\lambda,{\rm b}}^{\rm bo}}\coloneqq\begin{bmatrix}0_{M\times(s_\ppp-s_\eee)}& \mathbf I_M\end{bmatrix}
\overline{\FF_{\lambda}^{\rm bo}}\begin{bmatrix}\mathbf I_{s_\ppp-s_\eee}&-\overline{B_\Psi}\\0&\mathbf I_M\end{bmatrix}$.

\subsection{Discretization in time}
 Now to be able to solve numerically~\eqref{SInternal-sD} and~\eqref{SBoundary-sD} in a given time interval $[0,T]$, with $T>0$, we need to discretize $[0,T]$.
 We introduce a uniform mesh~$[0,T]_D$ consisting of $N_t+1\ge3$ points
\begin{align}
\label{TimeMesh}
[0,T]_D \coloneqq \left( 0, kT,2 kT,\cdots,(N_t -1) kT,T \right),
\end{align}
where~$k\coloneqq \textstyle\frac{T}{N_t}$ is the time step.
Any function $z \in C([0,T]\times\Omega)$ will be approximated by the values $z_i^j\coloneqq z(jk,\,\ppp_i)$ taken in $[0,T]_D \times \Omega_D$, that is, we essentially
approximate $z = z(t, x)$ by a matrix  $[z]=[z_{(i,j)}]\in \MM_{ N_p \times (N_t+1)}$, $z_{(i,j)}\coloneqq z_i^j$. Notice that in the~$j$th column,
denoted~$\overline z^j$, we have an approximation of~$z$ at time~$t=jk$, $\overline z^j=\overline{z(jk,\Bigcdot)}\approx z(jk,\Bigcdot)$.

We will use the Crank-Nicolson scheme taking, for $t_f>t_i$, $\p_tz\rest{\frac{t_i+t_f}{2}}\approx\frac{z(t_f)-z(t_i)}{t_f-t_i}$ and
$z(\frac{t_i+t_f}{2})\approx\frac{z(t_f)+z(t_i)}{2}$. For system~\eqref{SInternal-sD}, we obtain
\begin{equation*}
\textstyle\frac{\Ma_{\rm ii}}{k} (\overline z_{\rm i}^{j+1}-\overline z_{\rm i}^{j})
= -\nu \frac{\St_{\rm ii}}{2} (\overline z_{\rm i}^{j+1}+\overline z_{\rm i}^{j})
-\frac{\LLL_{\lambda,{\rm ii}}^j\overline z_{\rm i}^{j}+\LLL_{\lambda,{\rm ii}}^{j+1}\overline z_{\rm i}^{j+1}}{2}
-\frac{\Ma_{\rm ii}\overline{\FF_{\lambda}^{j,\rm in}}\overline z_{\rm i}^{j}+\Ma_{\rm ii}\overline{\FF_{\lambda}^{j+1,\rm in}}\overline z_{\rm i}^{j+1}}{2},
\end{equation*}
where, recalling that~$\LLL_{\lambda,{\rm ii}}$ and~$\FF_{\lambda}^{\rm in}$ (may) do depend on time,
we denote~$\LLL_{\lambda,{\rm ii}}^j\coloneqq \LLL_{\lambda,{\rm ii}}(jk)$ and~$\overline{\FF_{\lambda}^{j,\rm in}}\coloneqq\overline{\FF_{\lambda}^{\rm in}(jk)}$. We wish to find~$z_{\rm i}^{j+1}$ once~$z_{\rm i}^{j}$ is known,
Since~$z_{\rm i}^{j+1}$ is unknown we make the following linear extrapolations
\begin{equation}\label{extrap}
\LLL_{\lambda,{\rm ii}}^{j+1}\overline z_{\rm i}^{j+1}\approx 2\LLL_{\lambda,{\rm ii}}^{j}\overline z_{\rm i}^{j}-\LLL_{\lambda,{\rm ii}}^{j-1}\overline z_{\rm i}^{j-1}
\quad\mbox{and}\quad \overline{\FF_{\lambda}^{j+1,\rm in}}\overline z_{\rm i}^{j+1}\approx
2\overline{\FF_{\lambda}^{j,\rm in}}\overline z_{\rm i}^{j}-\overline{\FF_{\lambda}^{j-1,\rm in}}\overline z_{\rm i}^{j-1}.
\end{equation}
Defining $\mathbf A_{\rm ii}^\oplus\coloneqq(2\Ma_{\rm ii}+k\nu\St_{\rm ii})$ and~$\mathbf A_{\rm ii}^\ominus\coloneqq(2\Ma_{\rm ii}-k\nu\St_{\rm ii})$, we obtain the system
\begin{equation}\label{SInternal-D}
\begin{split}
\mathbf A_{\rm ii}^\oplus \overline z_{\rm i}^{j+1}
= \mathbf A_{\rm ii}^\ominus\overline z_{\rm i}^{j}
-k(3\LLL_{\lambda,{\rm ii}}^j\overline z_{\rm i}^{j}-\LLL_{\lambda,{\rm ii}}^{j-1}\overline z_{\rm i}^{j-1})
-k\Ma_{\rm ii}(3\overline{\FF_{\lambda}^{j,\rm in}}\overline z_{\rm i}^{j}-\overline{\FF_{\lambda}^{j-1,\rm in}}\overline z_{\rm i}^{j-1}),
\end{split}
\end{equation}
which we invert to obtain~$\overline z_{\rm i}^{j+1}$.
To start the loop we define, at the ``ghost'' time instant $t=-kT$, the
terms~$\LLL_{\lambda,{\rm ii}}^{-1}\overline z_{\rm i}^{-1}\coloneqq\LLL_{\lambda,{\rm ii}}^{0}\overline z_{\rm i}^{0}$
and~$\overline{\FF_{\lambda}^{-1,\rm in}}\overline z_{\rm i}^{-1}\coloneqq\overline{\FF_{\lambda}^{0,\rm in}}\overline z_{\rm i}^{0}$.

\begin{remark}
Notice that since~$\Ma_{\rm ii}$ and~$\St_{\rm ii}$ are symmetric and positive definite, then $\mathbf A_{\rm ii}^\oplus$ is symmetric and positive definite (at least for
small enough~$k$). Notice also that the idea in~\eqref{extrap} is proposed in~\cite[section~5.4]{KroRod15}, but with an extrapolation
as~$L^{j+1}\approx (1+k)L^{j}-kL^{j-1}$, which approaches a zero order extrapolation as $k$ decreases to~$0$. 
With the linear extrapolation, we observed a better accuracy/convergence performance in some tests. Finally, the
feedback part of the control in~\cite[section~5.4]{KroRod15} is treated in a different way using a preliminary ``uncontrolled guess'' $z_G^{j+1}$ for~$\overline z^{j+1}_{\rm i}$
by solving the system with no control, the idea is to use the fact
that we know~$\FF^{j+1,\rm in}$. We do not use this fact with the linear extrapolation above. However, in the {2D} boundary case, the extrapolation seems to work better in our
simulations, this is why we take the extrapolation in both cases. Furthermore, the extrapolation approach is cheaper because we
need to solve the system only once (at each time step).
\end{remark}

Analogously, system~\eqref{SBoundary-sDk} is approximated by 
\begin{subequations}\label{SBoundary-D}
 \begin{equation}\label{SBoundary-Dk}
(2+k\varsigma_\lambda)\kappa^{j+1}
=(2-k\varsigma_\lambda)\kappa^{j}-k\left(3\overline{\FF_{\lambda,{\rm b}}^{j,\rm bo}}
\begin{bmatrix}\overline z_{\rm i}^{j}\\ \kappa^{j}\end{bmatrix}
-\overline{\FF_{\lambda,{\rm b}}^{j-1,\rm bo}}
\begin{bmatrix}\overline z_{\rm i}^{j-1}\\ \kappa^{j-1}\end{bmatrix}\right),
\end{equation}
with~$\overline{\FF_{\lambda,{\rm b}}^{-1,\rm bo}}\begin{bmatrix}\overline z_{\rm i}^{-1}\\ \kappa^{-1}\end{bmatrix}
\coloneqq \overline{\FF_{\lambda,{\rm b}}^{0,\rm bo}}\begin{bmatrix}\overline z_{\rm i}^{0}\\ \kappa^{0}\end{bmatrix}$, from which we obtain~$\kappa^{j+1}$.
Then, system~\eqref{SBoundary-sDy} is approximated by
\begin{equation*}
\begin{split}
\quad\mathbf A_{\rm ii}^\oplus\overline z_{\rm i}^{j+1}
&= \mathbf A_{\rm ii}^\ominus\overline z_{\rm i}^j 
-k\nu\St_{\rm ib}B_{\overline\Psi}^\Gamma(\kappa^{j+1}+\kappa^{j})
-2\Ma_{\rm ib}B_{\overline\Psi}^\Gamma(\kappa^{j+1}-\kappa^j)\\
&\quad-k\LLL_{\lambda,{\rm ii}}^{j+1}\overline z_{\rm i}^{j+1} -k\LLL_{\lambda,{\rm ii}}^{j}\overline z_{\rm i}^{j}
-k\LLL_{\lambda,{\rm ib}}^{j+1}B_{\overline\Psi}^\Gamma\kappa^{j+1}- k\LLL_{\lambda,{\rm ib}}^{j}B_{\overline\Psi}^\Gamma\kappa^{j}.
\end{split}
\end{equation*}
Extrapolating again for the unknown~$\LLL_{\lambda,{\rm ii}}^{j+1}\overline z_{\rm i}^{j+1}$,
with~$\LLL_{\lambda,{\rm ii}}^{-1}\overline z_{\rm i}^{-1}
\coloneqq\LLL_{\lambda,{\rm ii}}^{0}\overline z_{\rm i}^{0}$, we arrive to
\begin{equation}\label{SBoundary-Dy}
\begin{split}
\quad\mathbf A_{\rm ii}^\oplus\overline z_{\rm i}^{j+1}
&= \mathbf A_{\rm ii}^\ominus\overline z_{\rm i}^j -(\mathbf A_{\rm ib}^\oplus +k\LLL_{\lambda,{\rm ib}}^{j+1})B_{\overline\Psi}^\Gamma\kappa^{j+1}
+(\mathbf A_{\rm ib}^\ominus-k\LLL_{\lambda,{\rm ib}}^{j}) B_{\overline\Psi}^\Gamma \kappa^{j}\\
&\quad-k3\LLL_{\lambda,{\rm ii}}^{j}\overline z_{\rm i}^{j}+k\LLL_{\lambda,{\rm ii}}^{j-1}\overline z_{\rm i}^{j-1},
\end{split}
\end{equation}
which we invert to obtain the interior component~$\overline z_{\rm i}^{j+1}$ of~$\overline z^j$.
Notice that the boundary component is given by~$\overline z_{\rm b}^{j+1}=B_{\overline\Psi}^\Gamma \kappa^{j+1}$.
\end{subequations}

\subsubsection{Solving the Riccati systems}
It remains to explain how we compute the the feedbacks $\overline{\FF_{\lambda}^{\rm in}}$ and $\overline{\FF_{\lambda}^{\rm bo}}$ we need in~\eqref{SInternal-D} and~\eqref{SBoundary-Dk}.
That is, to explain how we solve the Ricatti system~\eqref{eq:riccati_nu-sD}, backwards in time and in an bounded interval of time~$[0,T]$.
We follow the procedure in~\cite[sections~5.3.2 and~5.3.3]{KroRod15}, with some changes.
\subsubsection*{Internal feedback}
Firstly, we look for a solution~$\Pi_D^T$ of the agebraic Riccati equation
\begin{equation*}
\Pi_D^T \XXX_{\lambda}^{\rm in}(T) + \XXX_{\lambda}^{\rm in}(T)^{\top} \Pi_D^T - \Pi_D^T \RRR \RRR^{\top} \Pi_D^T +\left(\CCC^{\rm in}\right)^\top \left(\CCC^{\rm in} \right)= 0,
\end{equation*}
with~$\RRR=\HHH_0=\Ma_{\rm ii,c}^{-1}$ as the Cholesky factor of~$\Ma_{\rm ii}^{-1}$. The main idea is to
have~$\HHH_0\HHH_0^\top=\Ma_{\rm ii}$ which corresponds to taking the identity as control operator (see~\cite[5.3.3]{KroRod15} where
it is chosen~$\HHH_0=(\Ma_{\rm ii,c}\Ma_{\rm ii}^{-1})^\top$).

Secondly, we connect~$\HHH_0$ to~$\RRR^{\rm in}$ by an homotopy. In the case where we have finite-dimensional controls~$\RRR^{\rm in}$ is a rectangular matrix, so we add the enough
zero columns to obtain a square matrix, and we consider
\begin{equation}\label{homotopy}
\HHH_\tau=(1-\tau)^2\HHH_0 +\tau^2 \begin{bmatrix}\RRR^{\rm in} & 0\end{bmatrix},\qquad\tau\in[0,\,1].
\end{equation}
Discretizing the homotopy interval~$[0,\,1]_D=[0,l,2l,\dots,(N_h-1)l,1]$, $N_h\in\N_0$ and $l=\frac{1}{N_h}$, we compute the solution corresponding to~$\RRR=\HHH_{ml}$
from those corresponding to~$\RRR=\HHH_{(m-1)l}$, $m\in\{1,2,\dots,N_h\}$, following~\cite{KroRod15}.
Let $\Pi_D^T$ be the solution corresponding to~$\HHH_{1}$, then since~$\HHH_{1}\HHH_{1}^\top=\RRR^{\rm in} \RRR^{\rm in\top}$, $\Pi_D^T$ is the solution
corresponding to~$\RRR=\RRR^{\rm in}$. 

Finally, we solve the equation~\eqref{eq:riccati_nu-sD} as in~\cite{KroRod15}, with the final condition $\Pi_D(T)=\Pi_D^T$ and
using Crank-Nicolson discretization in time variable,
by transforming the equation into an algebraic Riccati equation at each time step.

To solve the algebraic Riccati equations we use the software in~\cite{BennerRicSolver} (see also~\cite{Benner06}).

\begin{remark}
Notice that in~\cite{KroRod15} the solution~$\Pi_D^T$ is found in three steps and~$\RRR$ is always set to be a square matrix, this is because we need to connect
two matrices by an homotopy. Here we find~$\Pi_D^T$ in two steps and
we implicitly connect the square matrix~$\HHH_0$ to the rectangular one~$\RRR^{\rm in}$ by
connecting~$\HHH_0$ to~$\begin{bmatrix}\RRR^{\rm in}&0\end{bmatrix}$. We have used the homotopy~\eqref{homotopy} because in some situations we observed
that we need less homotopy steps than with the convex combination~$(1-\tau)\HHH_0 +\tau \begin{bmatrix}\RRR^{\rm in} & 0\end{bmatrix}$ like as in~\cite{KroRod15}, but we cannot
say a priori which homotopy is better.
\end{remark}

\subsubsection*{Boundary feedback}
We proceed as in the internal case to find a final condition for the differential Riccati equation. At final time~$t=T$ we start by solving the system 
\begin{equation*}
\Pi_D^T \XXX_{\lambda}^{\rm bo}(T) + \XXX_{\lambda}^{\rm bo}(T)^{\top} \Pi_D^T - \Pi_D^T \RRR \RRR^{\top} \Pi_D^T +\left(\CCC^{\rm bo}\right)^\top\left(\CCC^{\rm bo} \right)= 0
\end{equation*}
and then set~$\HHH_0=\begin{bmatrix}\Ma_{\rm ii,c}^{-1}&0\\0&\mathbf I_M\end{bmatrix}$ and
$\HHH_\tau=(1-\tau)^2\HHH_0 +\tau^2 \begin{bmatrix}\RRR^{\rm bo} & 0\end{bmatrix}$, $\tau\in[0,\,1]$.

\section{Numerical examples}\label{S:num_exa}
We present some results of numerical simulations which we have performed concerning the stabilization of
systems \eqref{PSInternal} with internal feedback control or \eqref{PSBoundary} with boundary feedback control to zero.
Below, $z_{\mathrm{u},0}$ stands for the solution of the uncontrolled discretized
systems (i.e., without the feedback term  and $\lambda = 0$), and  $z_{\lambda}$ stands for the solution of the discretized systems
under the action of a discretized feedback control.
We focus on the 2D case and our domain is the unit ball. In the internal feedback control case, we define a rectangle
subdomain $\omega \coloneqq \left(0, \frac{1}{2} \right) \times \left(0, \frac{1}{3} \right)$. Then, we take a regular partition of into~$M=mn$ subrectangles
\[
\omega_{l_1,l_2} \coloneqq \left(\textstyle\frac{l_1-1}{2m},\frac{l_1}{2m} \right) \times \left(\textstyle\frac{l_2-1}{3n},\frac{l_2}{3n} \right),
\qquad (l_1,l_2)\in\{1,2,\dots,m\}\times\{1,2,\dots,n\}.
\]
We take the~$M$ actuators~$1_{\omega_{l_1,l_2}}$, thus in each subrectangle~$\omega_{l_1,l_2}$ the control is constant.
As an illustration, we plot a linear combination of 4 piecewise-constant actuators in Figure \ref{Controllers}(a), corresponding to the arrangement~$(m,n)=(2,2)$.
For the boundary control case,
our boundary, once parametrized by arc length, is $\Gamma = [0, 2 \pi)$. We use boundary actuators whose form is 
\begin{equation}\label{sin_contbdry}
\Psi_i (\theta) = 1_{\left(\theta_0, \theta_1 \right)} \sin \left( \frac{i(\theta - \theta_0)}{\theta_1 - \theta_0} \right).
\end{equation}
with $\theta_0 = \pi$ and $\theta_1 = \frac{5\pi}{4}$.
As an illustration, the boundary actuator $\Psi_2$ is plotted in Figure \ref{Controllers}(b). 

\begin{figure}[ht]  
  \centering
\subfigure[A linear combination off 4 piecewise-constant internal actuators.]
{\includegraphics[width=.495\linewidth]{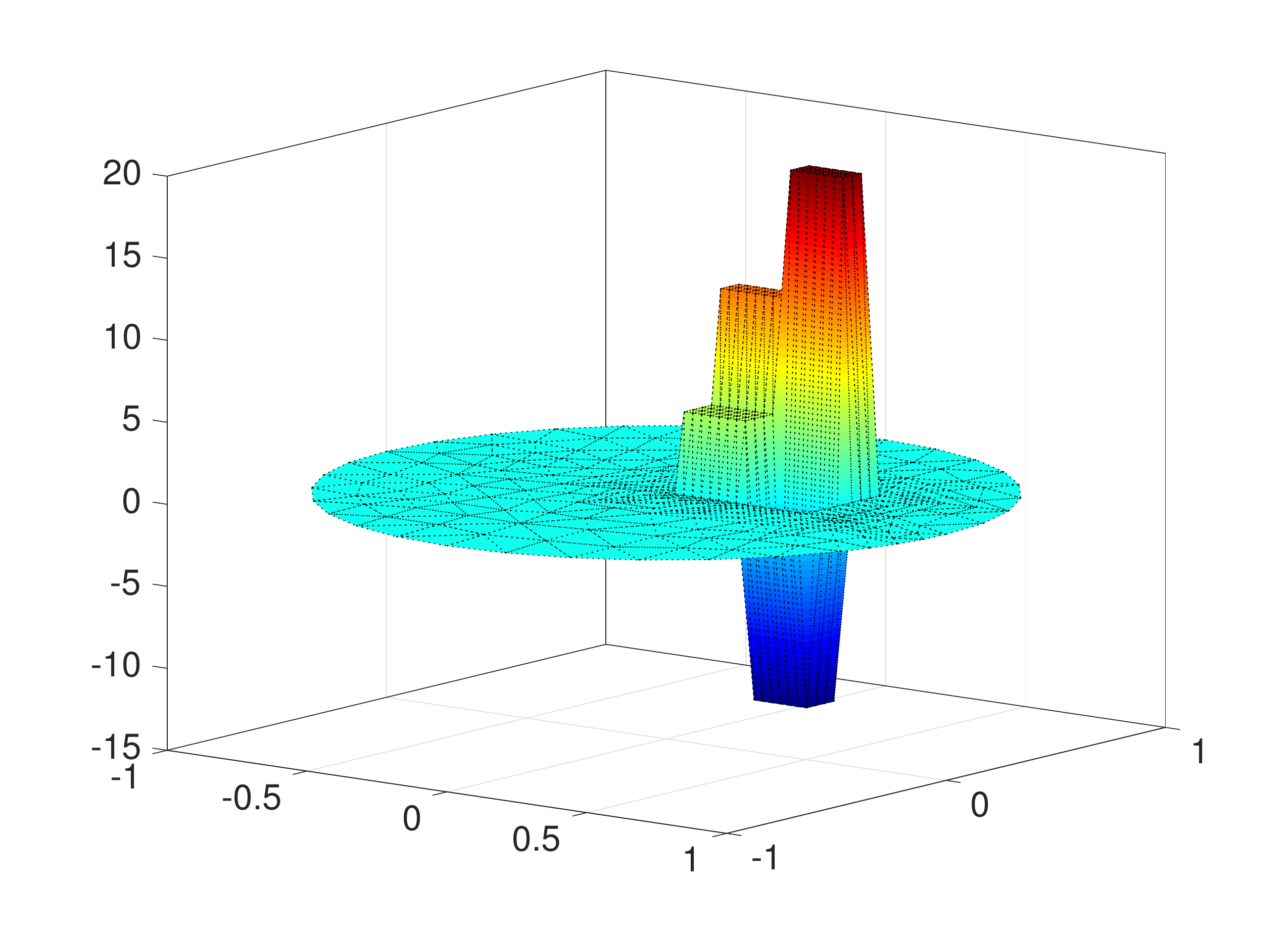}}
\subfigure[The boundary actuator $\Psi_2$.]
{\includegraphics[width=.495\linewidth]{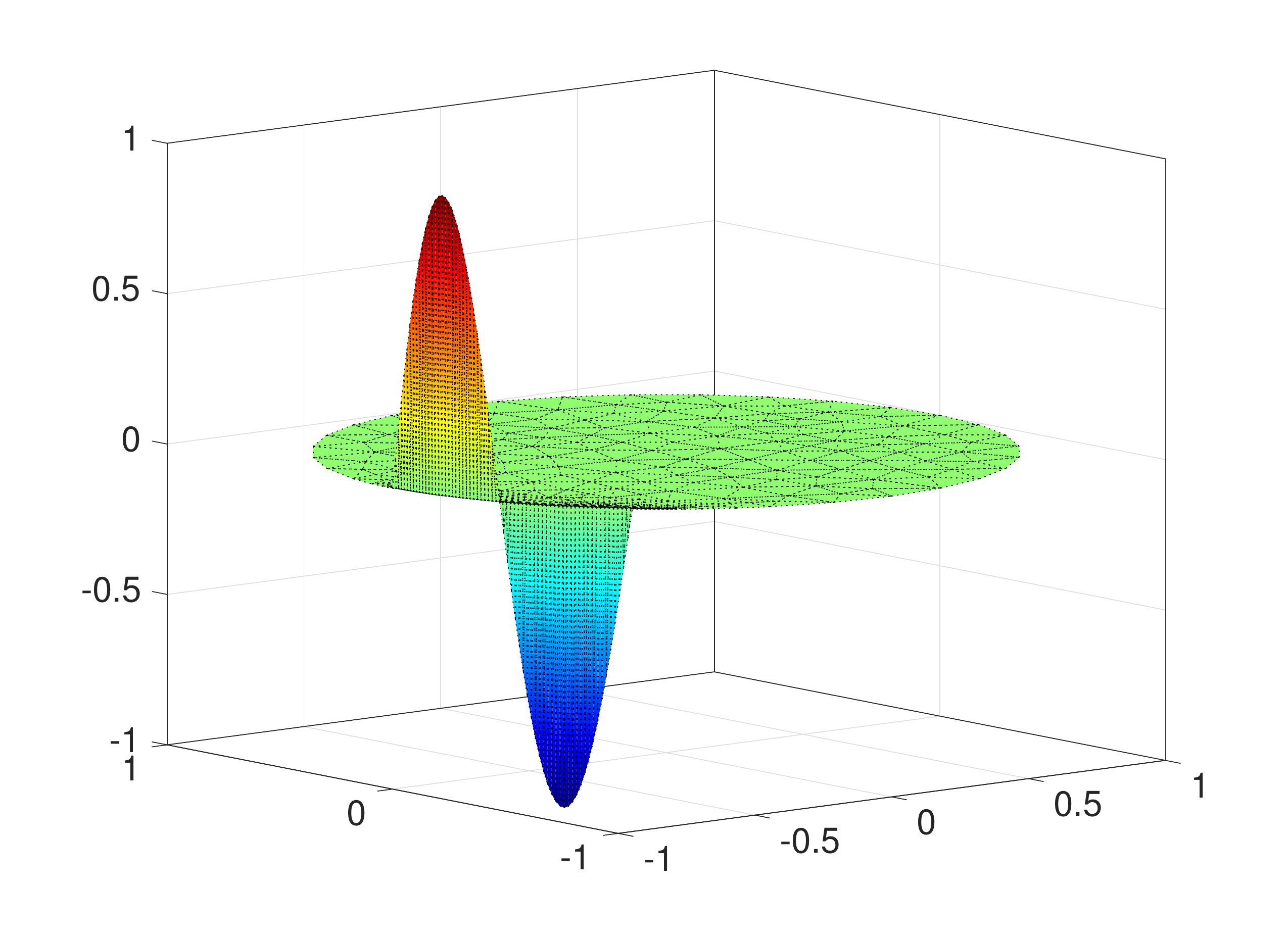}}
\caption{Internal and boundary actuators.}
\label{Controllers}
\end{figure}

\subsection{Testing with a family of functions~$(\hat a,\hat b)$.}
\label{FamilyTrajectories}
We set $\lambda = 2$, $\nu = \frac{1}{4}$,
, $\chi =  \mathbbm{1}_{\omega}$.
Next, we  choose a family of functions $\hat a,$ $\hat b_1$, and~$\hat b_2$ as follows
\begin{equation}\label{eq:FamilyTrajectories}
\begin{aligned}
\hat a (t,x)&= - \sin (t) \cos (\mathbf{i}x_1) + \sin(5t) \sin(\mathbf{j}x_2) -3, \\
\hat b_1(t,x) &=  \cos (t) \sin (-\mathbf{k}x_1) - \cos(3t) \cos (\mathbf{l} x_2),\\
\hat b_2(t,x) &= \sin(-t) \sin(\mathbf{m} x_1) - \cos(2t) \sin(\mathbf{n}x_2).
\end{aligned}
\end{equation}
We will test firstly with 6 piecewise-constant actuators, corresponding to~$(m,n)=(3,2)$, and then with 6 boundary
actuators~$\Psi_i$, $i\in\{1,2,\dots,6\}$. The initial condition is set $v_0 (x) \coloneqq \sin(2x_1) \cos(x_2)$ and the
time-interval is set $[0,8]$. In Figures~\ref{figure_familytrajectories}(b) and~\ref{figure_familytrajectories}(c),
we can observe that both internal and boundary feedback
control is able to stabilize the system with the desired rate $\frac{\lambda}{2} = 1$ for some 
parameters $(\mathbf{i},\mathbf{j},\mathbf{k},\mathbf{l},\mathbf{m},\mathbf{n}) \in \PP$, with
\begin{align*}
\PP \coloneqq \left\{(1,1,1,1,1,1), (1,2,2,1,1,1), (2,-1,1,-3,5,1),\right.\\
\qquad \qquad \left. (-1,5,3,1,1,5),(1,2,3,4,5,6),(6,-2,5,3,4,1) \right\}.
\end{align*}
In Figure~\ref{figure_familytrajectories}, as well in following ones, the squared norm $|z|_H^2$ is understood as the discrete
approximation $\overline{z}^{\top} \mathbf{M} \overline{z}$.
We would like to emphasize that without any control, the system is unstable for these parameters defined by $\PP$ as we can see from Figure~\ref{figure_familytrajectories}(a).
\begin{figure}[]  
  \centering
\subfigure[Without control.]
{\includegraphics[width=.325\linewidth]{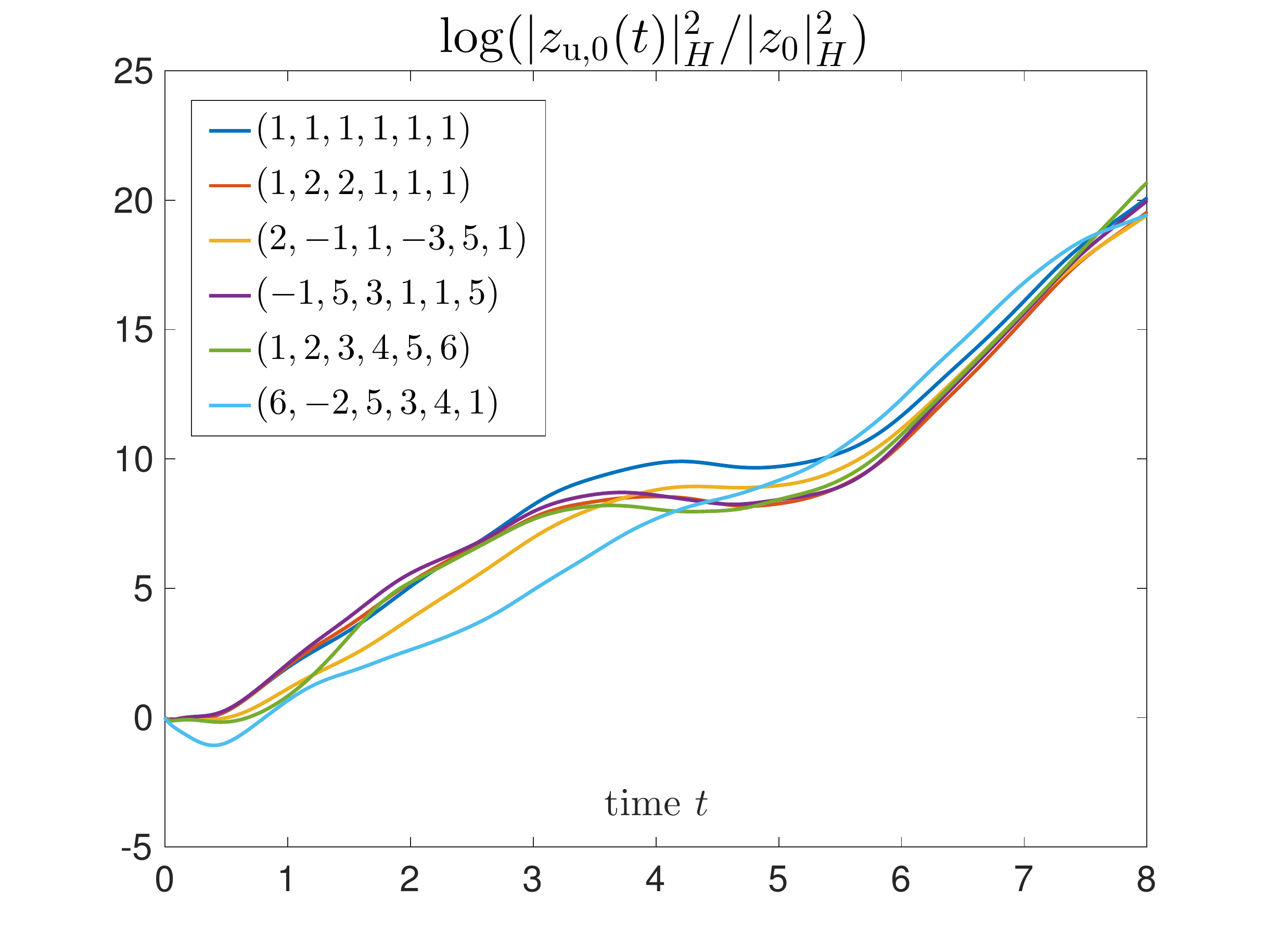}}
\subfigure[With internal feedback.]
{\includegraphics[width=.325\linewidth]{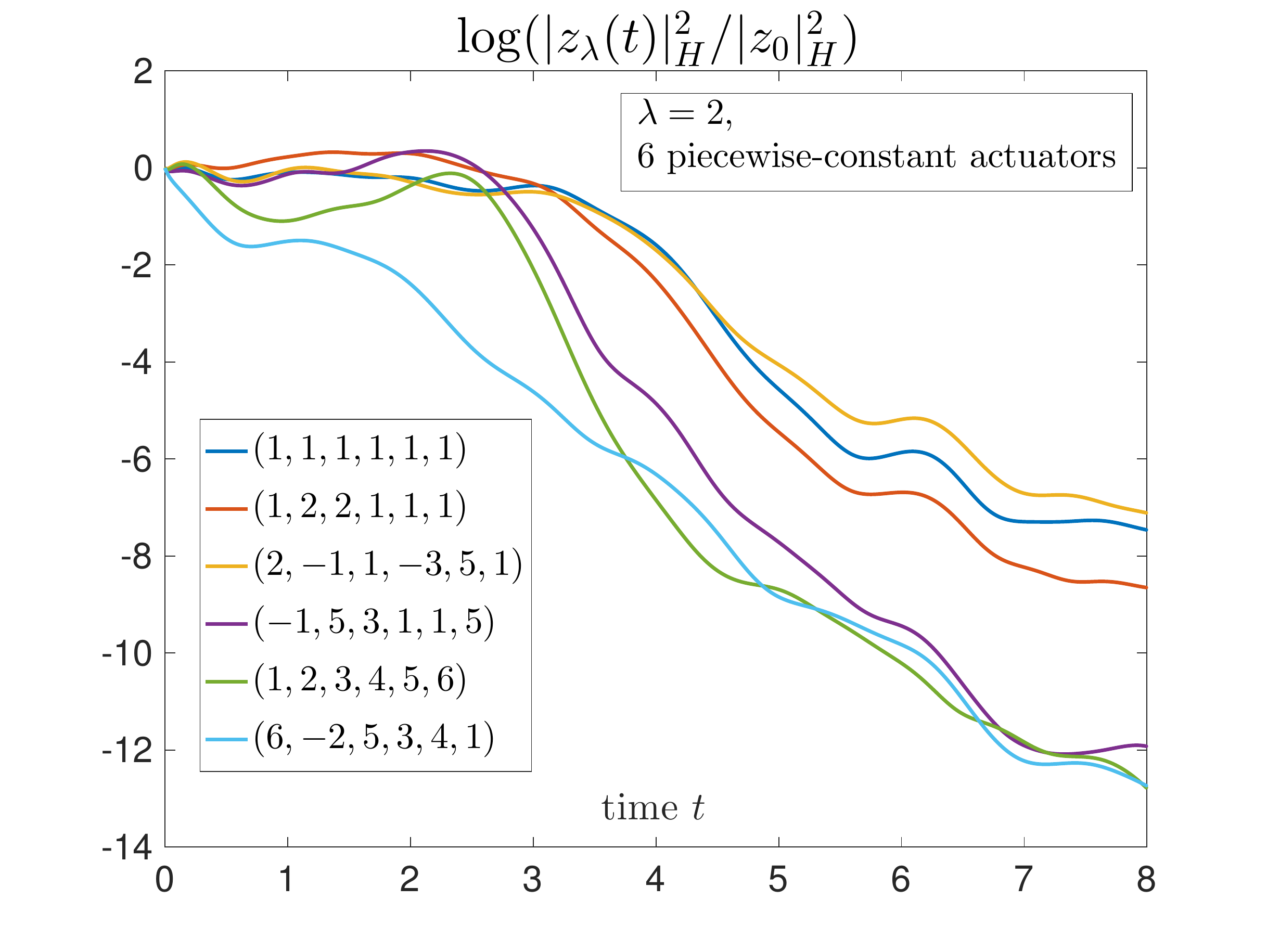}}
\subfigure[With boundary feedback.]
{\includegraphics[width=.325\linewidth]{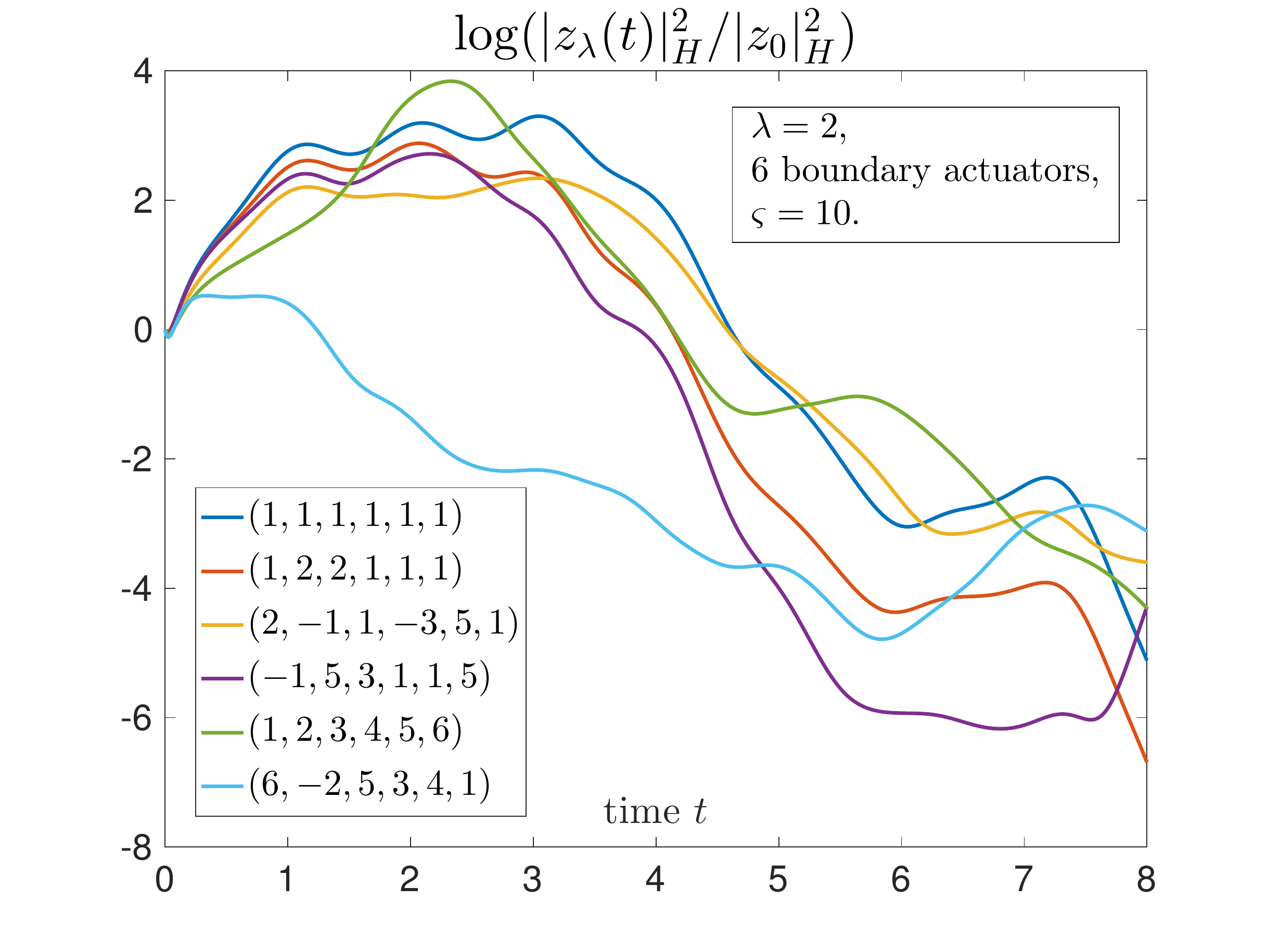}}
\caption{Stabilization rate is provided by the feedback control.}
\label{figure_familytrajectories}
\end{figure}
\subsection{Increasing the number of actuators}
We compare the results we obtain by changing the number of actuators. With no surprise, we see that with more actuators we obtain better results. We will take one element of the
family in~\eqref{eq:FamilyTrajectories}, namely the one corresponding to $(\mathbf{i},\mathbf{j},\mathbf{k},\mathbf{l},\mathbf{m},\mathbf{n}) = (2,-1,1,-3,5,1)$ and
also take $\lambda = 2$. 
\subsubsection{Internal feedback control}\label{ssS:intcontrols}
In
Figure~\ref{figure_ControlSol4DifferentNumControls_compare1}, we present the results for some rearrangements~$(m,n)$. We observe that
the cost~$(\Pi z(t),z(t))_H$, understood as~$\overline{z(t)}^\perp\Pi_D(t)\overline{z(t)}$,  decreases as the number of controls increase, as we can see in Figure~\ref{figure_ControlSol4DifferentNumControls_compare1}(b).

Notice that we cannot say a priori which among~$(m,n)=(2,2)$ and~$(m,n)=(4,1)$ is better because one set of actuators does not include the other. However,
for the considered example we observe that~$(2,2)$ is better than~$(4,1)$.

The case $(m,n)=(+\infty, +\infty)$ in Figure \ref{figure_ControlSol4DifferentNumControls_compare1} should be understood as the case we do not impose any restriction on
number of actuators. That is, to the case we take $P_M=1$, or in other words in the case our control operator in~\eqref{eq:kk2} is just~$1_\omega\chi 1_\omega$. In this case
$\RRR^{\rm in}=\DD_{\overline{\chi 1_\omega}}$ in~\eqref{RInternal}.

\begin{figure}[htb]  
  \centering
\subfigure[Controlled solution.]
{\includegraphics[width=.325\linewidth]{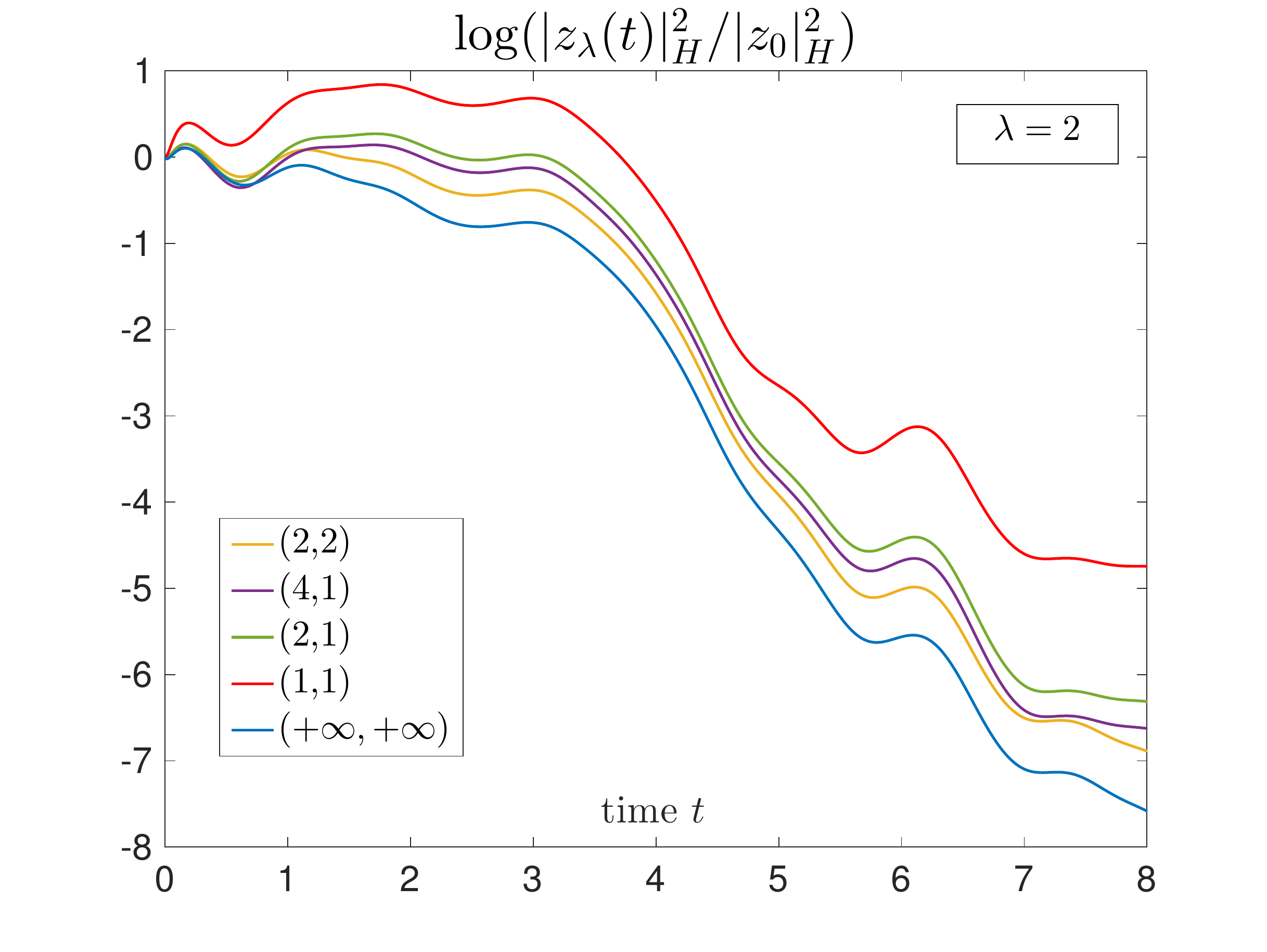}}
\subfigure[Cost function.]
{\includegraphics[width=.325\linewidth]{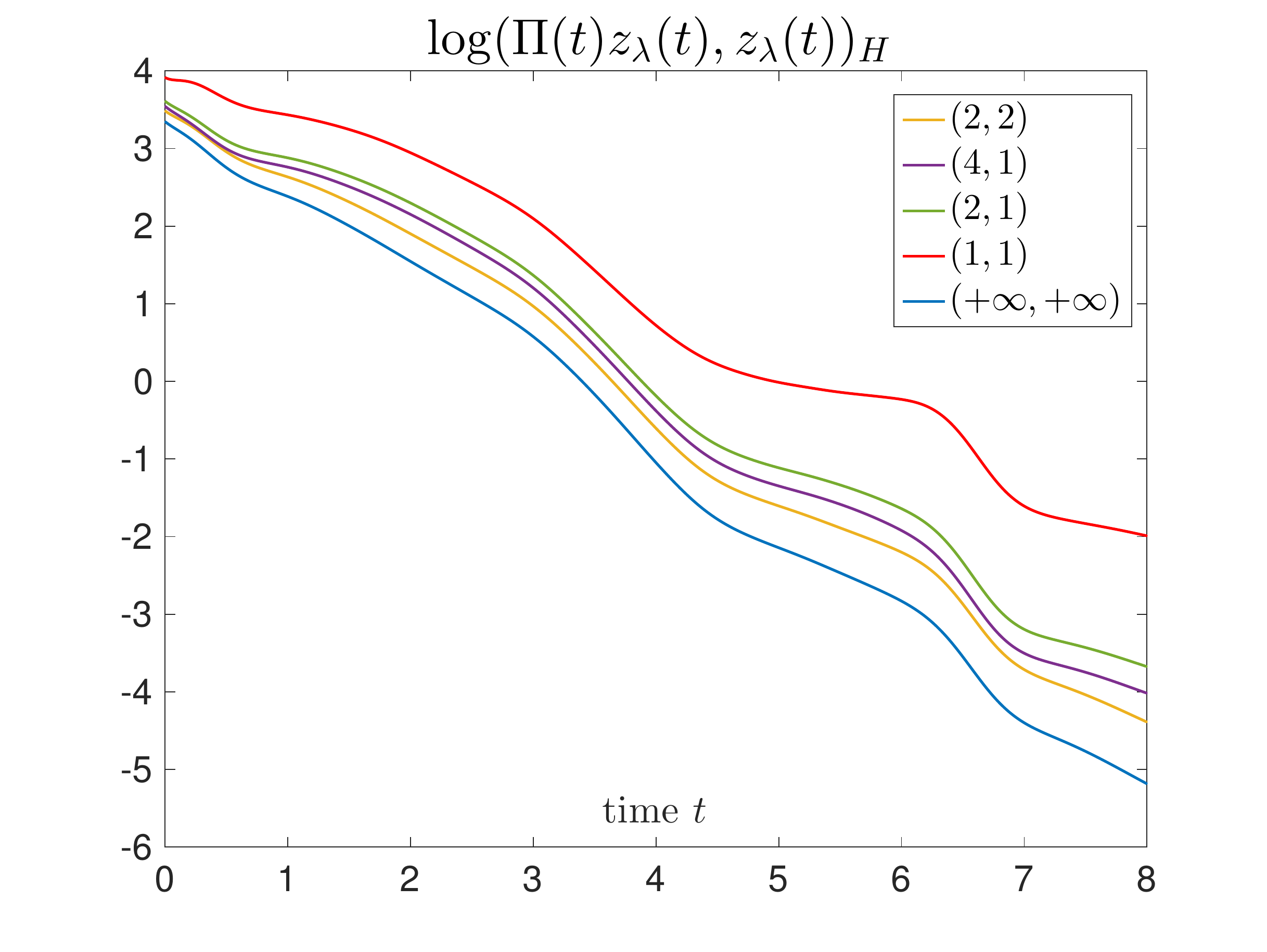}}
\subfigure[Control.]
{\includegraphics[width=.325\linewidth]{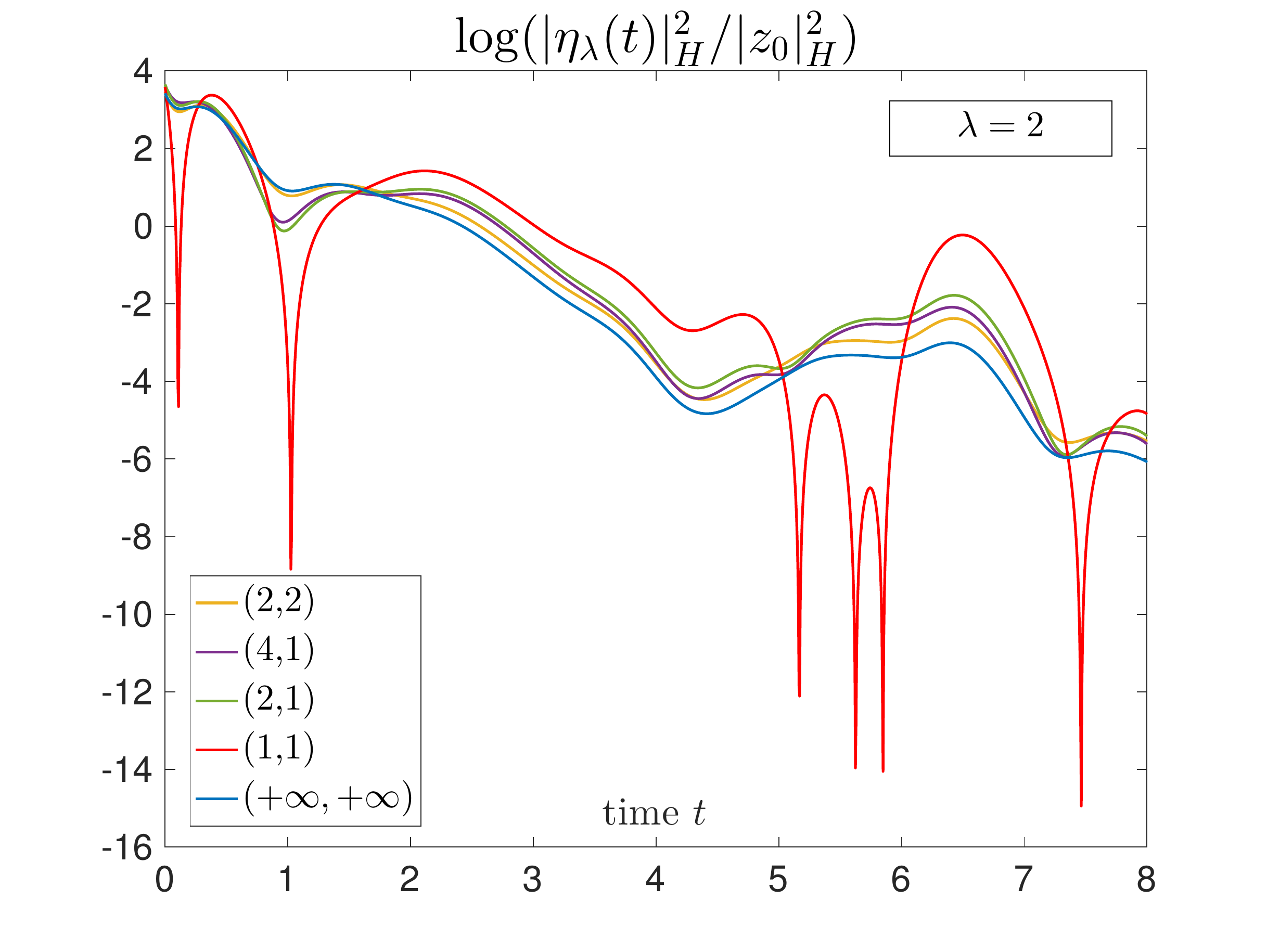}}
\caption{The convergence rate of the solution, the cost function and the control.}
\label{figure_ControlSol4DifferentNumControls_compare1}
\end{figure}

Recall that our control can be written as~$\eta(t,x)= \sum\limits_{l_1 =1}^{m}\sum\limits_{l_2 =1}^{n}{\eta_{l_1,l_2}(t) 1_{\omega_{l_1,l_2}} (x)}$. 
For example, in the left column of Figure \ref{figure_Controls_Sometimesteps}, we plot the value of the feedback control, in the case of a single actuator, at two time instants $t = 4.5$ and $t = 6.5$.
We refer also to Figure~\ref{figure_valuecontrol}(a) for this value at all time instants $t\in[0,8]$. 
Notice that, the ``cusps'' in Figure~\ref{figure_ControlSol4DifferentNumControls_compare1}(c), in the case~$(m,n)=(1,1)$, are due to the fact that the
control vanishes at the corresponding time instants.
In the case we take only one actuator a ``cusp'' will appear when the control changes sign, because the plotted function ``takes'' the value~$-\infty$ if the control
vanishes, see also Figure~\ref{figure_valuecontrol}(a).

In Figure~\ref{figure_Controls_Sometimesteps}, we plot the control at two time instants $t = 4.5$ and $t = 6.5$.
In Figure~\ref{figure_valuecontrol} we plot the value of each actuator for all time instants for some rearrangements~$(m,n)$.

\begin{figure}[htb]
{\includegraphics[width=1\linewidth]{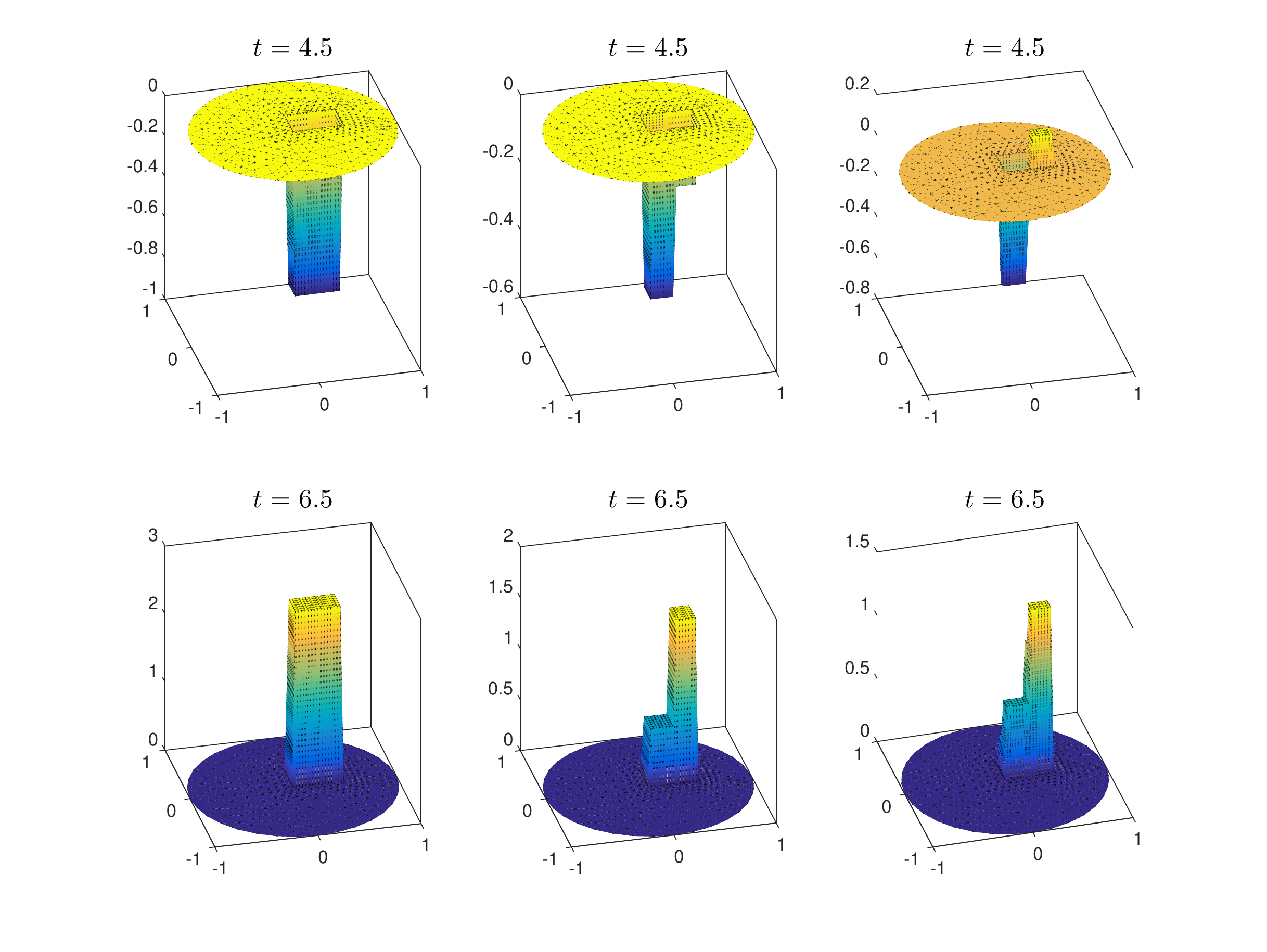}}
\caption{Feedback control at~$t\in\{4.5,6.5\}$ for $(m,n)\in\{(1,1),(2,1),(2,2)\}$.}
\label{figure_Controls_Sometimesteps}
\end{figure}

\begin{figure}[]  
  \centering
\subfigure[$(m,n)=(1,1)$, $1$~actuator.]
{\includegraphics[width=.325\linewidth]{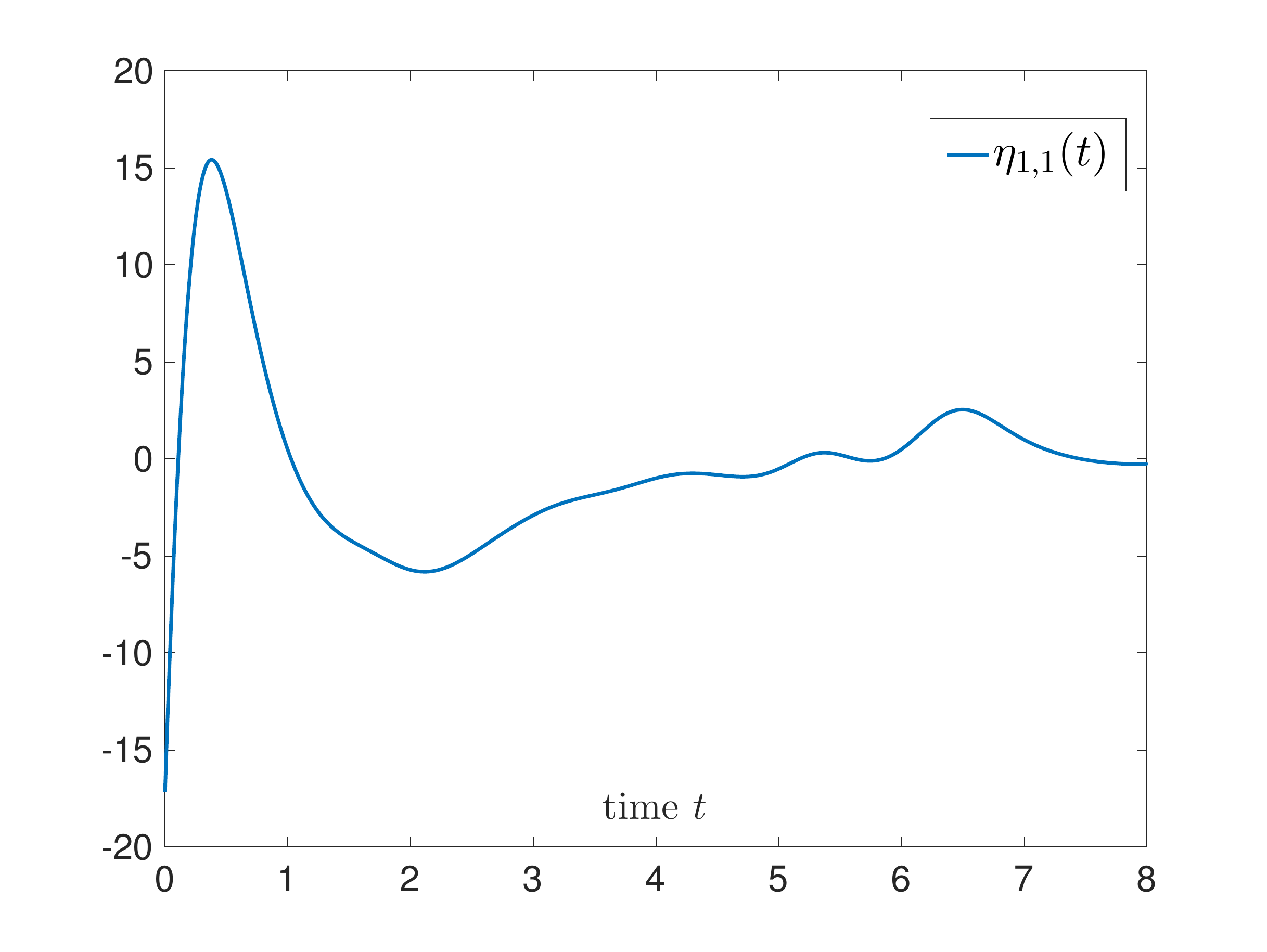}}
\subfigure[ $(m,n)=(2,1)$, $2$~actuators.]
{\includegraphics[width=.325\linewidth]{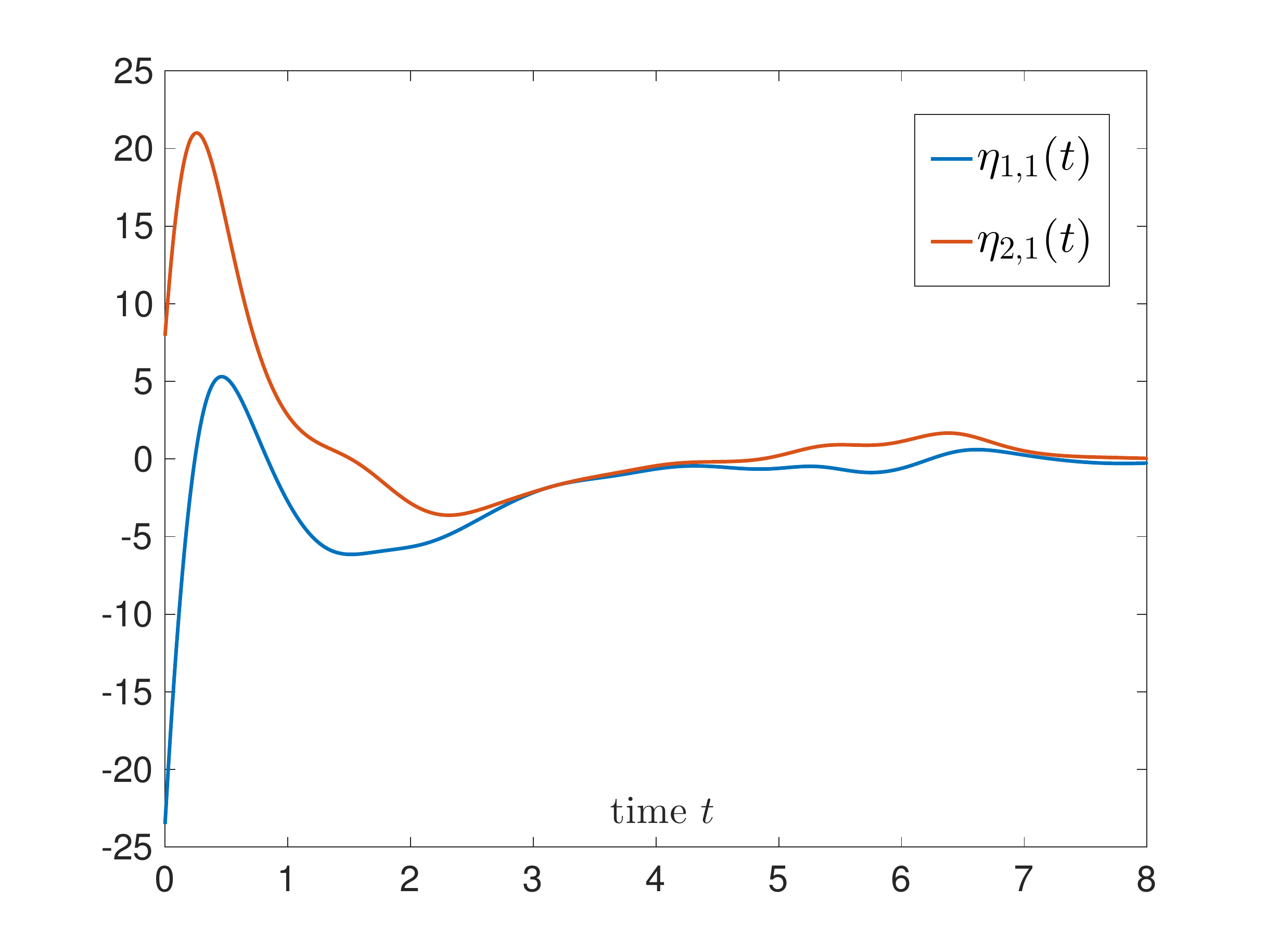}}
\subfigure[$(m,n)=(2,2)$, $4$~actuators.]
{\includegraphics[width=.325\linewidth]{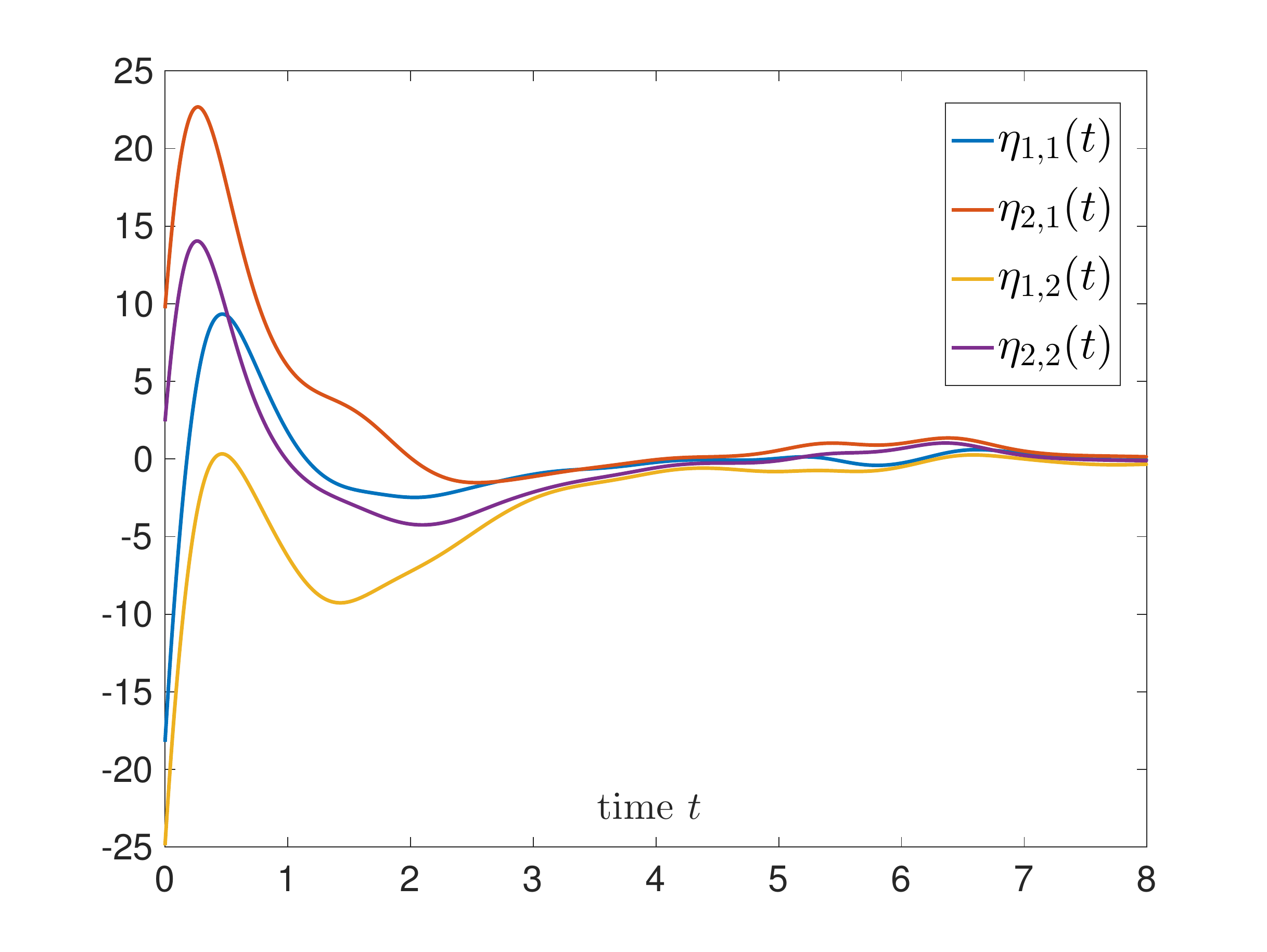}}
\caption{The magnitude(s) of the internal actuator(s).}
\label{figure_valuecontrol}
\end{figure}

\subsubsection{Boundary feedback control}\label{ssS:bdrycontrols}
Now we consider the reference function in~\ref{FamilyTrajectories} corresponding to $(\mathbf{i},\mathbf{j},\mathbf{k},\mathbf{l},\mathbf{m},\mathbf{n}) = (6,-2,5,3,4,1)$
and we take $\lambda = 2$ and~$\varsigma=10$. We take a family of actuators~$\{\Psi_i\mid i\in\{1,2,\dots,M\}\}$, as in~\eqref{sin_contbdry}.
We compare the result in the cases~$M\in\{1,2,4,6\}$. With no surprise, from Figure~\ref{figure_boundarycontrol}(a), taking more actuators leads to better results.
The boundary feedback control here can be written as~$\eta_M(t) = \sum_{i=1}^M { \kappa_i(t) \Psi_i}$.
In Figures~\ref{figure_boundarycontrol}(b,c) we plot the functions~$\kappa_i=\kappa_i(t,M)$ for the cases $M\in\{1,6\}$.
\begin{figure}[ht]  
  \centering
\subfigure[The cases~$M\in\{1, 2, 4, 6\}$.]
{\includegraphics[width=.325\linewidth]{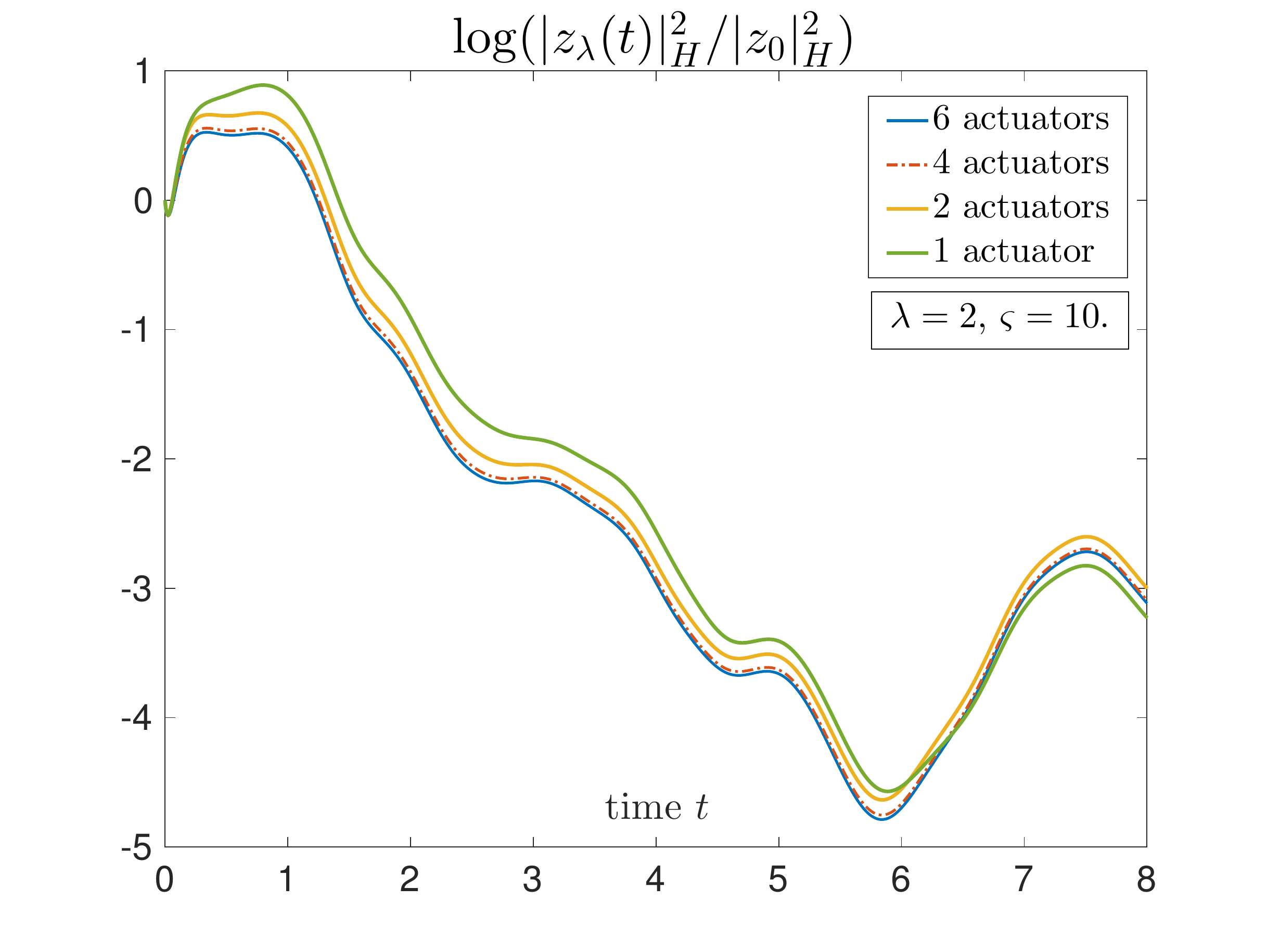}}
\subfigure[Magnitude for 1 actuator.]
{\includegraphics[width=.325\linewidth]{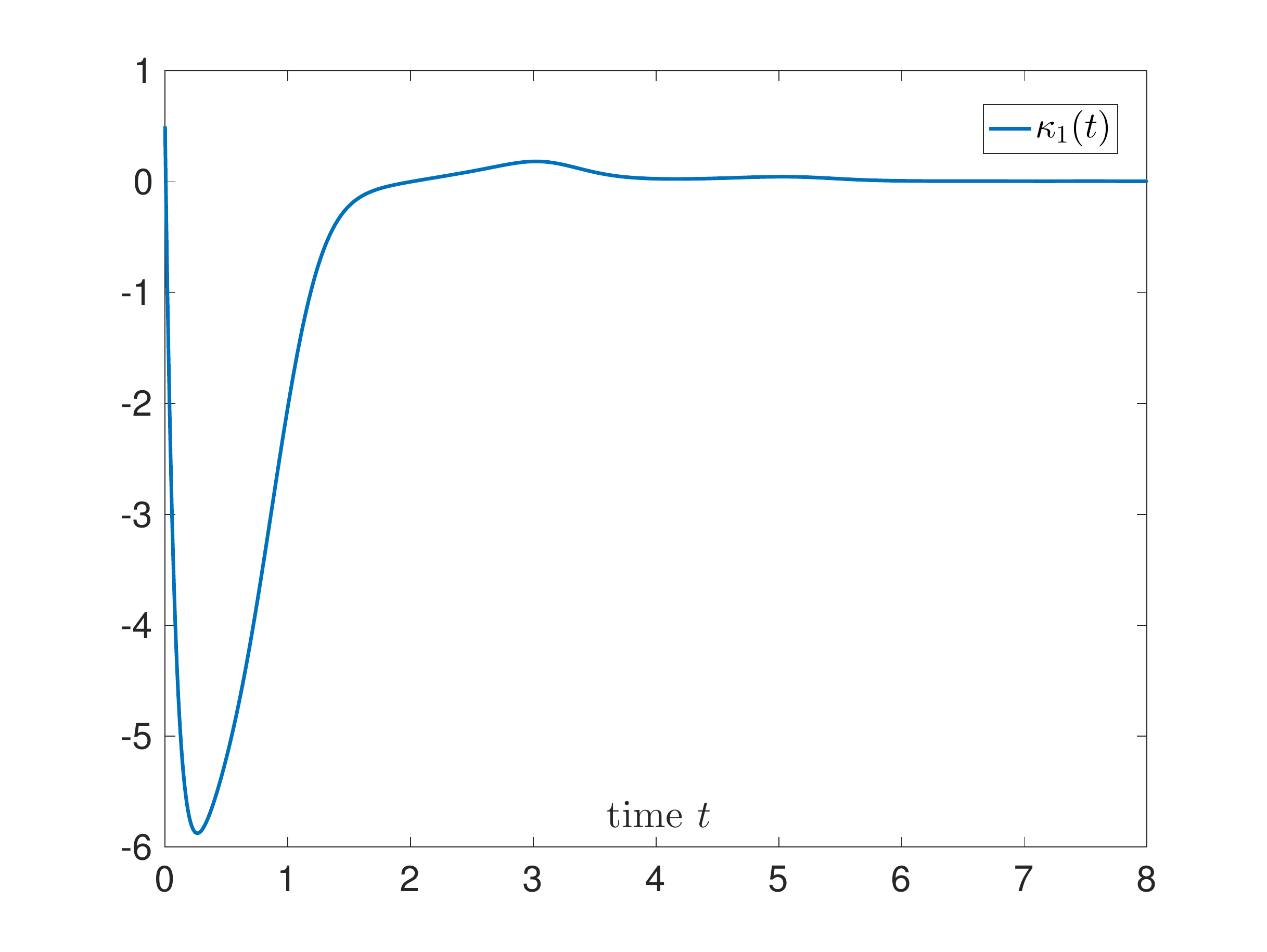}}
\subfigure[Magnitudes for 6 actuators.]
{\includegraphics[width=.325\linewidth]{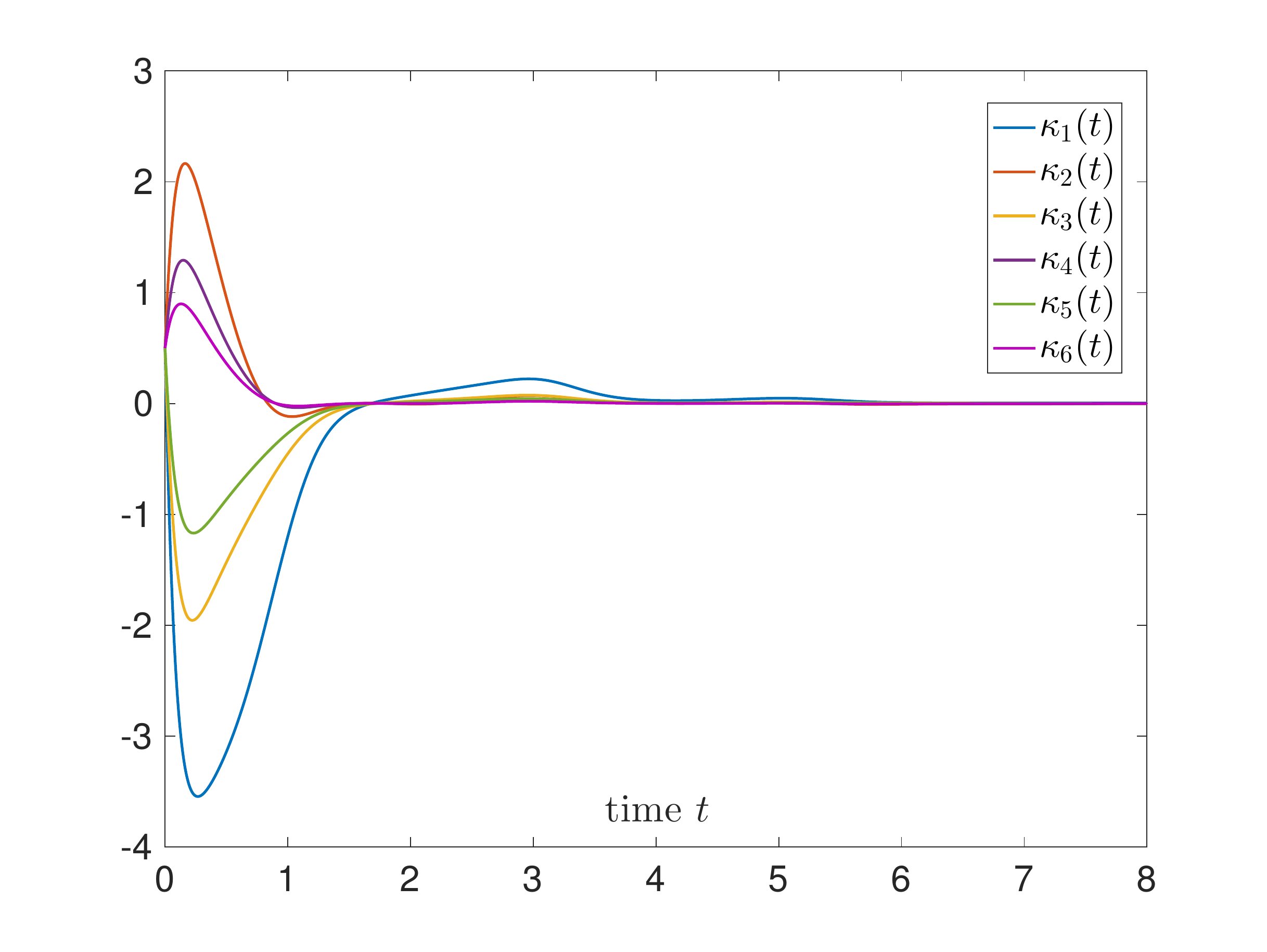}}
\caption{Convergence rate of solutions and the magnitude(s) of the boundary actuator(s).}
\label{figure_boundarycontrol}
\end{figure}
\subsection{Comparing with another placement of the actuators} We consider another placement of the actuators and
compare the controlled solutions. We recall one member of the family in \eqref{eq:FamilyTrajectories},
namely the one corresponding to $(\mathbf{i},\mathbf{j},\mathbf{k},\mathbf{l},\mathbf{m},\mathbf{n}) = (1,2,2,1,1,1)$ and
also set $\lambda = 2$.
\subsubsection{Internal feedback control}
Instead of defining all actuators in one rectangle $\omega = \left(0, \frac{1}{2} \right) \times \left(0, \frac{1}{3} \right)$
as in Figure \ref{Controllers}(a), we define 4 piecewise-constant actuators in 4 rectangles~$\omega_i$ away from each other as in
Figure~\ref{figure_TwoWaysDefineIC}(a). We want to say that the sum of the areas of the four separated rectangles~$\omega_i$ equals the area of~$\omega$, that is,
$\sum_{i=1}^4{|\omega_i|} = |\omega|$. Comparing Figures~\ref{figure_TwoWaysDefineIC}(b) and~\ref{figure_ControlSol4DifferentNumControls_compare1}(a),
we see that the latter placement is more effective than the former one (for this example). 
\begin{figure}[]  
  \centering
\subfigure[New placement of 4 internal actuators.]
{\includegraphics[width=.495\linewidth]{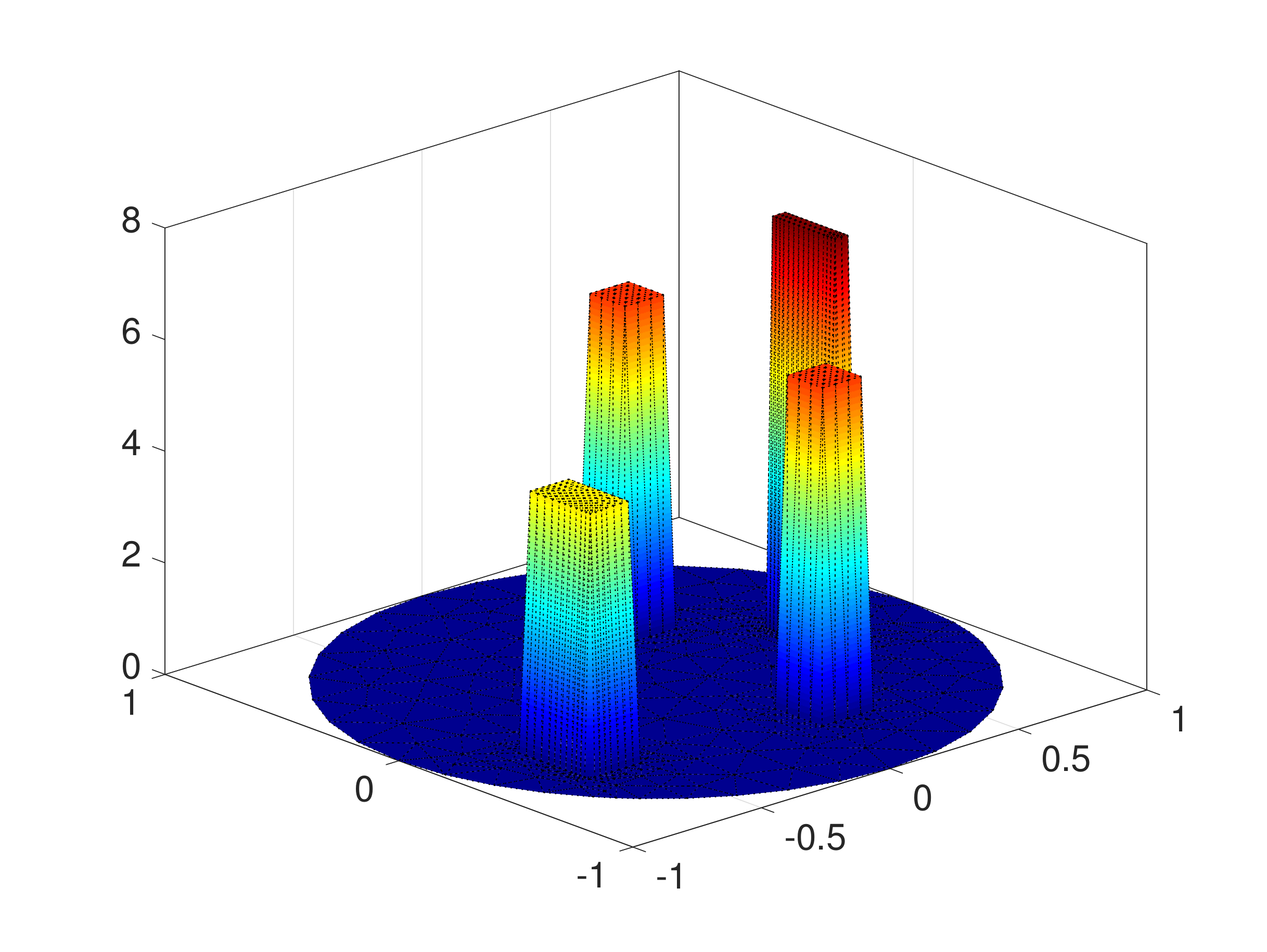}}
\subfigure[Controlled solution of the 2 placements.]
{\includegraphics[width=.495\linewidth]{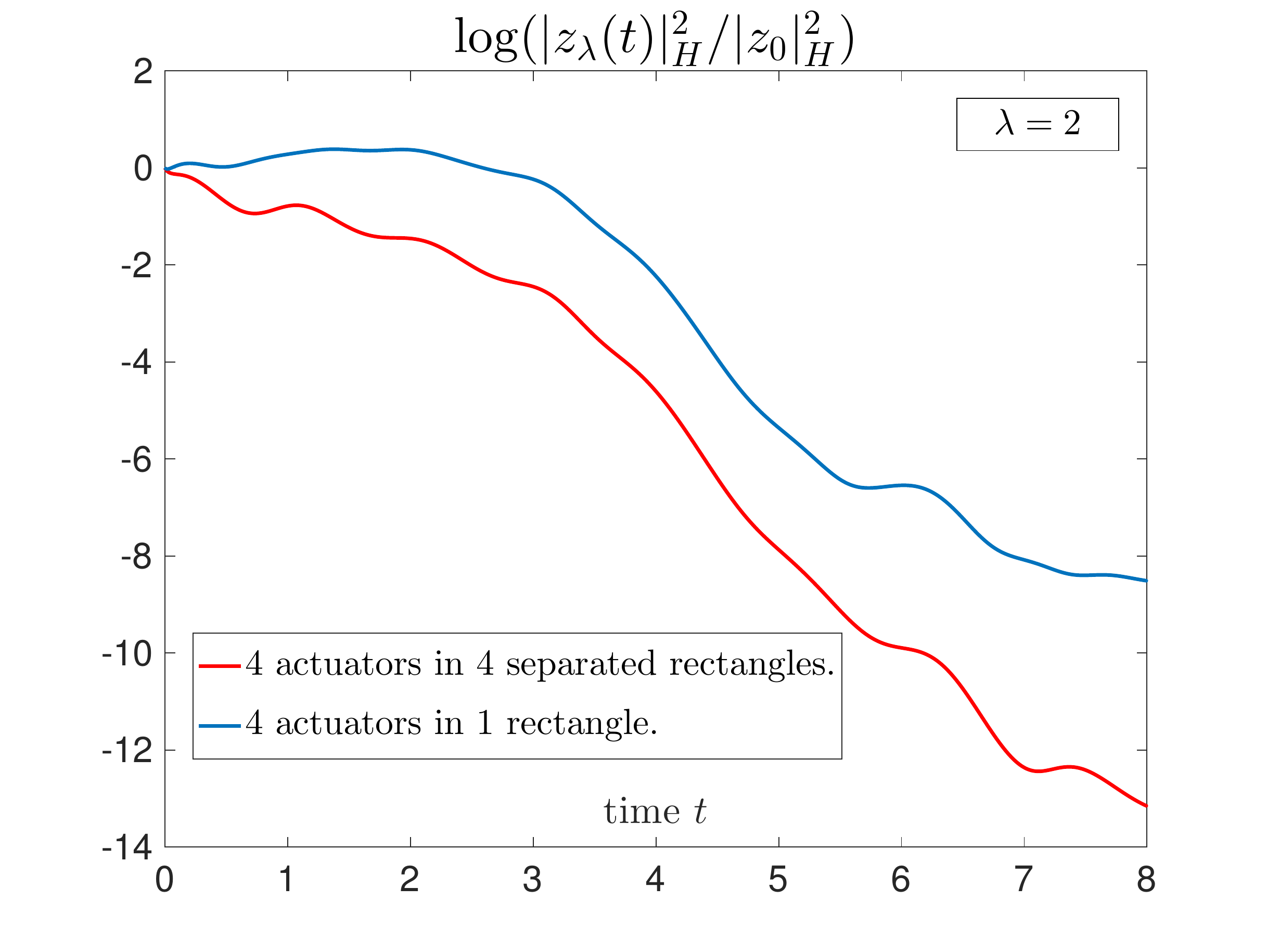}}
\caption{Compare two placements of internal actuators.}
\label{figure_TwoWaysDefineIC}
\end{figure}

\subsubsection{Boundary feedback control}
Here, we set $\zeta = 10$ and we use 4 boundary actuators. Instead of defining as in~\eqref{sin_contbdry}, see Figure~\ref{Controllers}(b), using 4 frequencies of the sinus
functions in an interval~$\left(\pi, \frac{5\pi}{4} \right)$, we now define the new actuators in domains away from each other as in Figure~\ref{figure_TwoWaysDefineBC}(a),
where each actuator is as in~\eqref{sin_contbdry} with $i=1$. Comparing Figures~\ref{figure_TwoWaysDefineBC}(b) and~\ref{figure_boundarycontrol}(a) we see
that the latter placement seems to be better than the former one (for this example).
\begin{figure}[]  
  \centering
\subfigure[New placement of 4 boundary actuators.]
{\includegraphics[width=.495\linewidth]{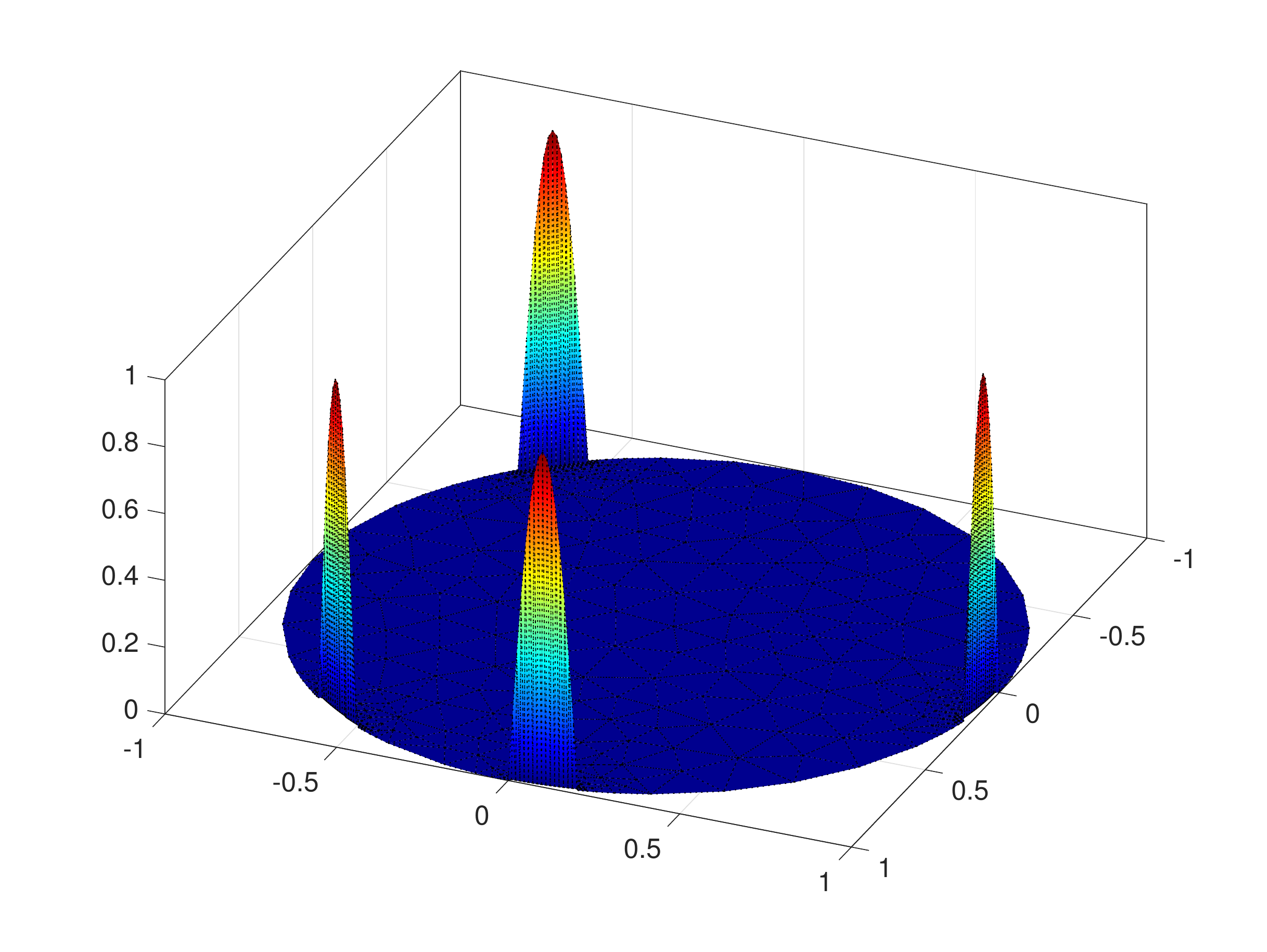}}
\subfigure[Controlled solution of the two placements.]
{\includegraphics[width=.495\linewidth]{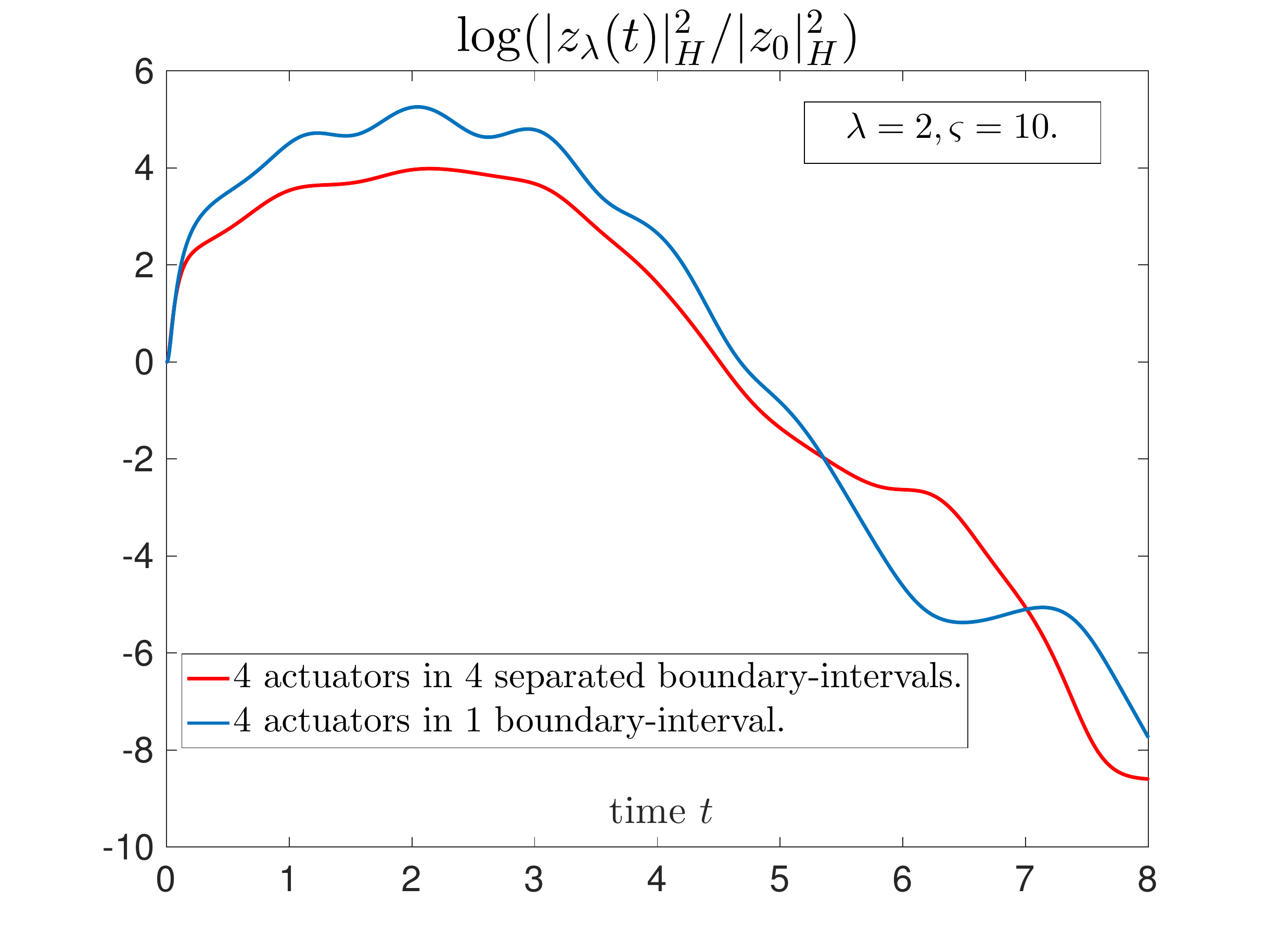}}
\caption{Compare two placements of the boundary actuators.}
\label{figure_TwoWaysDefineBC}
\end{figure}

\subsection{Dependence of the transient bound on the desired decreasing rate}
\label{sS:RateLambda}
Here we consider the case of internal controls with no restriction in the dimension of the control, we take~$P_M=1$, $\chi=1$
and~$\RRR^{\rm in}=\DD_{\overline{\chi1_\omega}}$ in~\eqref{RInternal}. We want to check whether the transient bound $\widehat C_{\lambda}$
associated to the Riccati based feedback control 
does depend on the desired exponential rate~$\lambda\ge0$ as
\begin{equation}\label{transalpha}
 C_{\lambda}\approx \ex^{\TT_{0,\alpha}+\TT_{1,\alpha}\lambda^\alpha}
\end{equation}
(cf. Section~\ref{sS:transient}), for a suitable~$\alpha\ge0$. As we will see, the answer is not clear.

We set $\nu = \frac{1}{2}$, $\hat a = -10 + 2x_1 + \cos x_2$, $\hat b = \left(-x_1^2 ,-\sin x_2 \right)$, and we impose no restriction on number of actuators.

Notice that~$(\hat a,\hat b)$ is independent of time. In the stationary case, we have ``just'' to solve an algebraic Riccati equation
for each~$\lambda$. Up to our best knowledge, even in the stationary case it is not known how precisely the constant~$C_{\lambda}$ depends on~$\lambda$,
for Riccati based stabilizing feedback.

Figure~\ref{figure_familyoflambda} shows that the system can be stabilized with rate~$\lambda\in[0,30]$, the figure also shows that
the constant~$C_{\lambda}$ increases with~$\lambda$.
\begin{figure}[htb]  
  \centering
\subfigure[With internal feedback control.]
{\includegraphics[width=.495\linewidth]{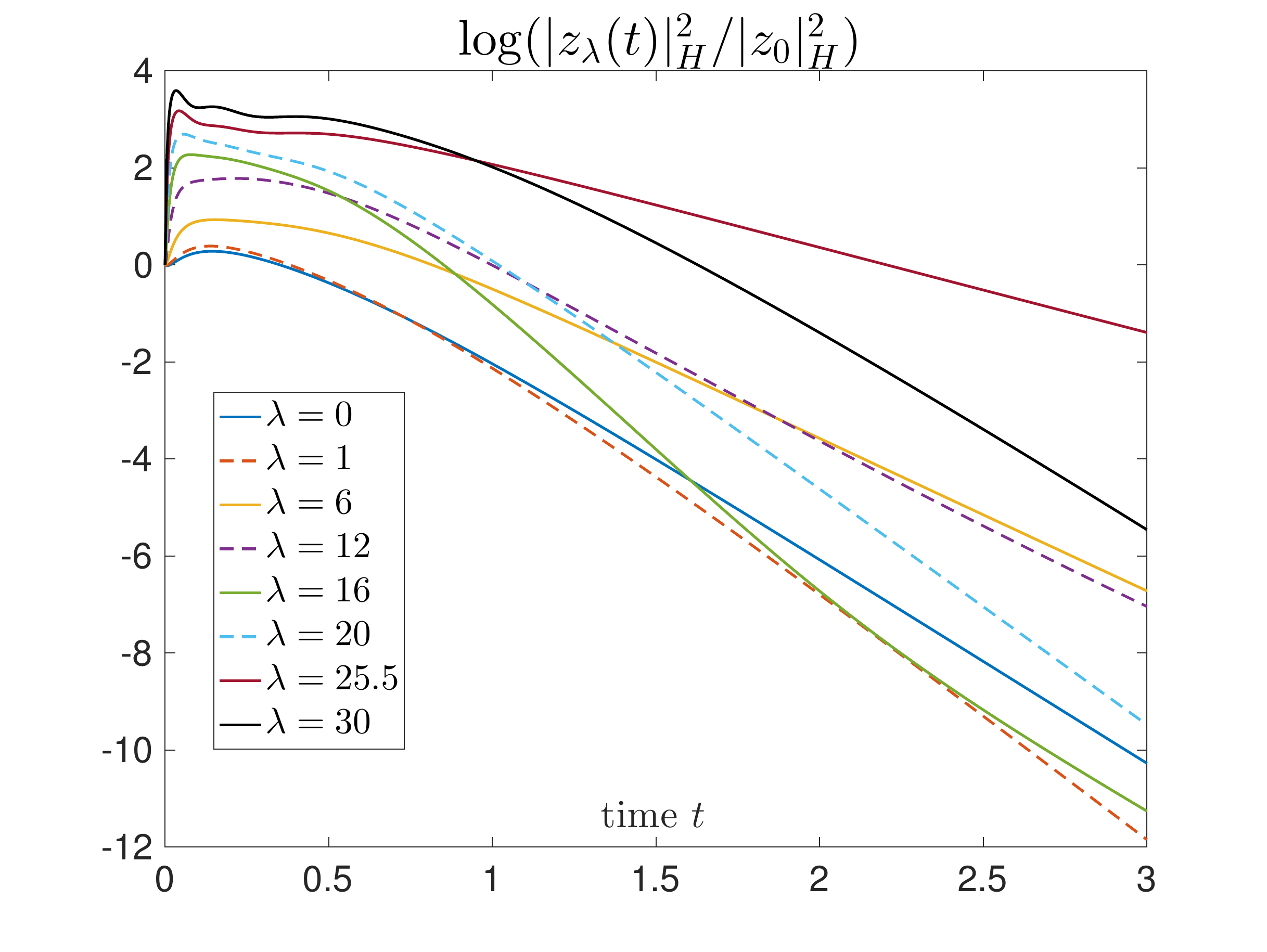}}
\subfigure[Without control.]
{\includegraphics[width=.495\linewidth]{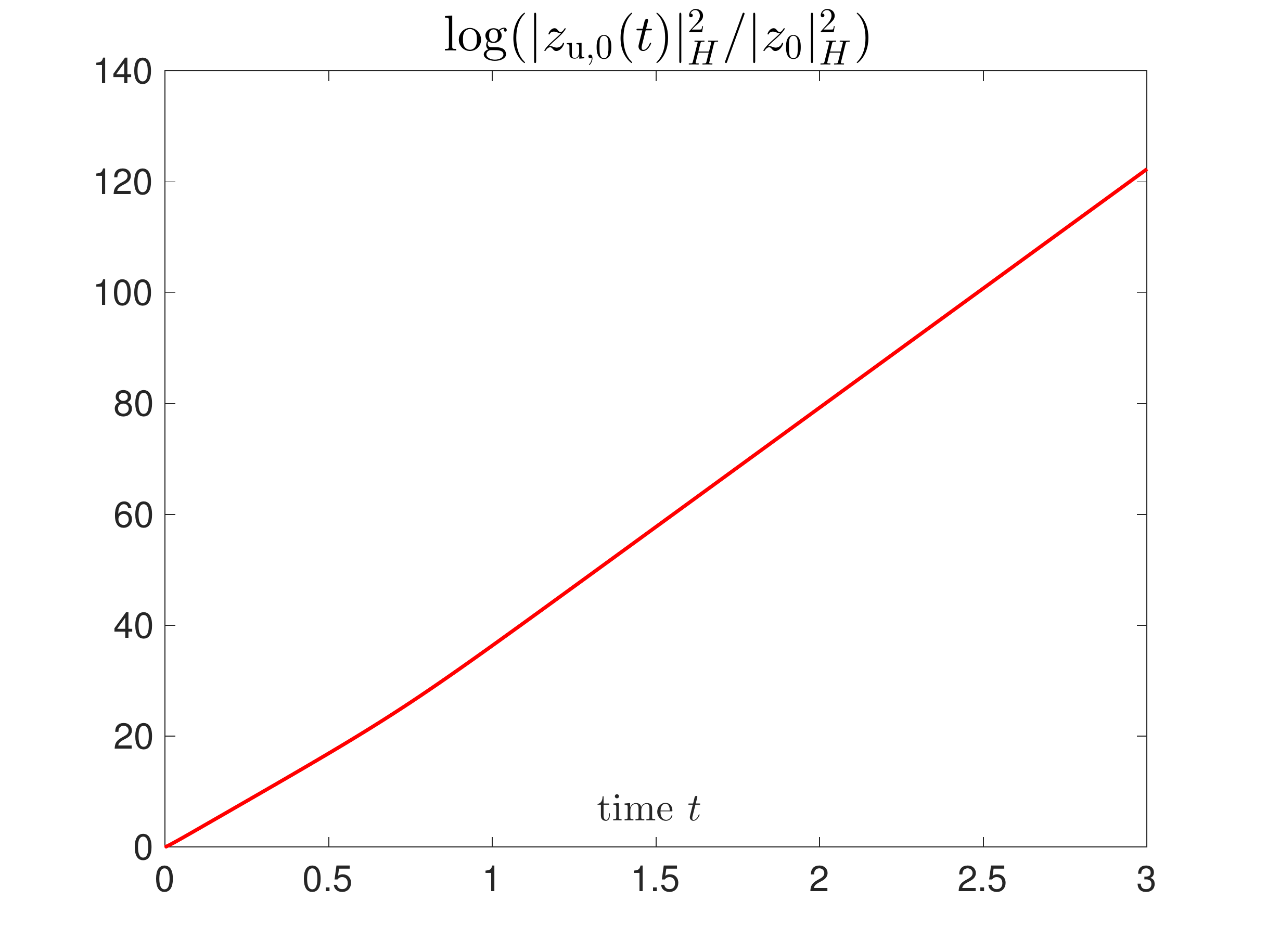}}
\caption{Stabilization rate of the solution is achieved under feedback control.}
\label{figure_familyoflambda}
\end{figure}

Let us observe that starting at time $t=s_0<T$, if for all~$t\in[s_0,T]$ we have~$|z(t)|^2_H\le C_{\lambda,s_0}\ex^{-\lambda(t-s_0)}|z(s_0)|^2_H$, then
we obtain the lower bound estimate
\[
C_{\lambda}\ge C_{\lambda,s_0}\ge\max_{t\in[s_0,T]}\ex^{\lambda(t-s_0)}\textstyle\frac{|z(t)|^2_H}{|z(s_0)|^2_H},
\]
and, considering all~$s_0\in[0,T)$ we obtain
\begin{align*}
\log C_{\lambda}\ge m_{\lambda}&\coloneqq \max_{s_0 \ge 0}\max_{t\in[s_0,T]} \left(\lambda(t-s_0) +\log\left(\textstyle\frac{|z(t)|^2_H}{|z(s_0)|^2_H}\right)\right)\\
&=\max_{s_0 \ge 0}\max_{t\in[s_0,T]} \left(r_{\lambda}(t) - r_{\lambda}(s_0)\right),
\end{align*}
with~$r_{\lambda}(t)
\coloneqq\lambda t + \log \left( \frac{|z(t)|^2_H}{|z_0|^2_H} \right)$.
In Figure \ref{figure_behaviorlambda} we plot the function~$m_{\lambda}$, which 
suggests that~\eqref{transalpha} might hold with~$\alpha\in(0,1]$, but we cannot confirm that~\eqref{trans_bd3} will hold for
``big~$\lambda$'', actually we cannot even say whether~$\lambda=30$ is big. Notice that for $\lambda\in[20,30]$ the figure suggests that~$\alpha=1$ is the best fitting parameter.
We must however say that we expect the magnitudes of control
and solution to increase as~$\lambda$ does, so we must be careful in reading the results for big~$\lambda$, because the mesh used was the same for all~$\lambda$, and for
bigger~$\lambda$ the numerical error will be more significant.
\begin{figure}[!]
{\includegraphics[width=.6\linewidth]{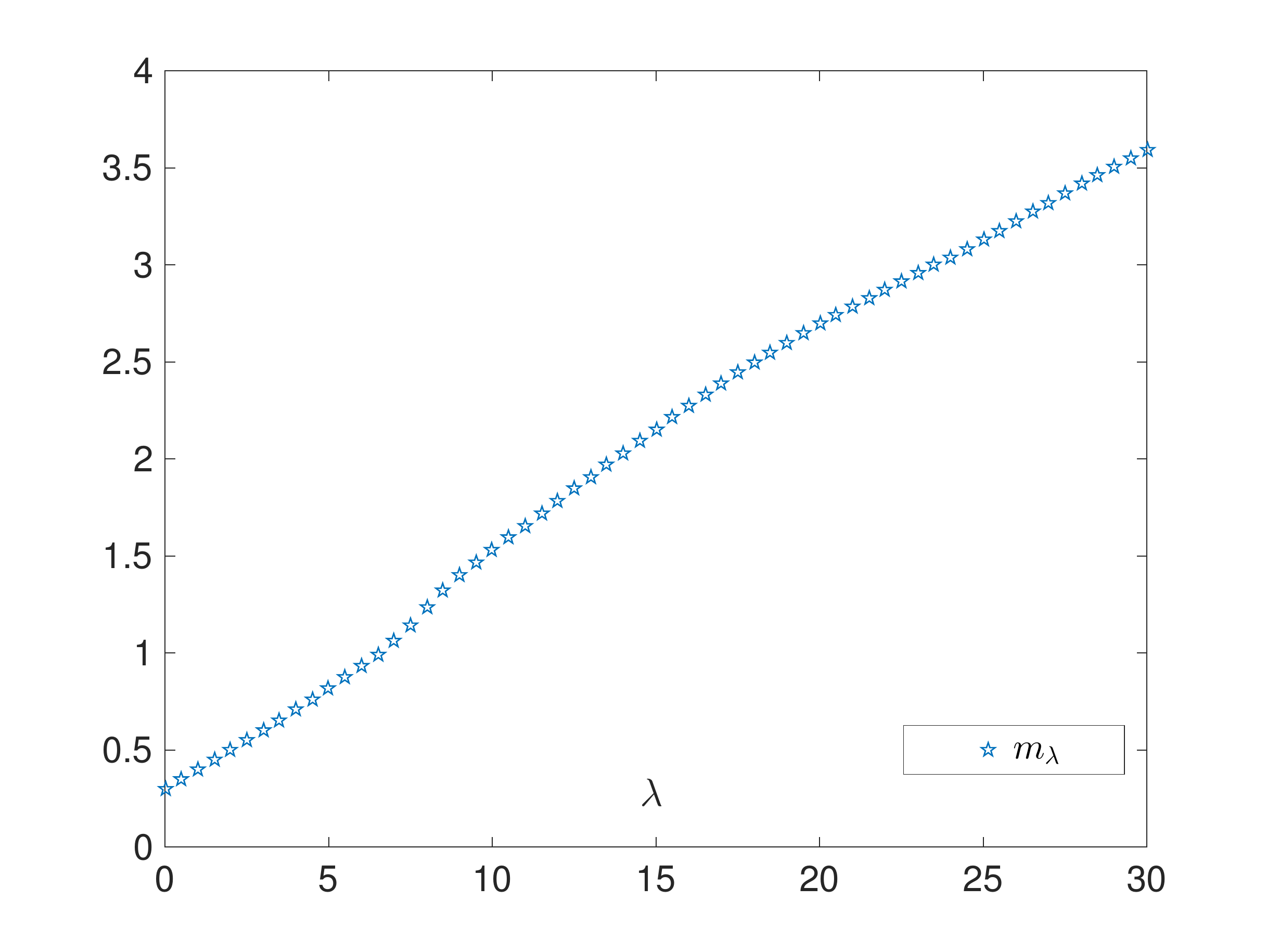}}
\caption{$m_{\lambda}$ with $\lambda \in \left\{0,\frac{1}{2},1,\frac{3}{2},\dots, 29,29 \frac{1}{2}, 30 \right\}$.} 
\label{figure_behaviorlambda}
\end{figure}

We present another example in the nonautonomous case. Notice that, this case is quite expensive, because we need to solve a differential Riccati equation for each~$\lambda$.
Here, we set $\nu = \frac{1}{4}$, $\hat a = -10 + 2x_1 + \sin 2t  \cos x_2$, $\hat b = \left(-x_1^3 \cos 3t ,-\sin t \sin x_2 \right)$,
and we impose no restriction on number of actuators. Figures~\ref{figure_familyoflambdaNS} and~\ref{figure_behaviorlambda2} show
that the system can be stabilized with rate $\lambda \in [0,40]$ and the constant $C_{\lambda}$ increases with $\lambda$. Again the Riccati feedback we use seems not to provide 
the behaviour~\eqref{trans_bd3} ``for big~$\lambda$''. So one question still remain: how small can we make the transient bound $C_{\lambda}$ and/or 
the ratio $\frac{C_{\lambda}}{\lambda}$ (cf.~Section~\ref{sS:transient}). Which feedback (Riccati based or not) makes the ratio smaller? 

\begin{figure}[]  
  \centering
\subfigure[With internal feedback control.]
{\includegraphics[width=.495\linewidth]{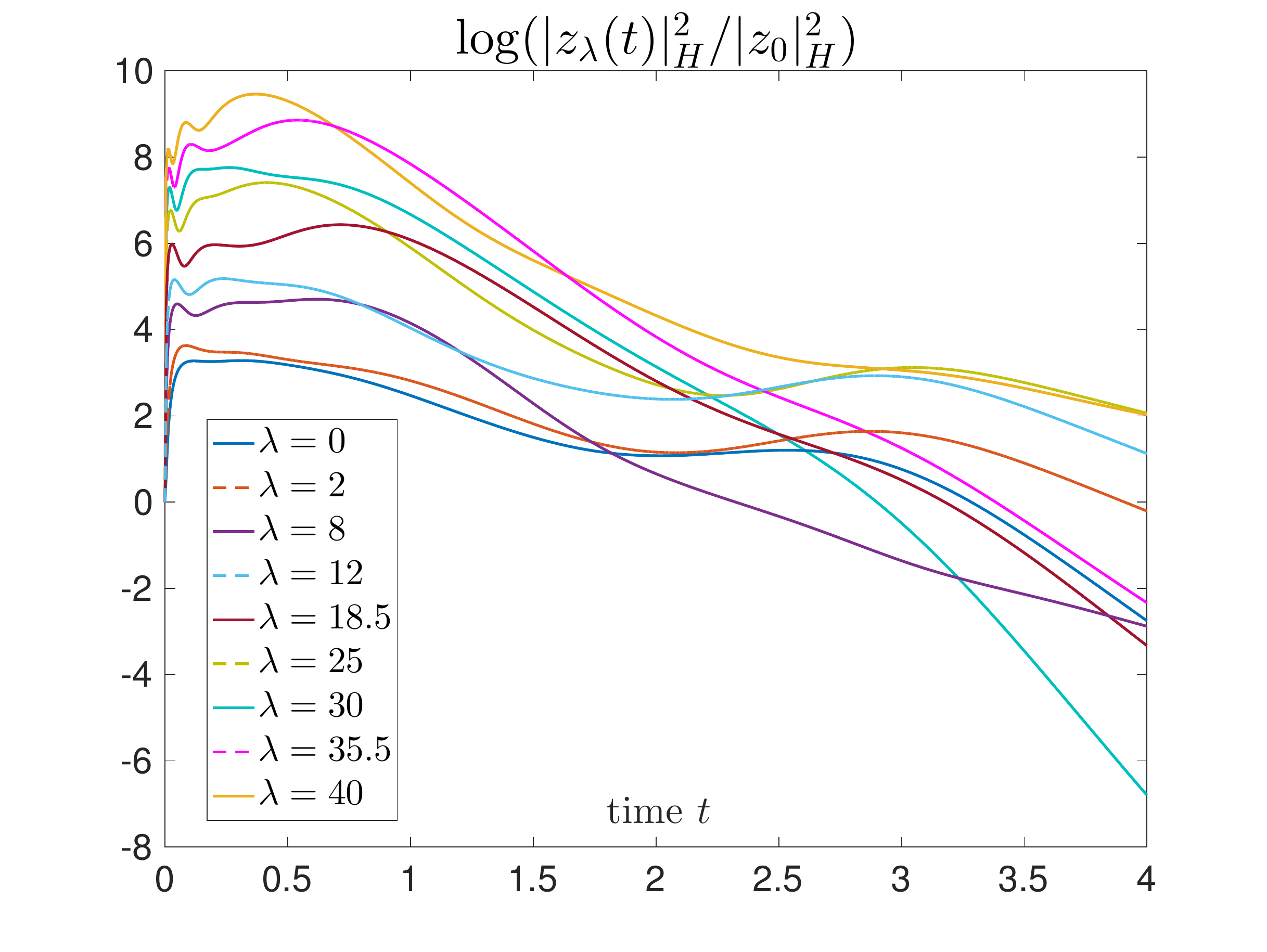}}
\subfigure[Without control.]
{\includegraphics[width=.495\linewidth]{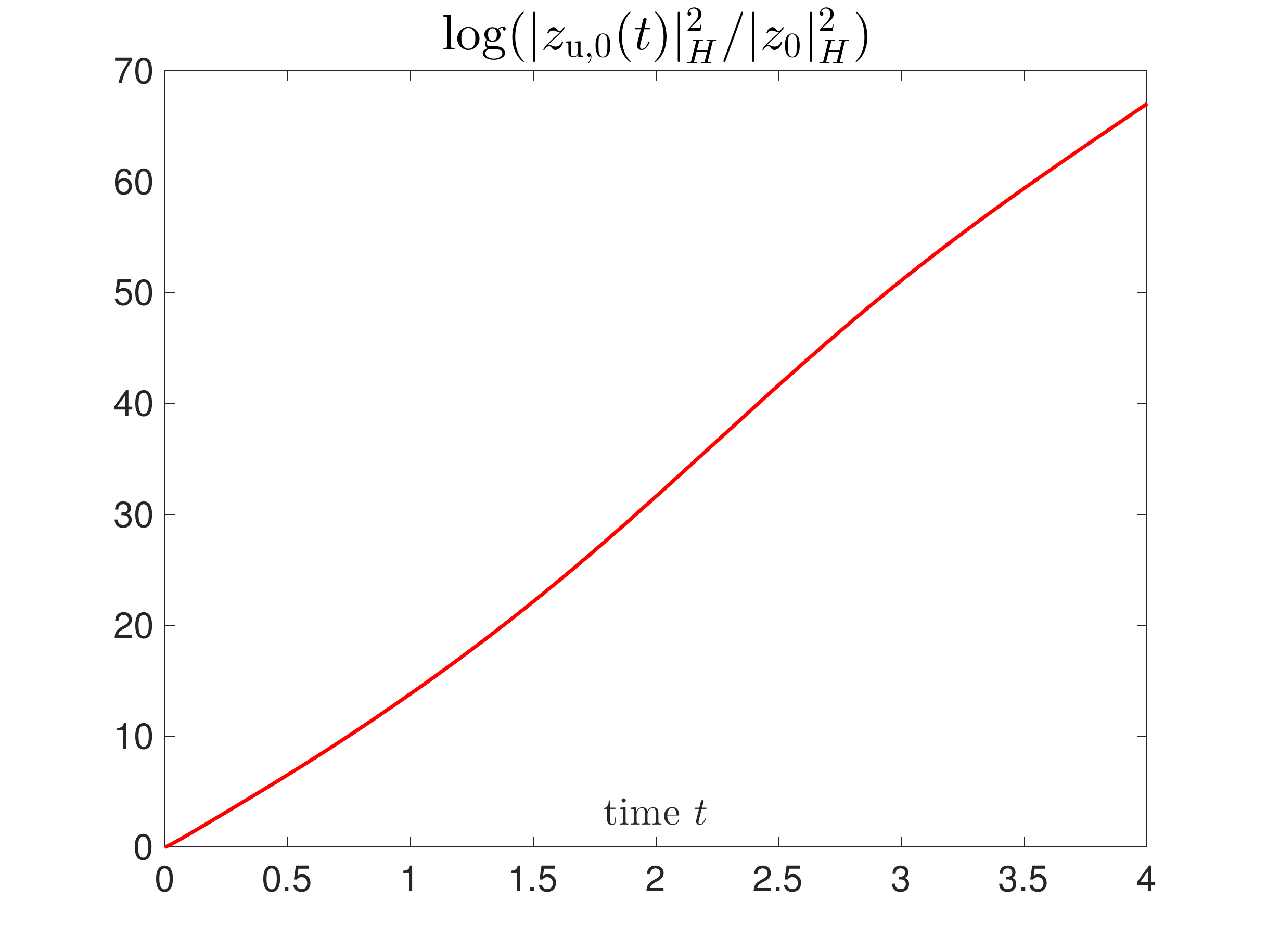}}
\caption{Stabilization rate of the solution is achieved under feedback control.}
\label{figure_familyoflambdaNS}
\end{figure}

\begin{figure}[!]
{\includegraphics[width=.6\linewidth]{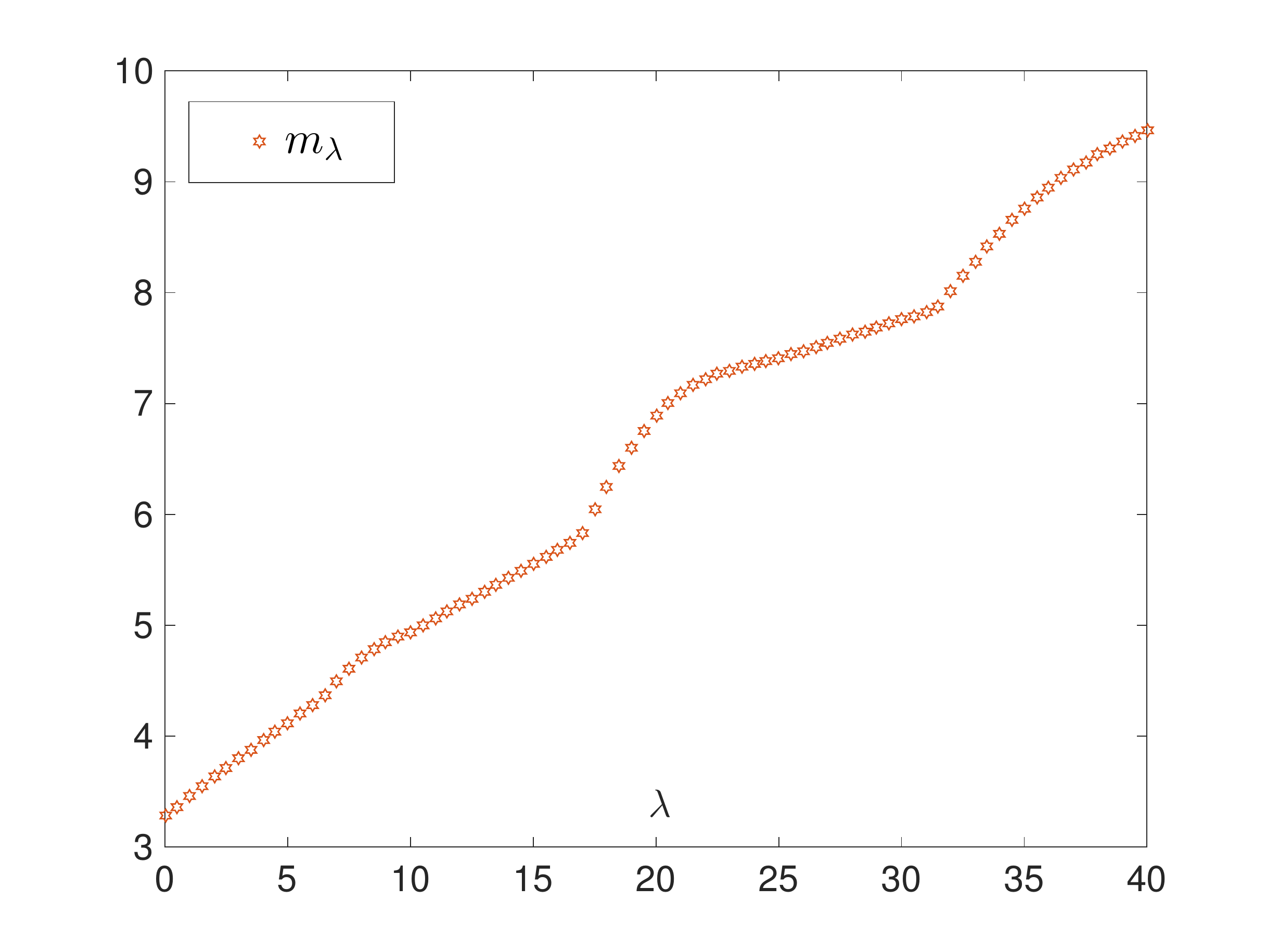}}
\caption{$m_{\lambda}$ with $\lambda \in \left\{0,\frac{1}{2},1,\frac{3}{2},\dots, 39,39 \frac{1}{2}, 40 \right\}$.} 
\label{figure_behaviorlambda2}
\end{figure}

In conclusion, the dependence of the transient bound $\widehat C_{\lambda}$,
associated to the Riccati based feedback control, 
on the desired exponential rate~$\lambda\ge0$ is not clear yet. This could be the subject of further researh and more simulations must be done, in particular for finer meshes.
This is however not trivial, because solving the Riccati equations is numerically quite demanding for fine meshes. However, the simulations strongly suggest that
(unfortunately) $\widehat C_{\lambda}$ depends exponentially-like on~$\lambda$, which suggests us that to treat the nonlinear system we should take $\lambda$ relatively small,
because~$\epsilon$ in~\eqref{goal} will decrease as~$\widehat C_{\lambda}$ increases (cf.Remark~\ref{R:trans_local}).

\subsection{On the parameter~$\varsigma$}
Here we consider the case of boundary controls. With $\nu$ and~$(\hat a,\hat b)$ as in Section~\ref{sS:RateLambda},
we try to understand the influence of the choice of the parameter~$\varsigma$ in the extended system~\eqref{PSBoundary}.
Notice that~$\varsigma$ does not appear in the ``original'' system~\eqref{PSBoundary-z}, it is a parameter which we can choose in the dynamics of the extension variable~$\kappa$
in~\eqref{PSBoundary-k}.

We will take 6 boundary actuators and fix $\lambda = 2$. 
Figure~\ref{figure_ParameterVarsigma} suggests that taking a bigger parameter $\varsigma$, we obtain a boundary feedback control providing a faster stabilization of the linear system
to zero. However, we must say that taking a bigger parameter~$\varsigma$, the free dynamics solution of~\eqref{PSBoundary-k} (without the feedback control)
will go faster to zero, which suggests that for bigger~$\varsigma$ we may need to take smaller time step~$k$ in the discretization~\eqref{SBoundary-sD} to get a good
approximation of the real dynamics, which will make the simulations more expensive.  

\begin{figure}[]
\centering
{\includegraphics[width=.6\linewidth]{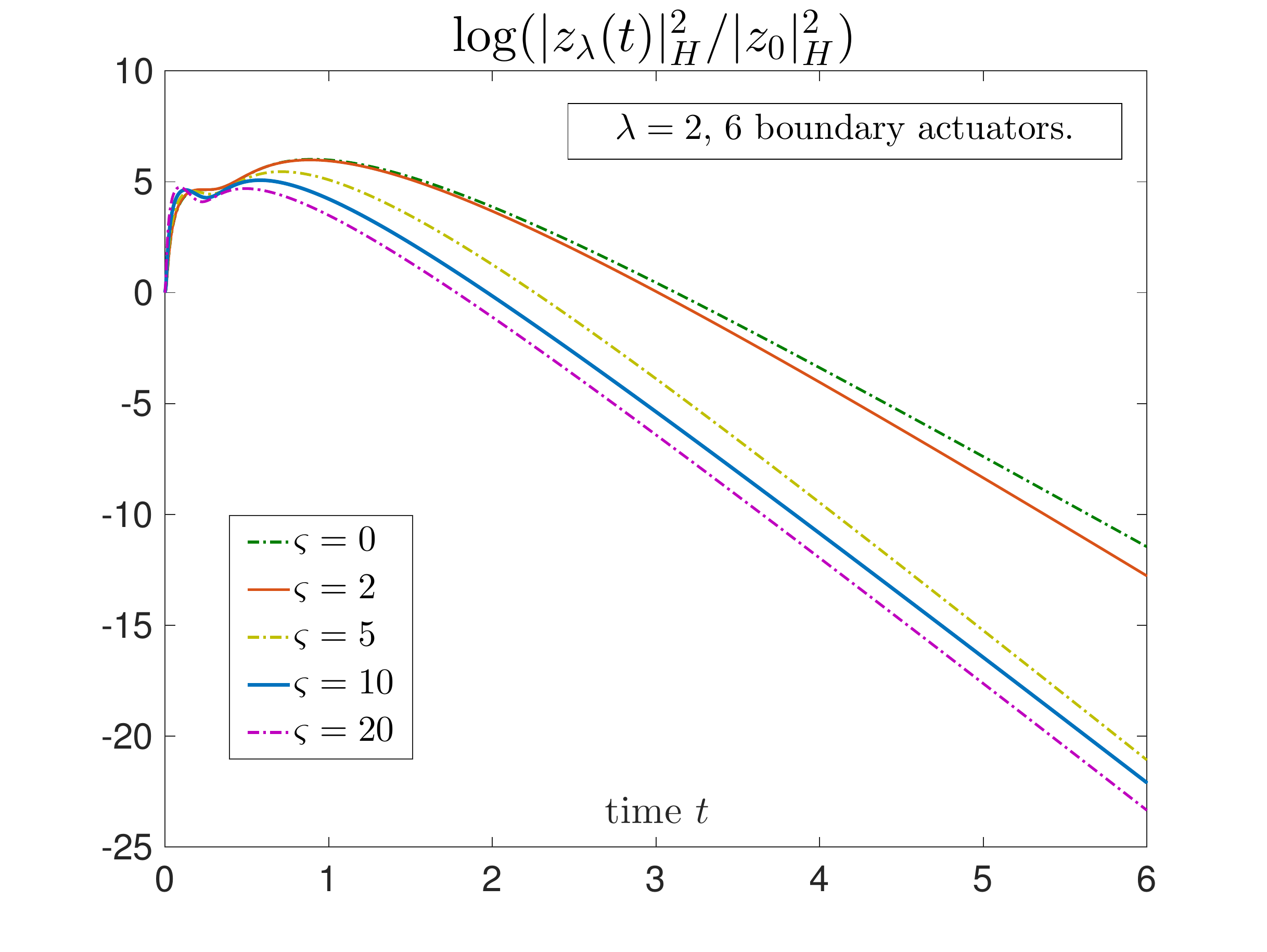}}
\caption{Increasing the parameter $\varsigma$.}
\label{figure_ParameterVarsigma}
\end{figure}

\subsection{On the feedback nature of the control} 
\label{RealFeedback}
Here, we remark the importance to have the control in a feedback form, because closed loop controls are known to be able to respond to small disturbances.
We consider the stationary
case with $\nu = \frac{1}{2}$, $\hat a = -10 + 2x_1 + \cos x_2$, $\hat b = \left(-x_1^2 ,-\sin x_2 \right)$ as in Section~\ref{sS:RateLambda} and with
$\lambda = 2$. 
However, here we consider 4 boundary actuators and set $\zeta = 10$. The initial condition here is $v_0(x)=3+\sum\limits_{i=1}^4\frac{1}{2}\widetilde\Psi_i(x)$

In Figure~\ref{figure_CopyandPasteFeedback}(a), we plot the results:
\begin{itemize}
\item  for the case of the
``original'' close-looped system~\eqref{PSBoundary},
\item for the case of the corresponding extended close-loop system~\eqref{sys-ext-Feed}
(with~$(\lambda,\varsigma)$ in the role of~$(\bar\lambda,\bar\varsigma)$ and with~$\mathbb A^{\hat a,\hat b,\nu}_{\lambda,\varsigma}$ as in~\eqref{A-Ric-orig}
in the role of~$\mathbb A^{a,b}_{\bar\lambda,\bar\varsigma}$),
and then recovering~$z=y_{\lambda}+B_\Psi \kappa_{\lambda}$, and
\item for the case where we save the~$\kappa_{\lambda}$ obtained for system~\eqref{sys-ext-Feed}
and plug it in the system~\eqref{PSBoundary-z} as an open-loop control.
\end{itemize}
Notice that, we do not expect that the numerical solutions obtained by solving the original closed-loop system
will coincide with the recovering from the solution of the extended closed-loop system (as Figure~\ref{figure_CopyandPasteFeedback}(a) could suggest),
but we do expect that they will be close to each other, this facts are confirmed in Figure~\ref{figure_CopyandPasteFeedback}(b). 

For a short period of time we do not see much difference among the three procedures. For a longer period of time, the closed-loop controls  
are able to stabilize the corresponding systems~\eqref{PSBoundary} and~\eqref{sys-ext-Feed} while, the open-loop control
is not able to stabilize the original one~\eqref{PSBoundary-z}, even though it stabilizes the ``equivalent'' extended system. This is due to the fact that the discretizations errors in the original and extended systems are different, the difference
between these two errors could be seen as a small disturbance to which the open-loop control cannot respond.

The conclusion is that it is important to have a control in feedback form in the original system~\eqref{PSBoundary}. It is not enough (at the discrete level) to find the control
for an ``equivalent'' auxiliary system, like~\eqref{sys-ext-Feed}, and plug it in~\eqref{PSBoundary-z}. 
\begin{figure}[]  
  \centering
\subfigure[Solutions from the three procedures.]
{\includegraphics[width=.495\linewidth]{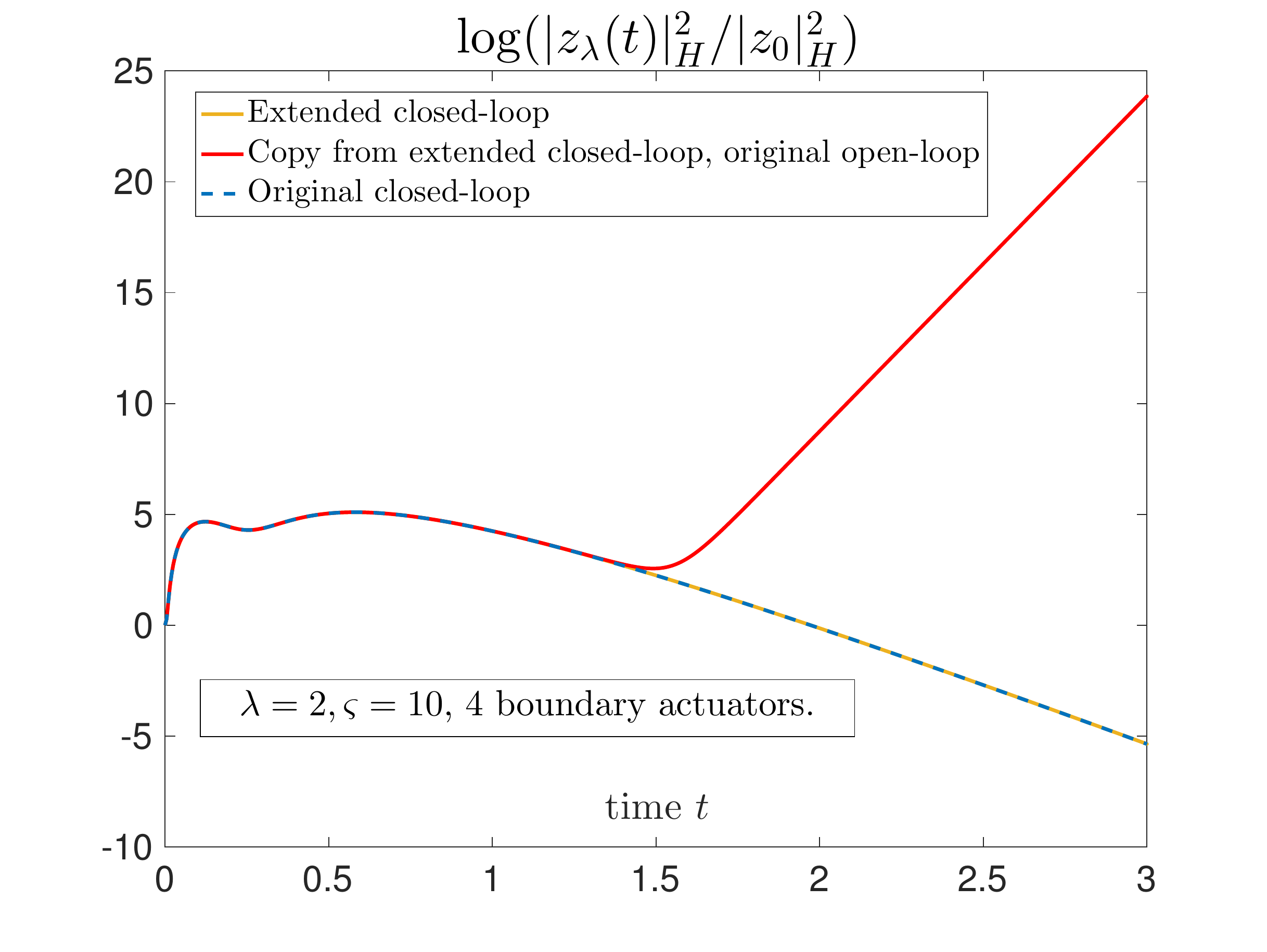}}
\subfigure[The difference between solutions from extended and original closed-loop.]
{\includegraphics[width=.495\linewidth]{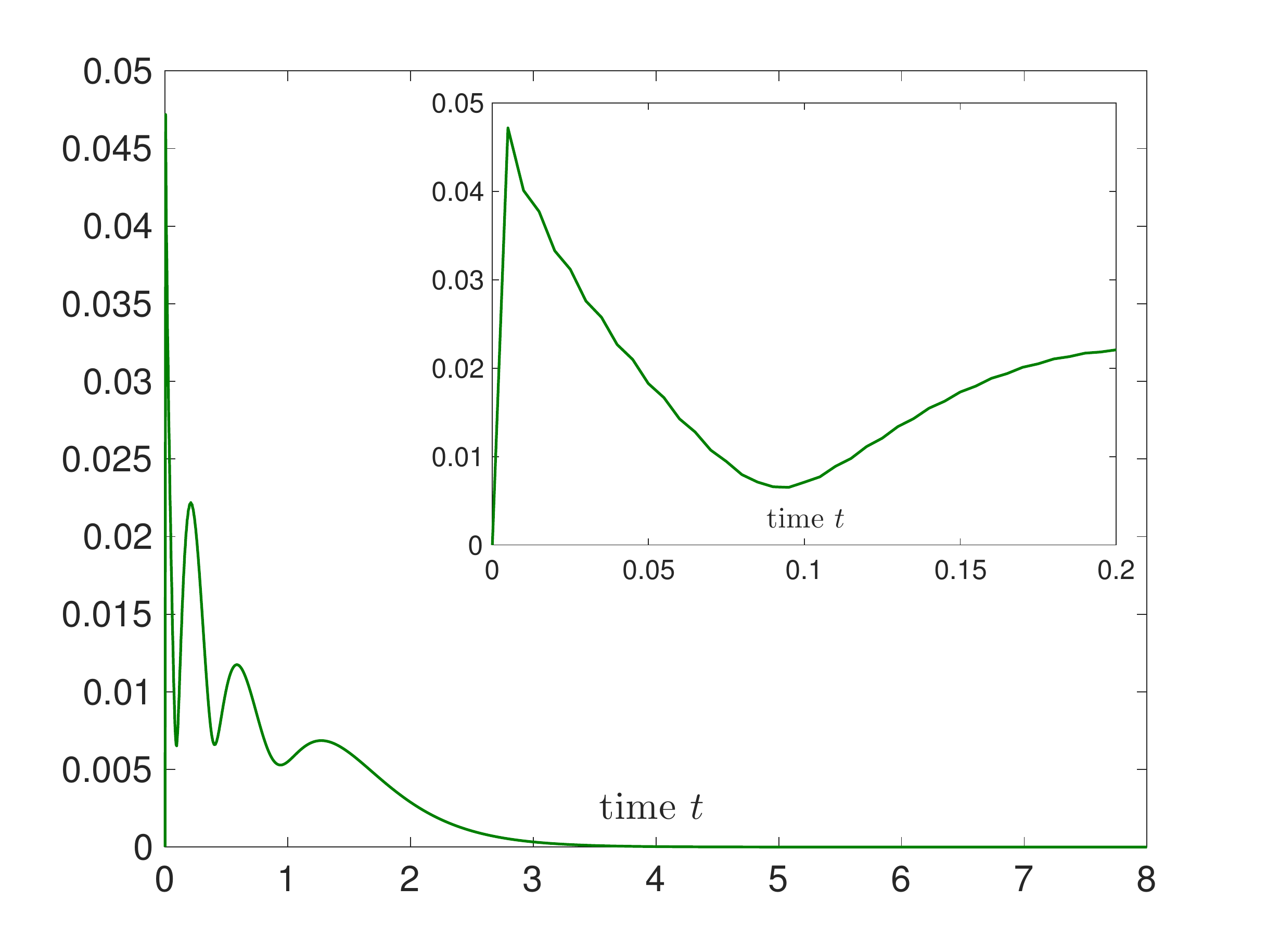}}
\caption{The ``real'' feedback to stabilize the system.}
\label{figure_CopyandPasteFeedback}
\end{figure}

\subsection{Switching the control off/on}
With the same setting as in Section \ref{RealFeedback}, here we perform an experiment to emphasize
the importance of feedback control to stabilize the system. We set $T = 6$.

In Figure~\ref{figure_SwitchOnSwitchOff} we plot the results for the 3 cases the control is switched on for time in~$[0,6]$, for time in~$[0,~3]\cup [4,~6]$, and for time in~$[0,3]$.

Switching the control off at time instant $t = 3$, we observe that the norm of the solution increases, and will (likely) not remain bounded.
Notice that at time~$t=3$ we have $|z(3)|^2_H \le \mathrm{e}^{-10-3\lambda}|z_0|^2_H < 1.2 \times 10^{-7} |z_0|^2_H$ is already quite small.
We also see that by switching on the control again, from time~$t=4$ on, then we recover the stability of the system.

Notice that at time~$t=3$ and~$t=4$, that is, we do not know the ``analytical'' expressions for $z(3)$ and~$z(4)$ which
come from the numerical solution. Therefore, in some sense, $z(3)$ and~$z(4)$ can be seen as ``random'' ``initial'' conditions showing the instability of the system and the
stabilizing property of the feedback control.
\begin{figure}[]
\centering
{\includegraphics[width=.6\linewidth]{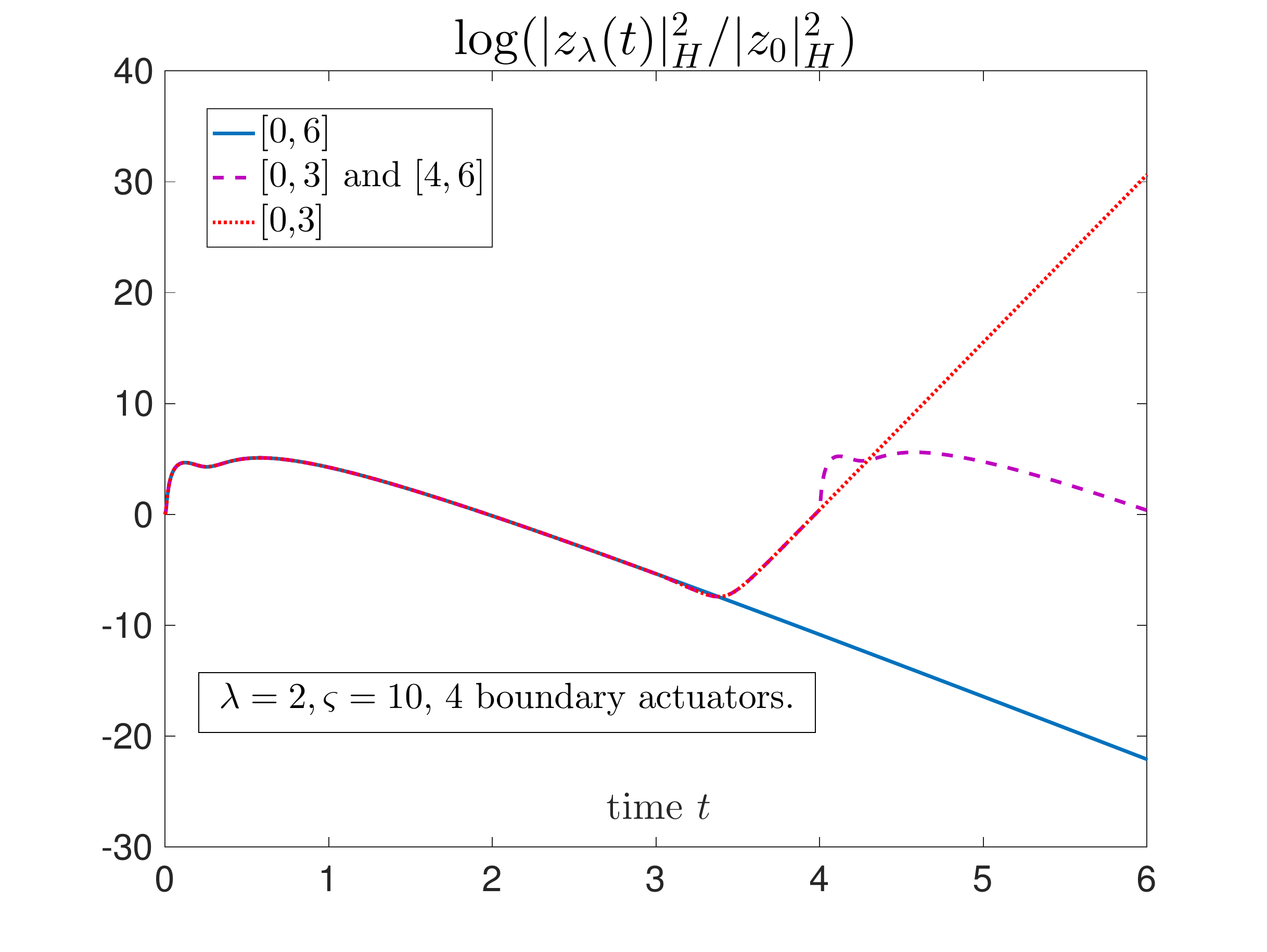}}
\caption{Switch on and switch off the feedback control.}
\label{figure_SwitchOnSwitchOff}
\end{figure}

\subsection{A nonlinear example}\label{sS:nonDexample}
We consider the following nonlinear parabolic equation, for time $t \in \left[0, T \right]$, again in the unit ball~$\Omega=\mathbb D=\{x=(x_1,x_2)\in\R^2\mid x_1^2+x_2^2<1\}$. 
\begin{equation}
 \begin{aligned}
\label{NonlinearSys}
\partial_t y - \nu \Delta y + c_3 y^3 + c_2 y^2 + c_1y + \frac{1}{2} \nabla \cdot \left(y^2, y^2 \right) + f_0 &= 0,\qquad y\rest\Gamma = g,\\
y(0,x) &= y_0(x), 
\end{aligned}
\end{equation}
where~$c_1$, $c_2$, and~$c_3$ are constants in~$\R$, and $f_0$ is a fixed appropriate function. 

Let us fix a smooth function~$\hat y$ which we will take as our reference trajectory. Then, as external forces, we (must) take the functions
\begin{equation}\label{exact_f0g}
 \begin{aligned}
f_0=f_0(\hat y) &= -\left(\partial_t \hat y - \nu \Delta \hat y + c_3 \hat y^3 + c_2 \hat y^2 + c_1\hat y + \frac{1}{2} \nabla \cdot \left(\hat y^2, \hat y^2 \right)\right),\\
g=g(\hat y)&=\hat y\rest\Gamma,
\end{aligned}
\end{equation}
We will also set the parameters
\[
\nu=0.2,\quad (c_1,\,c_2,c_3)=(-2,-1,-3),\quad\lambda=1.
\]

\begin{remark}
 In order to make a smooth function~$\hat{y}(t,x)$ a solution of~\eqref{NonlinearSys}, we have just to set the appropriate external forces~$f_0$ and
 $g$ as in~\eqref{exact_f0g}. Thus, to test with other reference trajectories, we would just have to take the corresponding~$f_0$ and~$g$.   
\end{remark}

\subsubsection{Discretization} 
Given a triangular mesh~$\DDD=(\ppp,\eee,\ttt)$ of~$\mathbb D$ and a time step~$k>0$, we discretize system~\eqref{NonlinearSys} as if it were the heat equation,
simply looking at the nonlinearity as an external forcing, that is, writing
\begin{align*}
f_1(y,\nabla y)&=c_3 y^3 + c_2 y^2 + c_1y + \frac{1}{2} \nabla \cdot \left(y^2, y^2 \right),\\
f=f(y,\nabla y)&=f_1(y,\nabla y)+ f_0,
\end{align*}
since~$f_0$ and~$g$ are smooth we can take at each time step $jk$, as approximations of~$f_0(jk)$ and~$g(jk)$ the corresponding evaluation
vectors~$\overline{f_0(jk)}$ and~$\overline{g(jk)}$ at the points~$\ppp$ of~$\DDD$.

For the nonlinear terms, at each time step, we take the aproximations
\begin{equation*}
(c_r y^r)^j\coloneqq c_r\Ma (\overline y^j)^r\approx c_r y^r(jk),
\qquad \textstyle\frac{1}{2}\left(\GGG_{x_1}(\overline y^j)^2+\GGG_{x_2}(\overline y^j)^2\right)\approx\frac{1}{2} \nabla \cdot \left(y^2(jk), y^2(jk) \right),
\end{equation*}
where $(\overline y^j)^r$ means that we take the $r$-th power of each coordinate of~$\overline y^j$.  
Hence, given vector solution~$\overline y^j$, at time~$jk$, we approximate the nonlinearity, as a function from~$V$ into~$V'$, as
\begin{align}\label{NN_D}
 \NN_{1,D}(\overline y^j)&\coloneqq \Ma\left(c_3 (\overline y^j)^3 +c_2 (\overline y^j)^2 +c_1\overline y^j\right)
 + \textstyle\frac{1}{2}\left(\GGG_{x_1}(\overline y^j)^2+\GGG_{x_2}(\overline y^j)^2\right).
 \end{align}

We arrive to a semi-discretization analogously to~\eqref{heat-sys-sD}:
\begin{equation*}
\partial_t\Ma_{\rm ii} \overline y_{\rm i}
= -\nu \St_{\rm ii} \overline y_{\rm i} 
-\nu\St_{\rm ib}\overline g
-\partial_t\Ma_{\rm ib} \overline g
-\begin{bmatrix}\Ma_{\rm ii} &\Ma_{\rm ib}\end{bmatrix}\overline f_0 -\left(\NN_{1,D}(\overline y) \right)_{\rm i}.
\end{equation*}
where~$(\NN_{1,D}(\overline y))_{\rm i}$ stands for the coordinates of the vector $\NN_{1,D}(\overline y)$ corresponding to the interior points of the mesh.
As before~$\overline y=\begin{bmatrix}\overline y_{\rm i}\\ \overline y_{\rm b}\end{bmatrix}=\begin{bmatrix}\overline y_{\rm i}\\ \overline g\end{bmatrix}$.

Then, with Crank-Nicolson scheme, and with the notations as in~\eqref{SBoundary-Dy}, we find
\begin{equation}\label{SNonlin-CN}
\begin{split}
\quad\mathbf A_{\rm ii}^\oplus\overline y_{\rm i}^{j+1}
&= \mathbf A_{\rm ii}^\ominus\overline y_{\rm i}^j -\mathbf A_{\rm ib}^\oplus \overline g^{j+1}
+\mathbf A_{\rm ib}^\ominus\overline g^{j}-k\begin{bmatrix}\Ma_{\rm ii} &\Ma_{\rm ib}\end{bmatrix}
\left(\overline f_0^{j+1}+\overline f_0^{j}\right)\\
&\quad-k\left( \left(\NN_{1,D}(\overline y^{j+1})\right)_{\rm i}+\left(\NN_{1,D}(\overline y^j)\right)_{\rm i}\right).
\end{split}
\end{equation}
Knowing~$y_{\rm i}^j$, the only unknown term in the right hand side is~$(\NN_{1,D}(\overline y^{j+1}))_{\rm i}$. To approximate this term, we take again the linear extrapolation
\begin{equation}\label{SNonlin-ourext}
 (\NN_{1,D}(\overline y^{j+1}))_{\rm i}=2(\NN_{1,D}(\overline y^{j}))_{\rm i}-(\NN_{1,D}(\overline y^{j-1}))_{\rm i},\quad j\ge0,
 \quad\mbox{with}\quad(\NN_{1,D}(\overline y^{-1}))_{\rm i}\coloneqq(\NN_{1,D}(\overline y^{0}))_{\rm i}.
\end{equation}
Therefore we arrive to the scheme
\begin{equation}\label{SNonlin-D}
\begin{split}
\quad\mathbf A_{\rm ii}^\oplus\overline y_{\rm i}^{j+1}
&= \mathbf A_{\rm ii}^\ominus\overline y_{\rm i}^j -\mathbf A_{\rm ib}^\oplus \overline g^{j+1}
+\mathbf A_{\rm ib}^\ominus\overline g^{j}-k\begin{bmatrix}\Ma_{\rm ii} &\Ma_{\rm ib}\end{bmatrix}
\left(\overline f_0^{j+1}+\overline f_0^{j}\right)\\
&\quad-k\left(3(\NN_{1,D}(\overline y^{j}))_{\rm i}-\NN_{1,D}(\overline y^{j-1}))_{\rm i}\right),
\end{split}
\end{equation}
which we can invert to obtain~$\overline y_{\rm i}^{j+1}$.

\subsubsection{Local feedback stabilization} With the setting as in~\eqref{sS:nonDexample}, we take~$T=8$ and the reference trajectory
\[
\hat{y}(t) =(2x_1^3 + x_2^2) \sin t,  
\]
which solves~\eqref{NonlinearSys}, provided we take the ``fixed'' external forces as in~\eqref{exact_f0g}.

We will confirm that the feedback control is able to stabilize locally system~\eqref{NonlinearSys} to the targetted trajectory~$\hat y$
(see Theorems~\ref{T:st-traj} and~\ref{T:st-traj-bdry}) with exponential rate~$\frac{\lambda}{2}$.
That is, the solutions of the systems
\begin{equation}
 \begin{aligned}
\label{NonlinearSys-feed_int}
\partial_t y - \nu \Delta y + f_1(y,\nabla y) + f_0 +B_MB_M^*\widehat\Pi_{\lambda}(y-\hat y)&= 0,\qquad y\rest\Gamma = g,\\
y(0) &= y_0, 
\end{aligned}
\end{equation}
and
\begin{subequations}\label{NonlinearSys-feed_bdry}
\begin{align}
\partial_t y-\nu\Delta y+ f_1(y,\nabla y) + f_0&=0,&z\rest\Gamma&=B_\Psi^\Gamma\kappa,\label{NonlinearSys-feed_bdry-y}\\
\partial_t\kappa+\varsigma\kappa+\FF^{\rm bo}(y-\hat y-B_\Psi\kappa,\kappa)&=0,&
(y(0),\kappa(0))&=(y_0,\kappa_0).\label{NonlinearSys-feed_bdry-k}
 \end{align}
\end{subequations} 
go exponential to~$\hat y$, with rate~$\frac{\lambda}{2}$,  provided the initial condition~$y_0$ is close enough to~$\hat y_0$.
On the other hand, $y_u$ is the solution of systems \eqref{NonlinearSys-feed_int} and~\eqref{NonlinearSys-feed_bdry} without any feedback control.

The feedback control is found to stabilize, respectively, the linearized systems~\eqref{PSInternal} and~\eqref{PSBoundary}. That is, respectively, by solving the 
differential Riccati equations~\eqref{eq:riccati_nu} and~\eqref{eq:riccati-bdry-traj}, with~$\hat a$ and~$\hat b$ as in~\eqref{abN_lineariz}.

Departing from~\eqref{SNonlin-D}, and proceeding as for the systems~\eqref{SInternal-D} and~\eqref{SBoundary-D}, by taking a suitable linear extrapolations
at time~$t=(j+1)k$,
we arrive to the following discretizations of the systems~\eqref{NonlinearSys-feed_int} and~\eqref{NonlinearSys-feed_bdry}.

In the internal case we arrive to
\begin{equation*}
\begin{split}
\quad\mathbf A_{\rm ii}^\oplus\overline y_{\rm i}^{j+1}
&= \mathbf A_{\rm ii}^\ominus\overline y_{\rm i}^j -\mathbf A_{\rm ib}^\oplus \overline g^{j+1}
+\mathbf A_{\rm ib}^\ominus\overline g^{j}-k\begin{bmatrix}\Ma_{\rm ii} &\Ma_{\rm ib}\end{bmatrix}
\left(\overline f_0^{j+1}+\overline f_0^{j}\right)\\
&\quad-k\left(3(\NN_{1,D}(\overline y^{j}))_{\rm i}-\NN_{1,D}(\overline y^{j-1}))_{\rm i}\right)
-k\Ma_{\rm ii}(3\FF^{j,\rm in}\overline y_{\rm i}^{j}-\FF^{j-1,\rm in}\overline y_{\rm i}^{j-1}).
\end{split}
\end{equation*}
In the boundary case we arrive to
 \begin{align*}
(2+k\varsigma)\kappa^{j+1}
&=(2-k\varsigma)\kappa^{j}-k\left(3\FF^{j,\rm bo}_{\rm b}
\begin{bmatrix}\overline y_{\rm i}^{j}\\ \kappa^{j}\end{bmatrix}
-\FF^{j-1,\rm bo}_{\rm b}
\begin{bmatrix}\overline y_{\rm i}^{j-1}\\ \kappa^{j-1}\end{bmatrix}\right),\\
\quad\mathbf A_{\rm ii}^\oplus\overline y_{\rm i}^{j+1}
&= \mathbf A_{\rm ii}^\ominus\overline y_{\rm i}^j -\mathbf A_{\rm ib}^\oplus \left(\overline g^{j+1}+B_{\overline\Psi}^\Gamma\kappa^{j+1}\right)
+\mathbf A_{\rm ib}^\ominus\left(\overline g^{j}+B_{\overline\Psi}^\Gamma \kappa^{j}\right)\\
&\quad-k\begin{bmatrix}\Ma_{\rm ii} &\Ma_{\rm ib}\end{bmatrix}
\left(\overline f_0^{j+1}+\overline f_0^{j}\right)-k\left(3(\NN_{1,D}(\overline y^{j}))_{\rm i}-\NN_{1,D}(\overline y^{j-1}))_{\rm i}\right),\notag
\end{align*}
where~$\NN_{1,D}(\overline y^{j})$ is as in~\eqref{NN_D} and the feedback rules $\FF^{\rm in}=\FF^{\rm in}_\lambda$ and $\FF^{\rm bo}=\FF^{\rm bo}_\lambda$
computed as
in Section~\ref{ssS:discRiccati}.

\subsubsection*{The case of internal controls} 
For system~\eqref{NonlinearSys-feed_int}, we take the initial condition in the form~$y_0=y_0(x)= \hat{y}_0(x) + \epsilon v_0(x)$, with~$v_0$ chosen as
\[
v_0(x) \coloneqq \begin{cases}  1, &\text{if} \quad x< \frac{-1}{3}, \\ 0,&\text{otherwise.} 
\end{cases} 
\]
We take~$6$ piecewise constant actuators, as in Section~\ref{ssS:intcontrols}, supported in the rectangular subset~$\omega$ corresponding to the arrangement~$(m,n)=(3,2)$.

In Figure \ref{figure_InternalControlNL-feed}(a), with some values of~$\epsilon$ in~$[-0.577,0.495]$, we see that the system is stable under the feedback control action.
However, in Figure \ref{figure_InternalControlNL-feed}(b,c),
feedback control can not stabilize anymore the system for~$\epsilon\in\{-0.578,0.496\}$.  

In Figure  \ref{figure_InternalControlNL-unc},
we see that uncontrolled solution is not stable. Actually, we observe that even for small~$\epsilon$ the solution explodes at some time in $(0,8)$. We would like to recall that
such blowing-up is somehow expected. We refer to~\cite[Section~3]{Ball77}.

\begin{figure}[ht]  
  \centering
\subfigure[With $\epsilon\in{[}-0.577, 0.495{]}$.]
{\includegraphics[width=.325\linewidth]{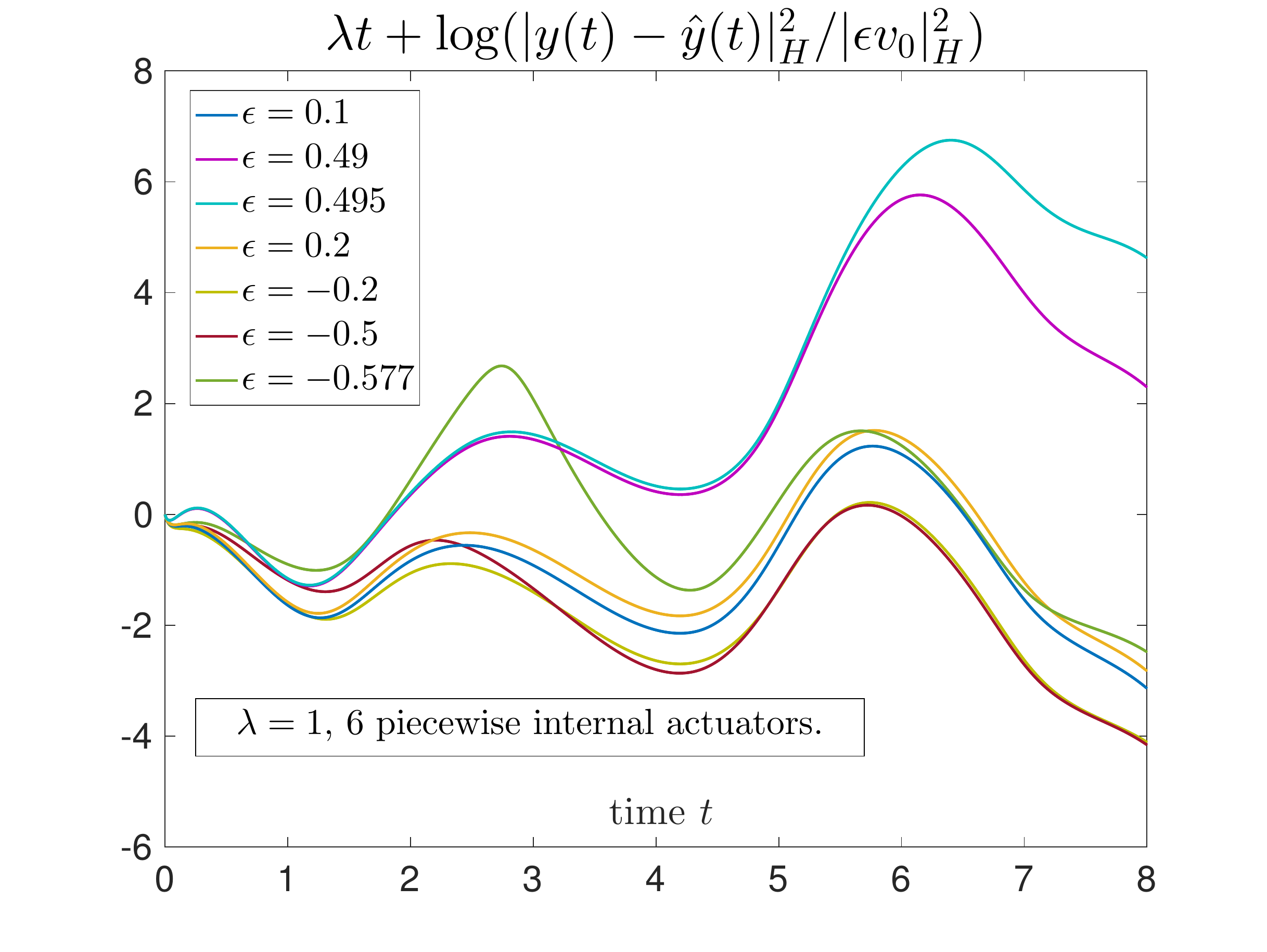}}
\subfigure[With $\epsilon = 0.496$.]
{\includegraphics[width=.325\linewidth]{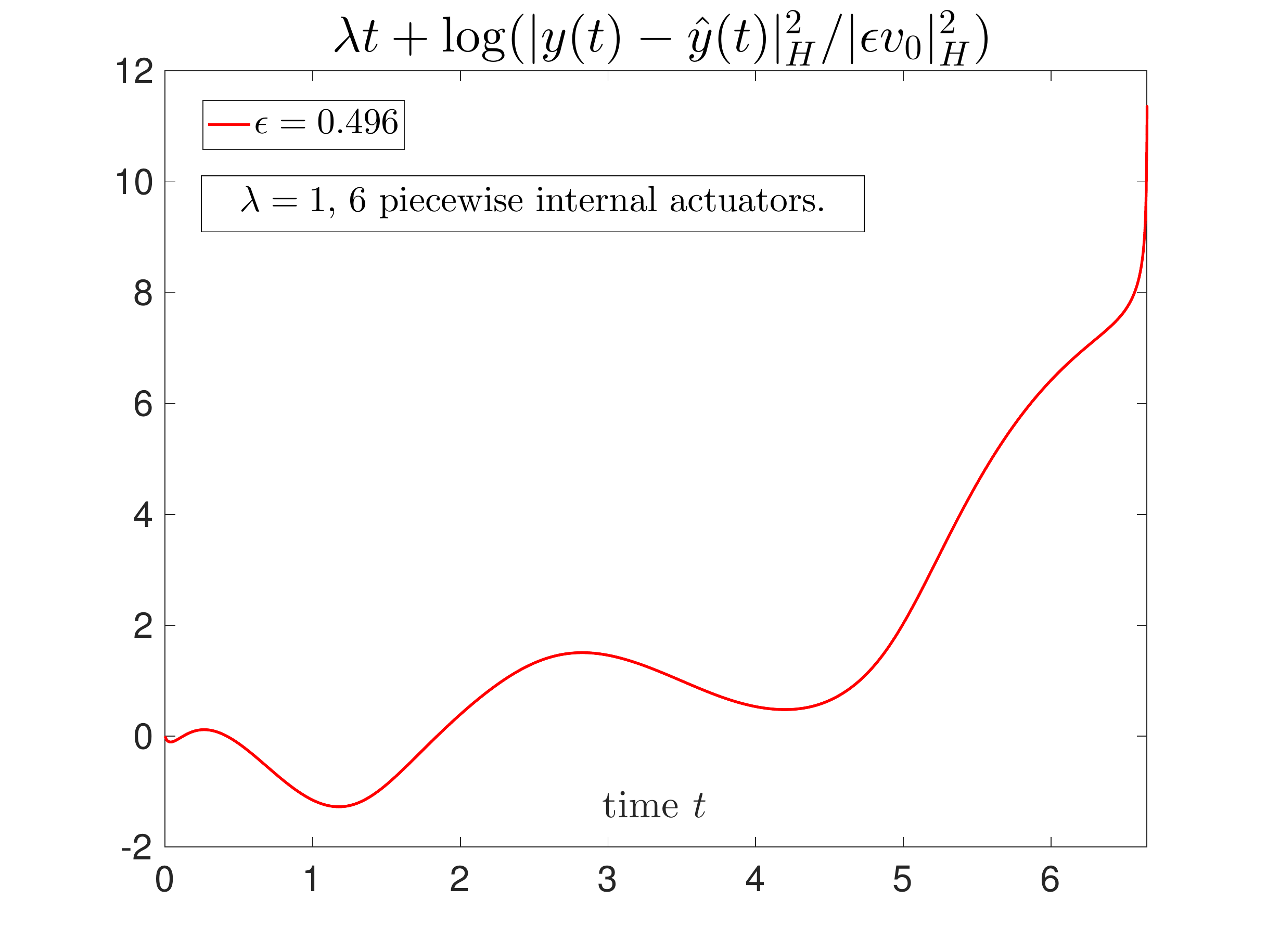}}
\subfigure[With $\epsilon = -0.578$.]
{\includegraphics[width=.325\linewidth]{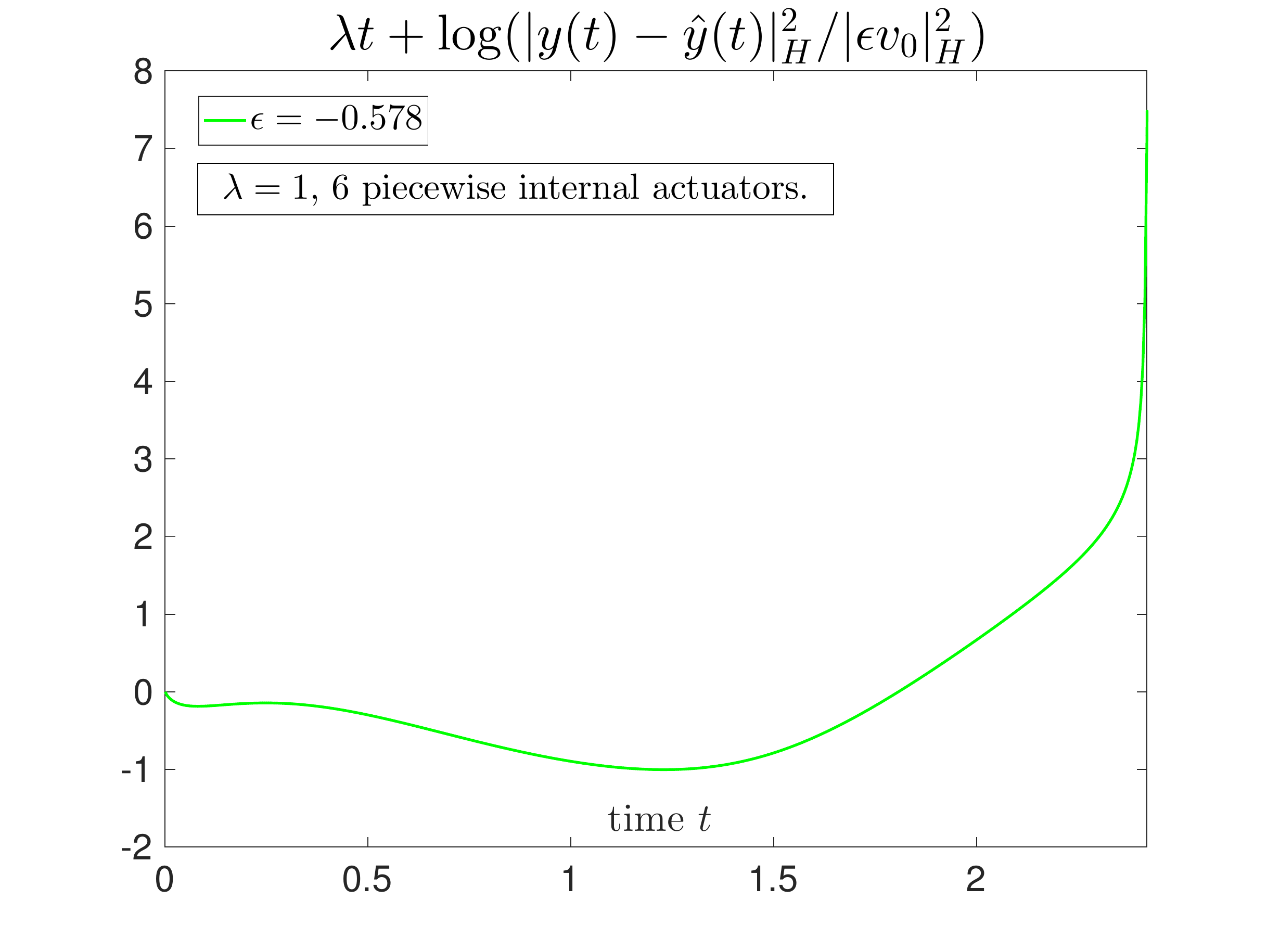}}
\caption{Internal feedback control. Convergence rate to $\hat{y}$ holds for small~$\epsilon$.}
\label{figure_InternalControlNL-feed}
\end{figure}

\begin{figure}[ht]  
  \centering
\subfigure[With $\epsilon = 0.1$.]
{\includegraphics[width=.325\linewidth]{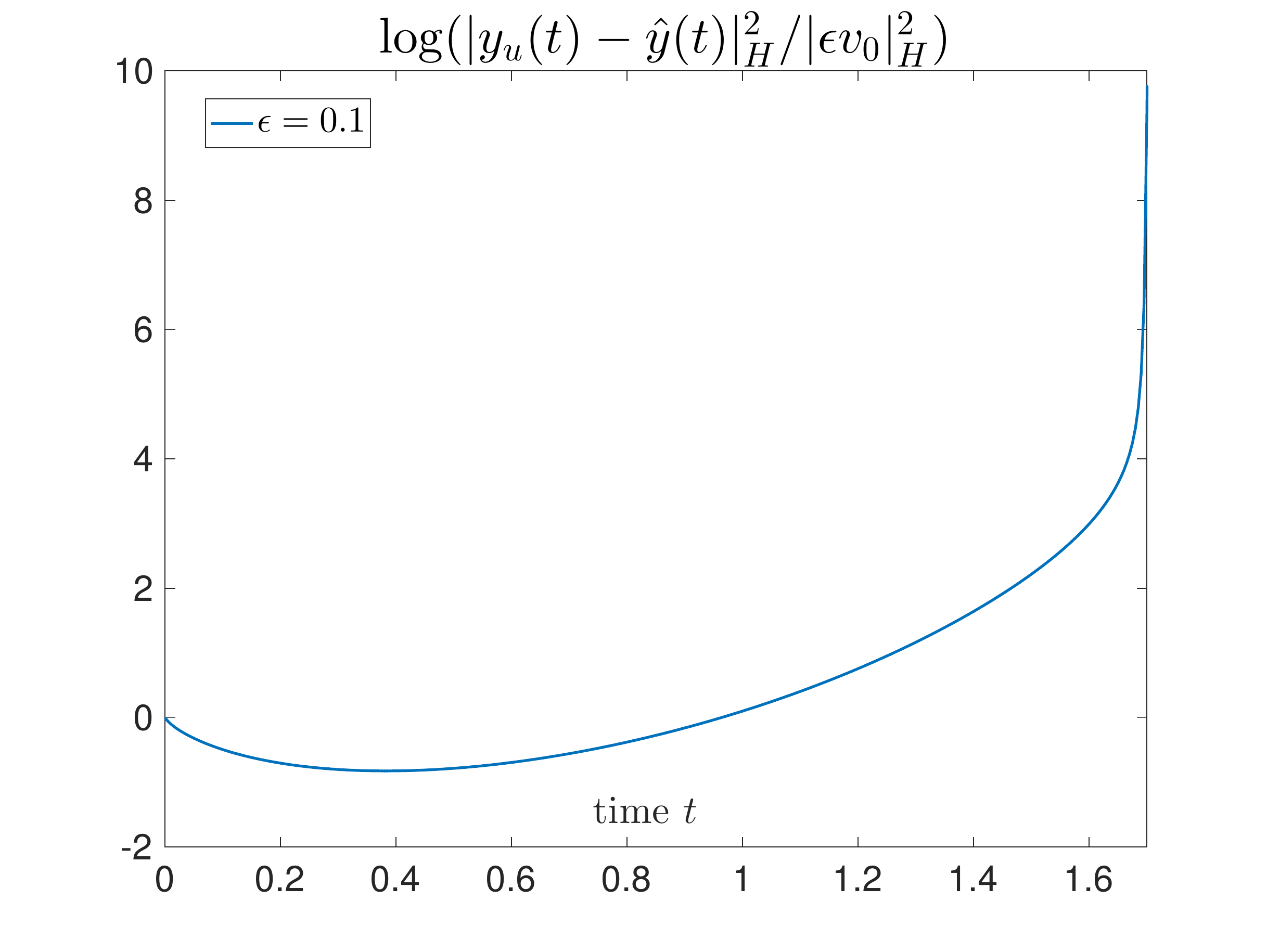}}
\subfigure[With $\epsilon = 0.496$.]
{\includegraphics[width=.325\linewidth]{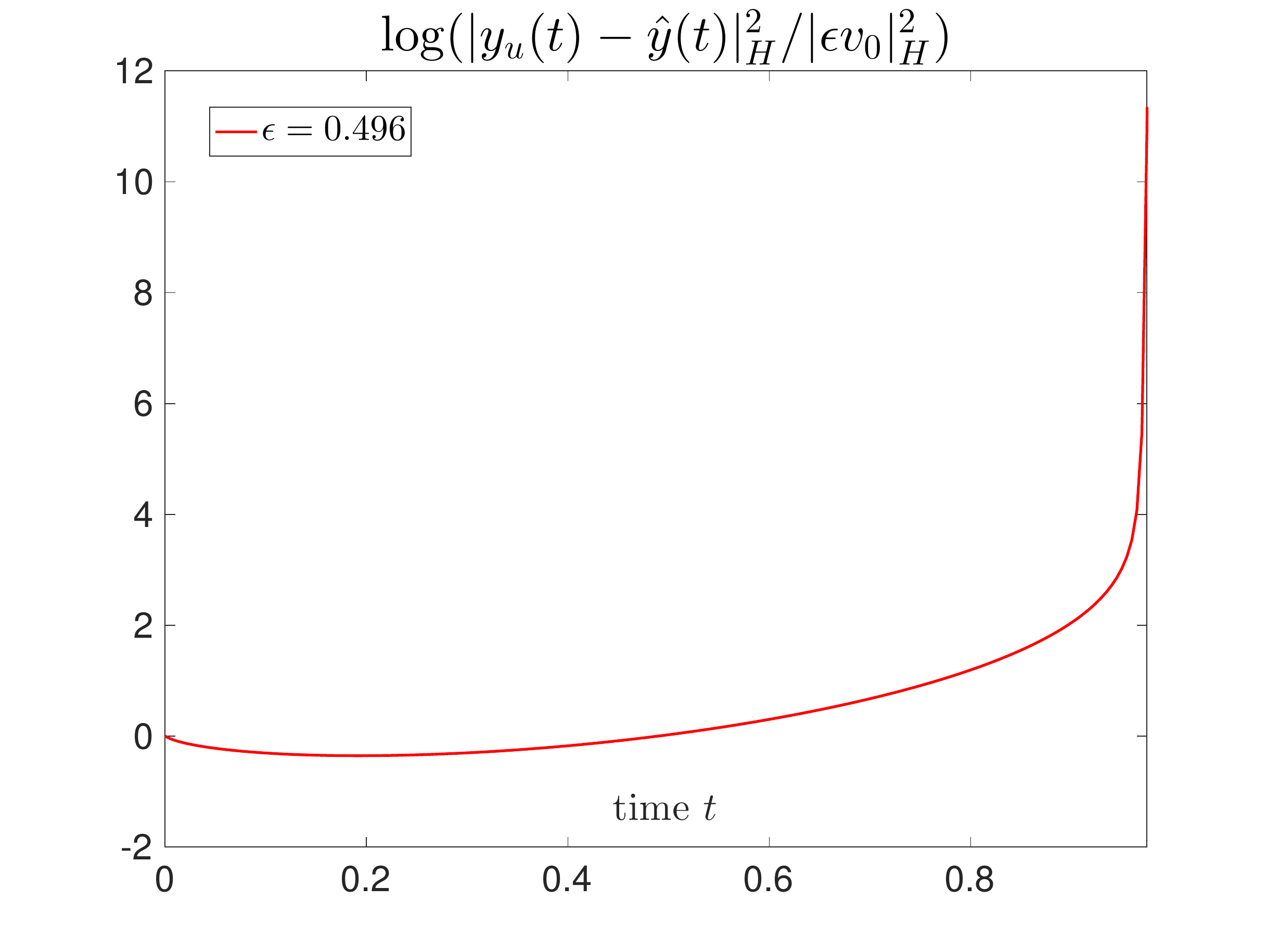}}
\subfigure[With $\epsilon = -0.578$.]
{\includegraphics[width=.325\linewidth]{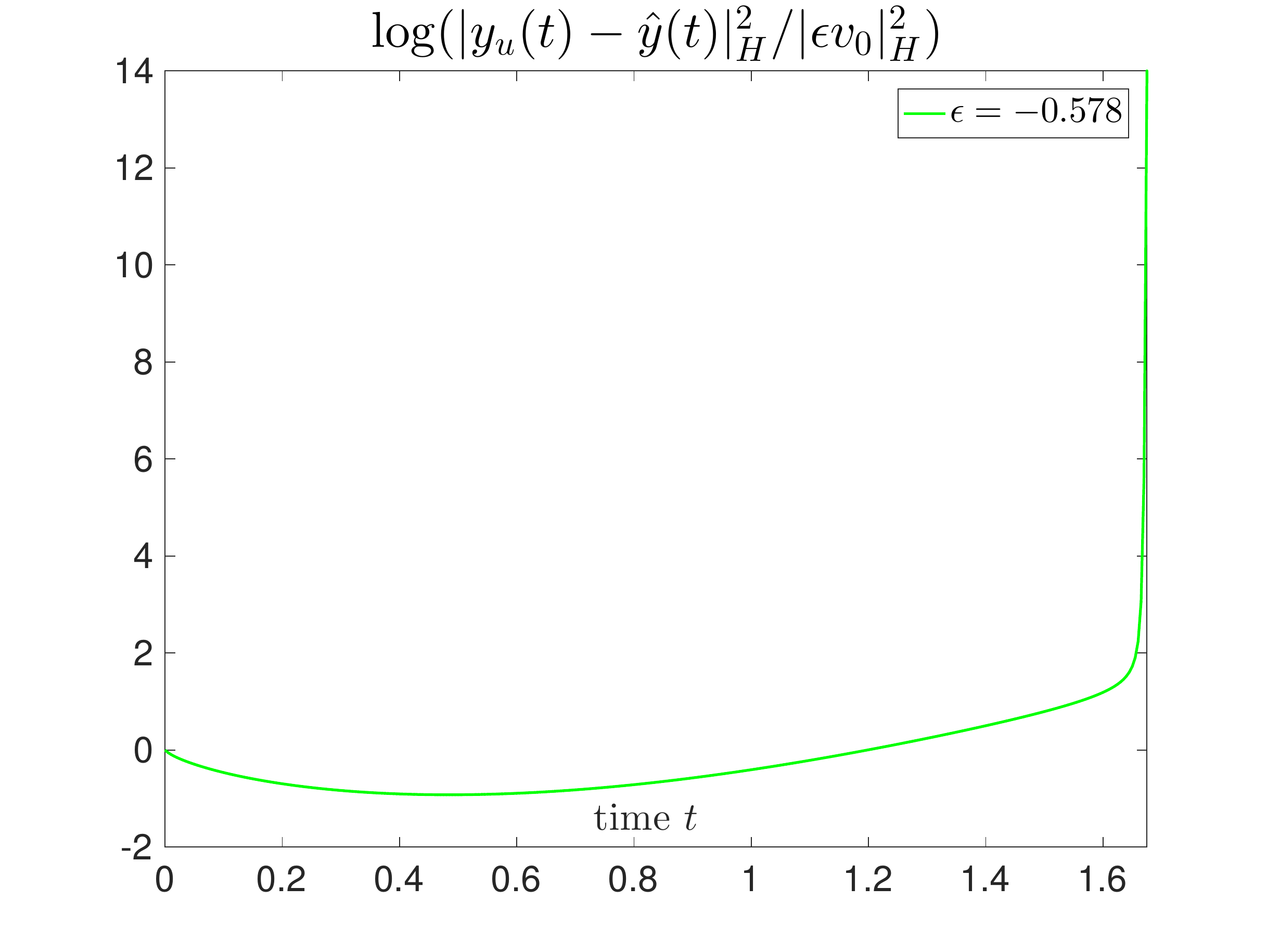}}
\caption{Uncontrolled solution.}
\label{figure_InternalControlNL-unc}
\end{figure}

Notice that, Theorem~\ref{T:st-traj} holds for~$v_0\in V$. The above choice does not satisfy this requirement, though the numerical results show that, in this example,
the feedback is still stabilizing the system locally.
Next, with the same actuators, we perform another simulation with $v_0\in V$ being the (numerical) solution of the elliptic system
\[
-\mu \Delta v_0 + \beta_{\rm r}v_0 + \nabla \cdot \left( \beta_{\rm c} v_0  \right) + h = 0,\quad v_0\rest\Gamma = 0,
\]
observing the analogous behaviour. Here, we choose
\begin{equation}\label{param_nonl}
\begin{split}
\mu = 0.5,\quad \beta_{\rm r}(x_1,x_2) &= \sin(x_1) + x_2,\quad \beta_{\rm c}(x_1,x_2) = \left( 2x_1x_2,~-2 \sin(x_2)  \right),\\
 \qquad \qquad\mbox{and}\quad h(x_1,x_2) &= \cos^2 (3y) + \sin (x) + 2.
\end{split} 
\end{equation}

In Figure \ref{figure_InternalControlNL-CompabilityCondition}(a), we plot the solution~$v_0$ of the equation above.
Again, in Figure \ref{figure_InternalControlNL-CompabilityCondition}, we observe that for small $\epsilon\in{[}-0.14, 0.1511{]}$ the
feedback control is able to stabilize the system.
In Figure \ref{figure_InternalControlNL-unc-CompabilityCondition}, the uncontrolled system is still unstable and exploding.

\begin{figure}[ht]  
  \centering
\subfigure[The function $v_0$.]
{\includegraphics[width=.325\linewidth]{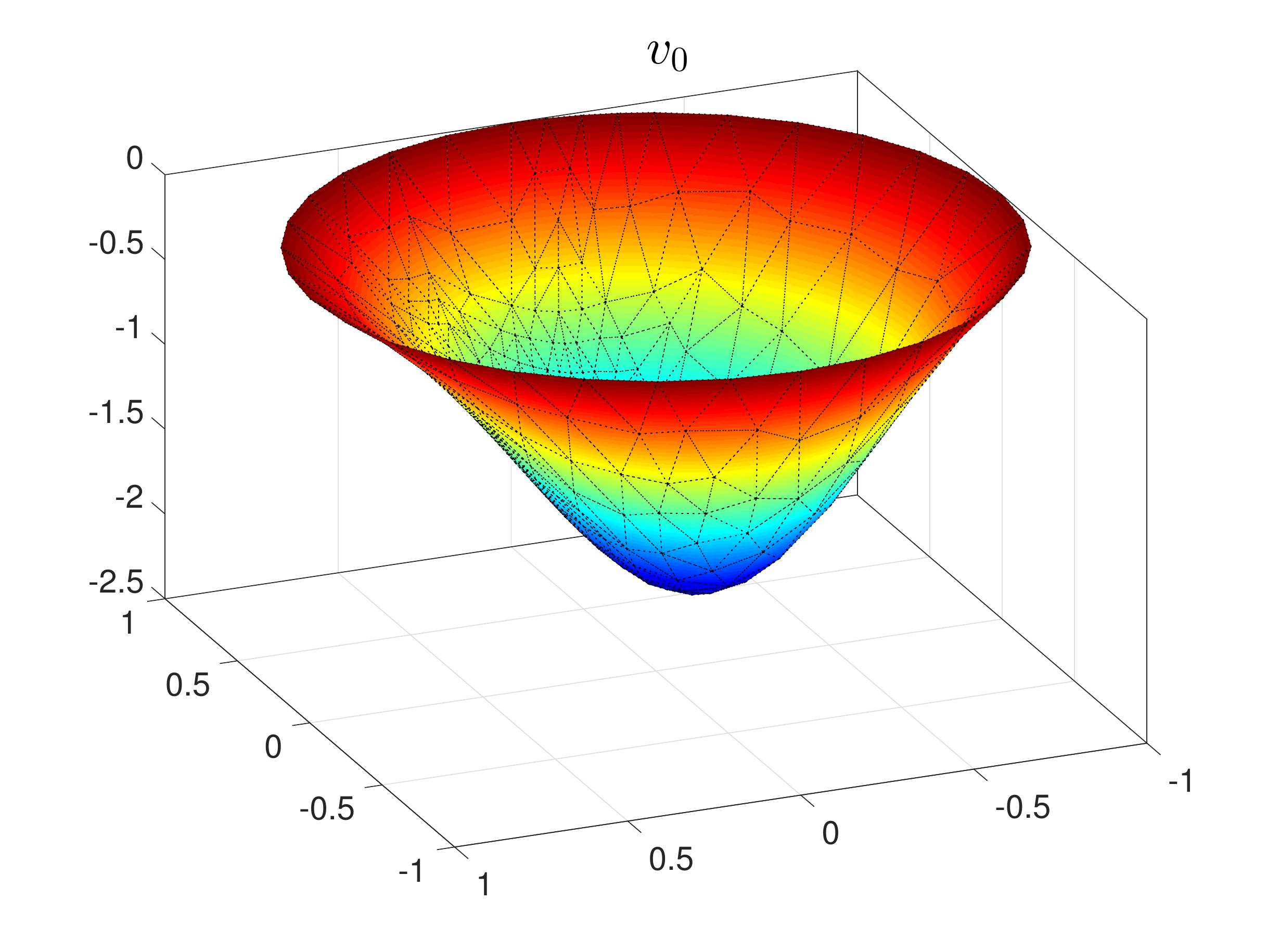}}
\subfigure[With $\epsilon\in{[}-0.14, 0.1511{]}$.]
{\includegraphics[width=.325\linewidth]{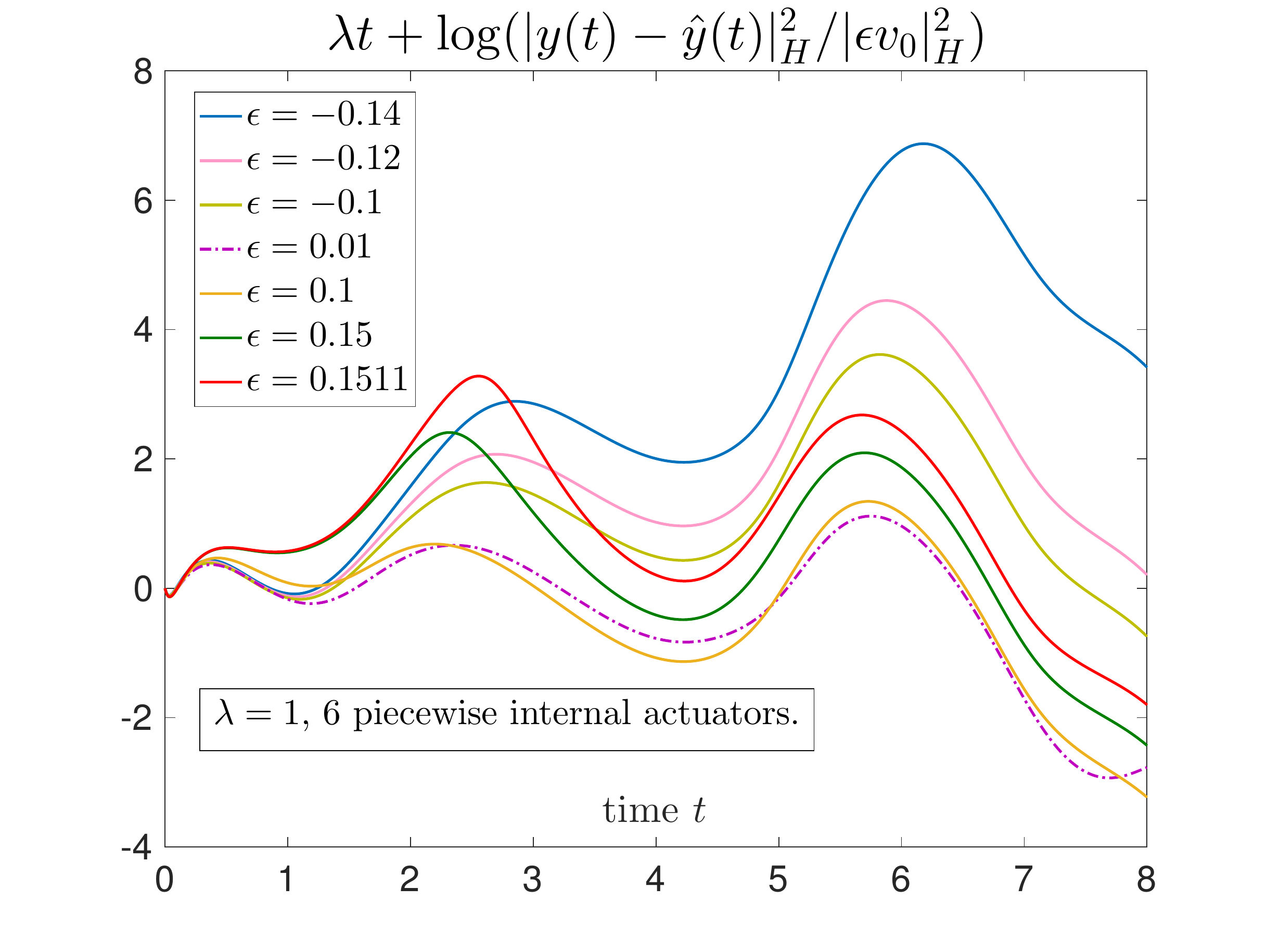}}
\subfigure[With $\epsilon \in \{ -0.142,~0.1512\}$.]
{\includegraphics[width=.325\linewidth]{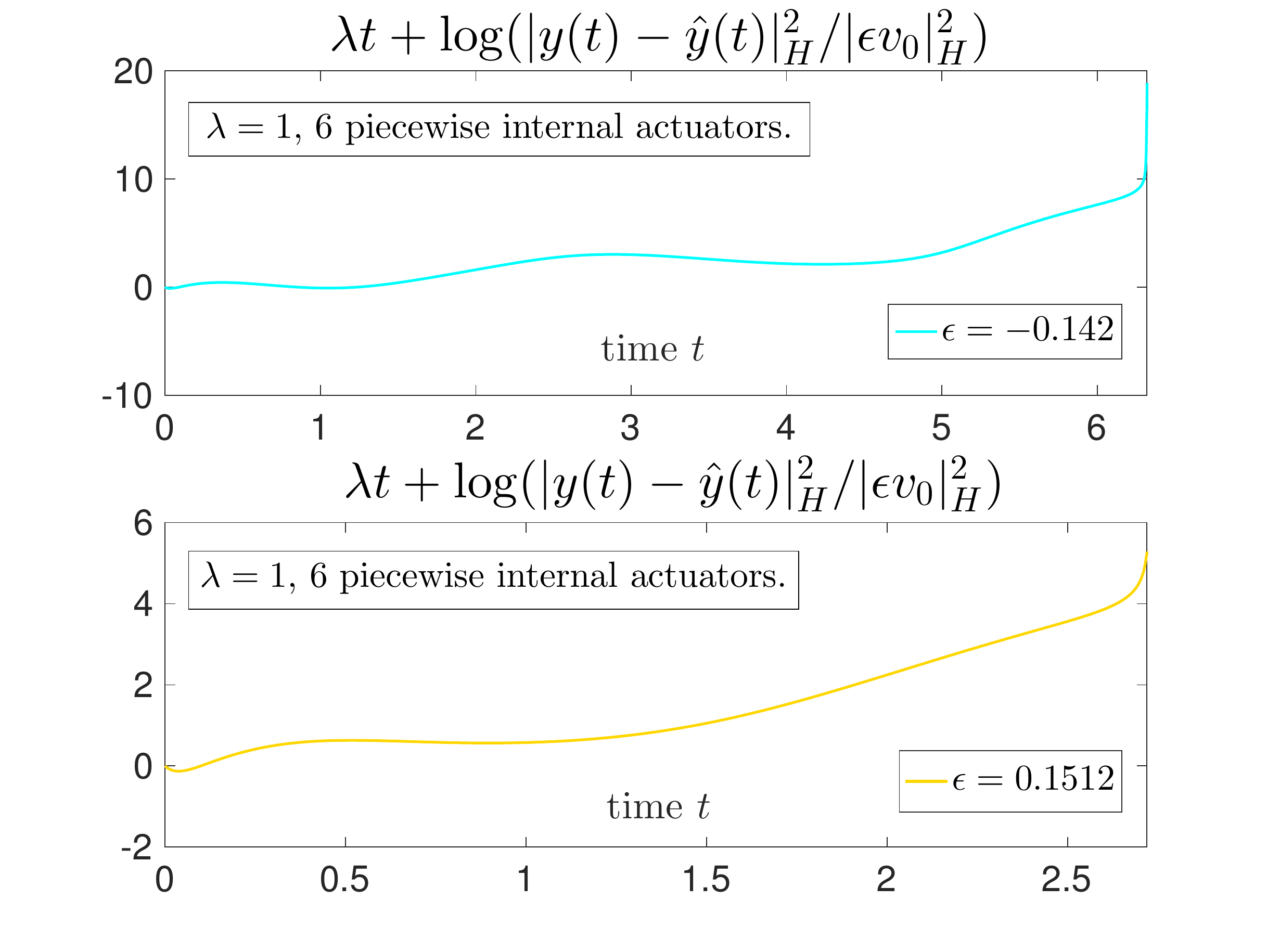}}
\caption{Internal feedback control. Convergence rate to $\hat{y}$ holds for small~$\epsilon$.}
\label{figure_InternalControlNL-CompabilityCondition}
\end{figure}

\begin{figure}[ht]  
  \centering
\subfigure[With $\epsilon = 0.01$.]
{\includegraphics[width=.325\linewidth]{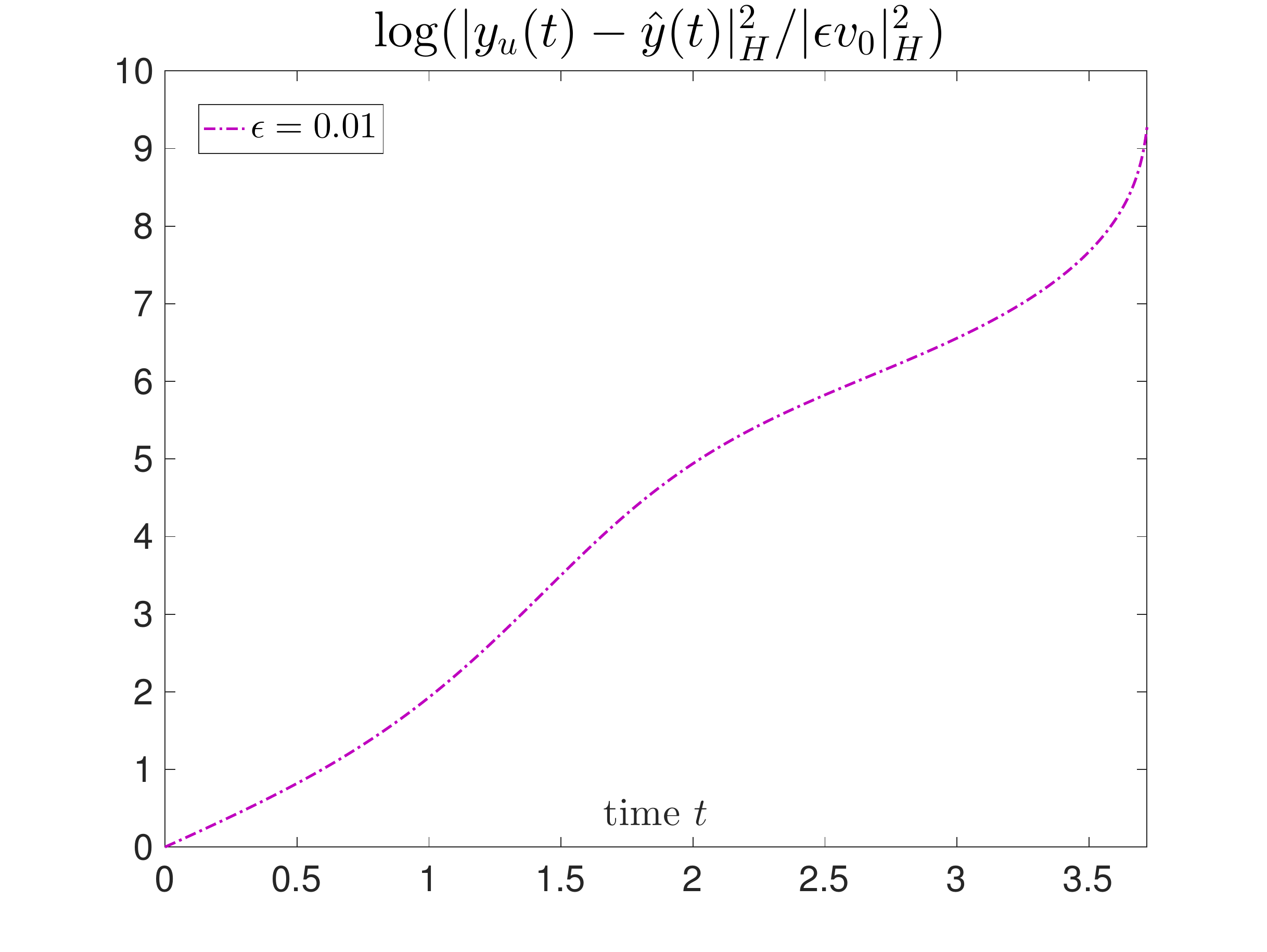}}
\subfigure[With $\epsilon = -0.14$.]
{\includegraphics[width=.325\linewidth]{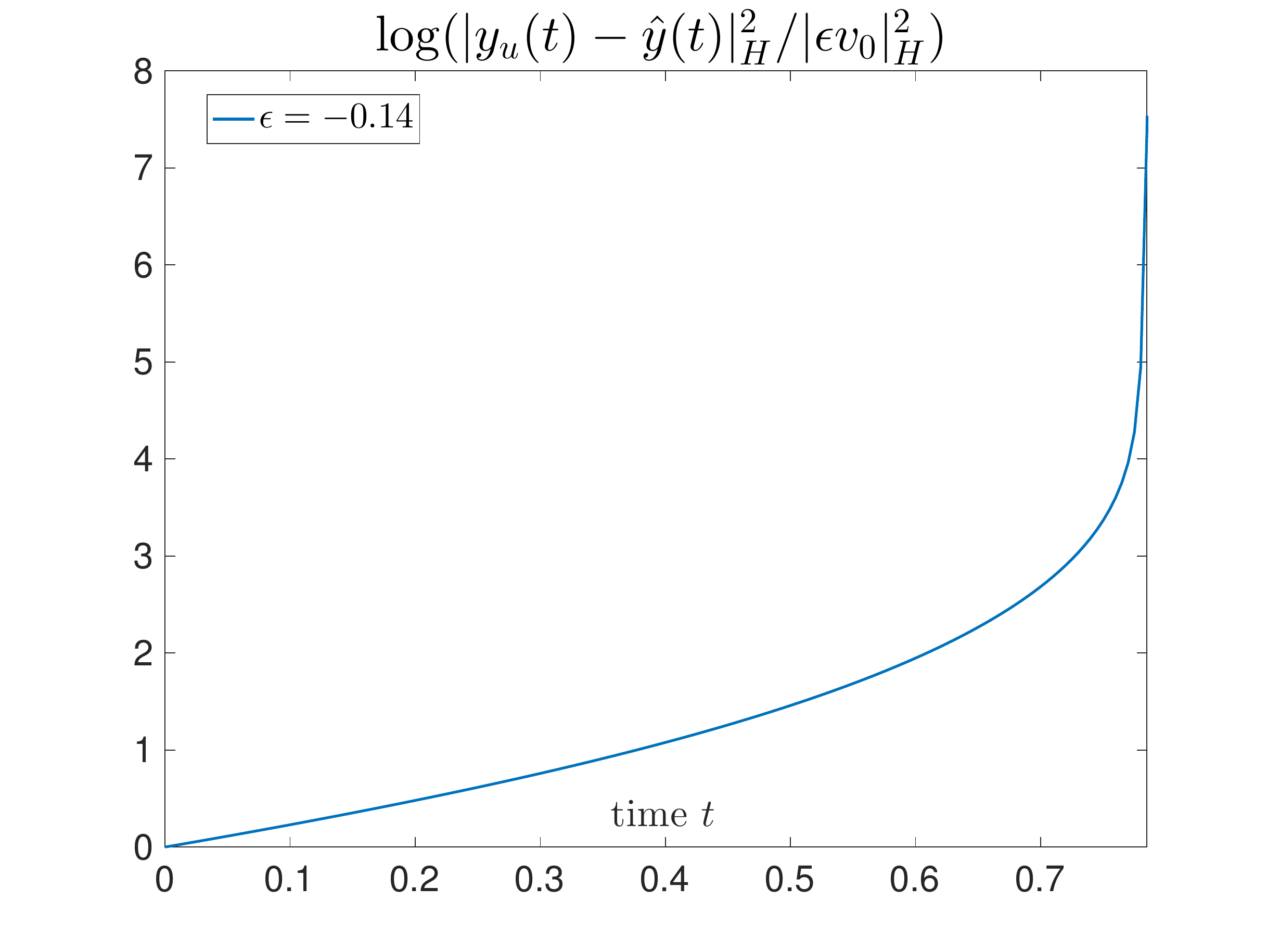}}
\subfigure[With $\epsilon = 0.1512$.]
{\includegraphics[width=.325\linewidth]{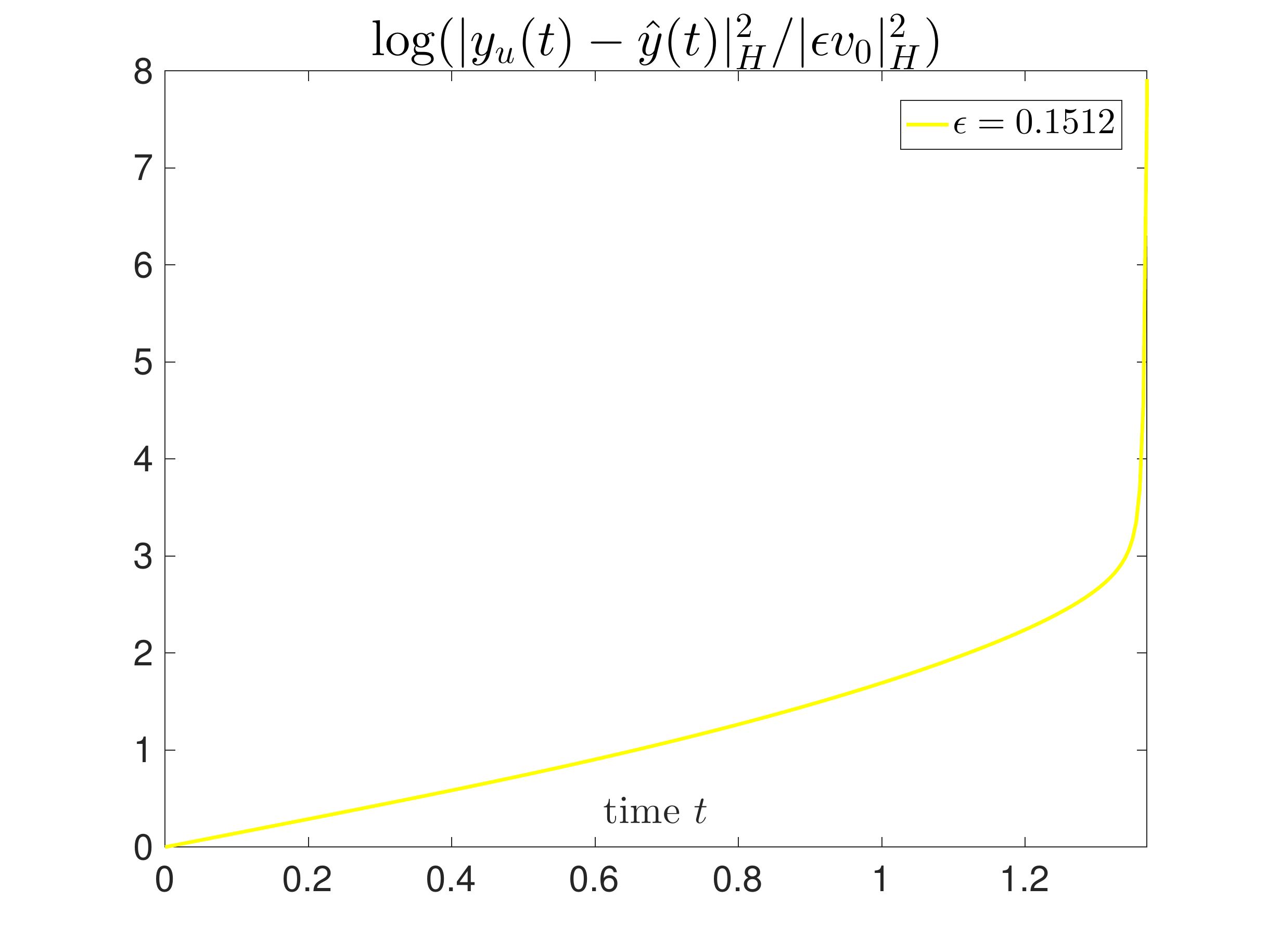}}
\caption{Uncontrolled solution.}
\label{figure_InternalControlNL-unc-CompabilityCondition}
\end{figure}

\subsubsection*{The case of boundary controls} 
Next, we use 6~boundary actuators as in~Section~\ref{ssS:bdrycontrols}, see~\eqref{sin_contbdry}.
Here in order to guarantee the compability condition~\eqref{CompatibilityCond}, we take $y_0 = \hat{y}_0+\epsilon v_0$ where $v_0$ is obtained by solving an elliptic equation 
\begin{align*}
-\mu \Delta v_0 + \beta_{\rm r} v_0 + \nabla \cdot \left( \beta_{\rm c} v_0  \right) + h = 0,\quad v_0\rest\Gamma = \sum_{i=1}^{M}{ \varrho_i \Psi_i},
\end{align*}
with~$\mu$, $\beta_{\rm r}$, $\beta_{\rm c}$, and $h$  as in~\eqref{param_nonl} and with $\varrho = \begin{bmatrix}
 1&1&0&0.5&0&0
\end{bmatrix}^{\top}.$
In Figure \ref{figure_BoundaryControlNL-feed}(a), we plot the function $v_0$ that we get by solving the system above.

Again, solving our system for different values of~$\epsilon$, in Figures~\ref{figure_BoundaryControlNL-feed}(b) we observe that under the boundary feedback control,
is stable for small~$\epsilon$. The feedback fails
to stabilize the system for bigger~$\epsilon$, as we see in Figure~\ref{figure_BoundaryControlNL-feed}(c). In this case we take~$\kappa^0=\kappa(0)=\epsilon\varrho$
in~\eqref{SBoundary-Dk}.

In Figure~\ref{figure_BoundaryControlNL-unc} we see that the uncontrolled solution is not stable, and it even explodes for small~$\epsilon$.
\begin{figure}[ht]
\centering
\subfigure[The function $v_0$.]
{\includegraphics[width=.325\linewidth]{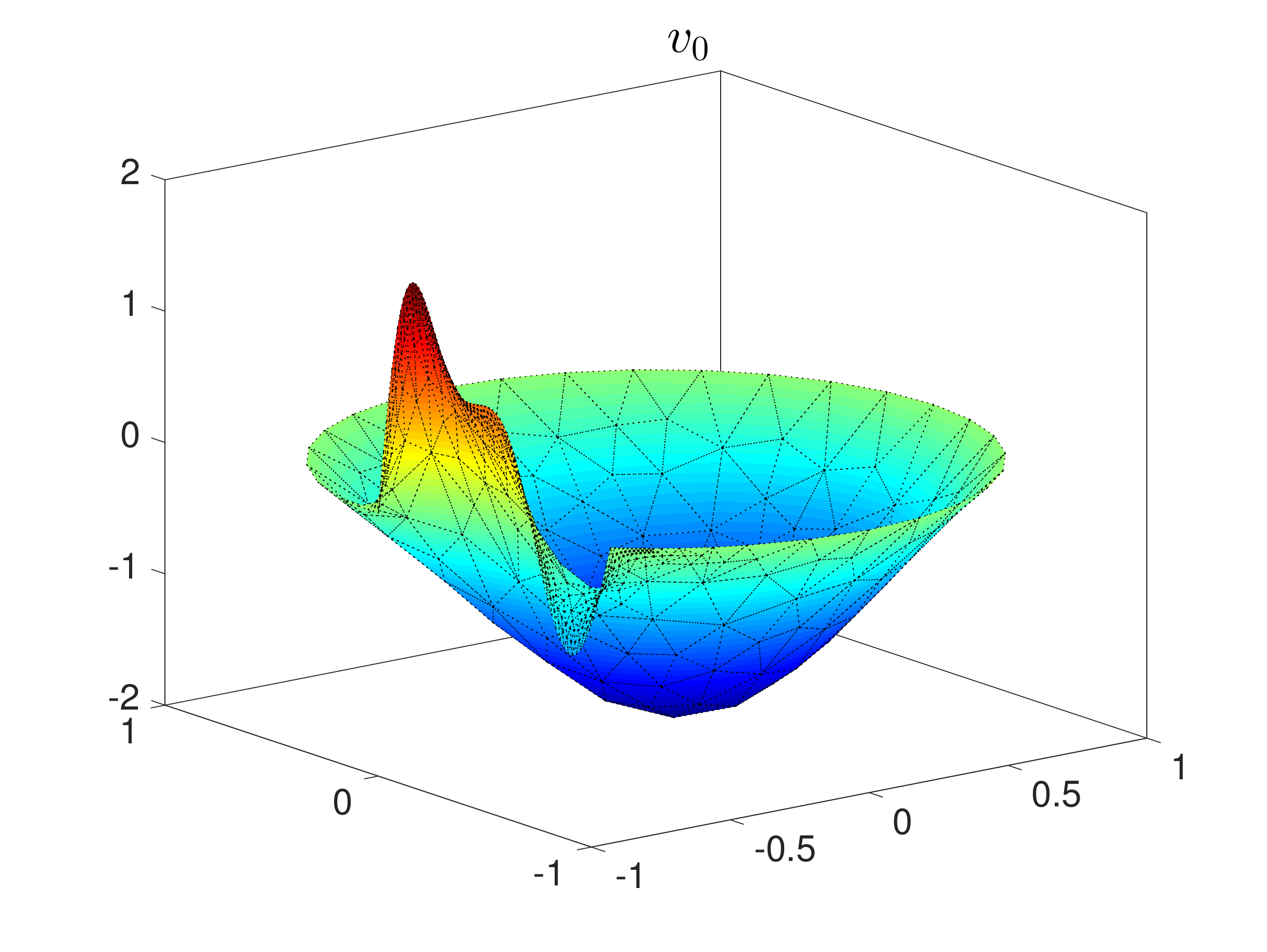}}
\subfigure[With $\epsilon\in{[}-0.008,0.041{]}$.]
{\includegraphics[width=.325\linewidth]{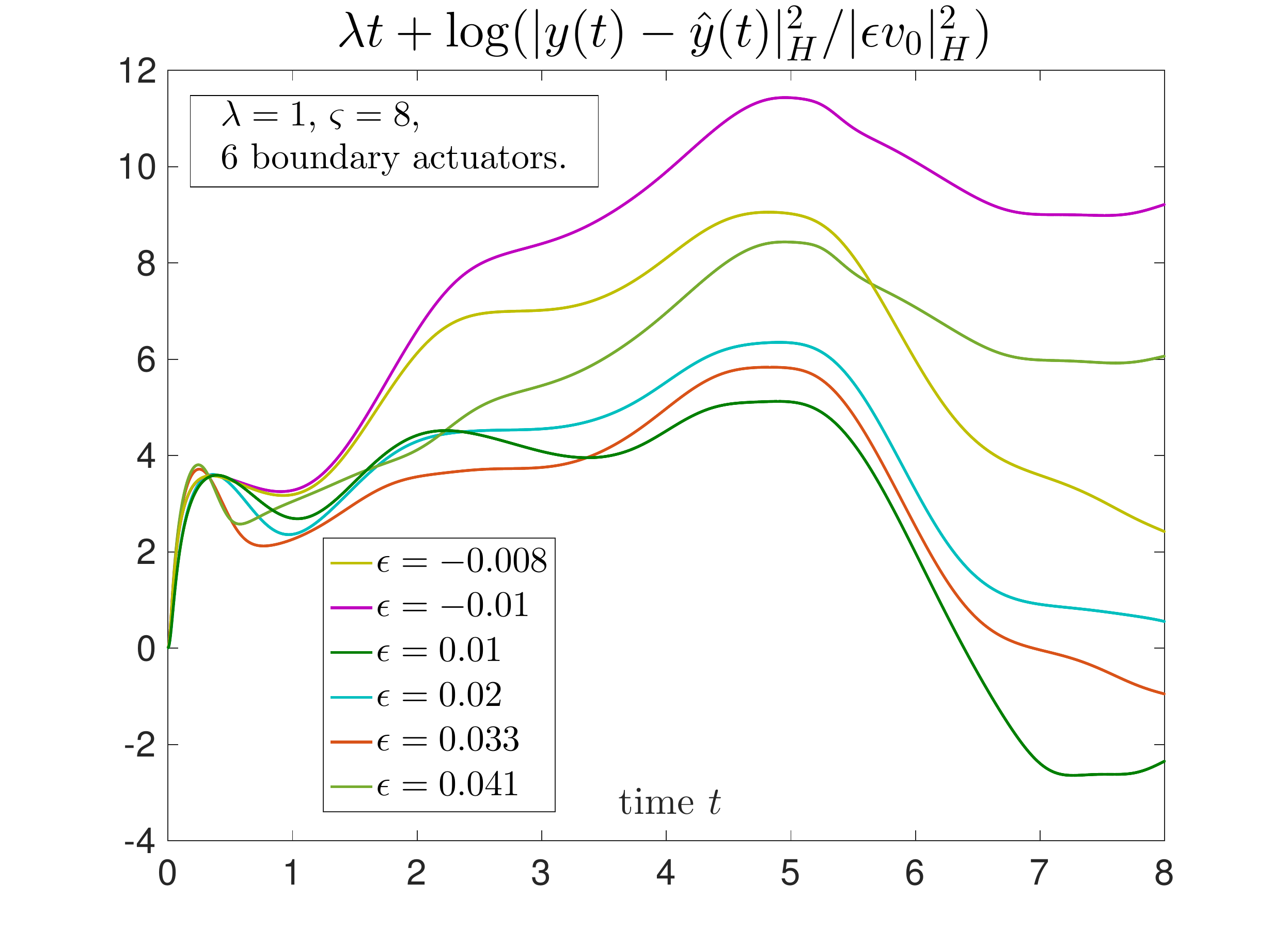}}
\subfigure[With $\epsilon \in \{ -0.011,~0.0429\}$.]
{\includegraphics[width=.325\linewidth]{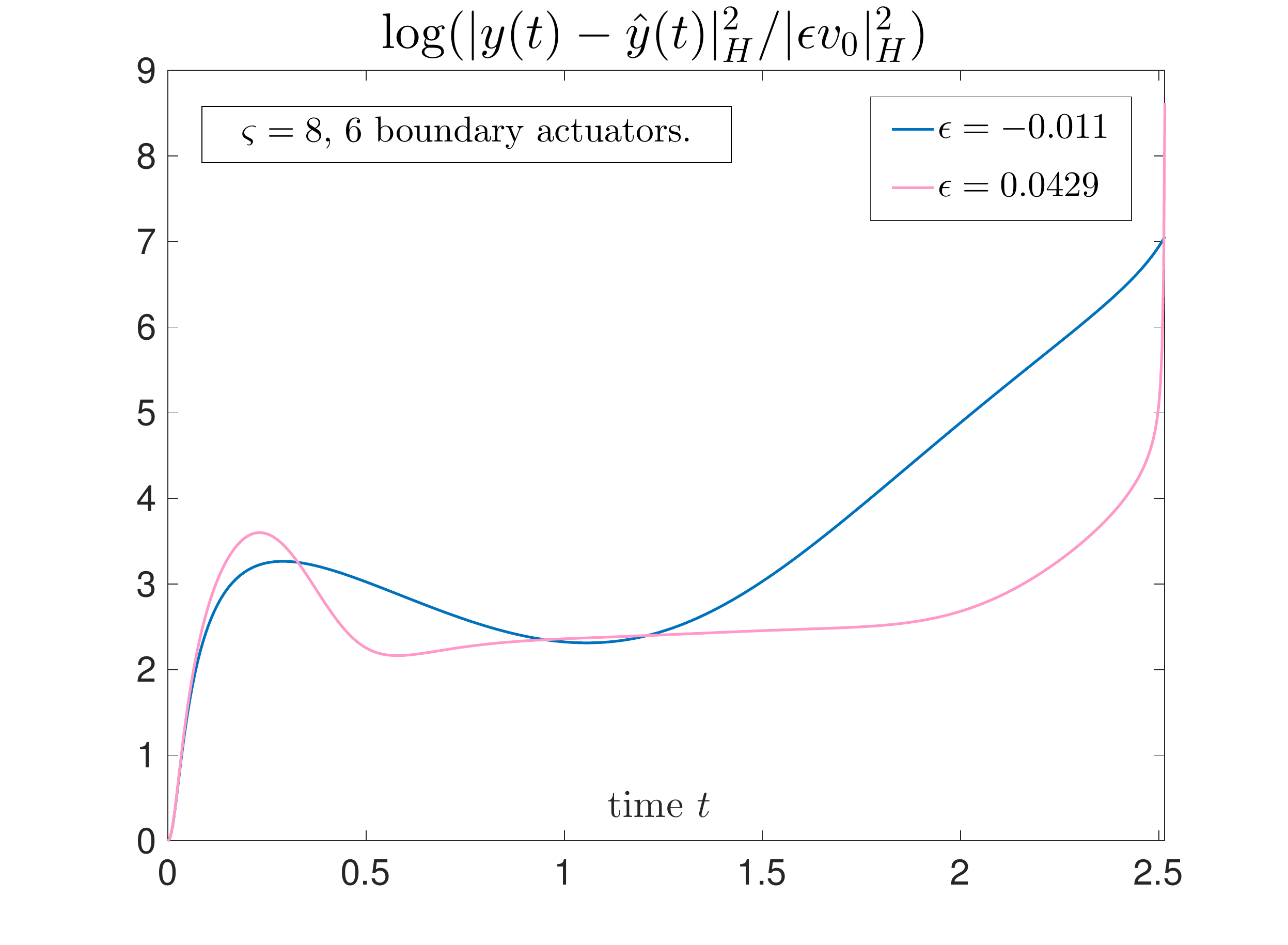}}
\caption{Boundary feedback control. Convergence rate to $\hat{y}$ holds for small~$\epsilon$.}
\label{figure_BoundaryControlNL-feed}
\end{figure}
\begin{figure}[ht]
\centering
\subfigure[With $\epsilon = -0.01$.]
{\includegraphics[width=.325\linewidth]{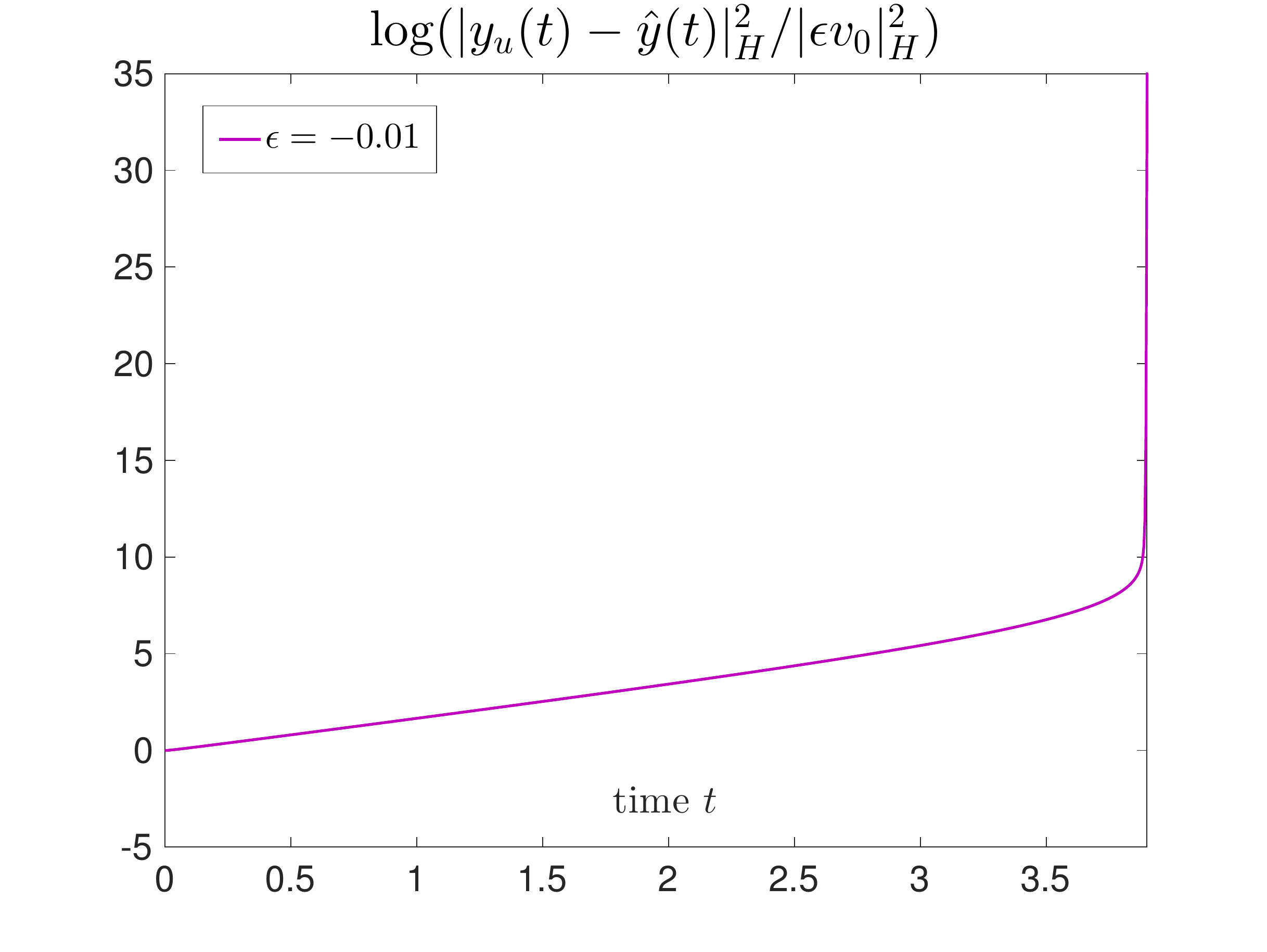}}
\subfigure[With $\epsilon = 0.01$.]
{\includegraphics[width=.325\linewidth]{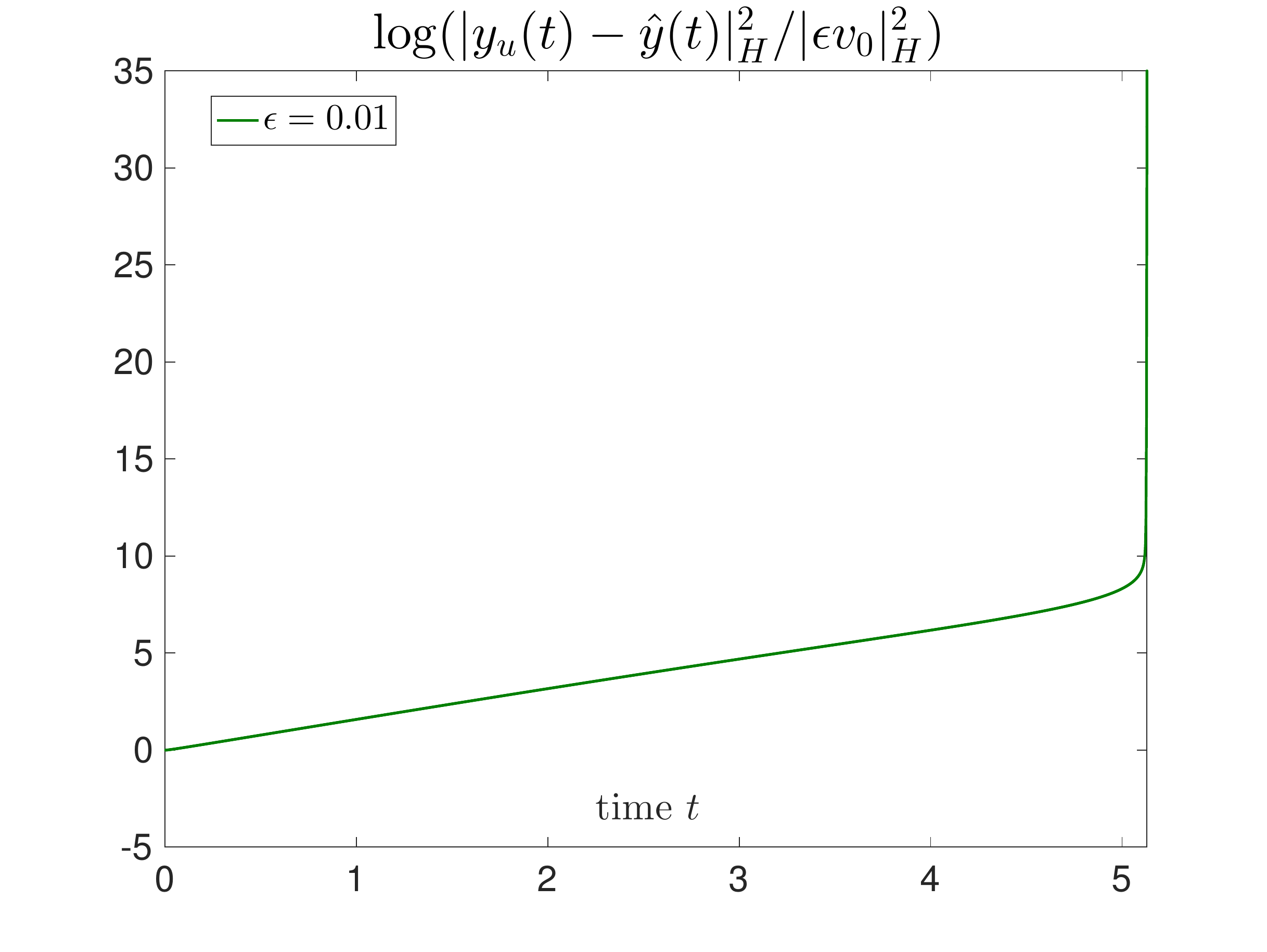}}
\subfigure[With $\epsilon = 0.0429$.]
{\includegraphics[width=.325\linewidth]{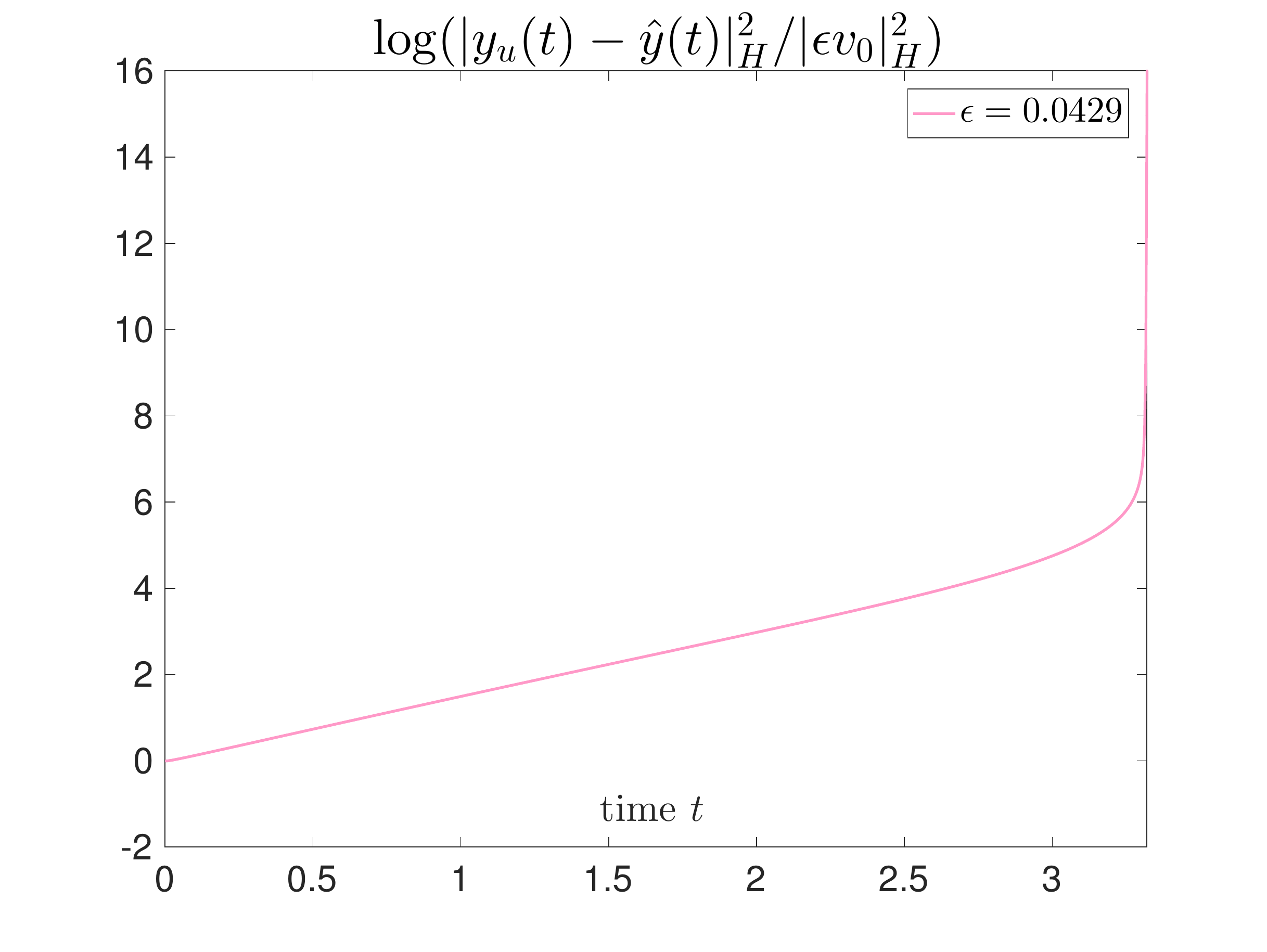}}
\caption{Uncontrolled solution.}
\label{figure_BoundaryControlNL-unc}
\end{figure}

\subsection{The discretization error} Here we take the time interval~$[0,5]$ and consider the function
\[
\hat{y}(x_1,x_2) = \left( t^2 - 2t \right) x_1^3 \sin^2 (x_2).
\]
Of course we expect the discrete solution vector~$\bar y$ to get closer to $\bar{\hat{y}}$ as the triangular mesh in~$\mathbb D$ gets finer and the timestep gets smaller.
Here we show, just through a simulation that this is the case for the discretization we propose.

We check the error that we obtain as the mesh pair~$(\DDD,k)$ is refined.
We start with a pair~$(\DDD_1,k_1)$ where~$\DDD_1$ is a triangular mesh of the cylinder~$\mathbb D$,
where each edge of each triangle in the mesh has a length bounded above by~$h_1 = 0.6$,
and~$k_1 = 0.01$ is the time step.  More precisely~$\DDD_1$ was generated
by the MATLAB function~{\tt initmesh} with the input~${\tt Hmax}= 0.6$.

Recursively, we construct the finer pair~$(\DDD_{r+1},k_{r+1})$ by refining
regularly the triangulation~$\DDD_{r}$ (by connecting the middle points of the edges of each triangle),
and by dividing the time-step by~$2$, $k_{r+1}=\frac{k_{r}}{2}$.
Hence, we want the solution~$y[r]$ obtained with~$(\DDD_r,k_r)$ to converge to~$\hat{y}$ as~$r$ increases.

We will also compare our approach with a Newton and a Heun based approach. The methods differ in the way we solve~\eqref{SNonlin-CN}.

\begin{remark}
We follow~\cite[Section~7.2.2]{Rosloniec08} and~\cite[Section~4.1.2]{McDonough07}
for the terminology ``Heun approach'' or ``Heun method''. However, in different references the terminology may vary, for example we find
``Modified Euler method'' in~\cite[Section~5.4]{BurdenFaires10},
or ``Explicit Trapezoidal method'' in~\cite[Section~4.1]{AscherPetzold98}. 
\end{remark}

\subsubsection*{The Heun based approach} Here instead of the extrapolation~\eqref{SNonlin-ourext}
it is used an explicit Euler step, see~\cite{McDonough07}, to find a preliminary guess $y^{*}_{\rm i}$ for~$y^{j+1}_{\rm i}$ as
\begin{align} \label{Heun-GuessbyEuler}
\begin{split}
\mathbf M_{\rm ii} \overline y^{*}_{\rm i} &=\left(\mathbf M_{\rm ii} - k \nu \mathbf{S}_{\rm ii} \right) \overline y^{j}_{\rm i}  +  \left( \mathbf M_{\rm ib} - k \nu \mathbf{S}_{\rm ib} \right)\overline g^{j} \\
												&\quad - \mathbf M_{\rm ib} \overline g^{j+1}  -k \begin{bmatrix}\Ma_{\rm ii} &\Ma_{\rm ib}\end{bmatrix} \overline f_0^{j} -k\left(\NN_{1,D}(\overline y^{j})\right)_{\rm}
\end{split}												
\end{align}
By defining $\overline y_{\mathbf G} = \begin{bmatrix}
\overline y^{*}_{\rm i} \\   \overline g^{j} 
\end{bmatrix}$,  we arrive to the scheme
\begin{equation*}
\begin{split}
\quad\mathbf A_{\rm ii}^\oplus\overline y_{\rm i}^{j+1}
&= \mathbf A_{\rm ii}^\ominus\overline y_{\rm i}^j -\mathbf A_{\rm ib}^\oplus \overline g^{j+1}
+\mathbf A_{\rm ib}^\ominus\overline g^{j}-k\begin{bmatrix}\Ma_{\rm ii} &\Ma_{\rm ib}\end{bmatrix}
\left(\overline f_0^{j+1}+\overline f_0^{j}\right)\\
&\quad-k\left( \left(\NN_{1,D}(\overline y_{\mathbf G})\right)_{\rm i}+\left(\NN_{1,D}(\overline y^j)\right)_{\rm i}\right).
\end{split}
\end{equation*}

\subsubsection*{The Newton based approach} 
We solve~\eqref{SNonlin-CN}, by a fixed point iterative procedure, see~\cite[Section 10.2]{BurdenFaires10}.
We write~\eqref{SNonlin-CN} in the form
\[
 F(\overline y^{j+1}_{\rm i})=0,
\]
with
\[
\begin{split}
F(w)&\coloneqq -\mathbf A_{\rm ii}^\oplus w -k\left(\NN_{1,D}\left(\begin{bmatrix}w\\ \overline g^{j+1}\end{bmatrix}\right)\right)_{\rm i}
+ H^j,\\
H^j &\coloneqq  \mathbf A_{\rm ii}^\ominus\overline y_{\rm i}^j -\mathbf A_{\rm ib}^\oplus \overline g^{j+1}
+\mathbf A_{\rm ib}^\ominus\overline g^{j}
-k\begin{bmatrix}\Ma_{\rm ii} &\Ma_{\rm ib}\end{bmatrix}
\left(\overline f_0^{j+1}+\overline f_0^{j}\right)-k\NN_{1,D}(\overline y^{j})_{\rm i}.
\end{split}
\]
Next, we take the derivative of $F$ at a given vector~$w^0$:
\[
\begin{split}
\ed F_{w^0}&\coloneqq-\mathbf A_{\rm ii}^\oplus
-k\left(\Ma\left(3c_3\begin{bmatrix}w^0\\ \overline g^{j+1}\end{bmatrix}^2+2c_2\begin{bmatrix}w^0\\ \overline g^{j+1}\end{bmatrix}+c_1\right)
+\GGG_{x_1}\begin{bmatrix}w^0\\ \overline g^{j+1}\end{bmatrix}+\GGG_{x_2}\begin{bmatrix}w^0\\ \overline g^{j+1}\end{bmatrix}\right)_{\rm i}\\
&=A^N_{w_0}+B(\overline g^{j+1}),
\end{split}
\]
with
\[
\begin{split} 
A^N_{w_0}&\coloneqq-\mathbf A_{\rm ii}^\oplus
-k\left(\Ma_{\rm ii}\left(3c_3(w^0)^2+2c_2w^0+c_1\right)
+\GGG_{x_1,\rm ii}w^0+\GGG_{x_2,\rm ii}w^0\right),\\
B(\overline g^{j+1})&\coloneqq-k\left(\Ma_{\rm ib}\left(3c_3(\overline g^{j+1})^2+2c_2\overline g^{j+1}+c_1\right)
+\GGG_{x_1,\rm ib}\overline g^{j+1}+\GGG_{x_2,\rm ib}\overline g^{j+1}\right),
\end{split}
\]
where the powers of vectors are understood to be taken coordinate-wise.

Now we find~$\overline y^{j+1}_{\rm i}$ iteratively, see~\cite{GraagTapia74} or \cite[Section 10.2]{BurdenFaires10}, as (or close to) the limit of
\[
w^{n+1}= w^n - \left [ F'(w_n)  \right]^{-1} F(w_n).
\]
By a continuity argument, if the time-step is small enough, we can expect the method to converge when we take
the starting vector $w^0=\overline y^{j}_{\rm i}$. Another option, see~\cite[page 88]{McDonough07}, is to
take $w^0 = \overline y^{*}_{\rm i}$ as in \eqref{Heun-GuessbyEuler}, which may allow us to reduce the number of iterations.
We used the latter in the simulations
presented here.
The stopping criteria which we used was
\begin{enumerate}
\item[i.] $\left| w^{n+1} - w^{n} \right|_{L^{\infty}} < \rm tol $.
\item[ii.] $\frac{\left| w^{n+1} - w^{n} \right|_{L^{\infty}}}{\left|  w^{n} \right|_{L^{\infty}}} < \rm tol $.
\item[iii.] $ F(w^{n+1}) < \rm tol$.
\end{enumerate}
with tolerance~${\rm tol}={\tt eps}$, where~${\tt eps}\approx 10^{-16}$ is the MATLAB's epsilon (i.e., ``zero'') for double precision.
Figure~\ref{figure_Refinement}(a), 
shows the $L_2$-norm, $\norm{y[r]-\hat{y}}{L^2((0, 5 ),L^2(\Omega,\R))}$ in the cylinder, of the discretization error with $r \in \{ 1,2,3,4,5\}$.
The Bochner norm $\norm{v}{L^2((0, 5 ),L^2(\Omega,\R))}$ is to be understood as the discrete approximation
\begin{align*}
\left(\underline{|v|_H} \right)^{\top} \mathbf{M}_{\rm t} \underline{|v|_H}\approx\textstyle\int_0^{5} |v(t)|_H|v(t)|_H\,\ed t =\norm{v}{L^2((0, 5 ),L^2(\Omega,\R))}^2, 
\end{align*}
where $\underline{|v|_H}$ is a column vector with $N_t +1$ components where each component is the discrete
approximation of the norm $|v(jk)|_H$, that is,  $\left(\underline{|v|_H} \right)_j \coloneqq \sqrt{\left(\overline{v}^j\right)^{\top} \mathbf{M} \overline{v}^j}$;
and $\mathbf{M}_{\rm t}$ is the mass matrix associated with the regular time mesh as in~\eqref{TimeMesh}, with~$T=5$. That is, (cf.~\cite[Section 5.1]{KroRod15}) 
\begin{align*}
\mathbf{M}_{\rm t} \coloneqq \frac{k}{6}
\begin{bmatrix}
    2 & 1 & 0 & 0 & \dots  & 0 \\
    1 & 4 & 1 & 0 & \dots  & 0  \\
     0 & 1 & 4 & 1 & \ddots  & \vdots  \\
\vdots & \ddots &  \ddots &  \ddots &  \ddots & 0 \\
0 & \dots & 0 &1 & 4 & 1 \\
0 & \dots & 0 & 0 & 1 & 2 
\end{bmatrix}.
\end{align*}

In the Figure~\ref{figure_Refinement}(b) we can see that the rate of convergence approaches~$4$ (i.e., second order convergence) for all the three approaches.

It is clear that the cheapest method is the one which we propose, and that the Newton method is the more expensive one.
Of course the Newton based approach is expected to be the most
accurate. However, we observe in the Figure~\ref{figure_Refinement} that, for this example,
the results of these two approaches do seem to match each other.

To find a difference between these two approaches, we have to take a bigger time step. We also take the bigger time interval $[0,6]$ and $h = 0.1$.
In Figure~\ref{figure_NewtonvsExtravsHeun}(a) and \ref{figure_NewtonvsExtravsHeun}(b), we can see that
with only~$60$ time nodes, the Newton approach gives already almost the same result as
with~$240$ time nodes. While, we can clearly see that there is an error associated with the time step for the approach that we propose using a linear extrapolation.
Notice, however that the approach we use can
be less expensive for~$240$ time nodes than the Newton approach with~$60$ time nodes,
depending on the number of Newton iterations at each time step, which is expected to increase with the time step (in our experiment with ~$60$ time nodes the average
number of iterations was $5.233$).

Another advantage of our method is
that we can invert the system~\eqref{SNonlin-D} iteratively, as we have done in our simulations by using the conjugate gradient method.
With the Newton approach the associated linearization matrices, at a given time $t=jk$,
may be not symmetric and so we may need to use a direct solver, for this in our simulations we have used the backslash ``$\backslash$'' solver from MATLAB.

The main disadvantage of the Heun method is the fact that it uses an extrapolation based on an explicit Euler guess for the solution~$y^{j+1}$ at time~$t=(j+1)k$,
which may lead to some oscillations/fluctuations in time if the time step is not small enough. With~$60$ time nodes, we observed that Heun approach fails, that is, the numerical
solution has exploded.
As we increase the number of time nodes to $240$, in Figure \ref{figure_NewtonvsExtravsHeun}(b) we can see that
the solution obtained by Heun approach is still the worst one. 
In Figure \ref{figure_NewtonvsExtravsHeun}(c), with a large enough number of time nodes, more precisely $600$, we cannot see a
remarkable difference among the three approaches.  
\begin{figure}[!]  
\centering
\subfigure[$L_2$-norm of the error.]
{\includegraphics[width=.495\linewidth]{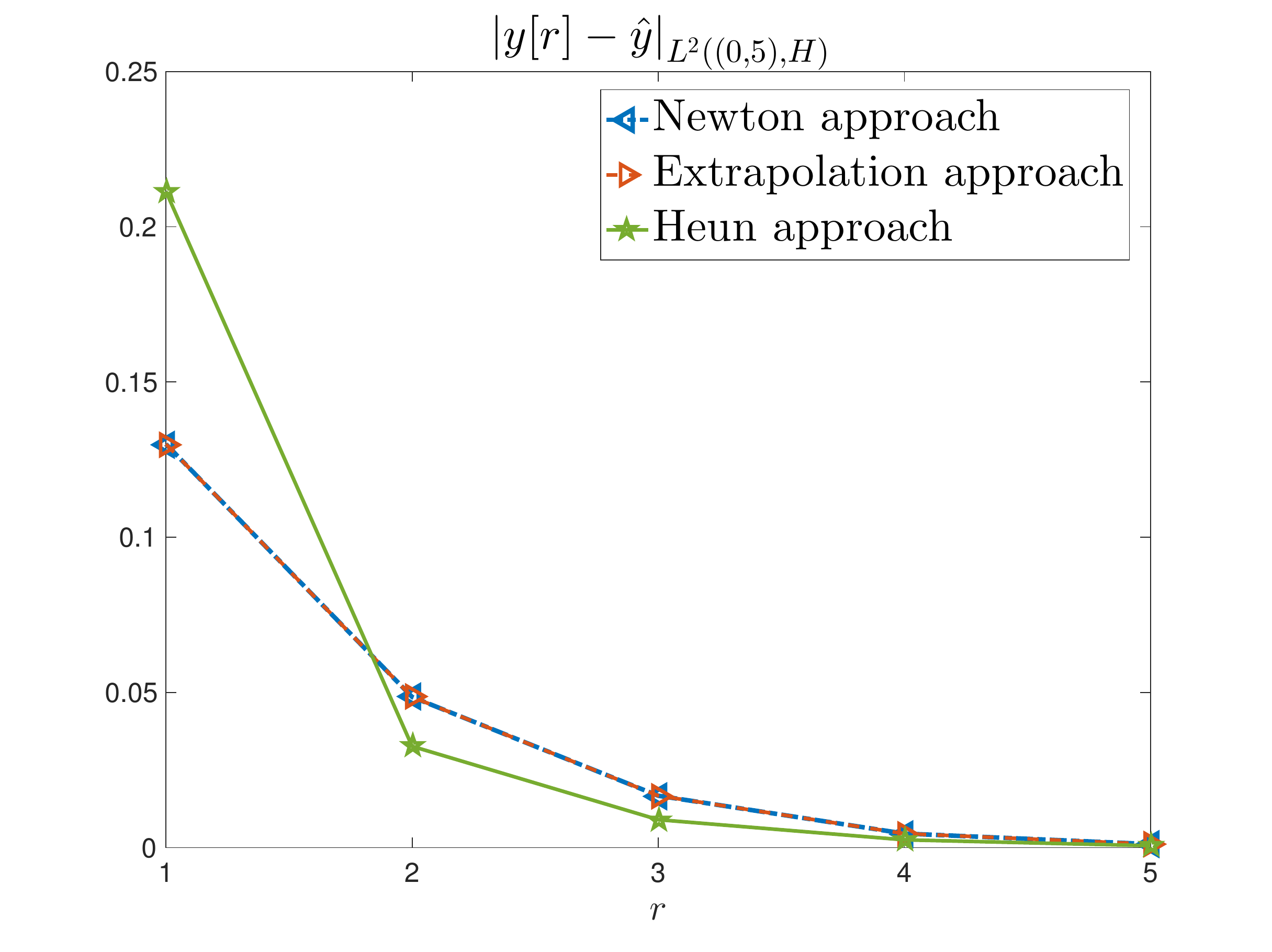}}
\subfigure[Ratio between two consecutive errors.]
{\includegraphics[width=.495\linewidth]{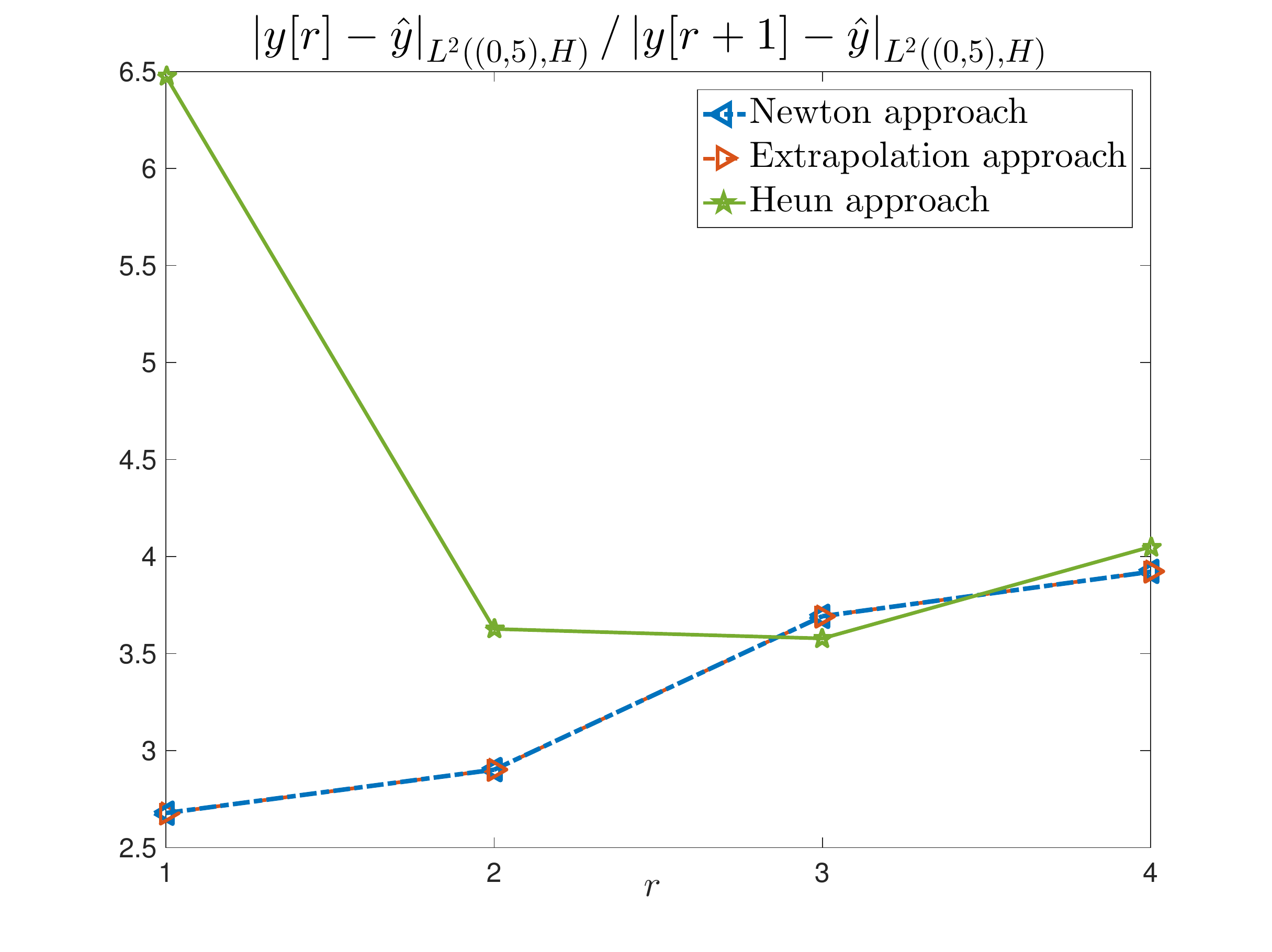}}
\caption{The discretization error for the mesh pairs~$(\DDD_r,k_r)$. }
\label{figure_Refinement}
\end{figure}
\begin{figure}[!]  
\centering
\subfigure[With 60 time nodes]
{\includegraphics[width=.325\linewidth]{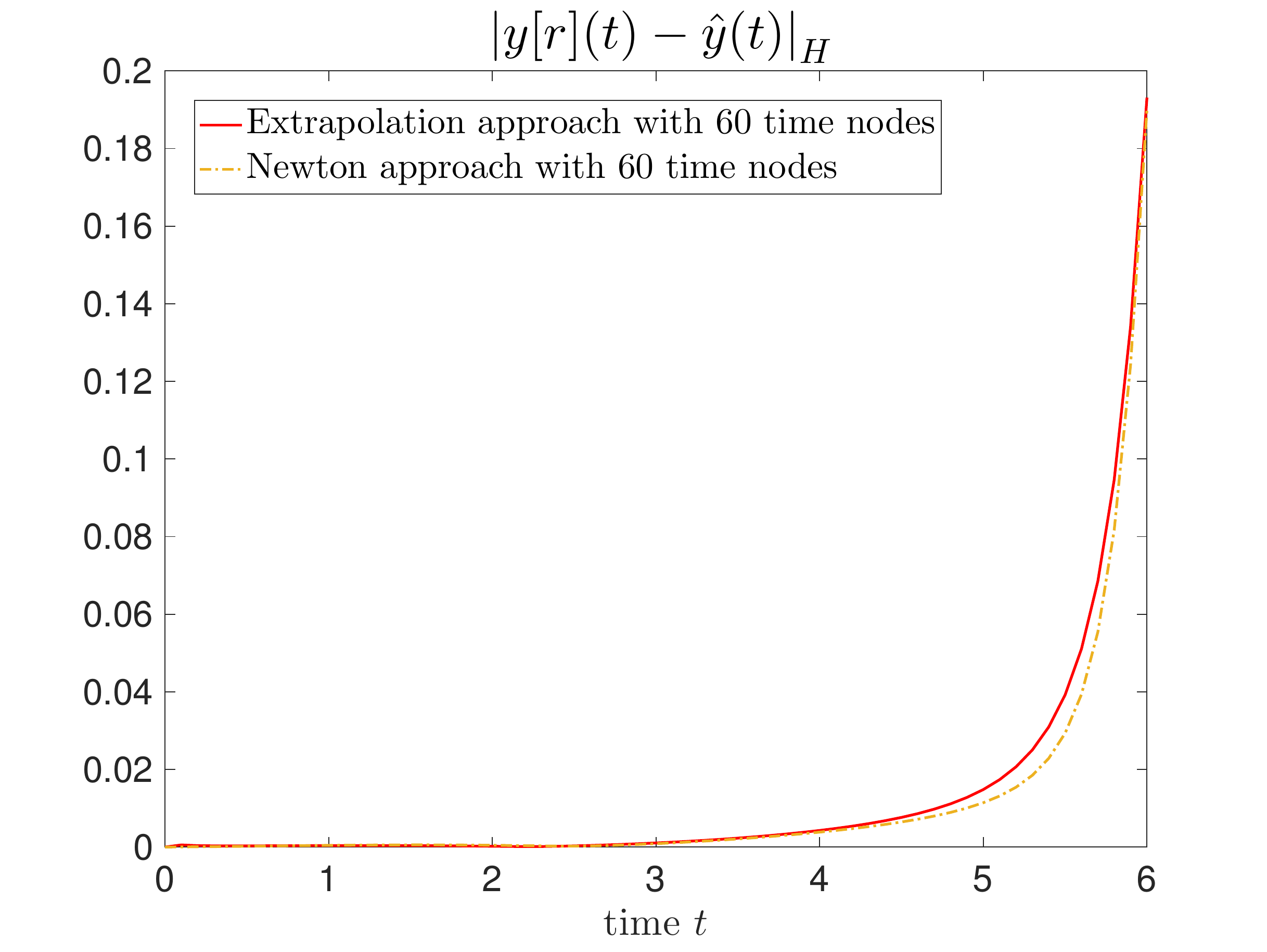}}
\subfigure[With 240 time nodes.]
{\includegraphics[width=.325\linewidth]{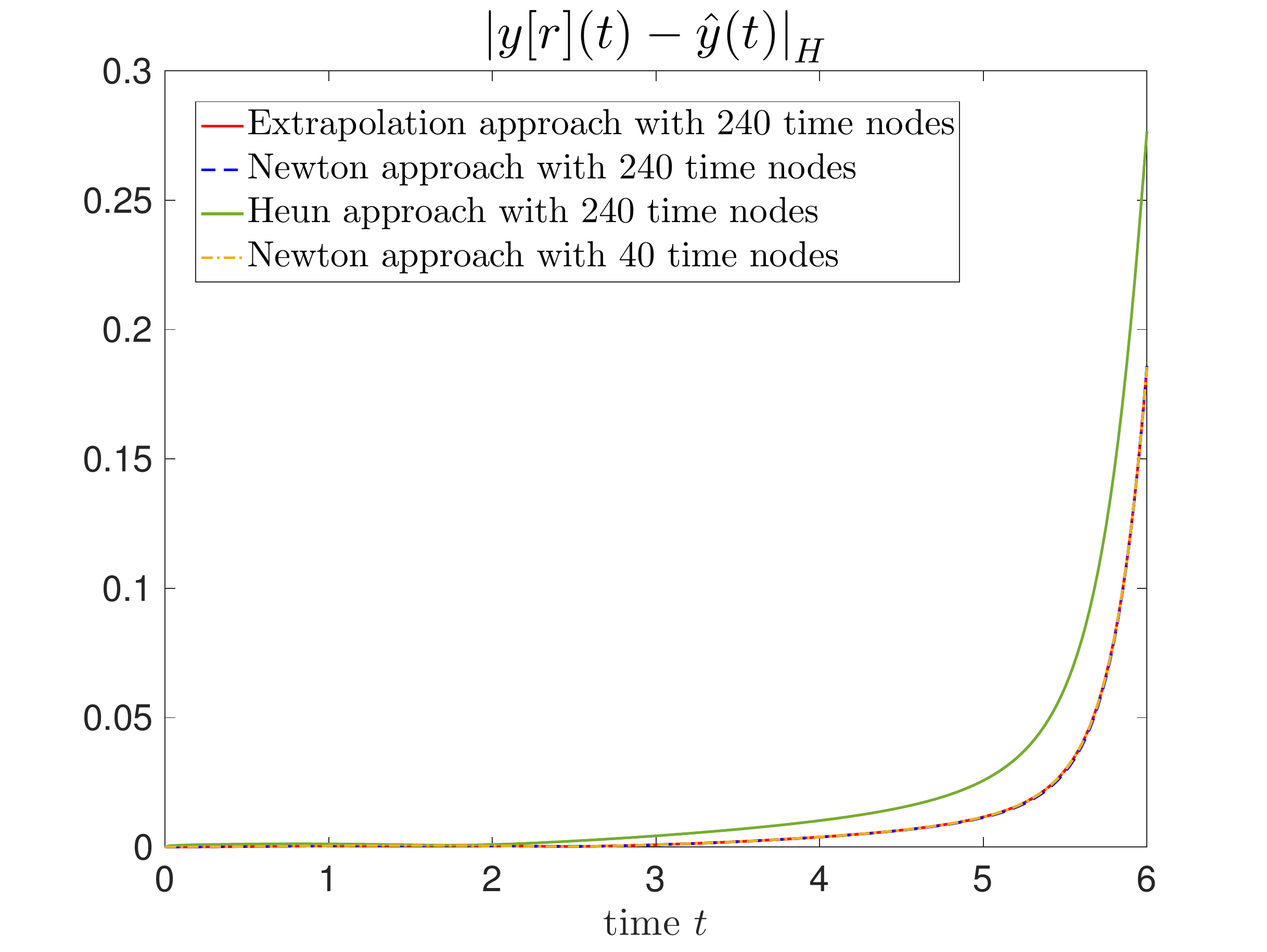}}
\subfigure[With 600 time nodes.]
{\includegraphics[width=.325\linewidth]{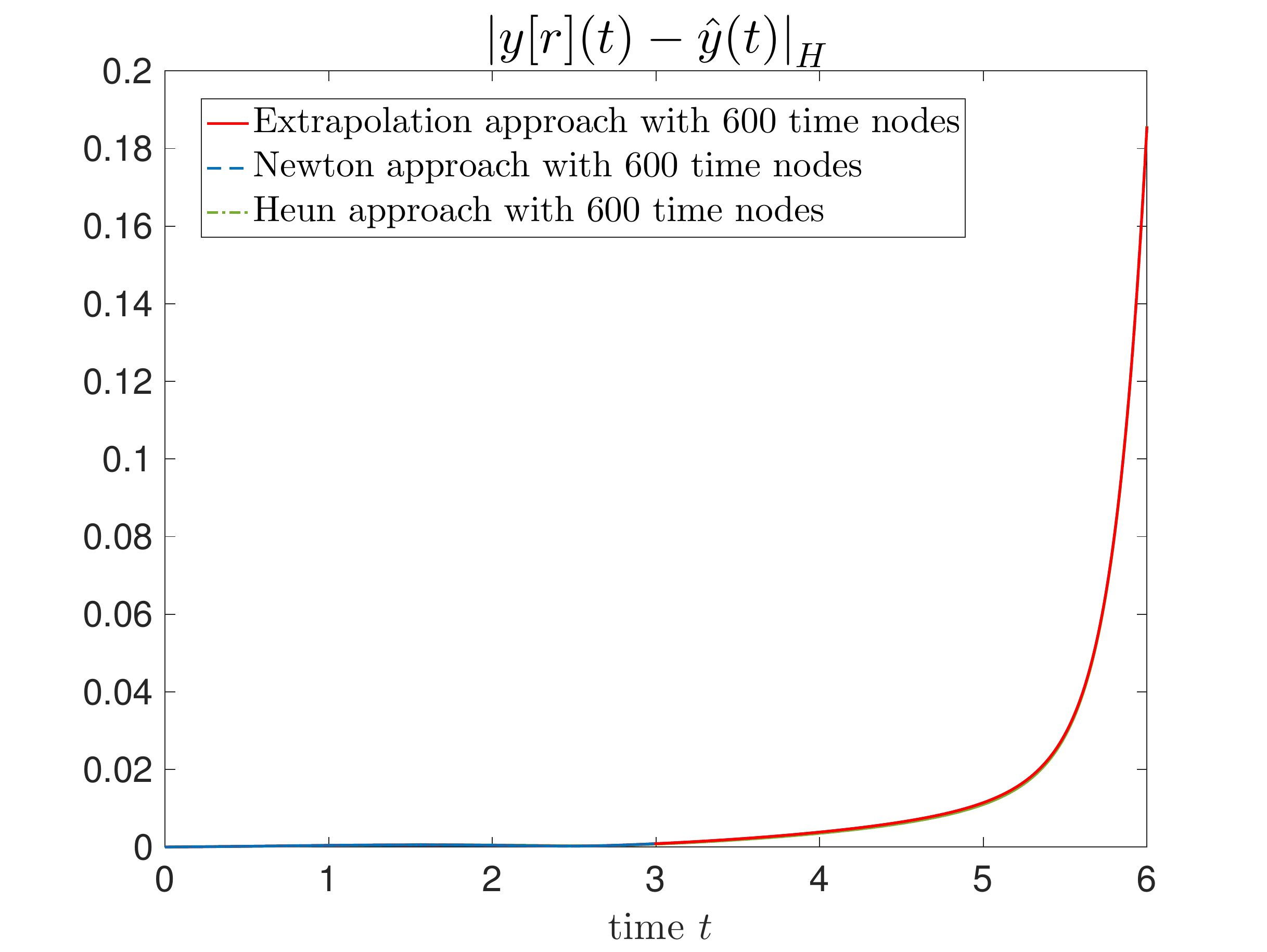}}
\caption{Comparing the three approaches for different time steps. }
\label{figure_NewtonvsExtravsHeun}
\end{figure}

\end{document}